\def\eqref#1{equation~(\ref{#1})}
\def\1{\bf{1}}
\newcommand{\Norm}[1]{\left\| #1 \right\|}
\newcommand{\norm}[1]{\left\| #1 \right\|}
\def\inner#1#2{\langle #1, #2 \rangle}
\def\fC{{\mathcal{C}}}
\def\fF{{\mathcal{F}}}
\def\ffF{{\mathscr{F}}}
\def\fI{{\mathcal{I}}}
\def\fO{{\mathcal{O}}}
\def\fP{{\mathcal{P}}}
\def\fS{{\mathcal{S}}}
\def\fX{{\mathcal{X}}}
\def\fY{{\mathcal{Y}}}
\def\fZ{{\mathcal{Z}}}
\def\BE{{\mathbb{E}}}
\def\BN{{\mathbb{N}}}
\def\BR{{\mathbb{R}}}
\newcommand{\E}{\mathbb{E}}
\DeclareMathOperator*{\argmin}{arg\,min}
\theoremstyle{plain}
\newtheorem{thm}{Theorem}
\newtheorem{dfn}{Definition}
\newtheorem{lem}{Lemma}
\newtheorem{asm}{Assumption}
\newtheorem{cor}{Corollary}
\newtheorem{prop}{Proposition}
\def\Ddots{\mathinner{\mkern1mu\raise\p@
\vbox{\kern7\p@\hbox{.}}\mkern2mu
\raise4\p@\hbox{.}\mkern2mu\raise7\p@\hbox{.}\mkern1mu}}
\newcommand*{\rom}[1]{\expandafter\@slowromancap\romannumeral #1@}
\title{Near-Optimal Distributed Minimax Optimization under the Second-Order Similarity}
\author{Qihao Zhou\thanks{School of Data Science, Fudan University; 20307130233@fudan.edu.cn}
\qquad
Haishan Ye\thanks{School of Management, Xi’an Jiaotong University; hsye\_cs@outlook.com}
\qquad
Luo Luo\thanks{School of Data Science, Fudan University; luoluo@fudan.edu.cn}
}
\date{}
\begin{document}
\maketitle
\begin{adjustwidth}{0.28in}{0.28in}
\begin{abstract}
This paper considers the distributed convex-concave minimax optimization under the second-order similarity.
We propose stochastic variance-reduced optimistic gradient sliding (SVOGS) method, which takes the advantage of the finite-sum structure in the objective by involving the mini-batch client sampling and variance reduction.
We prove SVOGS can achieve the $\varepsilon$-duality gap within communication rounds of 
${\mathcal O}(\delta D^2/\varepsilon)$, 
communication complexity of ${\mathcal O}(n+\sqrt{n}\delta D^2/\varepsilon)$,
and local gradient calls of 
$\tilde{\mathcal O}(n+(\sqrt{n}\delta+L)D^2/\varepsilon\log(1/\varepsilon))$, 
where $n$ is the number of nodes, $\delta$ is the degree of the second-order similarity, $L$ is the smoothness parameter and $D$ is the diameter of the constraint set.
We can verify that all of above complexity (nearly) matches the corresponding lower bounds.
For the specific $\mu$-strongly-convex-$\mu$-strongly-convex case, 
our algorithm has the upper bounds on communication rounds,  communication complexity, and local gradient calls of $\mathcal O(\delta/\mu\log(1/\varepsilon))$, ${\mathcal O}((n+\sqrt{n}\delta/\mu)\log(1/\varepsilon))$, and $\tilde{\mathcal O}(n+(\sqrt{n}\delta+L)/\mu)\log(1/\varepsilon))$ respectively, which are also nearly tight.
Furthermore, we conduct the numerical experiments to show the empirical advantages of proposed method.
\end{abstract}
\end{adjustwidth}

\section{Introduction}\label{sec:intro}

We study the distributed minimax optimization problem
\begin{align}\label{prob:main}
\min_{x \in \mathcal{X}} \max_{y \in \mathcal{Y}} f(x, y) := \frac{1}{n} \sum_{i=1}^n f_i(x, y),    
\end{align}
where $f_i$ is the differentiable local function associated with the $i$-th node, and~$\fX\subseteq\BR^{d_x}$ and $\fY\subseteq\BR^{d_y}$ are the constraint sets.
We are interested in the centralized setting, where there are one server node and $n-1$ client nodes that collaboratively solve the minimax problem. 
Without loss of generality, we assume the function $f_1$ is located on the server node and the functions~$f_2,\dots,f_{n}$ are located on the client nodes.
This formulation is a cornerstone in the study of game theory, aiming to achieve the Nash equilibrium \cite{6578120,Razaviyayn_2020}. It covers a lot of applications such as signal processing \cite{4434853}, optimal control \cite{8472154}, adversarial learning \cite{Razaviyayn_2020}, robust regression \cite{marteau2019globally,hendrikx2020statistically} and portfolio management \cite{XIDONAS2017299}. 

We focus on the first-order optimization methods for solving convex-concave minimax problem.
The classical full-batch approaches including extra-gradient (EG) method \cite{korpelevich1976extragradient}, forward-backward-forward (FBF) \cite{tseng2000modified}, optimistic gradient descent ascent (OGDA)~\cite{popov1980modification}, dual extrapolation \cite{nesterov2007dual} and so forth \cite{Nemirovski2004ProxMethodWR,malitsky2015projected,malitsky2020forward} achieve the optimal first-order oracle complexity under the assumption of Lipschitz continuous gradient~\cite{ouyang2021lower,zhang2022lower,ibrahim2020linear}.
For the objective with finite-sum structure, the stochastic variance reduced methods~\cite{alacaoglu2022stochastic,chavdarova2019reducing,luo2019stochastic,han2024lower,xie2020lower} can reduce the cost of per iteration by using the inexact gradient and lead to the better overall computational cost than full-batch methods.
It is natural to design the parallel iteration schemes by directly using above ideas to reduce the computational time in distributed setting.

The communication complexity is a primary bottleneck in distributed optimization.
The local functions in machine learning applications typically exhibit homogeneity~\cite{arjevani2015communication,kairouz2021advances,hendrikx2020statistically}, which is helpful to improve the communication efficiency.
One common measure used to describe relationships among local functions is the second-order similarity, e.g., the Hessian of each local function differs by a finite quantity from the Hessian of global objective.
Based on such characterization, several communication efficient distributed optimization methods have been established
\cite{shamir2014communication,pmlr-v37-zhangb15,sun2022distributed,tian2022acceleration,karimireddy2020scaffold,khaled2022faster,lin2023stochastic,kovalev2022optimalgradient,beznosikov2021distributed,beznosikov2022compression,beznosikov2024similarity}.
The highlight of these methods is their communication complexity bounds mainly depend on the degree of second-order similarity, which is potentially much tighter than the results depend on the smoothness parameter~\cite{beznosikov2022distributed,beznosikov2022decentralized,hou2021efficient,deng2021local,kovalev2022optimalgradient,kovalev2022optimal,stich2018local,li2019convergence,khaled2019first,gorbunov2021local,khaled2020tighter,mitra2021linear,mishchenko2022proxskip,malinovsky2022variance}.

Recently, \citet{khaled2022faster,lin2023stochastic} showed the iteration with partial participation can further reduce the communication complexity, improving the dependence on the number of nodes. 
They proposed stochastic variance reduced proximal point methods for convex optimization, which allow only one of clients to participate into the communication in the most of rounds.
Additionally, \citet{beznosikov2024similarity} combined partial participation with forward-backward-forward based method, reducing volume of communication complexity for minimax optimization.
However, these methods \cite{khaled2022faster,lin2023stochastic,beznosikov2024similarity} increase the communication rounds, 
which results more expensive time cost in communication than the full participation strategies \cite{kovalev2022optimalgradient,beznosikov2021distributed}.
In other words, the partial participation methods \cite{khaled2022faster,lin2023stochastic,beznosikov2024similarity} only reduce the overall volume of information exchanged among the nodes, while the advantage of parallel communication enjoyed in full participation methods is damaged.

In this paper, we propose a novel distributed minimax optimization method, called stochastic variance-reduced optimistic
gradient sliding (SVOGS), which uses the mini-batch client sampling to balance communication rounds, communication complexity, and computational complexity.
We prove SVOGS simultaneously achieves the (near) optimal communication complexity, communication rounds, and local gradient calls for convex-concave minimax problem under the assumption of second-order similarity.
We also conduct numerical experiments to show the superiority of  SVOGS.

\section{Preliminaries}\label{sec:preliminaries}

We focus on the distributed optimization in client-sever framework for solving minimax problem (\ref{prob:main}). 
The notation $f_i$ presents the local function on the $i$-th node. We assume the function $f_1$ is located on the server 
and the other individuals are located on clients.
We stack variables $x\in\BR^{d_x}$ and $y\in\BR^{d_y}$ as the vector $z = [x;y]\in\BR^d$, where $d=d_x+d_y$.
We let $\fZ := \fX \times \fY\subseteq\BR^d$ and define the projection operator 
$\fP_{\mathcal{Z}}(v):=\argmin_{z\in\fZ}\norm{z-v}$
for given $v\in\BR^d$.
We also denote the vector functions~$F_i:\BR^d \rightarrow \BR^d$ and $F:\BR^d \rightarrow \BR^d$ as
\begin{align*}
    F_i(z) := \begin{bmatrix} \nabla_x f_i(x, y) \\ -\nabla_y f_i(x, y) \end{bmatrix}\qquad  \text{and}\qquad F(z) := \frac{1}{n} \sum_{i=1}^{n} F_i(z). 
\end{align*} 
We consider the following common assumptions for our minimax problem.
\begin{asm}\label{asm:set}
We suppose the constraint set $\fZ\subseteq\BR^d$ is a non-empty, closed, and convex.
\end{asm}
\begin{asm}\label{asm:bounded}
We suppose the constraint set $\fZ\subseteq\BR^d$ is bounded by  diameter $D>0$, i.e., we have~$\norm{z_1-z_2}\leq D$ for all $z_1,z_2\in\fZ$.  
\end{asm}
\begin{asm}\label{asm:smooth}
We suppose each local function $f_i:\BR^{d_x}\times\BR^{d_y}\to\BR$ is smooth, i.e., there exists~$L>0$ such that $\norm{F_i(z_1)-F_i(z_2)} \leq L\norm{z_1-z_2}$ for all $i\in [n]$ and~$z_1, z_2\in \BR^d$ .
\end{asm}
\begin{asm}\label{asm:cc}
We suppose each differentiable local function $f_i:\BR^{d_x}\times\BR^{d_y}\to\BR$ is convex-concave, 
i.e., we have
$f_i(x,y) \geq f_i(x',y) + \inner{\nabla_x f_i(x',y)}{x-x'}$ and
$f_i(y) \leq f_i(y') + \inner{\nabla_y f_i(y')}{y-y'}$
for all $i\in[n]$, $x,x'\in\BR^{d_x}$ and $y,y'\in\BR^{d_y}$.
\end{asm}
\begin{asm}\label{asm:scsc}
We suppose the global objective $f:\BR^{d_x}\times\BR^{d_y}\to\BR$ is strongly-convex-strongly-concave, 
i.e., there exists $\mu>0$ such that the function $f(x,y)-\frac{\mu}{2}\norm{x}^2+\frac{\mu}{2}\norm{y}^2$ is convex-concave.
\end{asm}
Besides, we introduce the assumption of second-order similarity  to measure the homogeneity in local functions \cite{beznosikov2021distributed,beznosikov2024similarity,khaled2022faster,karimireddy2020scaffold}.
\begin{asm}\label{asm:ss}
The local functions $f_1,\dots,f_n:\BR^{d_x}\times\BR^{d_y}\to\BR$ are twice differentiable and hold the $\delta$-second-order similarity, 
i.e., there exists $\delta>0$ such that
\begin{align*}
    \norm{\nabla^2 f_i(x,y) - \nabla^2 f(x,y)} \leq \delta
\end{align*}
for all $i\in[n]$, $x\in\BR^{d_x}$ and $y\in\BR^{d_y}$.
\end{asm}

We measure the sub-optimality of the approximate solution $z=(x,y)\in\fZ$ by duality gap, that is
\begin{align*} \begin{split}
     \operatorname{Gap}(x,y):=\max_{y'\in\fY}f(x,y')-\min_{x'\in\fX}f(x',y).
\end{split} 
\end{align*}
We also consider the criterion of the gradient mapping for given $z=(x,y)\in\fZ$~\cite{nesterov2013introductory,yang2020catalyst,luo2021near}, that is,
\begin{align*}
    \ffF_\tau(z)=\frac{z-\fP_\fZ (z-\tau F(z))}{\tau},
\end{align*}
where $\tau>0$. 
The gradient mapping $\ffF_\tau(z)$ is a natural extension of gradient operator $F(z)$.
Noticing that we have $\ffF_\tau(z)=F(z)$ if the problem is unconstrained (i.e., $\fZ=\BR^d$), and the condition~$\ffF_\tau(z)=0$ is equivalent to the point $z$ is a solution of the problem.
Compared with the duality gap, the norm of gradient mapping is a more popular measure in empirical studied since it is easy to  achieve in practice.

For the specific strongly-convex-strongly-concave case, we can also measure the sub-optimality by the square of Euclidean distance to the unique solution $z^*=(x,y)\in\fZ$, that is
\begin{align*}
    \norm{z-z^*}^2 = \norm{x-x^*}^2 + \norm{y-y^*}^2.
\end{align*}


Moreover, we use notations $\fO(\cdot)$, $\Theta(\cdot)$ and $\Omega(\cdot)$ to hide
constants which do not depend on parameters of the problem, and notations $\tilde\fO(\cdot)$, $\tilde\Theta(\cdot)$, and $\tilde\Omega(\cdot)$ to additionally hide the logarithmic factors of $n$, $L$, $\mu$ and~$\delta$.

\section{Related Work}

For convex-concave minimax optimization,
the full batch first-order methods \cite{korpelevich1976extragradient,popov1980modification,nesterov2007dual,Nemirovski2004ProxMethodWR,tseng2000modified,malitsky2015projected,malitsky2020forward}
 can achieve $\varepsilon$-duality gap within at most~$\fO(LD^2/\varepsilon)$ iterations.
Applying these idea to distributed setting naturally leads to the communication rounds of $\fO(LD^2/\varepsilon)$ and each round requires all of the~$n$ nodes to compute and communicate their local gradient.  

In a seminar work, \citet{beznosikov2021distributed} proposed Star Min-Max Data Similarity (SMMDS) algorithm, 
which additionally consider the second-orders similarity (Assumption \ref{asm:ss}) by involving gradient sliding technique \cite{rogozin2021decentralized,lan2016gradient}. 
The SMMDS requires communication rounds of $\fO(\delta D^2/\varepsilon)$, which benefits from the homogeneity in local functions. 
Each round of this method needs to communicate/compute the local gradient of all $n$ nodes, and perform the local updates on the server within $\tilde\fO(L/\delta\log(1/\varepsilon))$ local iterations, which results the overall communication complexity of $\fO(n\delta D^2/\varepsilon)$ and local gradient complexity of $\tilde\fO((n\delta+L)D^2/\varepsilon\log(1/\varepsilon))$.
Later, \citet{kovalev2022optimalgradient} introduced extra-gradient sliding (EGS), which further improves the local gradient complexity to~$\fO((n\delta+L)D^2/\varepsilon)$.
It is worth pointing out that the communication rounds of~$\fO(\delta D^2/\varepsilon)$ achieved by SMMDS and EGS matches the lower complexity bound under the second-order similarity assumption~\cite{beznosikov2021distributed}.
However, these methods enforce all nodes to participate into  communication in every round, which does not sufficiently take the advantage of finite-sum structure in the objective.

Recently, \citet{beznosikov2024similarity} proposed Three Pillars Algorithm with Partial Participation (TPAPP), which uses the variance-reduced forward-backward-forward method \cite{alacaoglu2022stochastic,chavdarova2019reducing} to encourage only one of clients participate into the communication in most of the rounds.
The TPAPP can achieve point $z\in\BR^d$ such that $\BE[\|F(z)\|^2]\leq{\varepsilon}$ for unconstrained case within the communication rounds of $\fO(n\delta^2D^2/\varepsilon)$, communication complexity of $\fO(n\delta^2D^2/\varepsilon)$, and local gradient complexity of~$\fO(n^2\delta^4L^2D^6\varepsilon^{-3})$.\footnote{Although the complexity of TPAPP for achieving $\BE[\|F(z)\|]^2\leq{\varepsilon}$ is established for the unconstrained case, its analysis additionally assume that the sequences generated by the algorithm are bounded by $D>0$ \cite[Theorem 5.12]{beznosikov2024similarity}.}
The theoretical analysis of TPAPP for the constrained problem requires the objective being strongly-convex-strongly-concave.

In addition, we can also reduce the communication complexity by using the permutation compressors \cite{szlendak2021permutation} for high-dimensional problem~\cite{beznosikov2022compression,beznosikov2024similarity}, which achieves the similar complexity to existing partial participation methods~\cite{beznosikov2024similarity}.

We present the complexity of existing methods
and compare them with our results in both general convex-concave case and strongly-convex-strongly-concave case in Table \ref{table:cc}-\ref{table:ccg}.

\begin{table}[t]
\centering
\caption{The complexity of achieving $\BE[\operatorname{Gap}(x,y)]\leq\varepsilon$ in convex-concave case.} \label{table:cc}\vskip0.2cm
\small
\begin{tabular}{cccc}
\toprule
Methods & Communication Rounds & \!Communication Complexity\! & Local Gradient Complexity  \\
\midrule
  EG \cite{korpelevich1976extragradient} & $\fO\big(\frac{L D^2 }{\varepsilon}\big)$ & $\fO\big(\frac{nL D^2 }{\varepsilon}\big)$ & $\fO\big(\frac{nL D^2 }{\varepsilon}\big)$ \\\addlinespace
  SMMDS \cite{beznosikov2021distributed} & $\fO\big(\frac{\delta D^2 }{\varepsilon}\big)$ & $\fO\big(\frac{n\delta D^2 }{\varepsilon}\big)$ & $\tilde{\fO}\big(\frac{(n\delta+L) D^2 }{\varepsilon}\log\frac{1}{\varepsilon}\big)$  \\\addlinespace
 EGS \cite{kovalev2022optimalgradient}  & $\fO\big(\frac{\delta D^2 }{\varepsilon}\big)$ & $\fO\big(\frac{n\delta D^2 }{\varepsilon}\big)$ & $\fO\big(\frac{(n\delta+L) D^2 }{\varepsilon}\big)$  \\\addlinespace
  \begin{tabular}{c}
  SVOGS \\ (Algorithm \ref{alg:SVOGS}) 
  \end{tabular}
  & $\fO\big(\frac{\delta D^2 }{\varepsilon}\big)$ & $\fO\big(n+\frac{\sqrt{n}\delta D^2 }{\varepsilon}\big)$ & $\tilde{\fO}\big(n+\frac{(\sqrt{n}\delta+L) D^2 }{\varepsilon}\log\frac{1}{\varepsilon}\big)$  \\
\midrule
\begin{tabular}{c}
Lower Bounds \\ (Theorem \ref{thm:lowerboundKcc},\ref{thm:lowerboundVG1cc},\ref{thm:lowerboundGcc}) 
\end{tabular}
& $\Omega\big(\frac{\delta D^2 }{\varepsilon}\big)$ 
& $\Omega\big(n+\frac{\sqrt{n}\delta D^2 }{\varepsilon}\big)$ 
& $\Omega\big(n+\frac{(\sqrt{n}\delta+L) D^2 }{\varepsilon}\big)$ \\
\bottomrule
\end{tabular}\vskip0.05cm
\end{table}

\begin{table}[t]
\centering
\caption{The complexity of achieving $\BE[\norm{x-x^*}^2+\norm{y-y^*}^2]\leq\varepsilon$ in the strongly-convex-strongly-concave case.  
$^\dagger$These methods use permutation compressors \cite{szlendak2021permutation}, which require the assumption of $d>n$. $^\sharp$The complexity of TPAPP depends on local iterations number $H$, where ``TPAPP (a)'' and ``TPAPP (b)'' correspond to $H\!\!=\!\!\lceil L/(\sqrt{n}\delta) \rceil$ and $H\!\!=\!\!\lceil 8\log(40nL/\mu)\rceil$ respectively.}\label{table:scsc} \vskip0.2cm
\small
\begin{tabular}{cccc}
\toprule
Methods & Communication Rounds & Communication Complexity & Local Gradient Complexity \\
\midrule  
  EG \cite{korpelevich1976extragradient} & $\fO\big(\frac{L}{\mu}\log\frac{1}{\varepsilon}\big)$ & $\fO\big(\frac{nL}{\mu}\log\frac{1}{\varepsilon}\big)$ & $\fO\big({\frac{nL}{\mu}}\log\frac{1}{\varepsilon}\big)$   \\\addlinespace
  SMMDS \cite{beznosikov2021distributed} & $\fO\big(\frac{\delta}{\mu}\log\frac{1}{\varepsilon}\big)$ & $\fO\big(\frac{n\delta}{\mu}\log\frac{1}{\varepsilon}\big)$ & $\tilde{\fO}\big({\frac{n\delta+L}{\mu}}\log\frac{1}{\varepsilon}\big)$  \\\addlinespace
 EGS \cite{kovalev2022optimalgradient} & $\fO\big(\frac{\delta}{\mu}\log\frac{1}{\varepsilon}\big)$ & $\fO\big({\frac{n\delta}{\mu}}\log\frac{1}{\varepsilon}\big)$ & $\fO\big({\frac{n\delta+L}{\mu}}\log\frac{1}{\varepsilon}\big)$   \\\addlinespace
 OMASHA \cite{beznosikov2022compression}$^\dagger$
 & $\fO\big(\frac{L}{\mu}\log\frac{1}{\varepsilon}\big)$&$\fO\big(\big(n+\frac{\sqrt{n}\delta+L}{\mu}\big)\log\frac{1}{\varepsilon}\big)$&$\fO\big(\frac{nL}{\mu}\log\frac{1}{\varepsilon}\big)$ \\\addlinespace
 TPA \cite{beznosikov2024similarity}$^\dagger$ & $\fO\big(\big({n}+\frac{\sqrt{n}\delta}{\mu}\big)\log\frac{1}{\varepsilon}\big)$ & $\fO\big(\big(n+\frac{\sqrt{n}\delta}{\mu}\big)\log\frac{1}{\varepsilon}\big)$ & $\fO\big(\big(n+\frac{\sqrt{n}L}{\delta}+\frac{L}{\mu}\big)\log\frac{1}{\varepsilon}\big)$   \\\addlinespace
 TPAPP (a) \cite{beznosikov2024similarity}$^\sharp$ & $\fO\big(\big({n}+\frac{\sqrt{n}\delta}{\mu}\big)\log\frac{1}{\varepsilon}\big)$ & $\fO\big(\big(n+\frac{\sqrt{n}\delta}{\mu}\big)\log\frac{1}{\varepsilon}\big)$ & $\fO\big(\big(n+\frac{\sqrt{n}L}{\delta}+\frac{L}{\mu}\big)\log\frac{1}{\varepsilon}\big)$   \\\addlinespace
 TPAPP (b) \cite{beznosikov2024similarity}$^\sharp$ & \!\!$\fO\big(\big(n+\frac{\sqrt{n}\delta+L}{\mu}\big)\log\frac{1}{\varepsilon}\big)$\!\! & $\fO\big(\big(n+\frac{\sqrt{n}\delta+L}{\mu}\big)\log\frac{1}{\varepsilon}\big)$ & $\tilde{\fO}\big(\big(n+\frac{\sqrt{n}\delta+L}{\mu}\big)\log\frac{1}{\varepsilon}\big)$   \\\addlinespace
  \begin{tabular}{c}
    SVOGS \\ (Algorithm \ref{alg:SVOGS})
  \end{tabular}
   & $\fO\big(\frac{\delta}{\mu}\log\frac{1}{\varepsilon}\big)$ & $\fO\big(\big(n+\frac{\sqrt{n}\delta}{\mu}\big)\log\frac{1}{\varepsilon}\big)$ & $\tilde{\fO}\big(\big(n+\frac{\sqrt{n}\delta+L}{\mu}\big)\log\frac{1}{\varepsilon}\big)$    \\
\midrule
\!\!\!\!\!
\begin{tabular}{c}
Lower Bounds \\ (\cite{beznosikov2021distributed,beznosikov2024similarity}, Theorem \ref{thm:lowerboundGG2scsc}) 
\end{tabular} \!\!\!\!\!
 & $\Omega\big(\frac{\delta}{\mu}\log\frac{1}{\varepsilon}\big)$ & $\Omega\big(\big(n+\frac{\sqrt{n}\delta}{\mu}\big)\log\frac{1}{\varepsilon}\big)$ & $\Omega\big(\big(n+\frac{\sqrt{n}\delta+L}{\mu}\big)\log\frac{1}{\varepsilon}\big)$\\ 
\bottomrule
\end{tabular} 
\end{table}

\begin{table}[t]
\centering
\caption{The complexity of achieving $\BE[\norm{\ffF_\tau(x,y)}^2]\leq\varepsilon$ in convex-concave case.
$^\S$The  TPAPP additionally assumes $\fZ=\BR^d$ and the sequences 
 generated by the algorithm is bounded by $D>0$.
} \label{table:ccg}\vskip0.2cm
\small
\begin{tabular}{cccc}
\toprule
Methods & Communication Rounds & Communication Complexity & Local Gradient Complexity  \\
\midrule
 TPAPP \cite{beznosikov2024similarity}$^\S$  & $\fO\big(\frac{n\delta^2 D^2 }{\varepsilon}\big)$ & $\fO\big(\frac{n\delta^2 D^2 }{\varepsilon}\big)$ & $\fO\big(\frac{n^2\delta^4L^2D^6}{\varepsilon^3}\big)$  \\\addlinespace
  \!\begin{tabular}{c}
  SVOGS \\ (Algorithm \ref{alg:SVOGS}) 
  \end{tabular}\!
  & $\tilde\fO\big(\frac{\delta D }{\sqrt{\varepsilon}}\log\big(\frac{1}{\varepsilon}\big)\big)$ & $\tilde\fO\big(\big(n+\frac{\sqrt{n}\delta D }{\sqrt{\varepsilon}}\big)\log(\frac{1}{\varepsilon}\big))$ & $\tilde\fO\big(\big(n+\frac{\sqrt{n}L D }{\sqrt{\varepsilon}}\big)\log(\frac{1}{\varepsilon}\big)\big)$  \\
\bottomrule
\end{tabular}\vskip0.2cm
\end{table}

\begin{algorithm}[t]
\caption{Stochastic Variance-Reduced Optimistic Gradient Sliding (SVOGS)}
\label{alg:SVOGS}
\begin{algorithmic}[1]
\STATE \textbf{Input:} initial point $z^0 = (x^0, y^0)\in\fZ$, step size $\eta$, accuracy $\{\varepsilon_k\}_{k=1}^K$, communication rounds~$K$, mini-batch size $b$, probability $p \in (0, 1]$, weights $\alpha,\gamma \in (0, 1)$;\\[0.15cm]
\STATE \textbf{Initialization:} $w^{-1}=z^{-1}=w^0=z^0 = (x^0, y^0) \in \mathcal{Z}$, $z_{i}^0 = z^0$ for all $i\in [n]$;\\[0.15cm]
\FOR{$k = 0, 1, 2, \ldots$, $K-1$} \vskip0.15cm
    \STATE  $\bar{z}^k = (1-\gamma) z^k +\gamma w^k$;\\[0.15cm]
    \STATE Sample $\fS^k = \{j_{1}^k, \ldots, j_{b}^k \}$ uniformly and independently from $[n]$;\\[0.15cm]
    \STATE $\displaystyle{\delta^{k} =F(w^{k-1}) - F_1(w^{k-1}) + \frac{1 }{b}\sum_{j\in \fS^k}\big(F_{j}(z^k) - F_1(z^k)- F_{j}(w^{k-1}) + F_1(w^{k-1})\big)}$ \\
     $\displaystyle{\quad\quad~ + \frac{\alpha }{b}\sum_{j\in \fS^k}\big(F_{j}(z^k) - F_1(z^k)- F_{j}(z^{k-1}) + F_1(z^{k-1})\big)}$; \\[0.15cm]    
    \STATE\label{line:updatevk} $ v^{k} =\bar{z}^k - \eta \delta^k;$\\[0.15cm]
    \STATE\label{line:sub-prob} Find $u^k\in\BR^d$ such that $\|u^k - \hat{u}^k\|^2 \leq \varepsilon_k$, where $\hat{u}^k$ is the solution of the problem
    \begin{align}\label{prob:sub-svogs}
        \min_{\hat x \in \mathcal{X}} \max_{\hat y \in \mathcal{Y}} \left\{ f_1(\hat x, \hat y) + \frac{1}{2\eta}\|\hat x - v^k_x\|^2 - \frac{1}{2\eta}\|\hat y - v^k_y\|^2\right\};
    \end{align}\\[0.15cm]
     \STATE $z^{k+1}=u^k;$\\[0.15cm]
     \STATE\label{line:update}$w^{k+1}= \begin{cases} 
    z^{k+1} & \text{with probability } p ,\\
    w^k & \text{with probability } 1-p .
    \end{cases}$ \\[0.15cm]
\ENDFOR
\end{algorithmic}
\end{algorithm}

\section{Stochastic Variance-Reduced Optimistic Gradient Sliding}\label{sec:algo}

We propose stochastic variance-reduced optimistic gradient sliding (SVOGS) method in Algorithm~\ref{alg:SVOGS}.

The design of our algorithm starts from reformulating problem (\ref{prob:main}) as follows
\begin{align}\label{prob:composite}
    \min_{x \in \mathcal{X}} \max_{y \in \mathcal{Y}}f(x,y):=\underbrace{\frac{1}{n}\sum_{i=1}^n (f_i(x,y)-f_1(x,y))}_{g(x,y):=f(x,y)-f_1(x,y)} + f_1(x,y).
\end{align}
The idea of gradient sliding \cite{lan2016gradient} on minimax optimization can be viewed as iteratively solving the surrogate of problem (\ref{prob:composite}) within the quadratic approximation of $g(x,y)$~\cite{beznosikov2021distributed,kovalev2022optimalgradient,beznosikov2024similarity}.
Recall that the optimistic gradient descent ascent (OGDA) method \cite{popov1980modification,malitsky2020forward} iterates with
\begin{align}\label{eq:FoRB}
    z^{k+1}=\fP_\fZ \big(z^k-\eta(\underbrace{F(z^k)+F(z^k)-F(z^{k-1})}_{\text{optimistic gradient}})\big),
\end{align}
where $\eta>0$ is the step size. 
It is well-known that OGDA achieves optimal convergence rate  under the first-order smoothness assumption~\cite{zhang2022lower,ouyang2021lower}, which motivated us construct the quadratic approximation of~$g(x,y)$ by using the optimistic gradient of $g$ at $(x^k,y^k)$ in the linear terms, that is
\begin{align}\label{approx:ogs}
\begin{split}    
   & g(x,y) \approx \hat g(x,y)  \\
 =& g(x^k,y^k) + \inner{\underbrace{\nabla_x g(x^k,y^k)+\nabla_x g(x^k,y^k)-\nabla_x g(x^{k-1},y^{k-1})}_{\text{optimistic gradient with respect to $x$}}}{x-x^k} + \frac{1}{2\eta}\norm{x-x^k}^2  \\
& + \inner{\underbrace{\nabla_y g(x^k,y^k)+\nabla_y g(x^k,y^k)-\nabla_y g(x^{k-1},y^{k-1})}_{\text{optimistic gradient with respect to $y$}}}{y-y^k} -\frac{1}{2\eta}\norm{y-y^k}^2.
\end{split}
\end{align}

Applying approximation (\ref{approx:ogs}) to formulation (\ref{prob:composite}), we obtain the optimistic gradient sliding (OGS), which iteratively solve the sub-problem
\begin{align}\label{prob:ogs}
    (x^{k+1},y^{k+1}) \approx \arg\min_{\hat x\in\fX}\max_{\hat y\in\fY}~\hat g(\hat x,\hat y) + f_1(\hat x,\hat y).
\end{align}
We can verify function $g(x,y)$ is $\delta$-smooth under Assumption \ref{asm:ss}, which indicates taking $\eta=\Theta(1/\delta)$ and solving the sub-problem sufficiently accurate can find an $\varepsilon$-suboptimal solution within the iteration numbers of $\fO(\delta D^2/\varepsilon)$ and $\fO(\delta/\mu\log(1/\varepsilon))$ for the convex-concave case and the strongly-convex-strongly-concave case respectively (see Section \ref{sec:analysis}).
The dependence on $\delta$ implies OGS benefits from the second-order similarity in local functions, while each of its iteration requires the communication and the computation of the exact gradient of $f(x,y)$ within  the complexity of $\fO(n)$.

The key idea to improve the cost in each iteration is involving the mini-batch client sampling and variance reduction with momentum~\cite{kovalev2022optimal,allen2018katyusha}. 
Specifically, we estimate the optimistic gradient in formulation (\ref{approx:ogs}) as follows
{\begin{align}\label{eq:vr-og}
\begin{split}
 \!\!\!G(z^k)+G(z^k)-G(z^{k-1}) 
\approx \frac{1}{|\fS^k|}\!\sum_{j\in\fS^k}\!\big(G(w^k)+G_j(z^k)-G_j(w^{k-1})+\underbrace{\alpha(G_j(z^k)-G_j(z^{k-1}))}_{\text{momentum term}}\big),
\end{split}
\end{align}}
where $G(z):=F(z)-F_1(z)$, $\fS^k\subseteq[n]$ is the random index set, $w^k$ is the snapshot point which is updated infrequently in iterations, and $\alpha\in(0,1)$ is the momentum parameter.
Applying the optimistic gradient estimation (\ref{eq:vr-og}) to formulations (\ref{approx:ogs})-(\ref{prob:ogs}), 
we achieve our stochastic variance-reduced optimistic gradient sliding (SVOGS) method (Algorithm~\ref{alg:SVOGS}).

The proposed SVOGS enjoys the mini-batch partial participation in the steps communication and computation in most of rounds, which is the main difference between SVOGS and existing methods \cite{beznosikov2021distributed,kovalev2022optimal,beznosikov2024similarity}.
Concretely, taking the mini-batch size $|\fS^k|=\Theta(\sqrt{n}\,)$ for SVOGS can simultaneously balance communication rounds, communication complexity and local gradient complexity.
The SVOGS keeps both the benefit of parallel communication like full participation methods (i.e., SMMDS~\cite{beznosikov2021distributed} and EGS~\cite{kovalev2022optimalgradient}) and the low communication cost like existing participation methods (i.e., TPAPP~\cite{beznosikov2024similarity}).
Additionally, the communication advantage of SVOGS also makes the algorithm achieves better local gradient complexity than state-of-the-arts \cite{beznosikov2021distributed,kovalev2022optimal,beznosikov2024similarity}.

\section{The Complexity Analysis}\label{sec:analysis}

In this section, we provide the complexity analysis of proposed SVOGS (Algorithm \ref{alg:SVOGS}) to show its superiority. 
In particular, we let $\mu=0$ for the convex-concave case to the ease of presentation.

We analyze the convergence of SVOGS (Algorithm~\ref{alg:SVOGS}) by establishing the Lyapunov function 
\begin{align}\label{eq:Lyapunov} \begin{split}
    \Phi^k :=& \left(\frac{1}{\eta}+\mu\right)\| z ^k- z^* \|^2+2\langle F( z ^{k-1})-F_1( z ^{k-1})-F( z ^k)+F_1( z ^k), z ^{k}- z^* \rangle\\
    & +\frac{1}{64\eta}\| z ^k- z ^{k-1}\|^2+\frac{\gamma}{4\eta}\|w^{k-1}- z ^k\|^2+\frac{(2\gamma+\eta\mu)}{2p\eta}\|w^k- z^* \|^2.
\end{split} \end{align}
where we take the step size such that $\eta\leq 1/(32\delta)$ which always guarantees $\Phi^k\geq 0$ by using Young's inequality and the similarity assumption (see detailed proof in Appendix \ref{appx:nonneg-Lya}).

We show that the decrease of Lyapunov function in expectation as follows.

\begin{lem}\label{lem:convergenceLya}
    Suppose Assumptions \ref{asm:set}, \ref{asm:smooth}, \ref{asm:cc}, and \ref{asm:ss} hold with $0\leq \mu\leq \delta\leq L$, running SVOGS (Algorithm~\ref{alg:SVOGS}) with  $\gamma\leq 1/8$, 
    $\alpha=\max\left\{1-\eta\mu/6,1-p\eta\mu/(2\gamma+\eta\mu)\right\}$, 
    $\eta\leq\min\{1/\mu,1/(32\delta)\}$, ${256\eta^2\delta^2\alpha^2(b+1)}/{b}\leq \alpha$, ${4\eta\delta^2}/{b}\leq {\alpha\gamma}/{(4\eta)}$, and $\varepsilon_k\leq c^{-1}\min\left\{\|\hat{u}^k- z ^k\|,\|\hat{u}^k- z ^k\|^2\right\}$ for some $c={\rm poly}(\mu,\delta)$, then we have
    \begin{align} \label{eq:convergenceLya}\begin{split}
    \E [\Phi^{k+1}]\leq \max\left\{1-\frac{\eta\mu}{6},1-\frac{p\eta\mu}{2\gamma+\eta\mu}\right\}\E [\Phi^{k}]-\frac{1}{16\eta}\E \left[\| z ^k-\hat{u}^k\|^2\right]-\frac{\gamma}{2\eta}\E \left[\|w^k-\hat{u}^k\|^2\right].
\end{split} \end{align}
\end{lem}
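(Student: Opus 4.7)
The plan is to treat this as a one-step descent for a perturbed proximal-point iteration, with the stochastic estimator $\delta^k$ playing the role of an (approximate) operator at $\bar z^k$ and the momentum-augmented variance reduction providing the control on the estimator's error.

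First, I would start from the variational inequality characterizing the exact sub-problem solution $\hat u^k$ in Line \ref{line:sub-prob}. Convex-concavity of $f_1$ gives
\begin{align*}
\langle F_1(\hat u^k) + \tfrac{1}{\eta}(\hat u^k - v^k), z - \hat u^k\rangle \geq 0 \quad \text{for all } z\in\fZ,
\end{align*}
and plugging in $v^k = \bar z^k - \eta\delta^k$ rewrites this as $\langle \eta F_1(\hat u^k) + \eta\delta^k, z - \hat u^k\rangle \geq \langle \bar z^k - \hat u^k, z - \hat u^k\rangle$. Taking $z = z^*$ and invoking the three-point identity $2\langle \bar z^k - \hat u^k, z^* - \hat u^k\rangle = \|\bar z^k - z^*\|^2 - \|\hat u^k - z^*\|^2 - \|\bar z^k - \hat u^k\|^2$, I would then add and subtract $F(\hat u^k)$, use monotonicity $\langle F(\hat u^k) - F(z^*), \hat u^k - z^*\rangle \geq \mu \|\hat u^k - z^*\|^2$, and use $\langle F(z^*), \hat u^k - z^*\rangle \geq 0$, to obtain an inequality in which $(1+\eta\mu)\|\hat u^k-z^*\|^2$ on the left is dominated by $\|\bar z^k - z^*\|^2$ plus the cross term $2\eta\langle \delta^k - F(\hat u^k), z^* - \hat u^k\rangle$ and a negative $-\|\bar z^k - \hat u^k\|^2$ that I will spend.

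Next, I would decompose the awkward inner product by inserting $F(z^k)$:
\begin{align*}
\delta^k - F(\hat u^k) = \bigl(\delta^k - F(z^k)\bigr) + \bigl(F(z^k) - F(\hat u^k)\bigr),
\end{align*}
and split each piece into the part that matches the optimistic cross term already inside $\Phi^k$, namely $F(z^{k-1}) - F_1(z^{k-1}) - F(z^k) + F_1(z^k)$, so that the ``new'' optimistic term at step $k+1$ appears on the left and the ``old'' one at step $k$ appears on the right with matching signs. The residual piece $\delta^k - F(z^k) - (F(z^{k-1})-F_1(z^{k-1})-F(z^k)+F_1(z^k))$ is the genuine stochastic noise of the estimator, and for it I would take expectation conditional on iteration $k$: using mini-batch independence, $\mathbb{E}\|\delta^k - F(z^k) - (F(z^{k-1})-F_1(z^{k-1})-F(z^k)+F_1(z^k))\|^2$ splits into $\tfrac{1}{b}$ times the sample-variance of $G_j(z^k)-G_j(w^{k-1})$ plus $\tfrac{\alpha^2}{b}$ times the sample-variance of $G_j(z^k)-G_j(z^{k-1})$. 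Each of these variances is bounded by $\delta^2\|z^k-w^{k-1}\|^2$ and $\delta^2\|z^k-z^{k-1}\|^2$ respectively thanks to Assumption \ref{asm:ss} applied to $G = F - F_1$, which is why $\delta$ (not $L$) controls the noise.

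After substituting, applying Young's inequality to the remaining cross terms in the form $2\langle a,b\rangle \leq \theta\|a\|^2 + \theta^{-1}\|b\|^2$ with carefully chosen coefficients, and replacing $\bar z^k = (1-\gamma)z^k + \gamma w^k$ to expand $\|\bar z^k - z^*\|^2$ by convexity, I would identify exactly the terms $\tfrac{1}{64\eta}\|z^k-z^{k-1}\|^2$ and $\tfrac{\gamma}{4\eta}\|w^{k-1}-z^k\|^2$ appearing in $\Phi^k$ as absorbers for the variance bounds; the scheduled step-size conditions $\tfrac{256\eta^2\delta^2\alpha^2(b+1)}{b}\leq\alpha$ and $\tfrac{4\eta\delta^2}{b}\leq\tfrac{\alpha\gamma}{4\eta}$ are precisely what makes the coefficients line up. Finally, the probabilistic snapshot update in Line \ref{line:update} yields $\mathbb{E}\|w^{k+1}-z^*\|^2 = p\|z^{k+1}-z^*\|^2 + (1-p)\|w^k-z^*\|^2$, which gives the contraction factor $1 - \tfrac{p\eta\mu}{2\gamma+\eta\mu}$ on the snapshot summand while the distance-to-saddle summand contracts by $1-\tfrac{\eta\mu}{6}$, and the maximum of these two quantities is what appears in the lemma. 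The inexactness $\|u^k - \hat u^k\|^2\leq\varepsilon_k$ is propagated through each Young's-inequality step; the assumption $\varepsilon_k \leq c^{-1}\min\{\|\hat u^k - z^k\|,\|\hat u^k - z^k\|^2\}$ with $c = {\rm poly}(\mu,\delta)$ makes these perturbations absorbable into the negative $-\tfrac{1}{16\eta}\|z^k - \hat u^k\|^2$ and $-\tfrac{\gamma}{2\eta}\|w^k - \hat u^k\|^2$ terms on the right-hand side of \eqref{eq:convergenceLya}.

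The main obstacle I anticipate is the bookkeeping step where I simultaneously need the momentum coefficient $\alpha$ to produce the required telescoping of the optimistic cross-term between steps $k$ and $k+1$ and to shrink the variance contribution coming from $F_j(z^k) - F_j(z^{k-1})$; the choice $\alpha = \max\{1 - \eta\mu/6, 1 - p\eta\mu/(2\gamma+\eta\mu)\}$ is exactly tuned so that the coefficient in front of the ``old'' cross term after taking expectation matches $\alpha$ times its coefficient in $\Phi^k$, which in turn matches the desired contraction factor; getting all three of these equalities to hold simultaneously under the stated step-size constraints is the delicate calculation and is where the explicit constants in the lemma's hypotheses are fixed.
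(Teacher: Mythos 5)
Your overall architecture is the same as the paper's: start from the variational inequality for $\hat u^k$, use the three-point expansion around $\bar z^k=(1-\gamma)z^k+\gamma w^k$, split $\delta^k$ into its conditional mean and a mean-zero part whose variance is bounded by $\tfrac{\delta^2}{b}\|z^k-w^{k-1}\|^2+\tfrac{\alpha^2\delta^2}{b}\|z^k-z^{k-1}\|^2$ via the $\delta$-Lipschitzness of $F_i-F$ (Lemma \ref{lem:ss}), telescope the optimistic cross term between $\Phi^k$ and $\Phi^{k+1}$, use $\E_{k+1/2}\|w^{k+1}-z^*\|^2=p\|z^{k+1}-z^*\|^2+(1-p)\|w^k-z^*\|^2$ for the snapshot summand, and tune $\alpha,\eta,\gamma,p,b$ so the coefficients line up. That is exactly the route taken in Lemmas \ref{lem:lem1prooflem1}--\ref{lem:lem1prooflem3}.

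The genuine gap is in your last step, the claim that the inexactness $\|u^k-\hat u^k\|^2\le\varepsilon_k$ is simply ``propagated through Young's inequality and absorbed into $-\tfrac{1}{16\eta}\|z^k-\hat u^k\|^2$ and $-\tfrac{\gamma}{2\eta}\|w^k-\hat u^k\|^2$.'' The Lyapunov function at step $k+1$ is written in terms of $z^{k+1}=u^k$, not $\hat u^k$, so when you pass from $\|\hat u^k-z^*\|^2$ to $\|z^{k+1}-z^*\|^2$ you unavoidably pick up a term of the form $\tfrac{2}{\eta}\,\|\hat u^k-u^k\|\,\|z^{k+1}-z^*\|$ (similarly the new optimistic cross term is paired with $z^{k+1}-z^*$). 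This product cannot be absorbed by the two negative terms with a threshold $c={\rm poly}(\mu,\delta)$ alone: Lemma \ref{lem:convergenceLya} does not assume the bounded-domain Assumption \ref{asm:bounded}, so $\|z^{k+1}-z^*\|$ has no a priori bound, and since $\mu=0$ is allowed there is zero margin between the coefficient $\tfrac1\eta+\mu$ of $\|z^{k+1}-z^*\|^2$ on the left and its target coefficient on the right, so you also cannot Young-split a piece $\theta\|z^{k+1}-z^*\|^2$ off and absorb it into the contraction. The paper resolves this with an extra device you do not mention: it bounds $\|z^{k+1}-z^*\|\le\|u^k-\hat u^k\|+\|\hat u^k-z^k\|+\|z^k-z^*\|$, controls $\E\|z^k-z^*\|$ by $\sqrt{2\eta\,\E[\Phi^k]}$ using the coercivity of $\Phi^k$ (Appendix \ref{appx:nonneg-Lya}), and then runs a simultaneous induction showing $\E[\Phi^k]\le\Phi^0$ so that the absorption constant is uniform in $k$ (Lemma \ref{lem:lem1prooflem4} and the induction in the proof of Lemma \ref{lem:convergenceLya}); accordingly the actual threshold $c$ in \eqref{eq:value-c} contains the initialization-dependent term $64\sqrt{2\eta\Phi^0}$, not just $\mu$ and $\delta$. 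Without this induction (or a boundedness assumption you are not granted), your absorption step stalls, so the plan as written does not close.
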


\subsection{The Convex-Concave Case}
For the convex-concave case, we use Jensen's inequality and the convexity (concavity) to bound the duality gap  at $u_{{\rm avg}}^K=\frac{1}{K}\sum_{k=0}^{K-1}u^k$ as follows
\begin{align}\label{ieq:gap-inner}
    \operatorname{Gap}(u_{{\rm avg}}^K)\leq \max_{(x',y')\in\fZ}\frac{1}{K}\sum_{k=0}^{K-1}\left(f(u_x^k,y')-f(x',u_y^k)\right)\leq \max_{z\in\fZ}\frac{1}{K}\sum_{k=0}^{K-1}\langle F(u^k),u^k-z\rangle.
\end{align}

Applying Lemma \ref{lem:convergenceLya} by summing over inequality (\ref{eq:convergenceLya}), we can bound the right-hand side of (\ref{ieq:gap-inner}) via the terms of $\sum_{k=0}^{K-1}\E \left[\| z ^k-\hat{u}^k\|^2\right]$ and $\sum_{k=0}^{K-1}\E \left[\|w^k-\hat{u}^k\|^2\right]$, and achieve the following theorem.

\begin{thm}\label{thm:convergenceSVOGScc}
    Suppose Assumptions \ref{asm:set}, \ref{asm:bounded}, \ref{asm:smooth}, \ref{asm:cc} and \ref{asm:ss} hold with $0<\delta\leq L$ and $D>0$, we run \mbox{Algorithm~\ref{alg:SVOGS}} with 
    $b=\left\lceil\sqrt{n}\,\right\rceil$, 
    $\gamma=p=1/{(\sqrt{n}+8)}$,  $\eta=\min\left\{{\sqrt{\gamma b}}/(4\delta),1/(32\delta)\right\}$, $\alpha=1$, and \begin{align*}
        \varepsilon_k=\min\left\{\zeta,\hat{c}^{-1}\min\left\{\|\hat{u}^k- z ^k\|,\|\hat{u}^k- z ^k\|^2\right\}\right\}
    \end{align*} for some $\zeta={\rm poly}(L,\delta,n,D,\varepsilon)$ and \mbox{$\hat{c}={\rm poly}(\delta)$}. Then we have
    \begin{align*} \begin{split}
    \E\left[\max_{z\in\fZ}\frac{1}{K}\sum_{k=0}^{K-1}\langle F(u^k),u^k-z\rangle\right]\leq \frac{10 D^2 }{\eta K} + \frac{\varepsilon}{2}, \quad \text{where}~~u_{{\rm avg}}^K=\frac{1}{K}\sum_{k=0}^{K-1}u^k.
    \end{split} \end{align*}    
\end{thm}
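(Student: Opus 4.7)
The plan is to combine Lemma~\ref{lem:convergenceLya} with the averaging reduction already prepared in~(\ref{ieq:gap-inner}). First I would check the hypotheses of Lemma~\ref{lem:convergenceLya} with $\mu=0$: $\gamma=1/(\sqrt{n}+8)\le 1/8$ is immediate; with $\alpha=1$, the condition $256\eta^2\delta^2(b+1)/b\le\alpha$ reduces to $512\eta^2\delta^2\le 1$, which is implied by $\eta\le 1/(32\delta)$; and $16\eta^2\delta^2\le\gamma b$ follows from $\eta\le\sqrt{\gamma b}/(4\delta)$. With $\mu=0$ the contraction factor in~(\ref{eq:convergenceLya}) becomes $1$, so Lemma~\ref{lem:convergenceLya} specializes to
\begin{align*}
    \E[\Phi^{k+1}]\le \E[\Phi^k]-\frac{1}{16\eta}\E\|z^k-\hat u^k\|^2-\frac{\gamma}{2\eta}\E\|w^k-\hat u^k\|^2.
\end{align*}

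The second step is to upper bound each summand in~(\ref{ieq:gap-inner}) by expanding around $\hat u^k$. Since $\|u^k-\hat u^k\|^2\le\varepsilon_k$ and $F$ is bounded on $\fZ$ by Assumptions~\ref{asm:bounded} and~\ref{asm:smooth}, the replacement error $\langle F(u^k),u^k-z\rangle-\langle F(\hat u^k),\hat u^k-z\rangle$ is of order $\sqrt{\varepsilon_k}$ and its cumulative contribution is absorbed into $\varepsilon/2$ by the choice $\varepsilon_k\le\zeta$. For the remaining term, the first-order saddle optimality of $\hat u^k$ in~(\ref{prob:sub-svogs}) together with $v^k=\bar z^k-\eta\delta^k$ yields $\langle F_1(\hat u^k)+\delta^k+(\hat u^k-\bar z^k)/\eta,\, z-\hat u^k\rangle\ge 0$. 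Substituting $\bar z^k=(1-\gamma)z^k+\gamma w^k$ and applying the three-point identity produces
\begin{align*}
    \langle F(\hat u^k),\hat u^k-z\rangle \le & \; \frac{1-\gamma}{2\eta}\bigl(\|z^k-z\|^2-\|\hat u^k-z\|^2-\|z^k-\hat u^k\|^2\bigr)\\
    & + \frac{\gamma}{2\eta}\bigl(\|w^k-z\|^2-\|\hat u^k-z\|^2-\|w^k-\hat u^k\|^2\bigr) +\langle G(\hat u^k)-\delta^k,\hat u^k-z\rangle.
\end{align*}

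The hard part is controlling the residual $\langle G(\hat u^k)-\delta^k,\hat u^k-z\rangle$. With $\alpha=1$, a direct computation gives $\E_k[\delta^k]=2G(z^k)-G(z^{k-1})$, so the deterministic part of the residual equals $(G(\hat u^k)-G(z^k))-(G(z^k)-G(z^{k-1}))$ plus a zero-mean sampling noise. The $\delta$-smoothness of $G$ (immediate from Assumption~\ref{asm:ss}) together with Cauchy--Schwarz and Young's inequality bounds each deterministic piece by a small fraction of $\|\hat u^k-z\|^2$ plus positive multiples of $\|\hat u^k-z^k\|^2$ and $\|z^k-z^{k-1}\|^2$, which are absorbed by the negative term $-\|z^k-\hat u^k\|^2/(16\eta)$ from Lemma~\ref{lem:convergenceLya} and the $\|z^k-z^{k-1}\|^2/(64\eta)$ stored inside $\Phi^k$. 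The sampling noise, after taking total expectation, contributes a variance of order $\delta^2/b$ which is absorbed by the same negative terms; this is precisely where the choice $b=\lceil\sqrt{n}\,\rceil$ is calibrated.

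Finally, I would sum over $k=0,\ldots,K-1$ and take the supremum over $z\in\fZ$. The two distance blocks telescope: the $\|z^k-z\|^2-\|\hat u^k-z\|^2$ chain yields $\|z^0-z\|^2\le D^2$ up to the $\sqrt{\varepsilon_k}$ inexactness; the $\|w^k-z\|^2-\|\hat u^k-z\|^2$ chain, combined with the snapshot-update identity $\E_k[\|w^{k+1}-z\|^2]=p\|z^{k+1}-z\|^2+(1-p)\|w^k-z\|^2$, telescopes to $D^2/p$, which cancels with $\gamma=p$ in the coefficient. The Lyapunov telescope contributes $\Phi^0-\E[\Phi^K]\le \Phi^0=\mathcal{O}(D^2/\eta)$, since the initialization $z^{-1}=z^0=w^{-1}=w^0$ makes the cross term in~(\ref{eq:Lyapunov}) vanish. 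Collecting constants and absorbing the cumulative inexactness into $\varepsilon/2$ via $\zeta=\mathrm{poly}(L,\delta,n,D,\varepsilon)$ yields the claimed $10D^2/(\eta K)+\varepsilon/2$.
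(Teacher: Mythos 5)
Your overall route (verify the hypotheses of Lemma~\ref{lem:convergenceLya} with $\mu=0$, use the optimality of $\hat u^k$ and a three-point expansion around $\bar z^k=(1-\gamma)z^k+\gamma w^k$, absorb the variance and inexactness terms via the summed Lyapunov inequality, and telescope) is the same as the paper's, and your verification of the parameter conditions and of $\Phi^0=\mathcal O(D^2/\eta)$ is fine. But there is a genuine gap at the one point where this theorem is actually delicate: the quantity to be bounded is $\E\big[\max_{z\in\fZ}\sum_{k}\langle F(u^k),u^k-z\rangle\big]$, with the maximum \emph{inside} the expectation. The stochastic residuals you dismiss --- the mini-batch noise $\langle \delta^k-\E_k[\delta^k],\hat u^k-z\rangle$ and the snapshot step, where you invoke $\E_k[\|w^{k+1}-z\|^2]=p\|z^{k+1}-z\|^2+(1-p)\|w^k-z\|^2$ --- are linear in $z$ and have zero conditional mean only for each \emph{fixed} $z$. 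Once $z$ is replaced by the maximizer (which is a random variable depending on the whole trajectory), you cannot take conditional expectations termwise, so the claim that ``the sampling noise, after taking total expectation, contributes a variance of order $\delta^2/b$'' and that the $w$-chain ``telescopes to $D^2/p$'' is not justified as written. A crude fix, bounding the noise against the diameter by Cauchy--Schwarz, only yields an extra term of order $\eta\delta D\sum_k\E\|z^k-w^{k-1}\|/\sqrt b=\mathcal O(\delta D^2\sqrt{K/b})$, i.e.\ an $\mathcal O(1/\sqrt K)$ contribution after dividing by $K$, which destroys the claimed $10D^2/(\eta K)$ rate.

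The paper closes exactly this hole by isolating the $z$-dependent zero-mean errors $e_{11}(z,k)$, $e_{12}(z,k)$, $e_2(z,k)$ in (\ref{eq:errors}) and applying the martingale maximal inequality of Lemma~\ref{lem:mar} (Alacaoglu--Malitsky), which bounds $\E[\max_z\sum_k\langle r^{k+1},z\rangle]$ by $\max_z\tfrac12\|z^0-z\|^2+\tfrac12\sum_k\E\|r^{k+1}\|^2$; the resulting variance sums $\tfrac{\eta^2\delta^2}{b}\sum_k\E\|z^k-w^{k-1}\|^2$ and $p(1-p)\sum_k\E\|z^{k+1}-w^k\|^2$ are then absorbed using the summed form of Lemma~\ref{lem:convergenceLya}, i.e.\ $\sum_k\big(\tfrac1{32}\E\|z^k-\hat u^k\|^2+\tfrac\gamma2\E\|w^k-\hat u^k\|^2\big)\le(1+\gamma/p)\|z^0-z^*\|^2$, and the extra $\max_z\|z^0-z\|^2$ payments are what produce the constant $10$ in $10D^2/(\eta K)$. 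To repair your argument you need this (or an equivalent) maximal/Fenchel-type argument for the $z$-dependent noise; the rest of your sketch, including the treatment of the deterministic optimistic-gradient residual via $\delta$-smoothness of $G$ and the absorption of the inexactness through $\zeta$, matches the paper's proof.
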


Theorem \ref{thm:convergenceSVOGScc} shows we can run SVOGS with step size $\eta=\Theta(1/\delta)$ and communication rounds of~$K=\fO(\delta D/\varepsilon)$ to achieve the $\varepsilon$-sub-optimality in expectation. 
Additionally, each communication round contains the expected communication complexity of $b(1-p)+np=\fO(\sqrt{n}\,)$, leading to the overall communication complexity of $\fO(n+\sqrt{n}\delta D^2 /\varepsilon)$.

The sub-problem (\ref{prob:sub-svogs}) in SVOGS (line \ref{line:sub-prob} of Algorithm \ref{alg:SVOGS}) is a minimax problem with $(L+1/\eta)$-smooth and $(1/\eta)$-strongly-convex-$(1/\eta)$-strongly-concave objective. 
Therefore, the setting of $\varepsilon_k$ and $\eta$ in the theorem indicates the condition $\|u^k - \hat{u}^k\|^2 \leq \varepsilon_k$ can be achieved by the local iterations number of~$\fO((L+\delta)/\delta\log(\varepsilon_k))={\tilde \fO}(L/\delta\log(1/\varepsilon))$ on the server (e.g., use EG \cite{korpelevich1976extragradient}).
Additionally, each round of SVOGS contains the expected local gradient complexity of $b(1-p)+np=\fO(\sqrt{n}\,)$ to achieve the (mini-batch) optimistic gradient $\delta^k$.
Hence, the overall local gradient complexity of SVOGS is~$\tilde\fO(K(\sqrt{n}+L/\delta\log(1/\varepsilon)))=\tilde{\fO}(n+(\sqrt{n}\delta+L) D^2 /\varepsilon\log(1/\varepsilon))$.
We formally present the upper complexity bounds of SVOGS for the convex-concave case as follows.

\begin{cor}\label{cor:complexitySVOGScc}
Following the setting of Theorem \ref{thm:convergenceSVOGScc}, 
we can achieve $ \E[\operatorname{Gap}(u_{{\rm avg}}^K)]\leq \varepsilon$ within communication rounds of $\fO(\delta D^2/\varepsilon)$, communication complexity of $\fO(n+\sqrt{n}\delta D^2 /\varepsilon)$,  and local gradient complexity of $\tilde{\fO}(n+(\sqrt{n}\delta+L) D^2 /\varepsilon\log(1/\varepsilon))$, where $u_{{\rm avg}}^K=\frac{1}{K}\sum_{k=0}^{K-1}u^k$.
\end{cor}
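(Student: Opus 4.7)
The plan is to derive the three complexity bounds as direct accounting consequences of Theorem \ref{thm:convergenceSVOGScc} combined with the setting of $\eta$, $b$, $p$, and $\varepsilon_k$. First, I would chain the bound in Theorem \ref{thm:convergenceSVOGScc} with inequality (\ref{ieq:gap-inner}), so that $\E[\operatorname{Gap}(u_{\rm avg}^K)] \leq 10 D^2/(\eta K) + \varepsilon/2$. Since the parameter choices force $\eta = \Theta(1/\delta)$ (the second term $1/(32\delta)$ is active, and $\sqrt{\gamma b}/(4\delta) = \Theta(1/\delta)$ because $\gamma b = \Theta(1)$ with $b = \lceil\sqrt{n}\rceil$ and $\gamma = 1/(\sqrt{n}+8)$), taking $K = \Theta(\delta D^2/\varepsilon)$ drives the first term below $\varepsilon/2$. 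This immediately yields the communication rounds bound $\mathcal{O}(\delta D^2/\varepsilon)$.

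Next, I would count the per-round cost. Each round touches the server plus a random set $\mathcal{S}^k$ of $b$ clients to form $\delta^k$, contributing $b$ units of communication and $\mathcal{O}(b)$ local gradient evaluations in the $\delta^k$ formula. With probability $p$, the snapshot $w^{k+1}$ is refreshed to $z^{k+1}$, which costs a full synchronization of $\Theta(n)$. Hence the expected per-round communication is $b + p(n-b) = \mathcal{O}(\sqrt{n})$ under $b = \lceil\sqrt{n}\rceil$ and $p = 1/(\sqrt{n}+8)$. Multiplying by $K = \mathcal{O}(\delta D^2/\varepsilon)$ and adding the initial broadcast of $z^0$ (which costs $\mathcal{O}(n)$) gives the total communication complexity $\mathcal{O}(n + \sqrt{n}\,\delta D^2/\varepsilon)$.

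Third, I would analyze the local gradient cost. The sub-problem (\ref{prob:sub-svogs}) on the server is $(L+1/\eta)$-smooth and $(1/\eta)$-strongly-convex-strongly-concave, so its condition number is $\mathcal{O}(L\eta + 1) = \mathcal{O}(L/\delta + 1)$. Applying a linearly-convergent solver such as EG \cite{korpelevich1976extragradient}, the requirement $\|u^k - \hat u^k\|^2 \leq \varepsilon_k$ is met in $\mathcal{O}((L/\delta + 1)\log(1/\varepsilon_k))$ inner iterations. Because $\varepsilon_k = \min\{\zeta, \hat c^{-1}\min\{\|\hat u^k - z^k\|, \|\hat u^k - z^k\|^2\}\}$ with $\zeta = \text{poly}(L,\delta,n,D,\varepsilon)$ and $\hat c = \text{poly}(\delta)$, and since $\|\hat u^k - z^k\|$ stays polynomially bounded thanks to Assumption \ref{asm:bounded}, $\log(1/\varepsilon_k) = \tilde{\mathcal{O}}(\log(1/\varepsilon))$. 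Combined with the $\mathcal{O}(b) = \mathcal{O}(\sqrt{n})$ gradient evaluations used for $\delta^k$ each round, each round costs $\tilde{\mathcal{O}}(\sqrt{n} + L/\delta \cdot \log(1/\varepsilon))$ local gradient calls. Summing over $K$ rounds and adding the $\mathcal{O}(n)$ cost of initializing the $\{z_i^0\}_{i=1}^n$ table yields $\tilde{\mathcal{O}}(n + (\sqrt{n}\,\delta + L) D^2 / \varepsilon \log(1/\varepsilon))$.

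The main obstacle is not the accounting but verifying that the $\|\hat u^k - z^k\|$-dependent choice of $\varepsilon_k$ does not blow up the inner iteration count; a uniform polynomial lower bound (away from the uninteresting case $\hat u^k = z^k$) suffices because $\log$ turns any polynomial factor into a $\tilde{\mathcal{O}}(\log(1/\varepsilon))$ contribution, which is absorbed by the $\tilde{\mathcal{O}}$ notation. Everything else is just plugging in $\eta = \Theta(1/\delta)$, $b = \Theta(\sqrt{n})$, $p = \Theta(1/\sqrt{n})$, and $K = \Theta(\delta D^2/\varepsilon)$ into the two cost formulas derived above.
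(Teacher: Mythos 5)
Your accounting mirrors the paper's own proof almost step for step: take $K=\Theta(D^2/(\eta\varepsilon))=\Theta(\delta D^2/\varepsilon)$ from Theorem \ref{thm:convergenceSVOGScc}, note the expected per-round communication and gradient cost $b(1-p)+np=\fO(\sqrt{n})$ plus the $\fO(n)$ initialization, and bound the inner solve of (\ref{prob:sub-svogs}) by $\fO((1+\eta L)\log(\cdot))$ iterations of a linearly convergent solver using that the sub-problem is $(L+1/\eta)$-smooth and $(1/\eta)$-strongly-convex--strongly-concave. So the route is the same as the paper's.

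However, the way you dispose of the ``main obstacle'' is not correct as stated. You argue that $\log(1/\varepsilon_k)=\tilde\fO(\log(1/\varepsilon))$ by invoking a \emph{uniform polynomial lower bound} on $\|\hat u^k-z^k\|$. No such lower bound exists: Lemma \ref{lem:convergenceLya} (see equation (\ref{eq:99-bound2})) shows $\sum_{k}\E\big[\|z^k-\hat u^k\|^2\big]$ is bounded, so $\|\hat u^k-z^k\|$ must become arbitrarily small along the iterations, and consequently $\varepsilon_k=\hat c^{-1}\|\hat u^k-z^k\|^2$ can be far below any fixed polynomial threshold (and below $\zeta$). The correct repair — which is what the paper's bound $\fO((1+\eta L)\log(\max\{\zeta^{-1},\hat c\}))$ implicitly relies on — is that the tolerance is \emph{relative} to the warm start: initializing the inner solver at $z^k$, its error contracts from $\|z^k-\hat u^k\|^2$, so reaching $\hat c^{-1}\|\hat u^k-z^k\|^2$ costs only $\fO((1+\eta L)\log\hat c)$ iterations no matter how small $\|\hat u^k-z^k\|$ is; the branch $\hat c^{-1}\|\hat u^k-z^k\|$ costs $\fO((1+\eta L)\log(\hat c D))$ using the diameter bound, and the absolute floor $\zeta$ costs $\fO((1+\eta L)\log(D^2/\zeta))=\tilde\fO((1+L/\delta)\log(1/\varepsilon))$. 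With that substitution your per-round and total counts go through exactly as in the paper, yielding the stated $\fO(\delta D^2/\varepsilon)$, $\fO(n+\sqrt{n}\delta D^2/\varepsilon)$ and $\tilde\fO(n+(\sqrt{n}\delta+L)D^2/\varepsilon\log(1/\varepsilon))$ bounds.
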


\subsection{The Strongly-Convex-Strongly-Concave Case}\label{sec:scsc}

By appropriate settings of SVOGS, 
Lemma \ref{lem:convergenceLya} leads to the following linear convergence of our Lyapunov function in the strongly-convex-strongly-concave case.

\begin{thm}\label{thm:convergenceSVOGSscsc}
    Suppose Assumptions \ref{asm:set}, \ref{asm:smooth}, \ref{asm:cc}, \ref{asm:scsc} and \ref{asm:ss} hold with $0< \mu\leq \delta\leq L$, we run Algorithm~\ref{alg:SVOGS} with   \mbox{$b=\left\lceil\min\left\{\sqrt{n},\delta/\mu\right\}\right\rceil$}, 
    \mbox{$\gamma=p=1/(\min\left\{\sqrt{n},\delta/\mu\right\}+8)$}, 
    $\eta =\min\left\{\sqrt{\alpha\gamma b}/(4\delta),1/(32\delta)\right\}$, \begin{align*}
        \alpha=\max\left\{1-\frac{\eta\mu}{6},1-\frac{p\eta\mu}{2\gamma+\eta\mu}\right\},\quad \text{and}\quad \varepsilon_k=c^{-1}\min\left\{\|\hat{u}^k- z ^k\|,\|\hat{u}^k- z ^k\|^2\right\}
    \end{align*}
     for some $c={\rm poly}(\mu,\delta)$. Then we have
    \begin{align*} \begin{split}
       \E[ \Phi^K]\leq \max\left\{1-\frac{\eta\mu}{6},1-\frac{p\eta\mu}{2\gamma+\eta\mu}\right\}^K \Phi^{0}.
    \end{split} \end{align*}
\end{thm}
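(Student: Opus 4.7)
The plan is to obtain Theorem~\ref{thm:convergenceSVOGSscsc} as an immediate consequence of Lemma~\ref{lem:convergenceLya} by (a)~verifying that the parameter choices specified in the theorem statement satisfy every hypothesis of the lemma, and (b)~telescoping the per-iteration contraction over $K$ rounds. Step (a) is the only substantive work; step (b) is a one-line induction once the dissipative residual terms on the right-hand side of \eqref{eq:convergenceLya} are discarded as non-positive.

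For step (a), I would check the hypotheses of Lemma~\ref{lem:convergenceLya} one by one. The bound $\gamma\leq 1/8$ is immediate from $\gamma = 1/(\min\{\sqrt n,\delta/\mu\}+8) \leq 1/8$. The step-size constraint $\eta\leq \min\{1/\mu,1/(32\delta)\}$ follows because $\eta\leq 1/(32\delta)$ by the definition of $\eta$ and $\mu\leq\delta$ gives $1/(32\delta)\leq 1/\mu$. The formula for $\alpha$ and the sub-problem accuracy $\varepsilon_k$ are identical in the two statements. The inequality $4\eta\delta^2/b\leq \alpha\gamma/(4\eta)$ rearranges to $\eta\leq \sqrt{\alpha\gamma b}/(4\delta)$, which is built directly into the $\eta$-rule. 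The only non-trivial check is $256\eta^2\delta^2\alpha^2(b+1)/b\leq \alpha$: since $p=\gamma\leq 1/8$ we have $p\eta\mu/(2\gamma+\eta\mu)\leq p\leq 1/8$, so both candidates in the max defining $\alpha$ are at least $7/8$, hence $\alpha\geq 7/8$ and in particular $\alpha^2\leq \alpha$; combined with $(b+1)/b\leq 2$ and $\eta^2\leq 1/(1024\delta^2)$, this yields $256\eta^2\delta^2\alpha^2(b+1)/b\leq 512\eta^2\delta^2\leq 1/2\leq \alpha$.

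For step (b), Lemma~\ref{lem:convergenceLya} then applies at every iteration, and dropping the non-positive terms $-(1/16\eta)\,\E[\|z^k-\hat u^k\|^2]$ and $-(\gamma/2\eta)\,\E[\|w^k-\hat u^k\|^2]$ on the right-hand side of \eqref{eq:convergenceLya} gives the pure recursion $\E[\Phi^{k+1}]\leq \rho\,\E[\Phi^k]$ with $\rho=\max\{1-\eta\mu/6,\,1-p\eta\mu/(2\gamma+\eta\mu)\}$. Chaining this inequality from $k=0$ to $k=K-1$ delivers $\E[\Phi^K]\leq \rho^K\Phi^0$, which is precisely the asserted bound.

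The main obstacle, and the only place where circularity could arise, is the coupled definition of $\alpha$ and $\eta$: $\alpha$ is specified in terms of $\eta$, while $\eta$ depends on $\alpha$ through the $\sqrt{\alpha\gamma b}/(4\delta)$ branch. The key unlocking observation is that the bound $\alpha\geq 7/8$ obtained above holds independently of the specific value of $\eta$ (it uses only $p=\gamma\leq 1/8$), so $\eta=\min\{\sqrt{\alpha\gamma b}/(4\delta),\,1/(32\delta)\}$ is well-defined and lies in a bounded interval, after which the lemma's quadratic-in-$\eta$ constraint is automatically satisfied without further iteration between the parameters.
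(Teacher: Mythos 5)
Your proposal is correct and follows essentially the same route as the paper: the paper's proof of this theorem is exactly to verify that the stated parameter choices satisfy the hypotheses of Lemma~\ref{lem:convergenceLya} and then apply that lemma (dropping the non-positive residual terms) recursively over $K$ rounds. Your explicit parameter checks, including the bound $\alpha\geq 7/8$ used to settle the $\alpha$--$\eta$ coupling and the constraint $256\eta^2\delta^2\alpha^2(b+1)/b\leq\alpha$, are valid and simply spell out the verification the paper leaves implicit.
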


We then apply Theorem \ref{thm:convergenceSVOGSscsc} with $K=\fO(\delta /\mu\log(1/\varepsilon))$ and analyze the complexity like the discussion after Theorem \ref{thm:convergenceSVOGScc}, which results the upper complexity bounds as follows.

\begin{cor}\label{thm:complexitySVOGSscsc}
    Following the setting of Theorem \ref{thm:convergenceSVOGSscsc}, we can achieve $\E\left[\|z^{K}-z^*\|^2\right]\leq \varepsilon$ within communication rounds of $\fO(\delta/\mu\log(1/\varepsilon))$, communication complexity of $\fO((n+\sqrt{n}\delta/\mu)\log(1/\varepsilon))$, and local gradient complexity of $\tilde{\fO}((n+(\sqrt{n}\delta +L)/\mu)\log(1/\varepsilon))$.
\end{cor}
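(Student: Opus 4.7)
The plan is to convert the linear contraction of the Lyapunov function $\Phi^k$ given by Theorem \ref{thm:convergenceSVOGSscsc} into an $\varepsilon$-accuracy guarantee on $\E[\|z^K-z^*\|^2]$, and then translate the round count $K$ into communication and gradient call counts by accounting for the per-round cost of the mini-batch oracle and the inner subproblem solve.

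First I would establish a lower bound of the form $\Phi^k\geq \Omega(1/\eta)\,\|z^k-z^*\|^2$. Starting from the definition (\ref{eq:Lyapunov}), the only term that is not manifestly non-negative is the cross-term $2\langle F(z^{k-1})-F_1(z^{k-1})-F(z^k)+F_1(z^k),\,z^k-z^*\rangle$. Bounding its inner gradient difference by $\delta\|z^k-z^{k-1}\|$ via Assumption~\ref{asm:ss} and applying Young's inequality (the argument already sketched for non-negativity of $\Phi^k$ in Appendix~\ref{appx:nonneg-Lya}) absorbs it against the $\frac{1}{64\eta}\|z^k-z^{k-1}\|^2$ and $\frac{1}{\eta}\|z^k-z^*\|^2$ contributions, provided $\eta\leq 1/(32\delta)$, yielding $\|z^K-z^*\|^2=\fO(\eta)\,\Phi^K$. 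Combining this with Theorem~\ref{thm:convergenceSVOGSscsc}, it suffices that the contraction factor raised to the $K$-th power, times $\Phi^0$, be $\fO(\varepsilon/\eta)$. With the prescribed parameters one checks $\eta=\Theta(1/\delta)$ in both sub-cases $\sqrt{n}\leq \delta/\mu$ and $\sqrt{n}>\delta/\mu$, so $\eta\mu/6=\Theta(\mu/\delta)$; moreover $p=\gamma$ and $\eta\mu/\gamma$ is bounded by a universal constant in either sub-case, hence $p\eta\mu/(2\gamma+\eta\mu)=\Theta(\eta\mu)=\Theta(\mu/\delta)$. Since $\Phi^0$ is polynomial in problem constants and independent of $\varepsilon$, the requirement $\E[\Phi^K]\leq \fO(\varepsilon\delta)$ is met by $K=\fO((\delta/\mu)\log(1/\varepsilon))$, which is the claimed round complexity.

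Next I would compute the expected per-round communication: every round exchanges messages with the sampled subset $\fS^k$ of size $b$, and with probability $p$ additionally synchronizes all $n$ nodes to refresh the snapshot $w^{k+1}$, giving expected cost $\Theta(b+np)$. In the sub-case $\sqrt{n}\leq \delta/\mu$, $b=\Theta(\sqrt{n})$ and $p=\Theta(1/\sqrt{n})$ make this $\Theta(\sqrt{n})$; in the other sub-case, $b=\Theta(\delta/\mu)$ and $p=\Theta(\mu/\delta)$ make it $\Theta(n\mu/\delta)$, which dominates $\Theta(\sqrt{n})$ exactly when $\sqrt{n}>\delta/\mu$. Multiplying by $K$ and including the $\fO(n)$ initialization produces the bound $\fO((n+\sqrt{n}\delta/\mu)\log(1/\varepsilon))$ uniformly in the two sub-cases.

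Finally, the local gradient count per round is the sum of the $\Theta(b+np)$ oracle queries used to form $\delta^k$ and the cost of solving the subproblem (\ref{prob:sub-svogs}) to accuracy $\varepsilon_k$. Since that subproblem is $(L+1/\eta)$-smooth and $(1/\eta)$-strongly-convex-$(1/\eta)$-strongly-concave, its condition number is $\Theta(L\eta+1)=\Theta(L/\delta)$, so e.g.\ running the extragradient method of \citep{korpelevich1976extragradient} on the server yields $\|u^k-\hat u^k\|^2\leq \varepsilon_k$ in $\fO((L/\delta)\log(1/\varepsilon_k))$ inner steps. With the prescription $\varepsilon_k=c^{-1}\min\{\|\hat u^k-z^k\|,\|\hat u^k-z^k\|^2\}$ and $c=\mathrm{poly}(\mu,\delta)$, the linear decay of $\E[\|z^k-z^*\|^2]$ (and hence of $\|\hat u^k-z^k\|$) ensures $\log(1/\varepsilon_k)=\fO(\log(1/\varepsilon)+\log c)$ across all $K$ rounds. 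Summing, the subproblem work totals $\tilde\fO(K\cdot L/\delta)=\tilde\fO((L/\mu)\log(1/\varepsilon))$, which combined with the oracle cost $K(b+np)$ analyzed above gives exactly $\tilde\fO((n+(\sqrt{n}\delta+L)/\mu)\log(1/\varepsilon))$.

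I expect the only delicate step to be the first one: converting $\E[\Phi^K]$ into $\E[\|z^K-z^*\|^2]$ without losing the $\Theta(\delta)$ prefactor, because the cross-term in $\Phi^k$ is not a priori positive and the absorbing argument requires simultaneously exploiting $\eta\leq 1/(32\delta)$, the similarity $\delta$-bound, and the $(1/(64\eta))\|z^k-z^{k-1}\|^2$ slack. Once that conversion is in place, the remaining work is transparent parameter bookkeeping across the two sub-cases of $b=\lceil\min\{\sqrt{n},\delta/\mu\}\rceil$.
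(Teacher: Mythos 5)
Your treatment of the communication rounds and the communication complexity is essentially the paper's own argument: you verify $\eta=\Theta(1/\delta)$ and that both contraction factors in Theorem~\ref{thm:convergenceSVOGSscsc} are $1-\Theta(\mu/\delta)$ (the paper does this via $\alpha\geq 7/8$ and the bound $\frac{1}{p}+\frac{1}{\sqrt{pb}}\frac{\delta}{\mu}=\fO(\delta/\mu)$), and you count the expected per-round cost $\Theta(b(1-p)+np)$ plus the $\fO(n)$ initialization exactly as in Appendix~\ref{appx:proofscsc}. Your explicit conversion $\Phi^K\geq\Omega(1/\eta)\|z^K-z^*\|^2$ via the Young/similarity absorption of the cross term is a welcome addition that the paper leaves implicit (it is the computation of Appendix~\ref{appx:nonneg-Lya}), and it is correct under $\eta\leq 1/(32\delta)$.

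There is, however, a gap in your accounting of the local gradient complexity. You price each inner solve at $\fO((L/\delta)\log(1/\varepsilon_k))$ and assert that $\log(1/\varepsilon_k)=\fO(\log(1/\varepsilon)+\log c)$ because $\E[\|z^k-z^*\|^2]$ decays linearly. That justification does not hold: the linear decay gives an \emph{upper} bound on distances and provides no lower bound on $\|\hat u^k-z^k\|$ at a given round, so $\varepsilon_k$ cannot be lower-bounded by $\mathrm{poly}(\varepsilon)$ in the worst case. Moreover, even granting your claim, the per-round cost would be $\fO((L/\delta)\log(1/\varepsilon))$, and summing over $K=\Theta((\delta/\mu)\log(1/\varepsilon))$ rounds gives $(L/\mu)\log^2(1/\varepsilon)$, whereas you then write the total as $\tilde\fO(K\cdot L/\delta)$ — silently dropping a $\log(1/\varepsilon)$ factor that the paper's $\tilde\fO$ convention (which hides only logarithms of $n,L,\mu,\delta$) does not absorb. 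The correct argument, which is what the paper uses, exploits that in the strongly-convex-strongly-concave setting $\varepsilon_k$ is a \emph{relative} accuracy: $\varepsilon_k=c^{-1}\min\{\|\hat u^k-z^k\|,\|\hat u^k-z^k\|^2\}$ with $c=\mathrm{poly}(\mu,\delta)$, so warm-starting the inner extragradient method at $z^k$ (initial error $\|z^k-\hat u^k\|$) requires only $\fO((1+\eta L)\log c)=\tilde\fO(L/\delta)$ gradient calls per round, with no $\varepsilon$-dependence at all; the single $\log(1/\varepsilon)$ in the final bound then comes solely from $K$. With that replacement your derivation yields the stated $\tilde\fO((n+(\sqrt{n}\delta+L)/\mu)\log(1/\varepsilon))$.
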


\subsection{Making the Gradient Mapping Small}

For the convex-concave case (under the assumptions of Theorem \ref{thm:convergenceSVOGScc}), we can achieve the points with small gradient mapping by solving the regularized problem
\begin{align}\label{prob:regularized}
    \min_{x\in\fX} \max_{y\in\fY} \hat f(x,y) := f(x,y) + \frac{\lambda}{2}\norm{x-x^0}^2 -  \frac{\lambda}{2}\norm{y-y^0}^2
\end{align}
for some $\lambda>0$.
Noticing that the function $\hat f(x,y)$ is $(L+\lambda)$-smooth, $\lambda$-strongly-convex-$\lambda$-strongly-concave and $\delta$-similarity.
Then Corollary \ref{thm:complexitySVOGSscsc} implies running SVOGS (Algorithm \ref{alg:SVOGS}) by iterations number $K=\fO(\delta D/\sqrt{\varepsilon}\log(L/\varepsilon))$ to solve problem (\ref{prob:regularized}) with $\lambda = \fO(\sqrt{\varepsilon}/D)$ can achieve~$\BE[\|\ffF_\tau(z^K)\|^2]\leq\varepsilon$,
which results the complexity shown in Table \ref{table:ccg}. 
For the strongly-convex-strongly-concave case, the complexity of achieving $\BE[\|\ffF_\tau(z^K)\|^2\big]\leq\varepsilon$ nearly matches the complexity of achieving $\BE\big[\|z^K-z^*\|^2]\leq\varepsilon$.
We defer the detailed derivation for these results of making the gradient mapping small to Appendix \ref{appx:gds}.

\section{The Optimality of SVOGS}\label{sec:lower}
In this section, we provide the lower complexity bounds for solving our minimax problems by using distributed first-order oracle (DFO) methods.
The class of algorithms considered in our analysis follows the definition of \citet{beznosikov2024similarity}, 
which is formally described in Appendix \ref{appx:algorithm-class}.
Compared with existing lower bound analysis for second-order similarity only focusing on communication \cite{beznosikov2024similarity,beznosikov2021distributed}, we additionally study the computation complexity by considering the  local gradient calls.
The results in this section imply the complexity of proposed SVOGS (nearly) matches the lower bounds on the communication rounds, the communication complexity and the local gradient calls simultaneously.

\subsection{The Lower Bounds for Convex-Concave Case}

We first provide the following lower bounds for convex-concave case.

\begin{thm}\label{thm:lowerboundKcc}
    For any $0<\delta\leq L$, $n\geq 3$, $D>0$ and $\varepsilon\leq \delta D^2/(12\sqrt{2}\,)$, there exist $L$-smooth and convex-concave functions $f_1,\dots,f_n:\BR^{d_x}\times\BR^{d_y}$ with $\delta$-second-order similarity, and closed convex set $\fZ=\fX\times\fY$ with diameter $D$. 
    In order to find an approximate solution $z=(x,y)$ of problem (\ref{prob:main}) such that $\E[{\rm Gap}(z)]\leq\varepsilon$, any DFO algorithm needs at least $\Omega(\delta D^2/\varepsilon)$ communication rounds.
\end{thm}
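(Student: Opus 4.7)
The plan is to reduce the communication-round lower bound to the classical first-order lower bound for $\delta$-smooth convex-concave saddle problems. I would first observe that it suffices to exhibit a hard instance in which only two distinct local functions appear---one on the server and a single common function on all $n-1$ clients---since a round lower bound for this restricted subclass immediately implies the same bound over the general class of $n$-node instances. The core idea is to calibrate a zero-chain/Nemirovski-style bilinear saddle function so that its ``active'' spectral scale is $\delta$ rather than $L$, thereby turning second-order similarity into the effective complexity parameter.

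Concretely, I would consider
\begin{equation*}
\tilde f(x,y)=y^{\top} A x - b^{\top} y \quad \text{on } \fZ=\{\|x\|\leq D/2\}\times\{\|y\|\leq D/2\},
\end{equation*}
with $A$ a tridiagonal zero-chain matrix of spectral norm $\Theta(\delta)$ and $b$ supported on the first coordinate. Splitting $A=A_{\mathrm{odd}}+A_{\mathrm{even}}$ by parity of the superdiagonal entries, I would place an appropriately scaled version of the odd block on the server and the even block identically on every client, then add a common $(L-\delta)$-smooth strongly-convex-strongly-concave quadratic to every $f_i$ to inflate local smoothness up to $L$ without affecting any Hessian discrepancy. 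A direct computation then verifies $\|\nabla^{2}f_i-\nabla^{2}f\|\leq \delta$ for every $i$ and $L$-smoothness of each $f_i$. The heart of the argument is an induction showing that after $k$ communication rounds any iterate produced through any finite sequence of local gradient queries and communications must lie in $z^{0}+\operatorname{span}(e_1,\ldots,e_{k+1})$, because the server's gradient can extend the ``odd reach'' by one coordinate, any client's gradient can extend the ``even reach'' by one, and the two reaches are merged only through a communication round. Combined with the well-known zero-chain duality-gap lower bound $\operatorname{Gap}(z)\geq \Omega(\delta D^{2}/k)$ for all $z$ whose support lies in the first $k$ coordinates, this forces $k=\Omega(\delta D^{2}/\varepsilon)$.

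The main obstacle will be the constant-level calibration: if the server carries $c\,y^{\top} A_{\mathrm{odd}}x$ and each client $c'\,y^{\top} A_{\mathrm{even}}x$ with $c,c'$ chosen so that the uniform average over the $n$ nodes reproduces $y^{\top}Ax$, then each similarity gap $\nabla^{2}f_i-\nabla^{2}f$ becomes a linear combination of $A_{\mathrm{odd}}$ and $A_{\mathrm{even}}$ whose operator norm must be bounded by $\delta$ \emph{after} absorbing the $1/n$ averaging factor---no deep idea, but requiring careful bookkeeping to keep the chain-hardness calibrated to $\delta$ and not to $\delta/n$. A secondary obstacle is extending the support-subspace argument from deterministic to genuinely randomized DFO protocols; this is handled by the standard device of applying a uniformly random orthogonal rotation to the hard instance before executing the algorithm, under which any randomized algorithm performs no better in expectation than the deterministic worst case.
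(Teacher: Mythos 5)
Your zero-chain/alternating-block strategy is the same general mechanism the paper uses, but the specific allocation you propose breaks down at exactly the point you dismiss as ``bookkeeping.'' If the odd block sits \emph{only} on the server with coefficient $c$ and the even block on every client, then the $xy$-block of $\nabla^2 f_1-\nabla^2 f$ contains the term $c(1-1/n)A_{\mathrm{odd}}$ on entries where $A_{\mathrm{even}}$ vanishes (test with $e_1,e_2$), so the $\delta$-similarity requirement forces $c=\fO(\delta)$. The server enters the global average with weight $1/n$, hence the odd links of $f$ have strength $\fO(\delta/n)$ while the even links have strength $\Theta(\delta)$. Such a lopsided chain is not the instance covered by the Han et al.\ gap estimate, and it is genuinely easier: dropping the $\fO(\delta/n)$ odd links perturbs the duality gap by only $\fO(\delta D^2/n)$ and decouples the linear term from the rest of the chain, so after $\fO(1)$ rounds an algorithm can reach gap $\fO(\delta D^2/n)$. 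Your construction therefore cannot certify $\Omega(\delta D^2/\varepsilon)$ rounds throughout the stated range $\varepsilon\leq \delta D^2/(12\sqrt{2})$; at best it loses a factor of $n$. The fix is structural, not a constant chase: each of the two blocks must be carried by a constant fraction of the nodes so that it survives the $1/n$ averaging with weight $\Theta(1)$. This is precisely what the paper does in (\ref{def:ficc})--(\ref{def:fcc}): nodes are split into three classes by $i \bmod 3$, the server (and one class) gets the zero function, one class of $\approx n/3$ clients carries $\frac{\delta}{4}x^\top A_1y$ plus the linear term, another carries $\frac{\delta}{4}x^\top A_2y$, and then both the global coupling and every similarity gap are $\Theta(\delta)$, while the zero-chain alternation between the two client classes (Lemma \ref{lem:zero-chain}) still forces one communication round per new coordinate.

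Two smaller points. First, adding a common $(L-\delta)$-strongly-convex-strongly-concave quadratic to reach smoothness exactly $L$ is both unnecessary and harmful: $L$-smoothness only requires the Lipschitz constant to be at most $L$, which the $\Theta(\delta)$-scaled bilinear functions already satisfy since $\delta\leq L$, and the added strong monotonicity would invalidate the $\Omega(\delta D^2/k)$ gap decay on which your argument (and the paper's Proposition \ref{prop:lowerccrounds}) relies. Second, the random-rotation device for randomized algorithms is not needed here: within the DFO class of Definition \ref{dfn:oracles} the span/zero-chain argument applies directly, randomness in client sampling notwithstanding, which is how the paper concludes.
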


\begin{thm}\label{thm:lowerboundVG1cc}
    For any $0<\delta\leq L$, $n\geq 2$, $D>0$ and $\varepsilon\leq \delta D^2/(16\sqrt{2n}\,)$, there exist $L$-smooth and convex-concave functions $f_1,\dots,f_n:\BR^{d_x}\times\BR^{d_y}$ with $\delta$-second-order similarity, and closed convex set $\fZ=\fX\times\fY$ with diameter $D$.
    In order to find an approximate solution $z=(x,y)$ of problem~(\ref{prob:main}) such that $\E[{\rm Gap}(z)]\leq\varepsilon$, any DFO algorithm needs at least $\Omega(n+\sqrt{n}\delta D^2/\varepsilon)$ communication complexity and $\Omega(n+\sqrt{n}\delta D^2/\varepsilon)$ local gradient calls.
\end{thm}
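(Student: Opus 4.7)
The plan is to prove the two additive terms of the bound separately. The $\Omega(n)$ contribution is handled by a standard information-hiding argument: I would construct a separable instance, for example $f_i(x,y) = \tfrac{L}{2}(x_i - c_i)^2 - \tfrac{L}{2}(y_i - d_i)^2$ on an appropriate product ball of diameter $D$, where the pairs $(c_i, d_i)$ are chosen adversarially. Any DFO algorithm whose trace of oracle calls and communications never touches some node $j$ cannot distinguish that instance from the one obtained by flipping the sign of $(c_j, d_j)$, and a routine calculation then shows the algorithm's single output fails on one of the two by more than $\varepsilon$ once $\varepsilon$ lies below a sufficiently small multiple of $LD^2/n^2$. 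The hypothesis $\varepsilon\le \delta D^2/(16\sqrt{2n}\,)$ is compatible with this regime, yielding $\Omega(n)$ operations.

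\paragraph{Hard instance for the $\sqrt{n}$ term.} For the dominant term, I would adapt the bilinear chain lower bound of Ouyang--Xu and Zhang et al.\ to the distributed finite-sum setting. Start from a global objective of the form
\begin{align*}
f(x,y) = \frac{\delta}{2}\,\inner{Ax - b}{y}, \qquad A \in \BR^{m\times m}\text{ bidiagonal ``zero-chain''},
\end{align*}
on a product Euclidean ball of radius $\Theta(D)$, chosen so that $A$ has operator norm $\Theta(1)$ and its saddle point populates all $m$ chain-coordinates at magnitudes of order $D/\sqrt{m}$. Partition the rows of $A$ into $n$ groups according to a block-cyclic pattern and assign group $i$ to node $i$, symmetrizing so that $\frac{1}{n}\sum_i f_i = f$. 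Each local perturbation has spectral norm at most $\delta$, which directly yields the $\delta$-second-order similarity; $L$-smoothness and convex-concavity are inherited from the bilinear form. The partition is engineered so that activating the $k$-th chain-coordinate requires a gradient query at a specific responsible node lying in a pool of $\Theta(\sqrt{n})$ candidates, preventing the algorithm from unlocking more than one new coordinate per $\Theta(\sqrt{n})$ operations.

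\paragraph{Oracle-span argument.} To turn the construction into a lower bound, I would track the linear subspace accessible to each node as a function of the total number $T$ of operations (oracle calls plus communications, each node-to-node message counted once). Using the zero-chain property of $A$ together with the block-cyclic layout, the combined span reachable at the server after $T$ operations contains at most $\fO(T/\sqrt{n}\,)$ nontrivial chain-coordinates. A direct computation, of the kind used to derive the $\Omega(\delta D^2/\varepsilon)$ communication-round lower bound of Theorem~\ref{thm:lowerboundKcc}, then shows that any point whose first $k$ chain-coordinates are set and the remaining coordinates are zero incurs duality gap $\Omega(\delta D^2/k)$ on this instance. Setting $k = \Theta(\delta D^2/\varepsilon)$ yields $T = \Omega(\sqrt{n}\,\delta D^2/\varepsilon)$. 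Since $T$ simultaneously upper bounds both the communication complexity and the local gradient count, the same lower bound holds for both quantities, and the hypothesis $\varepsilon\le \delta D^2/(16\sqrt{2n}\,)$ guarantees that the chain length $m$ required for the construction remains compatible with the natural dimension of the instance.

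\paragraph{Main obstacle.} The delicate step is establishing the $1/\sqrt{n}$ factor in the span-growth rate. Unlike in non-distributed lower bounds, the algorithm may adaptively interleave gradient computations and communications across nodes, so we must rule out any schedule that amortizes several chain-steps into a single sub-$\sqrt{n}$ batch. I would handle this with a potential-function argument: for each node $i$ define $\Phi_i$ as the largest chain-index represented in its local span, and show that a single oracle call at node $i$ can increase $\Phi_i$ by at most $1$ and only when the responsible row for the next chain-step belongs to node $i$'s group, while a communication can only merge two potentials into their maximum. A counting argument over the block-cyclic partition, analogous to those used in finite-sum variance-reduction lower bounds, then forces $\max_i \Phi_i$ to grow at rate at most $1/\sqrt{n}$ per operation, completing the proof.
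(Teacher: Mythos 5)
Your overall template (a bilinear zero-chain instance split across nodes, plus a count of how fast chain coordinates can be activated) is the same family of argument the paper uses, but the quantitative mechanism you propose does not close, and the gap is precisely in the step you flag as delicate. First, the scaling/similarity bookkeeping fails: if the global objective is $f(x,y)=\frac{\delta}{2}\langle Ax-b,y\rangle$ and the rows of $A$ are partitioned among the $n$ nodes so that $\frac1n\sum_i f_i=f$, each node's coupling block must be amplified by a factor $n$, so the local Hessian deviations are of order $n\delta$, not ``at most $\delta$'' as you assert; $\delta$-second-order similarity forces you to shrink the global coupling. The paper's construction (equations (\ref{def:ficc2})--(\ref{def:fcc2})) places rows scaled by $\sqrt{n}\delta$ on the clients so that the \emph{global} coupling is only $\delta/(8\sqrt{n})$; consequently the gap at points in $\fF_k^2$ is of order $\delta D^2/(\sqrt{n}\,k)$ (Proposition \ref{prop:lowerccrounds2}), so only $k=\Theta(\delta D^2/(\sqrt{n}\varepsilon))$ coordinates are needed, and the $\sqrt{n}$ in the final bound comes from multiplying this by an expected cost of $\Theta(n)$ communications/gradient calls per activated coordinate --- not, as in your plan, from a full-strength coupling together with a ``pool of $\Theta(\sqrt{n})$ responsible nodes.'' You give no construction achieving that $\sqrt{n}$-sized pool, and it is unclear how one could, since keeping every node's deviation $O(\delta)$ while the average equals a full-strength chain is exactly what the similarity constraint forbids.

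Second, your span-growth/potential argument targets the wrong adversary. In the DFO class used here (Definition \ref{dfn:oracles}), the algorithm cannot choose which clients to contact: communication batches are sampled uniformly at random, and the paper's Lemma \ref{lem:coordinate} exploits exactly this --- each new chain coordinate can only be unlocked by one specific client among the $n-1$, so a round of batch size $b$ succeeds with probability $b/(n-1)$, giving in expectation $M\leq\min\{2V/n,2C/n\}$ activated coordinates after $V$ units of communication and $C$ gradient calls. Your proposed potential function addresses adaptive, adversarially scheduled queries, which is not the obstacle in this model; conversely, without the uniform-sampling restriction a deterministic algorithm could simply query the responsible node each step, and a $1/\sqrt{n}$ per-operation span-growth bound would not hold, so that step of your argument would fail as stated. (Minor point: the separate separable instance for the $\Omega(n)$ term is unnecessary --- in the paper it follows from $M\le 2V/n$ with $M\geq1$ --- and your stated threshold $\varepsilon\lesssim LD^2/n^2$ is not implied by the hypothesis $\varepsilon\le\delta D^2/(16\sqrt{2n})$, though this is immaterial since $\sqrt{n}\delta D^2/\varepsilon=\Omega(n)$ in that regime.)
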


The lower bounds on communication round and communication complexity shown in Theorem~\ref{thm:lowerboundKcc}~and~\ref{thm:lowerboundVG1cc} match the corresponding upper bounds of SVOGS shown in Corollary \ref{cor:complexitySVOGScc}.
However, the lower bound on local gradient complexity shown in Theorem \ref{thm:lowerboundVG1cc} only nearly matches the result of Corollary \ref{cor:complexitySVOGScc} in the case of $\sqrt{n}\delta \geq \Omega(L)$. 
Therefore, we also provide the following lower bound on local gradient complexity to show the tightness of dependence on the smoothness parameter $L$.



\begin{lem}\label{lem:lowerboundG2cc}
    For any $L > 0$, $n\in\BN$, $D>0$ and $\varepsilon\leq \delta D^2/(4\sqrt{2})$, there exist $L$-smooth and convex-concave functions $f_1,\dots,f_n:\BR^{d_x}\times\BR^{d_y}$ with $\delta$-second-order similarity, and closed convex set~$\fZ=\fX\times\fY$ with diameter $D$. 
    In order to find an approximate solution $z=(x,y)$ of problem~  (\ref{prob:main}) such that $\E[{\rm Gap}(z)]\leq\varepsilon$, any DFO algorithm needs at least $\Omega(n+LD^2/\varepsilon)$ local gradient calls.
\end{lem}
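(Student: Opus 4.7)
}
The target bound is a sum $n + LD^2/\varepsilon$ whose two terms come from fundamentally different sources, so the plan is to construct two separate hard instances and take the larger of the two lower bounds. The $\Omega(n)$ term is an immediate ``finite-sum'' lower bound that follows from the same information-hiding argument used in Theorem~\ref{thm:lowerboundVG1cc} (whose construction already has identical local Hessians and is therefore compatible with any $\delta \geq 0$), so it carries over verbatim. The new content of the lemma is the $L$-dependent term $\Omega(LD^2/\varepsilon)$, and this is where I would focus the proof.

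For that term I would use a purely homogeneous construction $f_1 = f_2 = \cdots = f_n = f$, where $f$ is the standard worst-case $L$-smooth convex-concave saddle function underlying the classical Ouyang--Xu / Nemirovski lower bound on bounded-domain minimax problems. Since $\nabla^2 f_i - \nabla^2 f \equiv 0$ identically, Assumption~\ref{asm:ss} is trivially satisfied for every $\delta \geq 0$, so the construction is admissible in particular for the $\delta$ fixed in the lemma. The accuracy regime $\varepsilon \leq \delta D^2/(4\sqrt{2}\,) \leq LD^2/(4\sqrt{2}\,)$ specified in the hypothesis is exactly the range in which the classical hard instance is informative, so no separate rescaling is required. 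The constraint set $\fZ$ can be taken to be the $D$-diameter Euclidean ball used in that construction, ensuring Assumption~\ref{asm:bounded} is preserved.

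Under this homogeneous instance every local gradient call returns $F_i(z) = F(z)$, so a DFO algorithm is no more powerful than a single-agent first-order algorithm on $f$: each local gradient call contributes at most one aggregate first-order query, and there is no genuine per-node side information to exploit. The classical iteration lower bound $\Omega(LD^2/\varepsilon)$ therefore transfers directly into a lower bound of the same order on the number of local gradient calls. Combining with the $\Omega(n)$ inherited from Theorem~\ref{thm:lowerboundVG1cc} yields the stated $\Omega(n + LD^2/\varepsilon)$ complexity.

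The main obstacle will be formalizing the ``no side-information'' step within the DFO oracle model of Appendix~\ref{appx:algorithm-class}: one has to check that communication and local arithmetic cannot let an algorithm amortize oracle queries across nodes beyond what the classical single-agent model already allows. Intuitively this is immediate because the local functions coincide, but it requires a short bookkeeping argument (essentially a simulation lemma: any DFO transcript on the homogeneous instance can be replayed by a centralized first-order method using the same number of oracle queries) to rigorously rule out exotic protocols that use, e.g., linear combinations of unrelated local computations to extract more than one independent gradient per call.
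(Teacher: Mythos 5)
Your proposal matches the paper's proof in essence: the paper also takes the homogeneous instance $f_1=\cdots=f_n=f$ with $f$ the classical bounded-domain bilinear hard function (equation (\ref{def:fcc3}), borrowed from \citet{han2024lower}/Ouyang--Xu), notes that $\delta$-similarity holds trivially since all local Hessians coincide, and transfers the single-agent $\Omega(LD^2/\varepsilon)$ bound to local gradient calls, while the $\Omega(n)$ term is handled exactly as you say via the construction of Theorem~\ref{thm:lowerboundVG1cc}. The ``simulation/bookkeeping'' step you flag is precisely what the paper formalizes through the zero-chain property (Lemma~\ref{lem:zero-chain2}: each gradient call advances the reachable span by at most one coordinate) combined with the restricted-subspace gap bound of Lemma~\ref{lem:coordinatebound2}, so your plan is sound and essentially identical in route.
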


Combining the results of Theorem \ref{thm:lowerboundVG1cc} and Lemma \ref{lem:lowerboundG2cc}, we achieve the following lower bound on local gradient complexity, which nearly matches the corresponding upper bound shown in Corollary \ref{cor:complexitySVOGScc}.

\begin{thm}\label{thm:lowerboundGcc}
    For any $0<\delta\leq L$, $n\geq 2$, $D>0$ and $\varepsilon\leq \delta D^2/(16\sqrt{2n})$, there exist $L$-smooth and convex-concave functions $f_1,\dots,f_n:\BR^{d_x}\times\BR^{d_y}$ with $\delta$-second-order similarity, and closed convex set $\fZ=\fX\times\fY$ with diameter $D$.
    In order to find an approximate solution $z=(x,y)$ of problem~(\ref{prob:main}) such that $\E[{\rm Gap}(z)]\leq\varepsilon$, any DFO algorithm needs at least $\Omega(n+(\sqrt{n}\delta+L)D^2/\varepsilon)$ local gradient calls.
\end{thm}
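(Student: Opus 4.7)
The plan is to derive Theorem \ref{thm:lowerboundGcc} as a direct consequence of Theorem \ref{thm:lowerboundVG1cc} and Lemma \ref{lem:lowerboundG2cc}, relying on the elementary inequality $\max\{A,B\} \geq (A+B)/2$ for non-negative $A, B$. Both source results already produce worst-case instances of $L$-smooth, convex-concave, $\delta$-similar functions on a convex set of diameter $D$, so no new construction is needed: I simply select the instance that yields the larger lower bound for the given parameter regime.

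First, I would verify that the hypothesis $\varepsilon \leq \delta D^2/(16\sqrt{2n})$ is simultaneously compatible with both source results. The $\varepsilon$-bound in Theorem \ref{thm:lowerboundVG1cc} coincides exactly with this hypothesis, while Lemma \ref{lem:lowerboundG2cc} only requires $\varepsilon \leq \delta D^2/(4\sqrt{2})$, which is implied whenever $n \geq 2$ because $16\sqrt{2n} \geq 4\sqrt{2}$. Hence, for any $(n,\delta,L,D,\varepsilon)$ satisfying the hypotheses of Theorem \ref{thm:lowerboundGcc}, both hard instances are available with the same parameters.

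Next, I would do a case split on the relative magnitudes of $\sqrt{n}\,\delta$ and $L$. If $\sqrt{n}\,\delta \geq L$, I take the instance from Theorem \ref{thm:lowerboundVG1cc}; its local gradient lower bound $\Omega(n + \sqrt{n}\,\delta D^2/\varepsilon)$ already dominates $\Omega(n + (\sqrt{n}\,\delta + L) D^2/\varepsilon)$ because $\sqrt{n}\,\delta D^2/\varepsilon \geq L D^2/\varepsilon$ in this regime. Otherwise $\sqrt{n}\,\delta < L$, and I take the instance from Lemma \ref{lem:lowerboundG2cc}; its bound $\Omega(n + L D^2/\varepsilon)$ dominates by the symmetric argument. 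In either case, the resulting bound matches $\Omega(n + (\sqrt{n}\,\delta + L) D^2/\varepsilon)$ up to a constant factor absorbed into the $\Omega$ notation.

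The only thing to watch carefully is the bookkeeping: one must check that each chosen instance already meets \emph{all} the structural requirements (smoothness, convex-concavity, second-order similarity, diameter) with exactly the parameters $(n,\delta,L,D)$ that appear in Theorem \ref{thm:lowerboundGcc}, and this is immediate from the way Theorem \ref{thm:lowerboundVG1cc} and Lemma \ref{lem:lowerboundG2cc} are stated. There is no deeper technical obstacle here — the heavy lifting of constructing worst-case instances and proving the individual lower bounds is done inside the cited results, and combining them into the additive form $n + (\sqrt{n}\,\delta + L)D^2/\varepsilon$ is purely algebraic.
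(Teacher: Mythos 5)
Your proposal is correct and follows essentially the same route as the paper: the paper also proves Theorem \ref{thm:lowerboundGcc} by a case split on whether $\sqrt{n}\delta$ or $L$ dominates, invoking the hard instance of Theorem \ref{thm:lowerboundVG1cc} in the first regime and that of Lemma \ref{lem:lowerboundG2cc} in the second, and absorbing the factor $2$ from $\max\{A,B\}\geq (A+B)/2$ into the $\Omega(\cdot)$. Your additional check that the hypothesis $\varepsilon\leq \delta D^2/(16\sqrt{2n})$ implies the weaker condition $\varepsilon\leq \delta D^2/(4\sqrt{2})$ of Lemma \ref{lem:lowerboundG2cc} is a small bit of bookkeeping the paper leaves implicit.
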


The constructions in our lower bound analysis is based on the modifications on the blinear functions provided by \citet{han2024lower}, which are originally used to analyze the minimax optimization in non-distributed setting.
We provide detailed proofs in Appendix \ref{appx:lowercc}.
In related work, \citet{beznosikov2021distributed} also provide  the lower bound of $\Omega(\delta D^2/\varepsilon)$ (matching the result of Theorem \ref{thm:lowerboundKcc}) for communication rounds by using the regularized function, which is different from our construction in the proof of Theorem \ref{thm:lowerboundKcc}. 
In addition, our lower bounds on the communication complexity and the local gradient complexity shown in Theorem \ref{thm:lowerboundVG1cc} and \ref{thm:lowerboundGcc} are new. 


\subsection{The Lower Bounds for
Strongly-Convex-Strongly-Concave Case}

The tight lower bound on communication rounds in strongly-convex-strongly-concave case has been provided by \citet[Theorem 1]{beznosikov2021distributed}. We present the result as follows.

\begin{thm}[{\cite{beznosikov2021distributed}}]\label{thm:lowerboundKscsc}
    For any $\mu, \delta, L > 0$ with $L\geq \max\{\mu,\delta\}$ and $n\geq 3$, there exist $L$-smooth and convex-concave functions $f_1,\dots,f_n:\BR^{d_x}\times\BR^{d_y}$ with $\delta$-second-order similarity such that the function $f(x,y)=\frac{1}{n}\sum_{i=1}^n f_i(x,y)$ is $\mu$-strongly-convex-$\mu$-strongly-concave. In order to find a solution of problem (\ref{prob:main}) such that $\E[\|z-z^*\|^2]\leq\varepsilon$, any DFO algorithm needs at least $\Omega(\delta/\mu\log(1/\varepsilon))$ communication rounds.
\end{thm}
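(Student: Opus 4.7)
My plan is to establish this lower bound by a span/progression argument in the spirit of Nemirovski--Nesterov worst-case constructions, adapted to distributed minimax optimization under second-order similarity. The strategy is to exhibit a specific pair of local functions whose unique saddle point is ``hidden'' behind many coordinate indices while each communication round can uncover only a constant number of new coordinates to the algorithm's iterates. Combining a geometric-decay estimate for $z^*$ with a span-inclusion estimate for reachable iterates will then give $K=\Omega((\delta/\mu)\log(1/\varepsilon))$.

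Concretely, I would work in an ambient dimension $d$ far larger than the target round count. Since the theorem assumes $n\geq 3$, I assign trivial summands $f_i(x,y)=\tfrac{\mu}{2}(\|x\|^2-\|y\|^2)$ to nodes $i=3,\dots,n$ and load all of the ``hard'' structure onto nodes $1$ and $2$ via
\begin{align*}
f_1(x,y)&=\tfrac{\mu}{2}(\|x\|^2-\|y\|^2)+\tfrac{n\delta}{4}\,x^{\top}\!A\,y-\langle e_1,x\rangle,\\
f_2(x,y)&=\tfrac{\mu}{2}(\|x\|^2-\|y\|^2)+\tfrac{n\delta}{4}\,x^{\top}\!B\,y,
\end{align*}
where $A$ and $B$ are sparse $d\times d$ matrices of operator norm at most $2/n$ whose off-diagonal supports are complementary: $A$ couples ``even'' indices of $x$ to ``odd'' indices of $y$ and $B$ does the opposite, so that $A+B$ is a tridiagonal shift operator. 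Then $f(x,y)=\tfrac{\mu}{2}(\|x\|^2-\|y\|^2)+\tfrac{\delta}{4}x^{\top}(A+B)y-\tfrac{1}{n}\langle e_1,x\rangle$ is $\mu$-strongly-convex-$\mu$-strongly-concave, a block-Hessian calculation gives $\|\nabla^2 f_i-\nabla^2 f\|\leq\delta$ for every $i$, and $L$-smoothness is automatic from $L\geq\max\{\mu,\delta\}$. The optimality system $F(z^*)=0$ reduces to a two-term linear recurrence in the coordinates of $z^*$, whose characteristic root can be tuned to $\rho=1-\Theta(\mu/\delta)$, giving $|z^*_k|\sim\rho^k$.

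The distributed argument is then a progressive-support claim: starting from $z^0=0$, for any DFO algorithm in the class of Appendix~\ref{appx:algorithm-class}, each communication round extends the coordinate support of the reachable iterates by at most a constant number of indices (one in each of $x$ and $y$), because node~$1$ sees only $A$ and node~$2$ only $B$ and their sparsity patterns interleave. Unlimited local computation between communications does not break this invariant, since local updates on a quadratic preserve the current support. Hence after $K$ rounds any reachable iterate vanishes on indices above some $cK$, and
\begin{align*}
\mathbb{E}[\|z^K-z^*\|^2]\;\geq\;\sum_{i>cK}(z^*_i)^2\;\gtrsim\;\rho^{2cK}/\mu.
\end{align*}
Requiring the right-hand side to be at most $\varepsilon$ forces $K\geq\Omega((\delta/\mu)\log(1/\varepsilon))$, as desired.

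The main obstacle will be the choice of $A$ and $B$ that simultaneously (i) makes $A+B$ tridiagonal with a clean geometric decay of $z^*$ at the sharp rate $1-\Theta(\mu/\delta)$, crucially without an acceleration-induced $\sqrt{\mu/\delta}$; (ii) satisfies $\|\nabla^2 f_i-\nabla^2 f\|\leq\delta$ exactly rather than up to an extra constant; and (iii) enforces the ``at-most-one-new-coordinate-per-round'' progress law inside the formal oracle model, even against adversaries that may rotate the hard instance or query arbitrary points. Point (iii) is the technically delicate step; I would handle it by restricting to the ``canonical basis'' span of accumulated gradient information, following the bookkeeping used in communication lower bounds for decentralized smooth optimization, and invoking the orthogonality pattern of $A$ and $B$ to track which coordinates have been reached.
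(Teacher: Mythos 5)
There is a genuine quantitative gap, and it sits exactly at the point you flag as item (i). Note first that the paper does not prove this theorem at all: it is imported from \citet{beznosikov2021distributed}, so the comparison is against that construction. Your skeleton (zero-chain progression per communication round plus geometric decay of $z^*$) is the right one, but your specific instance cannot produce the claimed rate, because concentrating all of the bilinear coupling on nodes $1$ and $2$ is self-defeating under $\delta$-similarity. Write $C_i$ for the off-diagonal (cross) block of $\nabla^2 f_i$ and $C=\tfrac1n\sum_i C_i$ for that of $\nabla^2 f$. Assumption \ref{asm:ss} forces $\|C_i-C\|\le\delta$ for every $i$; since $C_i=0$ for $i\ge 3$ in your design, $(n-2)\|C\|=\|(C_1-C)+(C_2-C)\|\le 2\delta$, i.e. $\|C\|\le 2\delta/(n-2)$, no matter how $A$ and $B$ are chosen. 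In your normalization this is the constraint you already impose ($\|A\|,\|B\|\le 2/n$), which gives global coupling $\tfrac{\delta}{4}\|A+B\|=O(\delta/n)$. Consequently the saddle point of $f=\tfrac\mu2\|x\|^2-\tfrac\mu2\|y\|^2+O(\delta/n)\,x^\top(A+B)y-\tfrac1n\langle e_1,x\rangle$ has coordinates decaying at rate $1-\Theta(n\mu/\delta)$, not $1-\Theta(\mu/\delta)$ as you assert, and your final estimate yields only $K=\Omega\bigl(\tfrac{\delta}{n\mu}\log\tfrac1\varepsilon\bigr)$ — a factor $n$ short. Nothing in the \emph{rounds} metric can recover this factor: in one round all nodes may communicate (full participation), so one round always suffices to extend the chain by one coordinate; client-sampling tricks only help for communication \emph{complexity}, which is a different statement (cf.\ Theorem \ref{thm:lowerboundVG1scsc}).

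The repair, which is what the cited proof does and what this paper's own convex-concave lower bound (equations (\ref{def:ficc})--(\ref{eq:matrixA})) mirrors, is to spread the two interleaved bilinear pieces over constant \emph{fractions} of the nodes rather than over two nodes: roughly $n/3$ nodes carry $\tfrac{\delta}{4}x^\top A_1y$, $n/3$ carry $\tfrac{\delta}{4}x^\top A_2y$, the rest carry none, and every node gets the $\tfrac\mu2(\|x\|^2-\|y\|^2)$ regularizer (this is why the statement requires $n\ge 3$). With coefficients $\Theta(\delta)$ independent of $n$, each $\|\nabla^2 f_i-\nabla^2 f\|\le\delta$ while the \emph{global} coupling remains $\Theta(\delta)$; the odd/even alternation between the two groups, which must be mediated by the server, still limits progress to one new coordinate per round (your progressive-support claim, which is fine in spirit and matches Lemma \ref{lem:zero-chain}), and the decay rate $1-\Theta(\mu/\delta)$ of $z^*$ then gives $K=\Omega\bigl(\tfrac{\delta}{\mu}\log\tfrac1\varepsilon\bigr)$ as claimed. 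Your remaining concerns (smoothness from $L\ge\max\{\mu,\delta\}$, the exact constant in the similarity bound, the formal oracle bookkeeping) are indeed only constant-factor or routine issues; the structural flaw is the two-node concentration with $n$-scaled coefficients.
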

The tight lower bound on communication complexity has been provided by \citet{beznosikov2024similarity}. 
We follow their construction to establish the lower bound on local gradient complexity, nearly matching the corresponding upper bound of our SVOGS. We formally present these lower bounds as follows.
\begin{thm}\label{thm:lowerboundGG2scsc}
    For any $\mu, \delta, L > 0$ with $L\geq \max\{\mu,\delta\}$ and $n\geq 2$, there exist $L$-smooth and convex-concave functions $f_1,\dots,f_n:\BR^{d_x}\times\BR^{d_y}$ with $\delta$-second-order similarity such that the function $f(x,y)=\frac{1}{n}\sum_{i=1}^n f_i(x,y)$ is $\mu$-strongly-convex-$\mu$-strongly-concave. In order to find a solution of problem (\ref{prob:main}) such that $\E[\|z-z^*\|^2]\leq\varepsilon$, any DFO algorithm needs at least $\Omega((n+\sqrt{n}\delta/\mu)\log(1/\varepsilon))$ communication complexity and $\Omega((n+(\sqrt{n}\delta+L)/\mu)\log(1/\varepsilon))$ local gradient calls.
\end{thm}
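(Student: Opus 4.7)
The theorem bundles two lower bounds. The communication-complexity bound $\Omega((n+\sqrt{n}\delta/\mu)\log(1/\varepsilon))$ has already been established by Beznosikov et al.\ 2024 under the same DFO class of Appendix~\ref{appx:algorithm-class}, and I would invoke it directly. For the local-gradient bound I would follow the same combinatorial strategy used in the convex-concave proof of Theorem~\ref{thm:lowerboundGcc}: assemble two independent hard instances, each sitting inside the class of problems admissible by the theorem, and take the maximum of their bounds.

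\textbf{Two hard instances for the gradient bound.} \emph{(a) Distributed instance.} Reuse the Beznosikov et al.\ 2024 instance that realises the communication lower bound. In the DFO model, every outgoing message from client $i$ at round $k$ is a function only of the local gradients $F_i(\cdot)$ it has queried together with messages it has received, so each active communication event on client $i$ is chargeable to at least one local gradient call on client $i$. This bookkeeping lifts the communication bound verbatim to a gradient bound of $\Omega((n+\sqrt{n}\delta/\mu)\log(1/\varepsilon))$. \emph{(b) Smoothness-only instance.} Take a standard single-function $L$-smooth $\mu$-strongly-convex-$\mu$-strongly-concave bilinear quadratic $\tilde f$, for example the spectrum-designed hard instance of Zhang et al.\ 2022 (cf.\ also Ouyang-Xu 2021 and Han et al.\ 2024), for which any first-order algorithm requires $\Omega((L/\mu)\log(1/\varepsilon))$ gradient queries to reach $\|z-z^\ast\|^2\leq\varepsilon$. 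Setting $f_1=\cdots=f_n\equiv\tilde f$ trivially satisfies $\delta$-similarity with $\delta=0$, so the theorem's hypotheses hold, and any DFO algorithm on this instance degenerates to a centralised first-order algorithm on $\tilde f$, to which the single-agent lower bound applies verbatim.

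\textbf{Combining.} Since both instances belong to the theorem's function class, the worst case is the maximum of the two bounds, which (using $a+b=\Theta(\max\{a,b\})$) equals $\Omega\bigl((n+(\sqrt{n}\delta+L)/\mu)\log(1/\varepsilon)\bigr)$, matching the claimed statement.

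\textbf{Main obstacle.} The central technical step is ensuring that the resisting-oracle (``progressive dimension'') argument used for each instance is compatible with the DFO class of Appendix~\ref{appx:algorithm-class}. Concretely, one must rotate the hard instance by a random orthogonal matrix chosen after the algorithm is fixed, then prove inductively that each interaction round---local gradient query, client-to-server send, server-to-client broadcast, or server update---enlarges the subspace spanned by all information available at any node by at most one coordinate. For the smoothness instance this is the classical Nemirovski-Yudin argument adapted to the saddle-point setting; for the distributed instance it additionally requires careful accounting of the adaptive, random node-sampling step, paralleling the proof of Theorem~\ref{thm:lowerboundVG1cc} in the convex-concave case. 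Once the dimension-growth bound is in place, $\mu$-strong monotonicity of $F$ forces geometric decay of the error in the unexplored subspace at rate $1-\Theta(\mu/\delta)$ for instance~(a) and $1-\Theta(\mu/L)$ for instance~(b), producing the $\log(1/\varepsilon)$ factors and completing both bounds.
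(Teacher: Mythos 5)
Your plan matches the paper's own route: the paper proves this theorem by combining exactly the same two hard instances, namely the Beznosikov-style distributed bilinear instance (Theorem~\ref{thm:lowerboundVG1scsc}, yielding $\Omega((n+\sqrt{n}\delta/\mu)\log(1/\varepsilon))$ simultaneously for communication complexity and local gradient calls) and a replicated single-function instance built from Zhang et al.'s construction (Lemma~\ref{lem:lowerboundG2scsc}, yielding $\Omega((L/\mu)\log(1/\varepsilon))$ gradient calls), and then taking the larger bound via a case split on whether $L \lesssim n\mu+\sqrt{n}\delta$ (Theorem~\ref{thm:lowerboundGscsc}). One caveat: your bookkeeping that "each communication event is chargeable to a gradient call" is not literally valid (a sampled client can forward stored vectors without any fresh gradient query); the paper instead bounds the expected number of newly activated coordinates by $\min\{2V/n,\,2C/n\}$ (Lemmas~\ref{lem:lower-scsc-000} and~\ref{lem:coordinate}), using that progress requires the currently needed client to apply its local matrix through a new gradient call, and no random-rotation/resisting-oracle machinery is needed because the DFO class of Definition~\ref{dfn:oracles} is already span-restricted.
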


\begin{figure}[t]
\begin{minipage}[b]{0.3\textwidth}
\centering
\includegraphics[width=\textwidth]{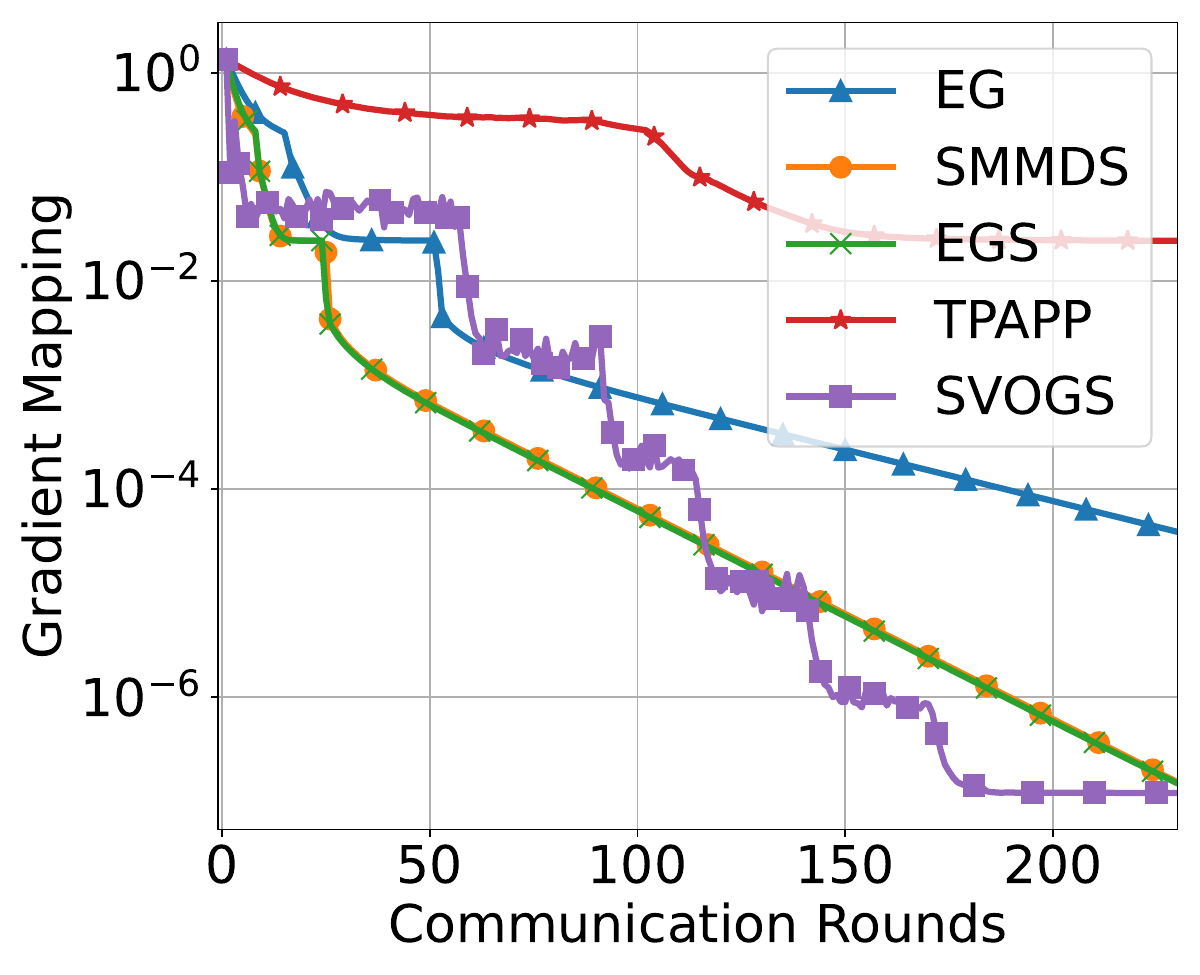}
\end{minipage}
\hfill
\begin{minipage}[b]{0.3\textwidth}
\centering
\includegraphics[width=\textwidth]{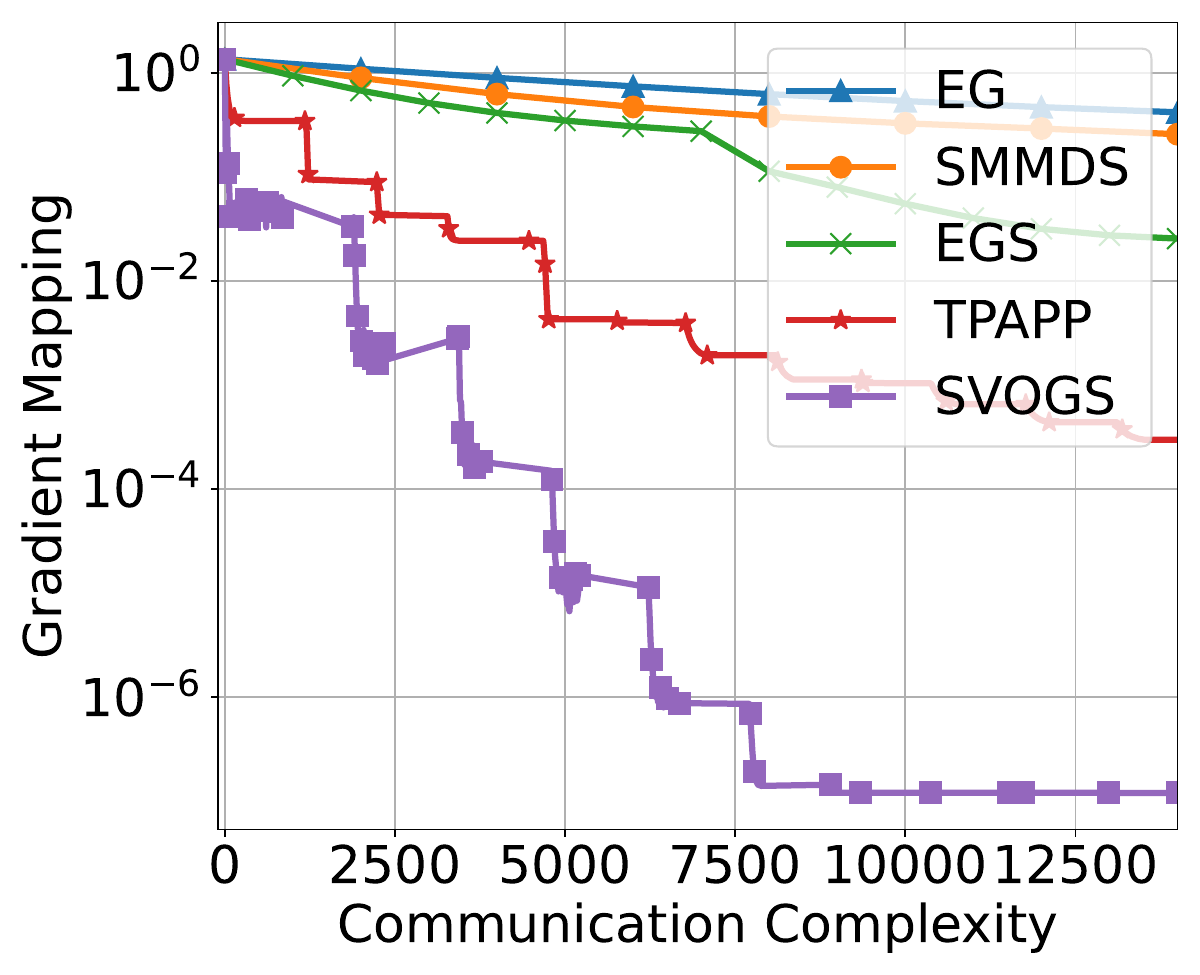}
\end{minipage}
\hfill
\begin{minipage}[b]{0.3\textwidth}
\centering
\includegraphics[width=\textwidth]{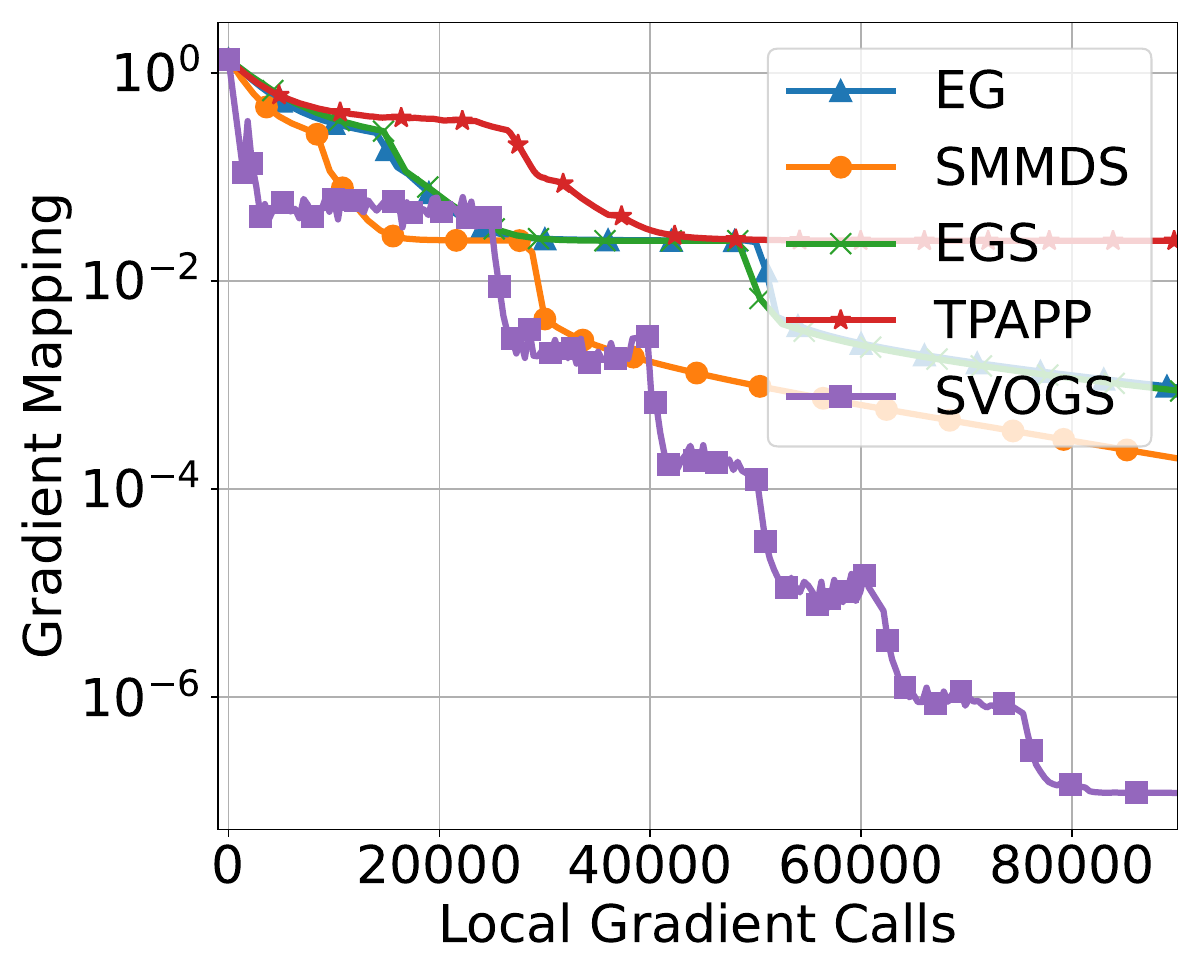}
\end{minipage}
\caption{The experimental results for convex-concave minimax problem (\ref{eq:problem1}).} \label{fig:a9acc}
\end{figure}

\begin{figure}[t]
\begin{minipage}[b]{0.3\textwidth}
\centering
\includegraphics[width=\textwidth]{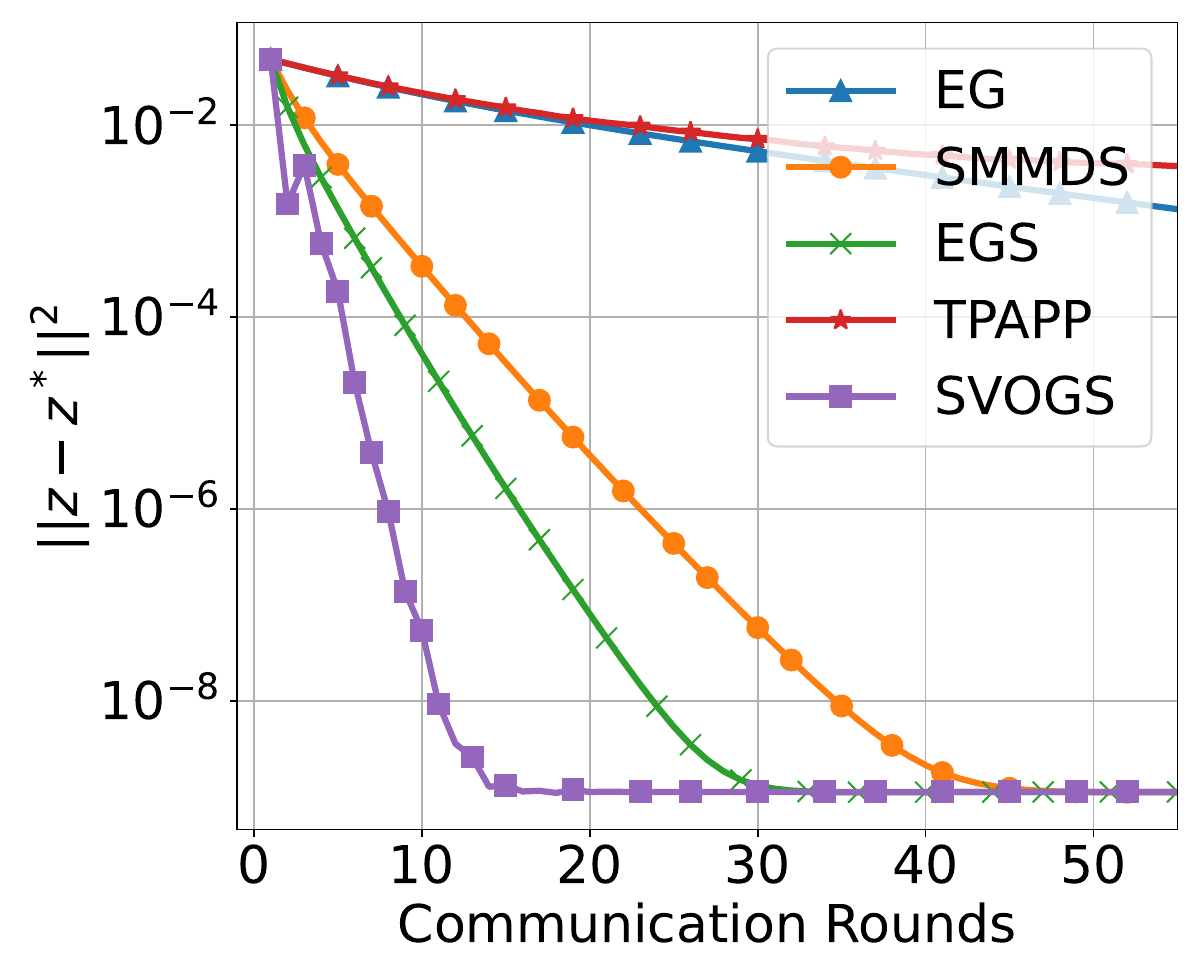}
\end{minipage}
\hfill
\begin{minipage}[b]{0.3\textwidth}
\centering
\includegraphics[width=\textwidth]{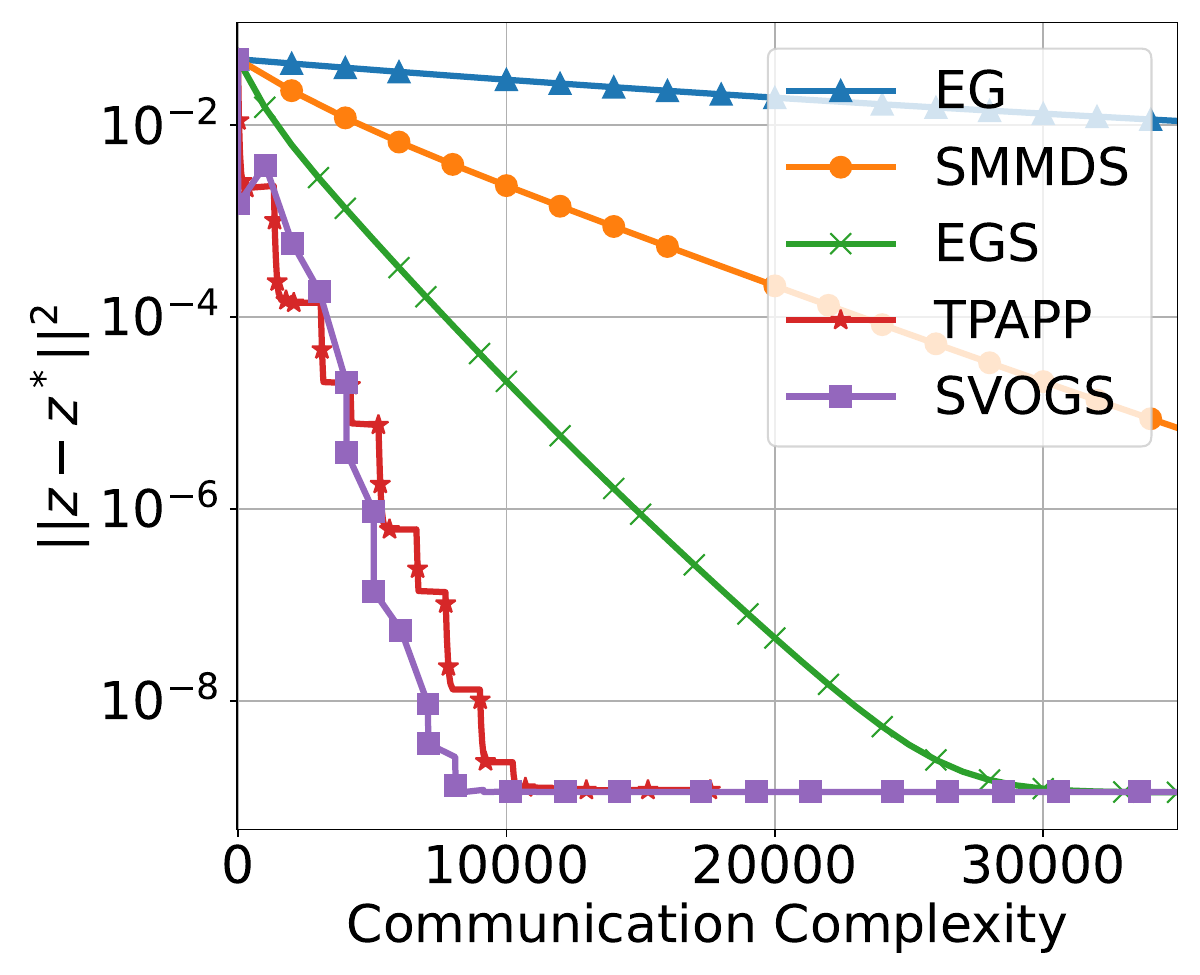}
\end{minipage}
\hfill
\begin{minipage}[b]{0.3\textwidth}
\centering
\includegraphics[width=\textwidth]{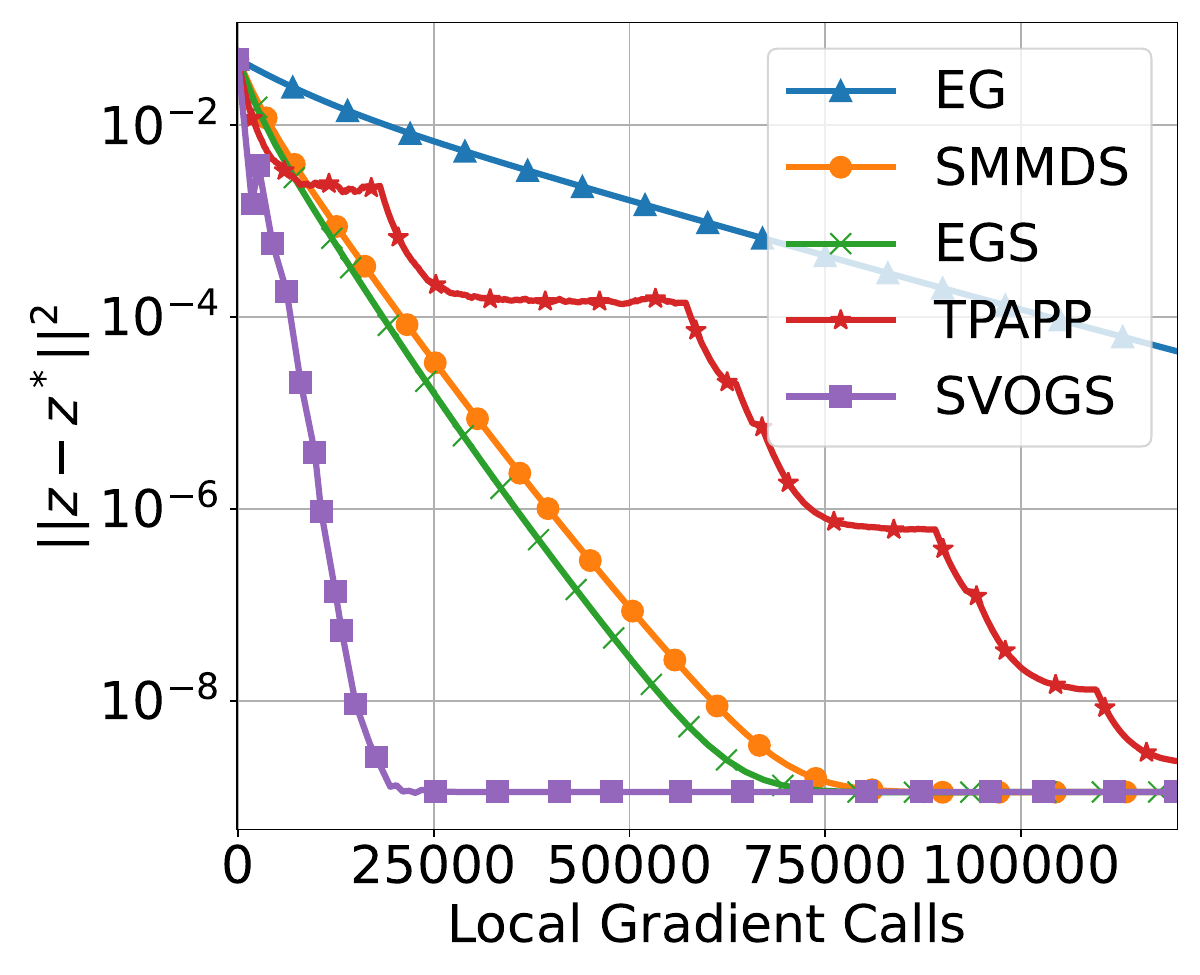}
\end{minipage}
\caption{The experimental results for strongly-convex-strongly-concave minimax problem (\ref{eq:problem2}).}\label{fig:a9a}
\end{figure}

\section{Experiments}\label{sec:experiments}

We conduct the experiment on robust linear regression~\cite{beznosikov2021distributed,hendrikx2020statistically,marteau2019globally}. 
Concretely, we consider the constrained convex-concave minimax problem 
\begin{align} \label{eq:problem1}\begin{split}
    \min_{\Norm{x}_1\leq R_x}\max_{\norm{y}\leq R_y}\frac{1}{2N}\sum_{i=1}^N\left(x^\top(a_i+y)-b_i\right)^2,
\end{split} \end{align}
and the unconstrained strongly-convex-strongly-concave minimax problem
\begin{align} \label{eq:problem2}
\begin{split}
    \min_{x\in\BR^{d'}}\max_{y\in\BR^{d'}}\frac{1}{2N}\sum_{i=1}^N\left(x^\top(a_i+y)-b_i\right)^2+\frac{\lambda}{2}\|x\|^2-\frac{\beta}{2}\|y\|^2,
\end{split} \end{align}
where $x$ contains the weights of the model, $y$ describes the noise, and $\{(a_i, b_i)\}_{i=1}^N$ is the training set.

We compare the proposed SVOGS (Algorithm \ref{alg:SVOGS}) with baselines Extra-Gradient method (EG) \cite{korpelevich1976extragradient}, Star Min-Max Data Similarity algorithm (SMMDS) \cite{beznosikov2021distributed}, Extra-Gradient Sliding (EGS) \cite{kovalev2022optimalgradient}), and Three Pillars Algorithm with Partial Participation (TPAPP) \cite{beznosikov2024similarity}.
We test the algorithms on real-world dataset ``a9a'' ($N=32,561$ and $d'=123$) from LIBSVM repository \cite{chang2011libsvm} and set the nodes number be $n=500$.
For problem (\ref{eq:problem1}), we set $R_x=2$ and $R_y=0.05$, respectively.

We present the experimental results in Figure \ref{fig:a9acc} and \ref{fig:a9a}. 
We can observe that our SVOGS outperforms all baselines in terms of the local gradient complexity.
Additionally, the SVOGS requires less communication rounds than  classical EG and existing partial participation method TPAPP, and it requires significantly less communication complexity than full participation methods EG, SMMDS and EGS.
All of these empirical results support our theoretical analysis.

\section{Conclusion}\label{sec:conclusion}

This paper presents a novel distributed optimization method named SVOGS, which use the second-order similarity in local functions and the finite-sum structure in objective to solve the convex-concave minimax problem within the near-optimal complexity.
Our theoretical results are also validated by the numerical experiments.
In future work, it is interesting to use our ideas to improve the efficiency of distributed nonconvex minimax optimization under the second-order similarity.

\bibliographystyle{plainnat}
\bibliography{reference}

\begin{thebibliography}{56}
\providecommand{\natexlab}[1]{#1}
\providecommand{\url}[1]{\texttt{#1}}
\expandafter\ifx\csname urlstyle\endcsname\relax
  \providecommand{\doi}[1]{doi: #1}\else
  \providecommand{\doi}{doi: \begingroup \urlstyle{rm}\Url}\fi

\bibitem[Alacaoglu and Malitsky(2022)]{alacaoglu2022stochastic}
Ahmet Alacaoglu and Yura Malitsky.
\newblock Stochastic variance reduction for variational inequality methods.
\newblock In \emph{Conference on Learning Theory}, 2022.

\bibitem[Allen-Zhu(2018)]{allen2018katyusha}
Zeyuan Allen-Zhu.
\newblock Katyusha: The first direct acceleration of stochastic gradient
  methods.
\newblock \emph{Journal of Machine Learning Research}, 18\penalty0
  (221):\penalty0 1--51, 2018.

\bibitem[Arjevani and Shamir(2015)]{arjevani2015communication}
Yossi Arjevani and Ohad Shamir.
\newblock Communication complexity of distributed convex learning and
  optimization.
\newblock \emph{Advances in Neural Information Processing Systems}, 2015.

\bibitem[Beznosikov and Gasnikov(2022)]{beznosikov2022compression}
Aleksandr Beznosikov and Alexander Gasnikov.
\newblock Compression and data similarity: Combination of two techniques for
  communication-efficient solving of distributed variational inequalities.
\newblock In \emph{International Conference on Optimization and Applications},
  2022.

\bibitem[Beznosikov et~al.(2021)Beznosikov, Scutari, Rogozin, and
  Gasnikov]{beznosikov2021distributed}
Aleksandr Beznosikov, Gesualdo Scutari, Alexander Rogozin, and Alexander
  Gasnikov.
\newblock Distributed saddle-point problems under data similarity.
\newblock \emph{Advances in Neural Information Processing Systems}, 2021.

\bibitem[Beznosikov et~al.(2022{\natexlab{a}})Beznosikov, Dvurechenskii,
  Koloskova, Samokhin, Stich, and Gasnikov]{beznosikov2022decentralized}
Aleksandr Beznosikov, Pavel Dvurechenskii, Anastasiia Koloskova, Valentin
  Samokhin, Sebastian~U Stich, and Alexander Gasnikov.
\newblock Decentralized local stochastic extra-gradient for variational
  inequalities.
\newblock \emph{Advances in Neural Information Processing Systems},
  2022{\natexlab{a}}.

\bibitem[Beznosikov et~al.(2022{\natexlab{b}})Beznosikov, Richt{\'a}rik,
  Diskin, Ryabinin, and Gasnikov]{beznosikov2022distributed}
Aleksandr Beznosikov, Peter Richt{\'a}rik, Michael Diskin, Max Ryabinin, and
  Alexander Gasnikov.
\newblock Distributed methods with compressed communication for solving
  variational inequalities, with theoretical guarantees.
\newblock \emph{Advances in Neural Information Processing Systems},
  2022{\natexlab{b}}.

\bibitem[Beznosikov et~al.(2023)Beznosikov, Tak{\'a}c, and
  Gasnikov]{beznosikov2024similarity}
Aleksandr Beznosikov, Martin Tak{\'a}c, and Alexander Gasnikov.
\newblock Similarity, compression and local steps: three pillars of efficient
  communications for distributed variational inequalities.
\newblock \emph{Advances in Neural Information Processing Systems}, 2023.

\bibitem[Chang and Lin(2011)]{chang2011libsvm}
Chih-Chung Chang and Chih-Jen Lin.
\newblock {LIBSVM}: a library for support vector machines.
\newblock \emph{ACM transactions on intelligent systems and technology (TIST)},
  2\penalty0 (3):\penalty0 1--27, 2011.

\bibitem[Chavdarova et~al.(2019)Chavdarova, Gidel, Fleuret, and
  Lacoste-Julien]{chavdarova2019reducing}
Tatjana Chavdarova, Gauthier Gidel, Fran{\c{c}}ois Fleuret, and Simon
  Lacoste-Julien.
\newblock Reducing noise in {GAN} training with variance reduced extragradient.
\newblock \emph{Advances in Neural Information Processing Systems}, 2019.

\bibitem[Deng and Mahdavi(2021)]{deng2021local}
Yuyang Deng and Mehrdad Mahdavi.
\newblock Local stochastic gradient descent ascent: Convergence analysis and
  communication efficiency.
\newblock In \emph{International Conference on Artificial Intelligence and
  Statistics}, 2021.

\bibitem[Gharesifard and Cortés(2014)]{6578120}
Bahman Gharesifard and Jorge Cortés.
\newblock Distributed continuous-time convex optimization on weight-balanced
  digraphs.
\newblock \emph{IEEE Transactions on Automatic Control}, 59\penalty0
  (3):\penalty0 781--786, 2014.

\bibitem[Gorbunov et~al.(2021)Gorbunov, Hanzely, and
  Richt{\'a}rik]{gorbunov2021local}
Eduard Gorbunov, Filip Hanzely, and Peter Richt{\'a}rik.
\newblock Local {SGD}: Unified theory and new efficient methods.
\newblock In \emph{International Conference on Artificial Intelligence and
  Statistics}, 2021.

\bibitem[Han et~al.(2024)Han, Xie, and Zhang]{han2024lower}
Yuze Han, Guangzeng Xie, and Zhihua Zhang.
\newblock Lower complexity bounds of finite-sum optimization problems: The
  results and construction.
\newblock \emph{Journal of Machine Learning Research}, 25\penalty0
  (2):\penalty0 1--86, 2024.

\bibitem[Hendrikx et~al.(2020)Hendrikx, Xiao, Bubeck, Bach, and
  Massoulie]{hendrikx2020statistically}
Hadrien Hendrikx, Lin Xiao, Sebastien Bubeck, Francis Bach, and Laurent
  Massoulie.
\newblock Statistically preconditioned accelerated gradient method for
  distributed optimization.
\newblock In \emph{International conference on machine learning}, 2020.

\bibitem[Hou et~al.(2021)Hou, Thekumparampil, Fanti, and Oh]{hou2021efficient}
Charlie Hou, Kiran~K. Thekumparampil, Giulia Fanti, and Sewoong Oh.
\newblock Efficient algorithms for federated saddle point optimization.
\newblock \emph{arXiv preprint:2102.06333}, 2021.

\bibitem[Ibrahim et~al.(2020)Ibrahim, Azizian, Gidel, and
  Mitliagkas]{ibrahim2020linear}
Adam Ibrahim, Wa{\i}ss Azizian, Gauthier Gidel, and Ioannis Mitliagkas.
\newblock Linear lower bounds and conditioning of differentiable games.
\newblock In \emph{International conference on machine learning}, 2020.

\bibitem[Kairouz et~al.(2021)Kairouz, McMahan, Avent, Bellet, Bennis, Bhagoji,
  Bonawitz, Charles, Cormode, Cummings, et~al.]{kairouz2021advances}
Peter Kairouz, H.~Brendan McMahan, Brendan Avent, Aur{\'e}lien Bellet, Mehdi
  Bennis, Arjun~Nitin Bhagoji, Kallista Bonawitz, Zachary Charles, Graham
  Cormode, Rachel Cummings, et~al.
\newblock Advances and open problems in federated learning.
\newblock \emph{Foundations and trends{\textregistered} in machine learning},
  14\penalty0 (1--2):\penalty0 1--210, 2021.

\bibitem[Karimireddy et~al.(2020)Karimireddy, Kale, Mohri, Reddi, Stich, and
  Suresh]{karimireddy2020scaffold}
Sai~Praneeth Karimireddy, Satyen Kale, Mehryar Mohri, Sashank Reddi, Sebastian
  Stich, and Ananda~Theertha Suresh.
\newblock {SCAFFOLD}: Stochastic controlled averaging for federated learning.
\newblock In \emph{International conference on machine learning}, 2020.

\bibitem[Khaled and Jin(2023)]{khaled2022faster}
Ahmed Khaled and Chi Jin.
\newblock Faster federated optimization under second-order similarity.
\newblock In \emph{International Conference on Learning Representations}, 2023.

\bibitem[Khaled et~al.(2019)Khaled, Mishchenko, and
  Richt{\'a}rik]{khaled2019first}
Ahmed Khaled, Konstantin Mishchenko, and Peter Richt{\'a}rik.
\newblock First analysis of local {GD} on heterogeneous data.
\newblock \emph{arXiv preprint:1909.04715}, 2019.

\bibitem[Khaled et~al.(2020)Khaled, Mishchenko, and
  Richt{\'a}rik]{khaled2020tighter}
Ahmed Khaled, Konstantin Mishchenko, and Peter Richt{\'a}rik.
\newblock Tighter theory for local {SGD} on identical and heterogeneous data.
\newblock In \emph{International Conference on Artificial Intelligence and
  Statistics}, 2020.

\bibitem[Kim and Boyd(2007)]{4434853}
Seung-Jean Kim and Stephen Boyd.
\newblock A minimax theorem with applications to machine learning, signal
  processing, and finance.
\newblock In \emph{IEEE Conference on Decision and Control}, 2007.

\bibitem[Korpelevich(1976)]{korpelevich1976extragradient}
Galina~M. Korpelevich.
\newblock The extragradient method for finding saddle points and other
  problems.
\newblock \emph{Matecon}, 12:\penalty0 747--756, 1976.

\bibitem[Kovalev et~al.(2022{\natexlab{a}})Kovalev, Beznosikov, Borodich,
  Gasnikov, and Scutari]{kovalev2022optimalgradient}
Dmitry Kovalev, Aleksandr Beznosikov, Ekaterina Borodich, Alexander Gasnikov,
  and Gesualdo Scutari.
\newblock Optimal gradient sliding and its application to optimal distributed
  optimization under similarity.
\newblock \emph{Advances in Neural Information Processing Systems},
  2022{\natexlab{a}}.

\bibitem[Kovalev et~al.(2022{\natexlab{b}})Kovalev, Beznosikov, Sadiev,
  Persiianov, Richt{\'a}rik, and Gasnikov]{kovalev2022optimal}
Dmitry Kovalev, Aleksandr Beznosikov, Abdurakhmon Sadiev, Michael Persiianov,
  Peter Richt{\'a}rik, and Alexander Gasnikov.
\newblock Optimal algorithms for decentralized stochastic variational
  inequalities.
\newblock \emph{Advances in Neural Information Processing Systems},
  2022{\natexlab{b}}.

\bibitem[Lan(2016)]{lan2016gradient}
Guanghui Lan.
\newblock Gradient sliding for composite optimization.
\newblock \emph{Mathematical Programming}, 159:\penalty0 201--235, 2016.

\bibitem[Li et~al.(2019)Li, Huang, Yang, Wang, and Zhang]{li2019convergence}
Xiang Li, Kaixuan Huang, Wenhao Yang, Shusen Wang, and Zhihua Zhang.
\newblock On the convergence of {FedAvg} on non-iid data.
\newblock \emph{International Conference on Learning Representations}, 2019.

\bibitem[Lin et~al.(2024)Lin, Han, Ye, and Zhang]{lin2023stochastic}
Dachao Lin, Yuze Han, Haishan Ye, and Zhihua Zhang.
\newblock Stochastic distributed optimization under average second-order
  similarity: Algorithms and analysis.
\newblock \emph{Advances in Neural Information Processing Systems}, 36, 2024.

\bibitem[Luo et~al.(2019)Luo, Chen, Li, Xie, and Zhang]{luo2019stochastic}
Luo Luo, Cheng Chen, Yujun Li, Guangzeng Xie, and Zhihua Zhang.
\newblock A stochastic proximal point algorithm for saddle-point problems.
\newblock \emph{arXiv preprint:1909.06946}, 2019.

\bibitem[Luo et~al.(2021)Luo, Xie, Zhang, and Zhang]{luo2021near}
Luo Luo, Guangzeng Xie, Tong Zhang, and Zhihua Zhang.
\newblock Near optimal stochastic algorithms for finite-sum unbalanced
  convex-concave minimax optimization.
\newblock \emph{arXiv preprint:2106.01761}, 2021.

\bibitem[Malinovsky et~al.(2022)Malinovsky, Yi, and
  Richt{\'a}rik]{malinovsky2022variance}
Grigory Malinovsky, Kai Yi, and Peter Richt{\'a}rik.
\newblock Variance reduced {ProxSkip}: algorithm, theory and application to
  federated learning.
\newblock \emph{Advances in Neural Information Processing Systems}, 2022.

\bibitem[Malitsky(2015)]{malitsky2015projected}
Yu~Malitsky.
\newblock Projected reflected gradient methods for monotone variational
  inequalities.
\newblock \emph{SIAM Journal on Optimization}, 25\penalty0 (1):\penalty0
  502--520, 2015.

\bibitem[Malitsky and Tam(2020)]{malitsky2020forward}
Yura Malitsky and Matthew~K. Tam.
\newblock A forward-backward splitting method for monotone inclusions without
  cocoercivity.
\newblock \emph{SIAM Journal on Optimization}, 30\penalty0 (2):\penalty0
  1451--1472, 2020.

\bibitem[Marteau-Ferey et~al.(2019)Marteau-Ferey, Bach, and
  Rudi]{marteau2019globally}
Ulysse Marteau-Ferey, Francis Bach, and Alessandro Rudi.
\newblock Globally convergent newton methods for ill-conditioned generalized
  self-concordant losses.
\newblock \emph{Advances in Neural Information Processing Systems}, 2019.

\bibitem[Mishchenko et~al.(2022)Mishchenko, Malinovsky, Stich, and
  Richt{\'a}rik]{mishchenko2022proxskip}
Konstantin Mishchenko, Grigory Malinovsky, Sebastian Stich, and Peter
  Richt{\'a}rik.
\newblock {ProxSkip}: Yes! local gradient steps provably lead to communication
  acceleration! finally!
\newblock In \emph{International Conference on Machine Learning}, pages
  15750--15769. PMLR, 2022.

\bibitem[Mitra et~al.(2021)Mitra, Jaafar, Pappas, and Hassani]{mitra2021linear}
Aritra Mitra, Rayana Jaafar, George~J. Pappas, and Hamed Hassani.
\newblock Linear convergence in federated learning: Tackling client
  heterogeneity and sparse gradients.
\newblock \emph{Advances in Neural Information Processing Systems}, 2021.

\bibitem[Nemirovski(2004)]{Nemirovski2004ProxMethodWR}
Arkadi Nemirovski.
\newblock Prox-method with rate of convergence {$O(1/t)$} for variational
  inequalities with lipschitz continuous monotone operators and smooth
  convex-concave saddle point problems.
\newblock \emph{SIAM Journal on Optimization}, 15:\penalty0 229--251, 2004.

\bibitem[Nesterov(2007)]{nesterov2007dual}
Yurii Nesterov.
\newblock Dual extrapolation and its applications to solving variational
  inequalities and related problems.
\newblock \emph{Mathematical Programming}, 109\penalty0 (2):\penalty0 319--344,
  2007.

\bibitem[Nesterov(2013)]{nesterov2013introductory}
Yurii Nesterov.
\newblock \emph{Introductory lectures on convex optimization: A basic course},
  volume~87.
\newblock Springer Science \& Business Media, 2013.

\bibitem[Notarnicola et~al.(2019)Notarnicola, Franceschelli, and
  Notarstefano]{8472154}
Ivano Notarnicola, Mauro Franceschelli, and Giuseppe Notarstefano.
\newblock A duality-based approach for distributed min–max optimization.
\newblock \emph{IEEE Transactions on Automatic Control}, 64\penalty0
  (6):\penalty0 2559--2566, 2019.

\bibitem[Ouyang and Xu(2021)]{ouyang2021lower}
Yuyuan Ouyang and Yangyang Xu.
\newblock Lower complexity bounds of first-order methods for convex-concave
  bilinear saddle-point problems.
\newblock \emph{Mathematical Programming}, 185\penalty0 (1):\penalty0 1--35,
  2021.

\bibitem[Popov(1980)]{popov1980modification}
Leonid~Denisovich Popov.
\newblock A modification of the arrow-hurwicz method for search of saddle
  points.
\newblock \emph{Mathematical notes of the Academy of Sciences of the USSR},
  28:\penalty0 845--848, 1980.

\bibitem[Razaviyayn et~al.(2020)Razaviyayn, Huang, Lu, Nouiehed, Sanjabi, and
  Hong]{Razaviyayn_2020}
Meisam Razaviyayn, Tianjian Huang, Songtao Lu, Maher Nouiehed, Maziar Sanjabi,
  and Mingyi Hong.
\newblock Nonconvex min-max optimization: Applications, challenges, and recent
  theoretical advances.
\newblock \emph{IEEE Signal Processing Magazine}, 37\penalty0 (5):\penalty0
  55–66, 2020.

\bibitem[Rogozin et~al.(2021)Rogozin, Beznosikov, Dvinskikh, Kovalev,
  Dvurechensky, and Gasnikov]{rogozin2021decentralized}
Alexander Rogozin, Aleksandr Beznosikov, Darina Dvinskikh, Dmitry Kovalev,
  Pavel Dvurechensky, and Alexander Gasnikov.
\newblock Decentralized distributed optimization for saddle point problems.
\newblock \emph{arXiv preprint:2102.07758}, 2021.

\bibitem[Shamir et~al.(2014)Shamir, Srebro, and Zhang]{shamir2014communication}
Ohad Shamir, Nati Srebro, and Tong Zhang.
\newblock Communication-efficient distributed optimization using an approximate
  newton-type method.
\newblock In \emph{International conference on machine learning}, 2014.

\bibitem[Stich(2018)]{stich2018local}
Sebastian~U. Stich.
\newblock Local {SGD} converges fast and communicates little.
\newblock \emph{arXiv preprint:1805.09767}, 2018.

\bibitem[Sun et~al.(2022)Sun, Scutari, and Daneshmand]{sun2022distributed}
Ying Sun, Gesualdo Scutari, and Amir Daneshmand.
\newblock Distributed optimization based on gradient tracking revisited:
  Enhancing convergence rate via surrogation.
\newblock \emph{SIAM Journal on Optimization}, 32\penalty0 (2):\penalty0
  354--385, 2022.

\bibitem[Szlendak et~al.(2021)Szlendak, Tyurin, and
  Richt{\'a}rik]{szlendak2021permutation}
Rafa{\l} Szlendak, Alexander Tyurin, and Peter Richt{\'a}rik.
\newblock Permutation compressors for provably faster distributed nonconvex
  optimization.
\newblock \emph{arXiv preprint:2110.03300}, 2021.

\bibitem[Tian et~al.(2022)Tian, Scutari, Cao, and
  Gasnikov]{tian2022acceleration}
Ye~Tian, Gesualdo Scutari, Tianyu Cao, and Alexander Gasnikov.
\newblock Acceleration in distributed optimization under similarity.
\newblock In \emph{International Conference on Artificial Intelligence and
  Statistics}, 2022.

\bibitem[Tseng(2000)]{tseng2000modified}
Paul Tseng.
\newblock A modified forward-backward splitting method for maximal monotone
  mappings.
\newblock \emph{SIAM Journal on Control and Optimization}, 38\penalty0
  (2):\penalty0 431--446, 2000.

\bibitem[Xidonas et~al.(2017)Xidonas, Mavrotas, Hassapis, and
  Zopounidis]{XIDONAS2017299}
Panos Xidonas, George Mavrotas, Christis Hassapis, and Constantin Zopounidis.
\newblock Robust multiobjective portfolio optimization: A minimax regret
  approach.
\newblock \emph{European Journal of Operational Research}, 262\penalty0
  (1):\penalty0 299--305, 2017.

\bibitem[Xie et~al.(2020)Xie, Luo, Lian, and Zhang]{xie2020lower}
Guangzeng Xie, Luo Luo, Yijiang Lian, and Zhihua Zhang.
\newblock Lower complexity bounds for finite-sum convex-concave minimax
  optimization problems.
\newblock In \emph{International Conference on Machine Learning}, 2020.

\bibitem[Yang et~al.(2020)Yang, Zhang, Kiyavash, and He]{yang2020catalyst}
Junchi Yang, Siqi Zhang, Negar Kiyavash, and Niao He.
\newblock A catalyst framework for minimax optimization.
\newblock \emph{Advances in Neural Information Processing Systems},
  33:\penalty0 5667--5678, 2020.

\bibitem[Zhang et~al.(2022)Zhang, Hong, and Zhang]{zhang2022lower}
Junyu Zhang, Mingyi Hong, and Shuzhong Zhang.
\newblock On lower iteration complexity bounds for the convex concave saddle
  point problems.
\newblock \emph{Mathematical Programming}, 194\penalty0 (1):\penalty0 901--935,
  2022.

\bibitem[Zhang and Lin(2015)]{pmlr-v37-zhangb15}
Yuchen Zhang and Xiao Lin.
\newblock {DiSCO}: Distributed optimization for self-concordant empirical loss.
\newblock In \emph{International Conference on Machine Learning}, 2015.

\end{thebibliography}



\newpage

\appendix
 {\LARGE\section*{Appendix}}
 
The appendix contains additional details supporting the main text. 
Section \ref{appx:fact} starts with some basic results.
Section \ref{appx:nonneg-Lya} shows the non-negativity of our Lyapunov function. 
Section \ref{appx:upper} provides the proof of upper bounds for the proposed method. 
Section \ref{appx:algorithm-class} formally defines the algorithm class in our lower bound analysis. 
Section \ref{appx:lowercc} and \ref{appx:lowerscsc} provide the lower complexity bound for both convex-concave and strongly-convex-strongly-concave cases. 
Section \ref{appx:gds} demonstrates the complexity of making the gradient mapping small. 
Section \ref{appx:para} shows our experimental setting.

\section{Some Basic Results}\label{appx:fact}

We introduce the following lemmas for our later analysis.

\begin{lem}[{\citet[Proposition B.1]{lin2023stochastic}}]\label{lem:ss}
    If the local functions $f_1,\dots,f_n:\BR^{d_x}\times\BR^{d_y}\to\BR$ hold the $\delta$-second-order similarity, then each $(F_i-F)(\cdot)$ is $\delta$-Lipschitz continuous, i.e., we have
    \begin{align*}
        \|(F_i-F)(z_1)-(F_i-F)(z_2)\|\leq \delta\|z_1-z_2\|
    \end{align*}
    for all $z_1,z_2\in\BR^d$ and $i\in[n]$.
\end{lem}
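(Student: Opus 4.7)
The plan is to reduce the statement to a Jacobian bound and then apply the fundamental theorem of calculus along line segments in $\BR^d$. First, I would unpack the definition of $F_i$: writing $M = \diag(I_{d_x}, -I_{d_y}) \in \BR^{d\times d}$, we have $F_i(z) = M\,\nabla f_i(x,y)$ (after identifying $z=(x,y)$), so the Jacobian of $F_i$ satisfies $JF_i(z) = M\,\nabla^2 f_i(z)$. Since the same relation holds for $F$, we get $J(F_i - F)(z) = M\bigl(\nabla^2 f_i(z) - \nabla^2 f(z)\bigr)$. Because $M$ is orthogonal ($M^\top M = I_d$), left-multiplication by $M$ preserves the operator norm, and Assumption \ref{asm:ss} therefore yields the pointwise bound
\begin{align*}
\|J(F_i - F)(z)\|\;=\;\|\nabla^2 f_i(z) - \nabla^2 f(z)\|\;\leq\;\delta \qquad \text{for every } z\in\BR^d.
\end{align*}

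Next, I would close the argument by integration along the segment from $z_2$ to $z_1$. Since each $f_i$ is twice differentiable, $F_i - F$ is continuously differentiable, so the fundamental theorem of calculus gives
\begin{align*}
(F_i-F)(z_1) - (F_i-F)(z_2) \;=\; \int_0^1 J(F_i-F)\bigl(z_2 + t(z_1-z_2)\bigr)(z_1-z_2)\,dt.
\end{align*}
Taking norms and pulling the norm inside the integral (a standard consequence of the triangle inequality for Bochner-type integrals) together with the pointwise Jacobian bound above yields $\|(F_i - F)(z_1) - (F_i - F)(z_2)\| \leq \delta\,\|z_1 - z_2\|$, as desired.

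There is essentially no hard step here; the only mild subtlety is recognizing that although $F_i$ is not literally a gradient field (because of the sign flip on the $y$-block), its Jacobian still factors as an orthogonal transformation of $\nabla^2 f_i$, so the second-order similarity bound on Hessians transfers directly to a Lipschitz bound on $F_i - F$. The remainder is a textbook mean-value/integration argument.
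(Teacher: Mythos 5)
Your proof is correct: the factorization $F_i = M\nabla f_i$ with $M=\diag(I_{d_x},-I_{d_y})$ orthogonal transfers the Hessian-similarity bound to a uniform Jacobian bound on $F_i-F$, and the integral/mean-value step then gives the Lipschitz estimate. The paper itself does not prove this lemma but imports it from \citet[Proposition B.1]{lin2023stochastic}; your argument is exactly the standard proof of that cited result, with the only cosmetic caveat that under mere twice differentiability one would invoke the vector-valued mean value inequality (or note that a bounded everywhere-existing derivative still permits the FTC representation) rather than assuming continuity of the Jacobian.
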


\begin{lem}[{\citet[Section 8]{alacaoglu2022stochastic}}]
\label{lem:mar}
    Let $\fF = \{\fF_k\}_{k\geq 0}$ be a filtration and $\{r^k\}$ be a stochastic process adapted to $\fF$ with $\E[r^{k+1}|\fF_k]=0$. Then for any $K \in \mathbb{N}$, $x^0 \in \fZ$, and any compact set $\fC \subset \fZ$, we have
    \begin{align*} \begin{split}
        \E\left[\max_{x\in\fC}\sum_{k=0}^{K-1}\langle  r^{k+1},x\rangle\right]\leq \max_{x\in\fC} \frac{1}{2}\|x^0-x\|^2+\frac{1}{2}\sum_{k=0}^{K-1}\E\|r^{k+1}\|^2.
    \end{split} \end{align*}
\end{lem}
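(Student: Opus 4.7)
The plan is to introduce a \emph{ghost iterate} sequence that lives in the constraint set and absorbs the stochastic increments $r^{k+1}$, then telescope a per-step bound and invoke the martingale property to eliminate the resulting cross term. Concretely, I would set $\tilde{x}^0 = x^0$ and define recursively $\tilde{x}^{k+1} = \fP_\fZ(\tilde{x}^k + r^{k+1})$. Since $\fZ$ is closed and convex (Assumption~\ref{asm:set}), the projection is well-defined, and because $\{r^k\}$ is adapted to $\fF$, an easy induction shows $\tilde{x}^k$ is $\fF_k$-measurable for every $k$.

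The engine of the argument is a single per-step inequality. Expanding squares gives the algebraic identity $\langle r^{k+1}, x \rangle = \tfrac{1}{2}\|x - \tilde{x}^k\|^2 - \tfrac{1}{2}\|x - \tilde{x}^k - r^{k+1}\|^2 + \langle r^{k+1}, \tilde{x}^k \rangle + \tfrac{1}{2}\|r^{k+1}\|^2$ for every $x$. For $x \in \fC \subseteq \fZ$, non-expansivity of the projection yields $\|x - \tilde{x}^{k+1}\| \leq \|x - \tilde{x}^k - r^{k+1}\|$, so the second summand can be replaced by $-\tfrac{1}{2}\|x - \tilde{x}^{k+1}\|^2$, producing
\begin{equation*}
\langle r^{k+1}, x \rangle \leq \tfrac{1}{2}\|x - \tilde{x}^k\|^2 - \tfrac{1}{2}\|x - \tilde{x}^{k+1}\|^2 + \langle r^{k+1}, \tilde{x}^k \rangle + \tfrac{1}{2}\|r^{k+1}\|^2.
\end{equation*}
Summing over $k = 0, \ldots, K-1$ telescopes the first two terms and leaves the non-positive leftover $-\tfrac{1}{2}\|x - \tilde{x}^K\|^2$, which I would simply discard.

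The last step is to pass to the maximum and then to the expectation. Because the bound is uniform in $x\in\fC$, taking $\max_{x\in\fC}$ concentrates all $x$-dependence in the single term $\tfrac{1}{2}\|x-x^0\|^2$, with the ghost-iterate inner products and the residual norms now independent of $x$. The crucial observation at this point is that $\tilde{x}^k$ is $\fF_k$-measurable while $\E[r^{k+1}\mid\fF_k] = 0$, so the tower property forces $\E[\langle r^{k+1}, \tilde{x}^k\rangle] = 0$ term by term, and the claimed inequality follows. The main obstacle is really the design of the ghost iterate: we need it $\fF_k$-measurable so that the cross term becomes a centered martingale increment, yet its update direction must be $r^{k+1}$ itself so that the telescoping identity reproduces exactly $\langle r^{k+1},x\rangle$ on the left. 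The non-expansiveness of $\fP_\fZ$ onto the convex set $\fZ$ is precisely the structural fact that reconciles these two requirements, which is why the hypothesis $\fC\subseteq\fZ$ with $\fZ$ convex cannot be dropped.
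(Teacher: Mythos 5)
Your proposal is correct: the ghost-iterate recursion $\tilde{x}^{k+1}=\fP_\fZ(\tilde{x}^k+r^{k+1})$ with $\tilde{x}^0=x^0$, the three-point expansion of $\langle r^{k+1},x-\tilde{x}^k\rangle$, non-expansiveness of the projection for $x\in\fC\subseteq\fZ$, telescoping, and killing the cross term $\E[\langle r^{k+1},\tilde{x}^k\rangle]=0$ via $\fF_k$-measurability and the tower property is a complete and valid argument. The paper itself gives no proof of this lemma (it is quoted from Alacaoglu and Malitsky), and the cited source proves it by precisely this auxiliary-sequence construction, so your route coincides with the standard one.
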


In related work, \citet[Assumption 4.3]{beznosikov2024similarity} considers 
the following second-order similarity assumption that is slightly different from our Assumption \ref{asm:ss}.

\begin{asm}\label{asm:meanss}
The local functions $f_1,\dots,f_n:\BR^{d_x}\times\BR^{d_y}\to\BR$ are twice differentiable and hold the $\delta$-average-second-order similarity, 
i.e., there exists $\delta>0$ such that
\begin{align*}
    \frac{1}{n}\sum_{i=1}^n\norm{\nabla^2 f_i(x,y) - \nabla^2 f_j(x,y)}^2 \leq \delta^2
\end{align*}
for all $x\in\BR^{d_x}$, $y\in\BR^{d_y}$, and $j\in[n]$, .
\end{asm}

We present the relationship between $\delta$-average-second-order similarity (Assumption \ref{asm:meanss}) and $\delta$-second-order-similarity (Assumption \ref{asm:ss}).

\begin{prop}\label{lem:asm}
For twice differentiable local functions  $f_1,\dots,f_n:\BR^{d_x}\times\BR^{d_y}\to\BR$, we have
\begin{itemize}[leftmargin=0.5cm,topsep=-0.03cm,itemsep=-0.1cm]
\item If functions $\{f_i\}_{i=1}^n$ hold the $\delta$-average-second-order similarity, then they also hold the $\delta$-second-order similarity. 
\item If functions $\{f_i\}_{i=1}^n$ hold the $\delta$-second-order similarity, then they hold the $2\delta$-average-second-order similarity.
\end{itemize}
\end{prop}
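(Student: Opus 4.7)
The proposition is an exercise in comparing the centered deviation $\nabla^2 f_i - \nabla^2 f$ (where $\nabla^2 f = \tfrac{1}{n}\sum_k \nabla^2 f_k$) with the pairwise deviations $\nabla^2 f_i - \nabla^2 f_j$. The plan is to prove each implication by a one-line calculation, the first using convexity of the squared operator norm and the second using the triangle inequality.

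For the first implication, I would fix any $i \in [n]$ and write
\begin{align*}
\nabla^2 f_i(x,y) - \nabla^2 f(x,y) = \frac{1}{n}\sum_{k=1}^n \bigl(\nabla^2 f_i(x,y) - \nabla^2 f_k(x,y)\bigr).
\end{align*}
Applying Jensen's inequality to $\|\cdot\|^2$ yields
\begin{align*}
\|\nabla^2 f_i(x,y) - \nabla^2 f(x,y)\|^2 \leq \frac{1}{n}\sum_{k=1}^n \|\nabla^2 f_i(x,y) - \nabla^2 f_k(x,y)\|^2 \leq \delta^2,
\end{align*}
where the last step invokes Assumption \ref{asm:meanss} with $j=i$. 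Taking square roots delivers the $\delta$-second-order similarity.

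For the second implication, I would invoke the triangle inequality together with Assumption \ref{asm:ss}, namely
\begin{align*}
\|\nabla^2 f_i(x,y) - \nabla^2 f_j(x,y)\| \leq \|\nabla^2 f_i(x,y) - \nabla^2 f(x,y)\| + \|\nabla^2 f(x,y) - \nabla^2 f_j(x,y)\| \leq 2\delta
\end{align*}
for every $i,j \in [n]$. Squaring and averaging over $i$ gives $\frac{1}{n}\sum_{i=1}^n \|\nabla^2 f_i(x,y) - \nabla^2 f_j(x,y)\|^2 \leq 4\delta^2$, which is exactly the $2\delta$-average-second-order similarity. There is no real obstacle here: both directions are immediate consequences of standard norm inequalities, and the only subtlety is to apply Jensen on the squared norm (rather than the norm itself) in the first part so that the constant does not degrade.
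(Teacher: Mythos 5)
Your proposal is correct and follows essentially the same route as the paper: the first implication is proved by exactly the same decomposition $\nabla^2 f_i - \nabla^2 f = \tfrac{1}{n}\sum_k(\nabla^2 f_i - \nabla^2 f_k)$ with Jensen applied to the squared operator norm. For the second implication you use the triangle inequality and then square, while the paper applies Young's inequality $\|a+b\|^2 \leq 2\|a\|^2 + 2\|b\|^2$ to the squared norms directly; both are one-line standard estimates yielding the same $(2\delta)^2$ bound, so the difference is purely cosmetic.
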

\begin{proof}
If functions $\{f_i\}_{i=1}^n$ hold the $\delta$-average-second-order similarity, then for all $j\in [n]$, we have
\begin{align*}\begin{split}
        \|\nabla^2f_j(x,y)-\nabla^2f(x,y)\|_2^2&=\left\|\frac{1}{n}\sum_{i=1}^n\left[\nabla^2f_j(x,y)-\nabla^2f_i(x,y)\right]\right\|_2^2\\
        &\leq \frac{1}{n}\sum_{i=1}^n\|\nabla^2f_j(x,y)-\nabla^2f_i(x,y)\|_2^2\leq \delta^2,
\end{split}
\end{align*}
where we use the convexity of $\|\cdot\|_2^2$. This implies functions $\{f_i\}_{i=1}^n$ also hold  the $\delta$-second-order similarity.

If functions $\{f_i\}_{i=1}^n$ hold the $\delta$ second-order similarity, then for all $j\in [n]$, we have
    \begin{align*}\begin{split}
        &\frac{1}{n}\sum_{i=1}^n\|\nabla^2f_j(x,y)-\nabla^2f_i(x,y)\|_2^2\\
        &\leq \frac{1}{n}\sum_{i=1}^n\left(2\|\nabla^2f_j(x,y)-\nabla^2f(x,y)\|_2^2+2\|\nabla^2f(x,y)-\nabla^2f_i(x,y)\|_2^2\right) \leq (2\delta)^2,
        \end{split}
    \end{align*}
    where we use the Young's inequality for the matrix 2-norm. This implies functions $\{f_i\}_{i=1}^n$ hold the $2\delta$-average-second-order similarity.
\end{proof}

\section{The Non-Negativity of Lyapunov Function}\label{appx:nonneg-Lya}

Our convergence analysis is based on the following Lyapunov function
\begin{align*}
    \Phi^k =& \left(\frac{1}{\eta}+\mu\right)\| z ^k- z^* \|^2+2\langle F( z ^{k-1})-F_1( z ^{k-1})-F( z ^k)+F_1( z ^k), z ^{k}- z^* \rangle\\
    & +\frac{1}{64\eta}\| z ^k- z ^{k-1}\|^2+\frac{\gamma}{2\eta}\|w^{k-1}- z ^k\|^2+\frac{(2\gamma+\eta\mu)}{2p\eta}\|w^k- z^* \|^2.
\end{align*}
Noticing that we can always guarantees $\Phi^k\geq 0$ by taking $\eta\leq 1/(32\delta)$, because the Young's inequality and Lemma \ref{lem:ss} indicates
\begin{align*}
     \Phi^k&\geq \frac{1}{\eta}\| z ^k- z^* \|^2+2\langle F( z ^{k-1})-F_1( z ^{k-1})-F( z ^k)+F_1( z ^k), z ^{k}- z^* \rangle+\frac{1}{64\eta}\| z ^k- z ^{k-1}\|^2\\
     &\geq \frac{1}{\eta}\| z ^k- z^* \|^2-\frac{1}{64\eta\delta^2}\|F( z ^{k-1})-F_1( z ^{k-1})-F( z ^k)+F_1( z ^k)\|^2\\
     &\quad -64\eta\delta^2\|z ^{k}- z^*\|^2+\frac{1}{64\eta}\| z ^k- z ^{k-1}\|^2\\
     &\geq \frac{1}{\eta}\| z ^k- z^* \|^2-\frac{1}{64\eta\delta^2}\delta^2\|z^k-z^{k-1}\|^2-64\eta\delta^2\|z ^{k}- z^*\|^2+\frac{1}{64\eta}\| z ^k- z ^{k-1}\|^2\\
     &= \frac{1}{2\eta}(2-128\eta^2\delta^2)\| z ^k- z^* \|^2\geq 0.
\end{align*}

\section{The Proofs for Upper Complexity Bounds}\label{appx:upper}

We provide the proofs for results in Section \ref{sec:analysis}.

\subsection{Proof of Lemma \ref{lem:convergenceLya}}

In later analysis, we denote $\E_k[\cdot]$ as the expectation with respect to the random sampled set $\fS^k$ in round $k$ and denote $\E_{k+1/2}[\cdot]$ as the expectation with respect to the random update of the snapshot point $w^k$ in round $k$.
Specifically, we take the constant
\begin{align}\label{eq:value-c}\begin{split}
c:=100+\frac{64\eta\delta^2}{\mu}+ 2048\eta^2 \delta^2 +96\eta\mu+64\sqrt{2\eta\Phi^0}
\end{split} \end{align}
for the statement of Lemma \ref{lem:convergenceLya}.

We first provide several lemmas that will be used in the proof of Lemma \ref{lem:convergenceLya}.

\begin{lem}\label{lem:lem1prooflem1}
    Under the setting of Lemma \ref{lem:convergenceLya}, we have
    \begin{align*} \begin{split}
    &-2\E \left[\langle\delta^k-\E _k[\delta^k],\hat{u}^k- z ^k\rangle\right]\\
    &\leq \frac{1}{2\eta}\E \left[\|\hat{u}^k- z ^k\|^2\right]+\frac{4\eta \delta^2}{b}\E \left[\| z ^k-w^{k-1}\|^2\right]+\frac{4\eta \delta^2\alpha^2}{b}\E \left[\| z ^k- z ^{k-1}\|^2\right],
\end{split} \end{align*}
and 
\begin{align*}
    &-2\E \left[\langle \E _k[\delta^k]+F_1(\hat{u}^k)-F( z^* ),\hat{u}^k- z^* \rangle\right]\\
    &\leq \frac{2\delta^2}{\mu}\E \left[\|\hat{u}^k-u^k\|^2  \right]+\frac{\mu}{2}\E \left[\|\hat{u}^k- z^* \|^2  \right]-2\mu\E \left[\|\hat{u}^k- z^* \|^2  \right]
    +\frac{1}{64\eta}\E \left[\| z ^k- z ^{k+1}\|^2\right]\\
    &\quad + 64\eta \delta^2 \E \left[\|\hat{u}^k-u^k\|^2  \right]-2\E \left[\langle F( z ^k)-F_1( z ^k)-F( z ^{k+1})+F_1( z ^{k+1}), z ^{k+1}- z^* \rangle\right]\\
    &\quad +\frac{1}{4\eta}\E \left[\| z ^k-\hat{u}^k\|^2\right]+4\eta\delta^2\alpha^2\E \left[\| z ^k- z ^{k-1}\|^2\right]\\
    &\quad -2\alpha\E \left[\langle F( z ^k)-F_1( z ^k)-F( z ^{k-1})+F_1( z ^{k-1}), z ^{k}- z^* \rangle\right].
\end{align*}
\end{lem}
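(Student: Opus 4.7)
The plan is to start by making the two identities on which everything rests. First, since each $j \in \fS^k$ is i.i.d.\ uniform on $[n]$, I get $\E_k[F_j - F_1] = F - F_1 =: G$, and therefore
\[
\E_k[\delta^k] = G(z^k) + \alpha\bigl(G(z^k)-G(z^{k-1})\bigr).
\]
Second, writing $F_1(\hat{u}^k) = F(\hat{u}^k) - G(\hat{u}^k)$ gives the crucial rewrite
\[
\E_k[\delta^k] + F_1(\hat{u}^k) - F(z^*) = \bigl(F(\hat{u}^k) - F(z^*)\bigr) + \bigl(G(z^k) - G(\hat{u}^k)\bigr) + \alpha\bigl(G(z^k) - G(z^{k-1})\bigr).
\]
Throughout, I would use the estimate $\|G(a)-G(b)\| \le \delta\|a-b\|$, which follows from Lemma \ref{lem:ss} applied to $G = F - F_1 = -(F_1 - F)$, to control every $G$-difference.

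For the first inequality, I would apply the pointwise Young bound $-2\langle A,B\rangle \le 2\eta\|A\|^2 + \frac{1}{2\eta}\|B\|^2$ with $A=\delta^k - \E_k[\delta^k]$ and $B=\hat{u}^k - z^k$, then take expectations. The conditional variance splits (via $\|x+y\|^2 \le 2\|x\|^2+2\|y\|^2$) into two mean-zero mini-batch averages of i.i.d.\ samples; each picks up a $1/b$ factor and a $\delta^2\|\cdot\|^2$ bound from Lemma \ref{lem:ss}, giving $\E_k\|\delta^k - \E_k[\delta^k]\|^2 \le \frac{2\delta^2}{b}\|z^k - w^{k-1}\|^2 + \frac{2\alpha^2\delta^2}{b}\|z^k - z^{k-1}\|^2$. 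Multiplying by $2\eta$ and combining with the $\frac{1}{2\eta}\|\hat{u}^k-z^k\|^2$ term matches the target.

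For the second inequality, I would pair the rewrite above with $\hat{u}^k - z^*$. The first bracket contributes $-2\mu\|\hat{u}^k - z^*\|^2$ by the $\mu$-strong monotonicity of $F$ (Assumption \ref{asm:scsc}). For the second bracket, I insert $\pm G(z^{k+1}) = \pm G(u^k)$ and split $\hat{u}^k - z^* = (z^{k+1}-z^*) + (\hat{u}^k - u^k)$; this extracts the target inner product $-2\langle G(z^k)-G(z^{k+1}), z^{k+1}-z^*\rangle$ verbatim, a cross term that Young with weight $64\eta\delta^2$ bounds by $\frac{1}{64\eta}\|z^k - z^{k+1}\|^2 + 64\eta\delta^2\|\hat{u}^k - u^k\|^2$, and a residual $-2\langle G(u^k)-G(\hat{u}^k), \hat{u}^k - z^*\rangle$ that Young with weight $\mu/2$ bounds by $\frac{2\delta^2}{\mu}\|\hat{u}^k - u^k\|^2 + \frac{\mu}{2}\|\hat{u}^k - z^*\|^2$. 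For the $\alpha$-bracket, I would split $\hat{u}^k - z^* = (z^k - z^*) + (\hat{u}^k - z^k)$ to peel off $-2\alpha\langle G(z^k)-G(z^{k-1}), z^k - z^*\rangle$ verbatim, and bound the remaining cross term by Young with weight $1/(4\eta)$ as $4\eta\alpha^2\delta^2\|z^k - z^{k-1}\|^2 + \frac{1}{4\eta}\|\hat{u}^k - z^k\|^2$. Taking expectations completes the proof.

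Conceptually the argument is routine once the $F = F_1 + G$ rewrite is identified; the main obstacle is bookkeeping, namely choosing the four Young weights ($2\eta$, $64\eta\delta^2$, $\mu/2$, $1/(4\eta)$) so that after substituting the $\delta$-Lipschitz estimate each $\|G\|^2$-side lands precisely on the target constant. Each RHS term in the claim pins down its weight uniquely, so the calibration is essentially forced.
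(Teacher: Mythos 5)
Your proposal is correct and matches the paper's own argument essentially step for step: the same conditional-variance bound with the $1/b$ factor and Lemma~\ref{lem:ss} plus Young's inequality for the first estimate, and the same rewrite $F_1(\hat u^k)=F(\hat u^k)-G(\hat u^k)$ with the splits through $z^{k+1}=u^k$ and $z^k$, monotonicity of $F$, and the identical Young weights ($2\eta$, $64\eta\delta^2$, $\mu/2$, $1/(4\eta)$) for the second. The paper simply writes the resulting six-term decomposition in one display rather than deriving it by inserting $\pm G(z^{k+1})$, but the algebra is the same.
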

\begin{proof}
    Firstly note that
\begin{align*}\begin{split}
    \E _k[\delta^k]=F( z ^k)-F_1( z ^k)+\alpha\left(F( z ^k) - F_1( z ^k)- F( z ^{k-1}) + F_1( z ^{k-1})\right).
\end{split} \end{align*}
According to the uniform and independent sampling and Lemma \ref{lem:ss} we have
\begin{align*} 
    \E \left[\|\delta^k-\E _k[\delta^k]\|^2\right]&\leq 2\E \left\|\frac{1}{b}\sum_{j\in \fS^k}\left(F_{j}( z ^k) - F_{j}(w^{k-1})\right)- \left(F( z ^k) -F(w^{k-1})\right)\right\|^2\\
    &\quad+2\E \left\|\frac{\alpha}{b}\sum_{j\in \fS^k}\left(F_{j}( z ^k) - F_{j}( z ^{k-1})\right)- \left(F( z ^k) -F( z ^{k-1})\right)\right\|^2\\
    &=  \frac{2}{b^2}\E\left[ \sum_{j\in \fS^k}\left\|\left(F_{j}( z ^k) - F_{j}(w^{k-1})\right)- \left(F( z ^k) -F(w^{k-1})\right)\right\|^2\right]\\
    &\quad+\frac{2\alpha^2}{b^2}\E \left[\sum_{j\in \fS^k}\left\|\left(F_{j}( z ^k) - F_{j}( z ^{k-1})\right)- \left(F( z ^k) -F( z ^{k-1})\right)\right\|^2\right]\\
    &\leq   \frac{2}{nb}\E \left[\sum_{j=1}^n\left\|\left(F_{j}( z ^k) - F_{j}(w^{k-1})\right)- \left(F( z ^k) -F(w^{k-1})\right)\right\|^2\right]\\
    &\quad+\frac{2\alpha^2}{nb}\E\left[ \sum_{j=1}^n\left\|\left(F_{j}( z ^k) - F_{j}( z ^{k-1})\right)- \left(F( z ^k) -F( z ^{k-1})\right)\right\|^2\right]\\
    &\leq \frac{2\delta^2}{b}\E \left[\| z ^k-w^{k-1}\|^2\right]+\frac{2\delta^2\alpha^2}{b}\E \left[\| z ^k- z ^{k-1}\|^2\right].
 \end{align*}

According to the above bound on $\E \left[\|\delta^k-\E _k[\delta^k]\|^2\right]$, we achieve the first result as follows
\begin{align*}\begin{split}
    &-2\E \left[\langle\delta^k-\E _k[\delta^k],\hat{u}^k- z ^k\rangle\right]\\
    &\leq \frac{1}{2\eta}\E \left[\|\hat{u}^k- z ^k\|^2\right]+2\eta\E \left[ \|\delta^k-\E _k[\delta^k]\|^2\right]\\
    &\leq \frac{1}{2\eta}\E \left[\|\hat{u}^k- z ^k\|^2\right]+\frac{4\eta \delta^2}{b}\E \left[\| z ^k-w^{k-1}\|^2\right]+\frac{4\eta \delta^2\alpha^2}{b}\E \left[\| z ^k- z ^{k-1}\|^2\right].
\end{split} \end{align*}

Again using Lemma \ref{lem:ss}, we achieve the second result as follows
\begin{align*}
    &-2\E \left[\langle \E _k[\delta^k]+F_1(\hat{u}^k)-F( z^* ),\hat{u}^k- z^* \rangle\right]\\
    &= -2\E \left[\langle F( z ^k)-F_1( z ^k)+\alpha\left(F( z ^k) - F_1( z ^k)- F( z ^{k-1}) + F_1( z ^{k-1})\right),\hat{u}^k- z^* \rangle\right]\\
    &\quad -2\E \left[\langle F_1(\hat{u}^k)-F( z^* ),\hat{u}^k- z^* \rangle\right]\\
    &= -2\E \left[\langle F(u^k)-F_1(u^k)-F(\hat{u}^k)+F_1(\hat{u}^k),\hat{u}^k- z^* \rangle\right]-2\E \left[\langle F(\hat{u}^k)-F( z^* ),\hat{u}^k- z^* \rangle\right]\\
    &\quad -2\E \left[\langle F( z ^k)-F_1( z ^k)-F( z ^{k+1})+F_1( z ^{k+1}),\hat{u}^k- z ^{k+1}\rangle\right]\\
    &\quad -2\E \left[\langle F( z ^k)-F_1( z ^k)-F( z ^{k+1})+F_1( z ^{k+1}), z ^{k+1}- z^* \rangle\right]\\
    &\quad -2\alpha\E \left[\langle F( z ^k)-F_1( z ^k)-F( z ^{k-1})+F_1( z ^{k-1}),\hat{u}^k- z ^{k}\rangle\right]\\
    &\quad -2\alpha\E \left[\langle F( z ^k)-F_1( z ^k)-F( z ^{k-1})+F_1( z ^{k-1}), z ^{k}- z^* \rangle\right]\\
    &\leq \frac{2\delta^2}{\mu}\E \left[\|\hat{u}^k-u^k\|^2  \right]+\frac{\mu}{2}\E \left[\|\hat{u}^k- z^* \|^2  \right]-2\mu\E \left[\|\hat{u}^k- z^* \|^2  \right]
    +\frac{1}{64\eta}\E \left[\| z ^k- z ^{k+1}\|^2\right]\\
    &\quad + 64\eta \delta^2 \E \left[\|\hat{u}^k-u^k\|^2  \right]-2\E \left[\langle F( z ^k)-F_1( z ^k)-F( z ^{k+1})+F_1( z ^{k+1}), z ^{k+1}- z^* \rangle\right]\\
    &\quad +\frac{1}{4\eta}\E \left[\| z ^k-\hat{u}^k\|^2\right]+4\eta\delta^2\alpha^2\E \left[\| z ^k- z ^{k-1}\|^2\right]\\
    &\quad -2\alpha\E \left[\langle F( z ^k)-F_1( z ^k)-F( z ^{k-1})+F_1( z ^{k-1}), z ^{k}- z^* \rangle\right].
 \end{align*}
\end{proof}

\begin{lem}\label{lem:lem1prooflem2}
    Under setting of Lemma \ref{lem:convergenceLya}, we have
    \begin{align*}
        &-\frac{1}{8\eta}\E \left[\|\hat{u}^k- z ^k\|^2\right]-\frac{\gamma}{\eta}\E \left[\|w^k-\hat{u}^k\|^2\right]-\frac{3\mu}{2}\E \left[\|\hat{u}^k- z^* \|^2\right]\\
        &\leq -\frac{1}{16\eta}\E \left[\|\hat{u}^k- z ^k\|^2\right]-\frac{1}{32\eta}\E \left[\| z ^{k+1}- z ^k\|^2\right]+\left(\frac{1}{8\eta}+3\mu\right)\E \left[\|\hat{u}^k-u^k\|^2\right]\\
        &\quad -\frac{\gamma}{2\eta}\E \left[\|w^k-\hat{u}^k\|^2\right]
   -\frac{\gamma}{4\eta}\E \left[\|w^k- z ^{k+1}\|^2\right] -\mu\E \left[\| z ^{k+1}- z^* \|^2\right].
    \end{align*}
\end{lem}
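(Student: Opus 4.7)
The statement compares two expressions that differ only in which of $\hat u^k$, $u^k$, or $z^{k+1}$ sits inside each squared norm. Since $z^{k+1}=u^k$ by line \ref{line:update} of the algorithm, the plan is simply to use Young's inequality three times to swap $u^k$ (equivalently $z^{k+1}$) for $\hat u^k$ on the right-hand side, absorbing the resulting discrepancies into the $(\tfrac{1}{8\eta}+3\mu)\,\E[\|\hat u^k-u^k\|^2]$ slack term. No deeper ingredient (e.g.\ the sub-problem accuracy $\varepsilon_k$, smoothness, or similarity) is required for this step.

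Concretely, I would rearrange the desired inequality as $\mathrm{RHS}-\mathrm{LHS}\ge 0$ and process each of the three term types separately. First, since $\|z^{k+1}-z^k\|^2\le 2\|u^k-\hat u^k\|^2+2\|\hat u^k-z^k\|^2$, the term $-\tfrac{1}{32\eta}\E[\|z^{k+1}-z^k\|^2]$ on the RHS is bounded below by $-\tfrac{1}{16\eta}\E[\|\hat u^k-z^k\|^2]-\tfrac{1}{16\eta}\E[\|u^k-\hat u^k\|^2]$; together with the $-\tfrac{1}{16\eta}\|\hat u^k-z^k\|^2$ already on the RHS, this exactly accounts for the $-\tfrac{1}{8\eta}\|\hat u^k-z^k\|^2$ on the LHS, leaving an error of $-\tfrac{1}{16\eta}\|u^k-\hat u^k\|^2$. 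Second, the analogous split $\|w^k-z^{k+1}\|^2\le 2\|u^k-\hat u^k\|^2+2\|w^k-\hat u^k\|^2$ converts $-\tfrac{\gamma}{4\eta}\|w^k-z^{k+1}\|^2$ into $-\tfrac{\gamma}{2\eta}\|w^k-\hat u^k\|^2$ plus the rest, and combined with the explicit $-\tfrac{\gamma}{2\eta}\|w^k-\hat u^k\|^2$ on the RHS it cancels the $-\tfrac{\gamma}{\eta}\|w^k-\hat u^k\|^2$ on the LHS, leaving an error of $-\tfrac{\gamma}{2\eta}\|u^k-\hat u^k\|^2$. Third, Young's inequality with parameter $\tfrac{1}{2}$ gives $\|z^{k+1}-z^*\|^2\le \tfrac32\|\hat u^k-z^*\|^2+3\|u^k-\hat u^k\|^2$, so that $-\mu\|z^{k+1}-z^*\|^2$ handles the $-\tfrac{3\mu}{2}\|\hat u^k-z^*\|^2$ on the LHS up to an error of $-3\mu\|u^k-\hat u^k\|^2$.

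Summing the three errors produces a coefficient $\tfrac{1}{16\eta}+\tfrac{\gamma}{2\eta}+3\mu$ in front of $\|u^k-\hat u^k\|^2$, which must be dominated by the slack $\tfrac{1}{8\eta}+3\mu$ provided on the RHS. This reduces to $\tfrac{\gamma}{2\eta}\le \tfrac{1}{16\eta}$, i.e.\ $\gamma\le 1/8$, which is precisely one of the hypotheses inherited from Lemma \ref{lem:convergenceLya}. Taking expectations throughout finishes the proof.

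There is no real obstacle here: the lemma is a pure algebraic rearrangement whose correctness hinges only on (i) the identity $z^{k+1}=u^k$ and (ii) the condition $\gamma\le 1/8$ that makes the Young errors fit inside the $(\tfrac{1}{8\eta}+3\mu)\|\hat u^k-u^k\|^2$ budget. The only mild subtlety is choosing the Young parameter $\tfrac{1}{2}$ for the $\|z^{k+1}-z^*\|^2$ split; any coefficient giving a factor strictly larger than $1$ on $\|u^k-\hat u^k\|^2$ would work, but $\tfrac{1}{2}$ matches exactly the $-\tfrac{3\mu}{2}\to -\mu$ reduction stated in the lemma.
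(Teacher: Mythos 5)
Your proposal is correct and is essentially the paper's own argument: the paper likewise applies Young's inequality three times (in the equivalent forms $\|a+b\|^2\geq\tfrac12\|a\|^2-\|b\|^2$ and $\tfrac32\|a+b\|^2\geq\|a\|^2-3\|b\|^2$) to replace $\hat u^k$ by $u^k=z^{k+1}$, and uses $\gamma\leq 1/8$ to absorb the $\tfrac{\gamma}{2\eta}\|\hat u^k-u^k\|^2$ error into the $\bigl(\tfrac{1}{8\eta}+3\mu\bigr)\|\hat u^k-u^k\|^2$ slack. The only nitpick is that the identity $z^{k+1}=u^k$ comes from the assignment step of Algorithm \ref{alg:SVOGS} rather than from line \ref{line:update} (which updates $w^{k+1}$), but this does not affect the argument.
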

\begin{proof}
    From the facts $\|a+b\|^2\geq\frac{1}{2}\|a\|^2-\|b\|^2$ and $\frac{3}{2}\|a+b\|^2\geq\|a\|^2-3\|b\|^2$, we have
\begin{align*} \begin{split}
    -\frac{1}{8\eta}\E \left[\|\hat{u}^k- z ^k\|^2\right]\leq -\frac{1}{16\eta}\E \left[\|\hat{u}^k- z ^k\|^2\right]-\frac{1}{32\eta}\E \left[\| z ^{k+1}- z ^k\|^2\right]+\frac{1}{16\eta}\E \left[\|\hat{u}^k-u^k\|^2\right],
\end{split} \end{align*}
\begin{align*} \begin{split}
   -\frac{\gamma}{\eta}\E \left[\|w^k-\hat{u}^k\|^2\right]&\leq 
   -\frac{\gamma}{2\eta}\E \left[\|w^k-\hat{u}^k\|^2\right]
   -\frac{\gamma}{4\eta}\E \left[\|w^k- z ^{k+1}\|^2\right]+\frac{\gamma}{2\eta}\E \left[\|\hat{u}^k-u^k\|^2\right]\\
   &\leq 
   -\frac{\gamma}{2\eta}\E \left[\|w^k-\hat{u}^k\|^2\right]
   -\frac{\gamma}{4\eta}\E \left[\|w^k- z ^{k+1}\|^2\right]+\frac{1}{16\eta}\E \left[\|\hat{u}^k-u^k\|^2\right],
\end{split} \end{align*}
and
\begin{align*} \begin{split}
   -\frac{3\mu}{2}\E \left[\|\hat{u}^k- z^* \|^2\right]\leq -\mu\E \left[\| z ^{k+1}- z^* \|^2\right]+3\mu\E \left[\|\hat{u}^k-u^k\|^2\right],
\end{split} \end{align*}
where we use the setting $\gamma\leq 1/8$ in the second inequality.
\end{proof}

\begin{lem}\label{lem:lem1prooflem3}
Under setting of Lemma \ref{lem:convergenceLya}, we have
    \begin{align*}
        &\left(\frac{1}{\eta}+\mu\right)\E \left[\| z ^{k+1}- z^* \|^2\right]+2\E \left[\langle F( z ^k)-F_1( z ^k)-F( z ^{k+1})+F_1( z ^{k+1}), z ^{k+1}- z^* \rangle\right]\\
    &\quad +\frac{1}{64\eta}\E \left[\| z ^{k+1}- z ^k\|^2\right]+\frac{\gamma}{4\eta}\E \left[\|w^k- z ^{k+1}\|^2\right]+\frac{\gamma+\frac{1}{2}\eta\mu}{p\eta}\E \left[\|w^{k+1}- z^* \|^2\right]\\
    &\leq \left(\frac{1}{\eta}+\frac{\mu}{2}\right)\E \left[\| z ^k- z^* \|^2\right]+2\alpha\E \left[\langle F( z ^{k-1})-F_1( z ^{k-1})-F( z ^k)+F_1( z ^k), z ^{k}- z^* \rangle\right]\\
    &\quad +\alpha\frac{1}{64\eta}\E \left[\| z ^k- z ^{k-1}\|^2\right]+\frac{4\eta \delta^2}{b}\E \left[\|w^{k-1}- z ^k\|^2\right]-\frac{1}{16\eta}\E \left[\| z ^k-\hat{u}^k\|^2\right] \\
    &\quad +\left(-\frac{7}{8\eta}+\frac{2\delta^2}{\mu}+ 64\eta \delta^2 +3\mu\right)\E \left[\|\hat{u}^k-u^k\|^2  \right]+\frac{2}{\eta}\E \left[\|\hat{u}^k-u^k\|\| z ^{k+1}- z^* \|\right]\\
    &\quad +\left(1-\frac{p\eta\mu}{2\gamma+\eta\mu}\right)\frac{(\gamma+\frac{1}{2}\eta\mu)}{p\eta}\E \left[\|w^k- z^* \|^2\right]-\frac{\gamma}{2\eta}\E \left[\|w^k-\hat{u}^k\|^2\right].
    \end{align*}
\end{lem}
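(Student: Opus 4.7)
The plan is to derive the inequality from the optimality condition of the inexact proximal subproblem on line~\ref{line:sub-prob}, then pass through strong monotonicity, the variance decomposition of the stochastic optimistic estimator $\delta^k$, and finally the randomness of the snapshot update on line~\ref{line:update}. Concretely, since $\hat{u}^k$ is the exact saddle of the strongly-convex-strongly-concave problem (\ref{prob:sub-svogs}) and $v^k = \bar{z}^k - \eta\delta^k = (1-\gamma)z^k + \gamma w^k - \eta\delta^k$, the first-order optimality condition yields
\begin{align*}
\left\langle F_1(\hat{u}^k) + \delta^k + \tfrac{1}{\eta}\big(\hat{u}^k - (1-\gamma)z^k - \gamma w^k\big),\ \hat{u}^k - z\right\rangle \le 0
\quad \text{for all } z\in\fZ.
\end{align*}
Setting $z=z^*$ and expanding the quadratic piece via the three-point identity $2\langle a-b,a-c\rangle=\|a-b\|^2+\|a-c\|^2-\|b-c\|^2$ produces $\|\hat{u}^k-z^*\|^2$, $\|\hat{u}^k-\bar{z}^k\|^2$, and $-\|\bar{z}^k-z^*\|^2$; then the convexity bound $\|\bar{z}^k-z^*\|^2\le(1-\gamma)\|z^k-z^*\|^2+\gamma\|w^k-z^*\|^2$ separates the $z^k$ and $w^k$ contributions with the right weighting.

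Next I will split the $\delta^k$ inner product as $2\langle \delta^k,\hat{u}^k-z^*\rangle=2\langle \delta^k-\E_k[\delta^k],\hat{u}^k-z^k\rangle+2\langle \delta^k-\E_k[\delta^k],z^k-z^*\rangle+2\langle\E_k[\delta^k],\hat{u}^k-z^*\rangle$, note that $\E_k$ of the middle term vanishes, and apply Lemma~\ref{lem:lem1prooflem1} to bound the first and third. Adding $2\langle F(z^*),\hat{u}^k-z^*\rangle=0$ to the RHS of the second bound of Lemma~\ref{lem:lem1prooflem1} converts $F_1$-only terms into $F-F_1$ optimistic-gradient inner products, and the $\mu$-strong monotonicity from Assumption~\ref{asm:scsc} absorbs the $2\mu\|\hat{u}^k-z^*\|^2$ term. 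The $\alpha$-multiplied optimistic-gradient inner product evaluated at $z^{k-1},z^k$ on the RHS of the claim is precisely the momentum contribution produced by the $\alpha$-term in $\delta^k$, while the un-weighted one at $z^k,z^{k+1}$ on the LHS is what the standard (non-momentum) part of $\delta^k$ produces after telescoping once with $z^{k+1}=u^k\approx\hat{u}^k$.

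After these manipulations I will take expectation over the Bernoulli update of $w^{k+1}$: $\E_{k+1/2}\|w^{k+1}-z^*\|^2=p\|z^{k+1}-z^*\|^2+(1-p)\|w^k-z^*\|^2$. Multiplying by $(\gamma+\tfrac{1}{2}\eta\mu)/(p\eta)$ produces the $w^{k+1}$ LHS term together with a residual $(\gamma+\tfrac{1}{2}\eta\mu)/\eta\cdot\|z^{k+1}-z^*\|^2$, and the $w^k$ coefficient equals $(1-p)(\gamma+\tfrac{1}{2}\eta\mu)/(p\eta)$, which is easily verified to equal $(1-p\eta\mu/(2\gamma+\eta\mu))(\gamma+\tfrac{1}{2}\eta\mu)/(p\eta)-\mu/2$; the extra $-\mu/2$ is then combined with strong-monotonicity leftover on the $\|\hat{u}^k-z^*\|^2$ coefficient via Lemma~\ref{lem:lem1prooflem2}. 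The $\|u^k-\hat{u}^k\|$ error is folded into the stated $\|\hat{u}^k-u^k\|$ and $\|\hat{u}^k-u^k\|\|z^{k+1}-z^*\|$ terms by Cauchy--Schwarz and the contractivity between $z^{k+1}$ and $u^k$ from $z^{k+1}=u^k$.

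The main obstacle is bookkeeping: juggling simultaneously (i) the three distinct quadratic weights $1/\eta+\mu$, $1/(64\eta)$, and $\gamma/(4\eta)$, each of which must be generated from a different Young-inequality split of the $\|\hat{u}^k-\bar{z}^k\|^2$ identity, (ii) the $\alpha$ factor that propagates from the momentum term through Lemma~\ref{lem:lem1prooflem1} into the RHS optimistic-gradient term at time $k-1$, and (iii) the exact matching of the $w^{k+1}$ coefficient so that its multiplier collapses to the compact form $(1-p\eta\mu/(2\gamma+\eta\mu))$. The parameter constraints $\gamma\le 1/8$, $\eta\le 1/(32\delta)$, and the $\alpha$-definition of Lemma~\ref{lem:convergenceLya} will be used only implicitly here to keep quantities nonnegative; the precise contractive factor is reserved for the subsequent application of this lemma together with Lemma~\ref{lem:lem1prooflem2} to derive (\ref{eq:convergenceLya}).
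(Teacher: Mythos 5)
Your route is the same as the paper's (optimality of $\hat u^k$ in the subproblem, three-point expansion around $\bar z^k=(1-\gamma)z^k+\gamma w^k$, martingale/mean split of $\delta^k$ handled by Lemma~\ref{lem:lem1prooflem1}, Lemma~\ref{lem:lem1prooflem2} to pass from $\hat u^k$ to $z^{k+1}$, then the Bernoulli expansion of $w^{k+1}$), but the coefficient bookkeeping in the last step --- which is the entire content of this lemma --- fails as written. The identity you invoke,
$(1-p)\tfrac{\gamma+\eta\mu/2}{p\eta}=\bigl(1-\tfrac{p\eta\mu}{2\gamma+\eta\mu}\bigr)\tfrac{\gamma+\eta\mu/2}{p\eta}-\tfrac{\mu}{2}$,
is false: since $\tfrac{p\eta\mu}{2\gamma+\eta\mu}\cdot\tfrac{\gamma+\eta\mu/2}{p\eta}=\tfrac{\mu}{2}$, the right side equals $\tfrac{\gamma+\eta\mu/2}{p\eta}-\mu$, while the left equals $\tfrac{\gamma+\eta\mu/2}{p\eta}-\tfrac{\gamma}{\eta}-\tfrac{\mu}{2}$, so they coincide only if $\gamma=\eta\mu/2$. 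The correct matching must combine the Bernoulli term with the $\tfrac{\gamma}{\eta}\|w^k-z^*\|^2$ produced by your own $\bar z^k$ expansion:
$\tfrac{\gamma}{\eta}+(1-p)\tfrac{\gamma+\eta\mu/2}{p\eta}=\bigl(1-\tfrac{p\eta\mu}{2\gamma+\eta\mu}\bigr)\tfrac{\gamma+\eta\mu/2}{p\eta}$,
which is how the stated $\|w^k-z^*\|^2$ coefficient arises; with your accounting the assembled coefficient either overshoots the stated one (if the $\gamma/\eta$ term is kept) or requires illegitimately discarding a positive right-hand term. Relatedly, you correctly note that the Bernoulli step leaves a residual $\tfrac{\gamma+\eta\mu/2}{\eta}\,\E[\|z^{k+1}-z^*\|^2]$ on the right, but you never say how it is disposed of; the lemma contains no such term, and its right-hand coefficient $\tfrac{1}{\eta}+\tfrac{\mu}{2}$ on $\|z^k-z^*\|^2$ (rather than the $\tfrac{1-\gamma}{\eta}$ your expansion yields) is precisely where that residual has to be accounted for, so your sketch does not actually land on the displayed inequality.

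A second gap: after the one-sided Jensen bound on $\|\bar z^k-z^*\|^2$ you only hold $-\tfrac{1}{\eta}\|\hat u^k-\bar z^k\|^2$, whereas both the statement and the hypotheses of Lemma~\ref{lem:lem1prooflem2} require the separated negative terms $-\tfrac{1-\gamma}{\eta}\|\hat u^k-z^k\|^2$ and $-\tfrac{\gamma}{\eta}\|w^k-\hat u^k\|^2$; these are the only sources of $-\tfrac{1}{16\eta}\|z^k-\hat u^k\|^2$, $-\tfrac{\gamma}{2\eta}\|w^k-\hat u^k\|^2$, $\tfrac{1}{64\eta}\|z^{k+1}-z^k\|^2$ and $\tfrac{\gamma}{4\eta}\|w^k-z^{k+1}\|^2$ in the claim. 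You need the exact polarization
$2\gamma\langle w^k-z^k,\hat u^k-z^*\rangle=\gamma\|w^k-z^*\|^2-\gamma\|w^k-\hat u^k\|^2-\gamma\|z^k-z^*\|^2+\gamma\|z^k-\hat u^k\|^2$
(equivalently, the exact convex-combination identities for $\|\hat u^k-\bar z^k\|^2$ and $\|\bar z^k-z^*\|^2$, whose $\gamma(1-\gamma)\|z^k-w^k\|^2$ corrections cancel), not a one-sided bound. Finally, a small point: $\langle F(z^*),\hat u^k-z^*\rangle$ is only nonnegative by the variational inequality at the constrained solution, not zero; you use it in the harmless direction, but the equality claim should be corrected.
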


\begin{proof}
The optimality of $\hat{u}^k$ implies
\begin{align} \label{eq:21-main}\begin{split}
\langle \eta F_1(\hat{u}^k)+\hat{u}^k-v^k, z^* -\hat{u}^k\rangle \geq 0.
\end{split} \end{align}


Combine equation (\ref{eq:21-main}) with the update rule in Line \ref{line:updatevk} of Algorithm \ref{alg:SVOGS} and $\langle -\gamma F( z^* ), z^* -\hat{u}^k\rangle \geq 0$, we achieve
\begin{align}\label{eq:24-main} \begin{split}
    -\frac{1}{\eta}\langle \bar{ z }^k-\hat{u}^k-\eta\delta^k,\hat{u}^k- z^* \rangle \leq -\langle F_1(\hat{u}^k)-F( z^* ),\hat{u}^k- z^* \rangle.
\end{split} \end{align}

Using the result of equation (\ref{eq:24-main}), we have
\begin{align*}\begin{split}
    \frac{1}{\eta}\|\hat{u}^k- z^* \|^2&=\frac{1}{\eta}\| z ^k- z^* \|^2+\frac{2}{\eta}\langle\hat{u}^k- z ^k,\hat{u}^k- z^* \rangle-\frac{1}{\eta}\|\hat{u}^k- z ^k\|^2\\
    &=\frac{1}{\eta}\| z ^k- z^* \|^2+\frac{2\gamma}{\eta}\langle w^k- z ^k,\hat{u}^k- z^* \rangle-2\langle\delta^k,\hat{u}^k- z^* \rangle\\
    &\quad -\frac{1}{\eta}\|\hat{u}^k- z ^k\|^2-\frac{2}{\eta}\langle \bar{ z }^k-\hat{u}^k-\eta\delta^k,\hat{u}^k- z^* \rangle \\
    &\leq \frac{1}{\eta}\| z ^k- z^* \|^2+\frac{2\gamma}{\eta}\langle w^k- z ^k,\hat{u}^k- z^* \rangle-2\langle\delta^k,\hat{u}^k- z^* \rangle\\
    &\quad -\frac{1}{\eta}\|\hat{u}^k- z ^k\|^2-2\langle F_1(\hat{u}^k)-F( z^* ),\hat{u}^k- z^* \rangle\\
     &= \frac{1}{\eta}\| z ^k- z^* \|^2+\frac{\gamma}{\eta}\|w^k- z^* \|^2-\frac{\gamma}{\eta}\|w^k-\hat{u}^k\|^2-\frac{\gamma}{\eta}\| z ^k- z^* \|^2\\
     &\quad -\frac{1-\gamma}{\eta}\|\hat{u}^k- z ^k\|^2-2\langle\delta^k-\E _k[\delta^k],\hat{u}^k- z ^k+ z ^k- z^* \rangle\\
     &\quad -2\langle \E _k[\delta^k]+F_1(\hat{u}^k)-F( z^* ),\hat{u}^k- z^* \rangle.
\end{split} \end{align*}
Taking the expectation on above result and using the fact 
\begin{align*}
    \E \left[\langle\delta^k-\E _k[\delta^k], z ^k- z^* \rangle\right]=\E \left[\E _k\left[\langle\delta^k-\E _k[\delta^k], z ^k- z^* \rangle\right]\right]=0,
\end{align*}
we obtain
\begin{align}\label{eq:28-main} \begin{split}
     \frac{1}{\eta}\E \left[\|\hat{u}^k- z^* \|^2\right]&\leq \E \left[\frac{1}{\eta}\| z ^k- z^* \|^2+\frac{\gamma}{\eta}\|w^k- z^* \|^2-\frac{\gamma}{\eta}\|w^k-\hat{u}^k\|^2-\frac{\gamma}{\eta}\| z ^k- z^* \|^2\right]\\
     &\quad -\E \left[\frac{1-\gamma}{\eta}\|\hat{u}^k- z ^k\|^2\right]-2\E \left[\langle\delta^k-\E _k[\delta^k],\hat{u}^k- z ^k\rangle\right]\\
     &\quad -2\E \left[\langle \E _k[\delta^k]+F_1(\hat{u}^k)-F( z^* ),\hat{u}^k- z^* \rangle\right].
\end{split} \end{align}

Applying Lemma \ref{lem:lem1prooflem1} to bound the term 
$-2\E \left[\langle\delta^k-\E _k[\delta^k],\hat{u}^k- z ^k\rangle\right]$ in equation (\ref{eq:28-main}), we obtain
\begin{align} \label{eq:33-main}
    &\frac{1}{\eta}\E \left[\|\hat{u}^k- z^* \|^2\right]\nonumber\\
    &\leq \E \left[\frac{1}{\eta}\| z ^k- z^* \|^2+\frac{\gamma}{\eta}\|w^k- z^* \|^2-\frac{\gamma}{\eta}\|w^k-\hat{u}^k\|^2-\frac{\gamma}{\eta}\| z ^k- z^* \|^2-\frac{1/4-\gamma}{\eta}\|\hat{u}^k- z ^k\|^2\right]\nonumber\\
     &\quad +\frac{4\eta \delta^2}{b}\E \left[\| z ^k-w^{k-1}\|^2\right]+\frac{4\eta \delta^2\alpha^2}{b}\E \left[\| z ^k- z ^{k-1}\|^2\right]\nonumber\\
    &\quad + \frac{2\delta^2}{\mu}\E \left[\|\hat{u}^k-u^k\|^2  \right]-\frac{3}{2}\mu\E \left[\|\hat{u}^k- z^* \|^2  \right]+\frac{1}{64\eta}\E \left[\| z ^k- z ^{k+1}\|^2\right]+ 64\eta \delta^2 \E \left[\|\hat{u}^k-u^k\|^2  \right]\nonumber\\
    &\quad -2\E \left[\langle F( z ^k)-F_1( z ^k)-F( z ^{k+1})+F_1( z ^{k+1}), z ^{k+1}- z^* \rangle\right]+4\eta\delta^2\alpha^2\E \left[\| z ^k- z ^{k-1}\|^2\right]\nonumber\\
    &\quad -2\alpha\E \left[\langle F( z ^k)-F_1( z ^k)-F( z ^{k-1})+F_1( z ^{k-1}), z ^{k}- z^* \rangle\right]\nonumber\\
    &\leq \frac{1}{\eta}\E \left[\| z ^k- z^* \|^2\right]+\frac{\gamma}{\eta}\E \left[\|w^k- z^* \|^2\right]-\frac{\gamma}{\eta}\E \left[\|w^k-\hat{u}^k\|^2\right]-\frac{\gamma}{\eta}\E \left[\| z ^k- z^* \|^2\right]\nonumber\\
     &\quad -\frac{1}{8\eta}\E \left[\|\hat{u}^k- z ^k\|^2\right]+\frac{1}{64\eta}\E \left[\| z ^k- z ^{k+1}\|^2\right]+\frac{4\eta \delta^2}{b}\E \left[\| z ^k-w^{k-1}\|^2\right]\nonumber\\
    &\quad +\left(\frac{4\eta \delta^2\alpha^2}{b}+4\eta\delta^2\alpha^2\right)\E \left[\| z ^k- z ^{k-1}\|^2\right]+\left(\frac{2\delta^2}{\mu}+ 64\eta \delta^2 \right)\E \left[\|\hat{u}^k-u^k\|^2  \right] \\
    &\quad -2\E \left[\langle F( z ^k)-F_1( z ^k)-F( z ^{k+1})+F_1( z ^{k+1}), z ^{k+1}- z^* \rangle\right]-\frac{3\mu}{2}\E \left[\|\hat{u}^k- z^* \|^2\right]\nonumber\\
    &\quad -2\alpha\E \left[\langle F( z ^k)-F_1( z ^k)-F( z ^{k-1})+F_1( z ^{k-1}), z ^{k}- z^* \rangle\right],\nonumber
 \end{align}
 where we use the setting $\gamma\leq 1/8$ in the second inequality.

Then we consider the terms related to $\hat{u}$. 
Firstly, we have
\begin{align}\label{eq:34-hatu1}\begin{split}
     \frac{1}{\eta}\E \left[\|\hat{u}^k- z^* \|^2\right]=
     \frac{1}{\eta}\E \left[\| z ^{k+1}- z^* \|^2\right]+\frac{1}{\eta}\E \left[\|\hat{u}^k-u^k\|^2\right]-\frac{2}{\eta}\E \left[\|\hat{u}^k-u^k\|\| z ^{k+1}- z^* \|\right].
\end{split} \end{align}

Applying Lemma \ref{lem:lem1prooflem2} and plugging equation (\ref{eq:34-hatu1}) into equation (\ref{eq:33-main}), we have
\begin{align} \label{eq:39-main}\begin{split}
    & \frac{1}{\eta}\E \left[\| z ^{k+1}- z^* \|^2\right]+2\E \left[\langle F( z ^k)-F_1( z ^k)-F( z ^{k+1})+F_1( z ^{k+1}), z ^{k+1}- z^* \rangle\right]\\
    &\quad +\frac{1}{64\eta}\E \left[\| z ^{k+1}- z ^k\|^2\right]+\frac{\gamma}{4\eta}\E \left[\|w^k- z ^{k+1}\|^2\right]\\
    &\leq \frac{1}{\eta}\E \left[\| z ^k- z^* \|^2\right]+2\alpha\E \left[\langle F( z ^{k-1})-F_1( z ^{k-1})-F( z ^k)+F_1( z ^k), z ^{k}- z^* \rangle\right]\\
    &\quad + \frac{\alpha}{64\eta}\E \left[\| z ^k- z ^{k-1}\|^2\right]+\frac{\gamma}{\eta}\E \left[\|w^k- z^* \|^2\right]-\frac{\gamma}{\eta}\E \left[\| z ^k- z^* \|^2\right]-\mu\E \left[\| z ^{k+1}- z^* \|^2\right]\\
     &\quad +\frac{4\eta \delta^2}{b}\E \left[\| z ^k-w^{k-1}\|^2\right]+\frac{2}{\eta}\E \left[\|\hat{u}^k-u^k\|\| z ^{k+1}- z^* \|\right]-\frac{1}{16\eta}\E \left[\|\hat{u}^k- z ^k\|^2\right]\\
    &\quad +\left(-\frac{7}{8\eta}+\frac{2\delta^2}{\mu}+ 64\eta \delta^2 +3\mu\right)\E \left[\|\hat{u}^k-u^k\|^2  \right]-\frac{\gamma}{2\eta}\E \left[\|w^k-\hat{u}^k\|^2\right],
\end{split} \end{align}
where we use the fact that ${256\eta^2 \delta^2\alpha^2}/{b}+256\eta^2\delta^2\alpha^2\leq \alpha$ to bound the coefficient before the term of $\E \left[\| z ^k- z ^{k-1}\|^2\right]$.

Then we add the term
\begin{align}\label{eq:40-update1} \begin{split}
    \mu\E \left[\| z ^{k+1}- z^* \|^2\right]+\frac{\gamma+\frac{1}{2}\eta\mu}{p\eta}\E \left[\|w^{k+1}- z^* \|^2\right]
\end{split} \end{align}
to both sides of equation (\ref{eq:39-main}) and use the update rule in Line \ref{line:update} of Algorithm \ref{alg:SVOGS} to obtain
\begin{align*}\begin{split}
     \frac{\gamma+\frac{1}{2}\eta\mu}{p\eta}\E \left[\|w^{k+1}- z^* \|^2\right]&=\frac{\gamma+\frac{1}{2}\eta\mu}{p\eta}\E \left[\E _{w^{k+1}}\left[\|w^{k+1}- z^* \|^2\right]\right]\\
     &=\frac{\gamma+\frac{1}{2}\eta\mu}{\eta}\E \left[\| z ^{k}- z^* \|^2\right]+\frac{(\gamma+\frac{1}{2}\eta\mu)(1-p)}{p\eta}\E \left[\|w^{k}- z^* \|^2\right],
\end{split} \end{align*}
and 
\begin{align*}\begin{split}
    \frac{\gamma}{\eta}+\frac{(\gamma+\frac{1}{2}\eta\mu)(1-p)}{p\eta}=\left(1-p+\frac{p\gamma}{(\gamma+\frac{1}{2}\eta\mu)}\right)\frac{(\gamma+\frac{1}{2}\eta\mu)}{p\eta}=\left(1-\frac{p\eta\mu}{2\gamma+\eta\mu}\right)\frac{(\gamma+\frac{1}{2}\eta\mu)}{p\eta}.
\end{split} \end{align*}

Combining all above results, we achieve
\begin{align*}
    &\left(\frac{1}{\eta}+\mu\right)\E \left[\| z ^{k+1}- z^* \|^2\right]+2\E \left[\langle F( z ^k)-F_1( z ^k)-F( z ^{k+1})+F_1( z ^{k+1}), z ^{k+1}- z^* \rangle\right]\\
    &\quad +\frac{1}{64\eta}\E \left[\| z ^{k+1}- z ^k\|^2\right]+\frac{\gamma}{4\eta}\E \left[\|w^k- z ^{k+1}\|^2\right]+\frac{\gamma+\frac{1}{2}\eta\mu}{p\eta}\E \left[\|w^{k+1}- z^* \|^2\right]\\
    &\leq \left(\frac{1}{\eta}+\frac{\mu}{2}\right)\E \left[\| z ^k- z^* \|^2\right]+2\alpha\E \left[\langle F( z ^{k-1})-F_1( z ^{k-1})-F( z ^k)+F_1( z ^k), z ^{k}- z^* \rangle\right] \\
    &\quad + \frac{\alpha}{64\eta}\E \left[\| z ^k- z ^{k-1}\|^2\right]+\frac{4\eta \delta^2}{b}\E \left[\|w^{k-1}- z ^k\|^2\right]-\frac{1}{16\eta}\E \left[\| z ^k-\hat{u}^k\|^2\right] \\
    &\quad +\left(-\frac{7}{8\eta}+\frac{2\delta^2}{\mu}+ 64\eta \delta^2 +3\mu\right)\E \left[\|\hat{u}^k-u^k\|^2  \right]+\frac{2}{\eta}\E \left[\|\hat{u}^k-u^k\|\| z ^{k+1}- z^* \|\right]\\
    &\quad +\left(1-\frac{p\eta\mu}{2\gamma+\eta\mu}\right)\frac{(\gamma+\frac{1}{2}\eta\mu)}{p\eta}\E \left[\|w^k- z^* \|^2\right]-\frac{\gamma}{2\eta}\E \left[\|w^k-\hat{u}^k\|^2\right].
\end{align*}
\end{proof}


\begin{lem}\label{lem:lem1prooflem4}
Under setting of Lemma \ref{lem:convergenceLya}, we additionally assume $\E[\Phi^k]\leq \Phi^0 $ holds, then we have
\begin{align*} 
    \E [\Phi^{k+1}]\leq \max\left\{1-\frac{\eta\mu}{6},1-\frac{p\eta\mu}{2\gamma+\eta\mu}\right\}\E [\Phi^{k}]-\frac{1}{16\eta}\E \left[\| z ^k-\hat{u}^k\|^2\right]-\frac{\gamma}{2\eta}\E \left[\|w^k-\hat{u}^k\|^2\right].
 \end{align*}
\end{lem}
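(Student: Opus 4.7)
The plan is to invoke Lemma \ref{lem:lem1prooflem3}, whose left-hand side is, by direct term-for-term comparison with the definition in (\ref{eq:Lyapunov}), exactly $\E[\Phi^{k+1}]$, and then to show its right-hand side is dominated by $\rho\,\E[\Phi^k]-\frac{1}{16\eta}\E\|z^k-\hat{u}^k\|^2-\frac{\gamma}{2\eta}\E\|w^k-\hat{u}^k\|^2$, where $\rho:=\max\{1-\eta\mu/6,\,1-p\eta\mu/(2\gamma+\eta\mu)\}$ coincides with the choice of $\alpha$ made in Lemma \ref{lem:convergenceLya}.

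First I would carry out a coefficient-by-coefficient matching to absorb the ``structural'' pieces of the right-hand side of Lemma \ref{lem:lem1prooflem3} into $\rho\,\Phi^k$. Specifically, I would check: $\rho(1/\eta+\mu)\ge 1/\eta+\mu/2$ (reducing to $\eta\mu\le 2$, ensured by $\eta\le 1/\mu$) for the $\|z^k-z^*\|^2$ term; $2\alpha=2\rho$ for the inner-product term; the identity $\alpha/(64\eta)=\rho/(64\eta)$ for the $\|z^k-z^{k-1}\|^2$ term; the hypothesis $4\eta\delta^2/b\le\alpha\gamma/(4\eta)$ for the $\|w^{k-1}-z^k\|^2$ term; and finally the defining inequality of $\rho$ for the $\|w^k-z^*\|^2$ term. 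The two negative pieces $-\frac{1}{16\eta}\E\|z^k-\hat{u}^k\|^2$ and $-\frac{\gamma}{2\eta}\E\|w^k-\hat{u}^k\|^2$ appear already on the right-hand side of Lemma \ref{lem:lem1prooflem3} and are retained unchanged.

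What remains is the residual $(-7/(8\eta)+2\delta^2/\mu+64\eta\delta^2+3\mu)\E\|\hat{u}^k-u^k\|^2+(2/\eta)\E[\|\hat{u}^k-u^k\|\|z^{k+1}-z^*\|]$, which must be absorbed into zero (or into the slack of the coefficient matchings above). Using line \ref{line:sub-prob} of Algorithm \ref{alg:SVOGS} together with the stipulated accuracy $\varepsilon_k\le c^{-1}\min\{\|\hat{u}^k-z^k\|,\|\hat{u}^k-z^k\|^2\}$, I would drop the non-positive $-7/(8\eta)$ piece of the quadratic residual and bound $\|\hat{u}^k-u^k\|^2\le c^{-1}\|\hat{u}^k-z^k\|^2$; the three positive summands $2\delta^2/\mu$, $64\eta\delta^2$, $3\mu$ are each individually dominated by one of the matching pieces $64\eta\delta^2/\mu$, $2048\eta^2\delta^2$, $96\eta\mu$ of the explicit constant $c$ in (\ref{eq:value-c}), contributing in total at most $(3/(32\eta))\E\|\hat{u}^k-z^k\|^2$.

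The main obstacle is the mixed term, since $\|z^{k+1}-z^*\|$ is not a priori bounded. Here the plan is to combine the pointwise inequality $\|z^k-z^*\|^2\le 2\eta\Phi^k$ established in Appendix \ref{appx:nonneg-Lya} with the induction hypothesis $\E[\Phi^k]\le\Phi^0$ to obtain $\E\|z^k-z^*\|^2\le 2\eta\Phi^0$, and then expand via $\|z^{k+1}-z^*\|\le\|z^k-z^*\|+\|\hat{u}^k-z^k\|+\|u^k-\hat{u}^k\|$. Applying $\|\hat{u}^k-u^k\|\le c^{-1/2}\|\hat{u}^k-z^k\|$ and Young's inequality splits the mixed term into a quadratic contribution in $\|\hat{u}^k-z^k\|^2$ (absorbed through $c$ exactly as above) and a piece scaling like $c^{-1/2}\sqrt{2\eta\Phi^0}$ that is precisely suppressed by the summand $64\sqrt{2\eta\Phi^0}$ built into $c$. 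Summing all contributions, the aggregate positive coefficient of $\|\hat{u}^k-z^k\|^2$ is kept strictly below the $1/(16\eta)$ budget inherited from Lemma \ref{lem:lem1prooflem3}, so the final inequality takes exactly the form stated in the lemma.
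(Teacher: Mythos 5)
Your overall route is the same as the paper's: read the left-hand side of Lemma \ref{lem:lem1prooflem3} as $\E[\Phi^{k+1}]$, match coefficients against $\alpha\,\E[\Phi^k]$ (via $\eta\mu\le 1$, the hypothesis $4\eta\delta^2/b\le\alpha\gamma/(4\eta)$, and the definition of $\alpha$), and absorb the inexactness residual using the accuracy condition, the constant $c$ in (\ref{eq:value-c}), the triangle inequality for $\|z^{k+1}-z^*\|$, and $\|z^k-z^*\|\le\sqrt{2\eta\Phi^k}$ combined with the hypothesis $\E[\Phi^k]\le\Phi^0$. The genuine gap is in the closing absorption step. First, you charge the residual against a ``$1/(16\eta)$ budget inherited from Lemma \ref{lem:lem1prooflem3}'', but no such budget is available: the lemma's conclusion must retain the full $-\frac{1}{16\eta}\E\|z^k-\hat u^k\|^2$ term, so nothing of it can be spent on positive contributions. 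Second, even if one were willing to weaken that coefficient, your own accounting overruns it: bounding each of $2\delta^2/\mu$, $64\eta\delta^2$, $3\mu$ by its single matching summand of $c$ gives three separate contributions of $\frac{1}{32\eta}$, i.e.\ already $\frac{3}{32\eta}>\frac{1}{16\eta}$ before the mixed term is touched; and the piece $\frac{2}{\eta}\E[\|\hat u^k-u^k\|\,\|\hat u^k-z^k\|]$, for which line \ref{line:sub-prob} (which controls the \emph{squared} error) only yields $\|\hat u^k-u^k\|\le c^{-1/2}\|\hat u^k-z^k\|$, contributes up to $\frac{2}{\eta\sqrt c}$, of order $\frac{1}{5\eta}$ since $c$ need only be of order $10^2$. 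With these bounds the negative term is wiped out entirely, so the argument as written proves no version of the stated inequality. The same square-root issue undermines the claim that the $\sqrt{\Phi^0}$ mixed piece is ``precisely suppressed'': with $c^{-1/2}$ it stays linear, not quadratic, in $\|\hat u^k-z^k\|$.

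The paper's calibration is joint rather than per-term: every summand of $c$ in (\ref{eq:value-c}) equals $32\eta$ times the corresponding residual coefficient ($64\eta\delta^2/\mu=32\eta\cdot 2\delta^2/\mu$, $2048\eta^2\delta^2=32\eta\cdot 64\eta\delta^2$, $96\eta\mu=32\eta\cdot 3\mu$, $100=32\eta(\tfrac{9}{8\eta}+\tfrac{2}{\eta})$, and $64\sqrt{2\eta\Phi^0}=32\eta\cdot 2\sqrt{2\Phi^0/\eta}$, the last invoking $\E[\Phi^k]\le\Phi^0$), and the error is used in the form $\E\|\hat u^k-u^k\|\le\tilde\zeta^k\min\{\E\|\hat u^k-z^k\|,\E\|\hat u^k-z^k\|^2\}$ with $\tilde\zeta^k\ge 1/c$ as in (\ref{eq:49-tildee}); summing \emph{all} the pieces at once then bounds the entire residual by $\frac{1}{32\eta}\E\|\hat u^k-z^k\|^2$, which is what allows the $-\frac{1}{16\eta}$ and $-\frac{\gamma}{2\eta}$ terms to survive (up to the factor-of-two slack the paper itself effectively tolerates, cf.\ the $\frac{1}{32}$ used downstream in (\ref{eq:99-bound2})). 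To repair your write-up, redo the absorption jointly in this way rather than term-by-term, and treat explicitly the distinction between $\|\hat u^k-u^k\|$ and $\|\hat u^k-u^k\|^2$ coming from line \ref{line:sub-prob}, since that is exactly where the $c^{-1/2}$ versus $c^{-1}$ discrepancy enters.
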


\begin{proof}

Recall that the definition of our Lyapunov function is
\begin{align}\label{eq:45-lya} \begin{split}
   \Phi^k =& \left(\frac{1}{\eta}+\mu\right)\| z ^k- z^* \|^2+2\langle F( z ^{k-1})-F_1( z ^{k-1})-F( z ^k)+F_1( z ^k), z ^{k}- z^* \rangle\\
    & +\frac{1}{64\eta}\| z ^k- z ^{k-1}\|^2+\frac{\gamma}{4\eta}\|w^{k-1}- z ^k\|^2+\frac{(2\gamma+\eta\mu)}{2p\eta}\|w^k- z^* \|^2.
\end{split} \end{align}

Recall that we take constant $c$ by equation (\ref{eq:value-c}), then the condition 
\begin{align*}
\varepsilon_k\leq c^{-1}\min\left\{\|\hat{u}^k- z ^k\|,\|\hat{u}^k- z ^k\|^2\right\}    
\end{align*}
guarantees 
\begin{align*} \begin{split}
    \E \left[\|\hat{u}^k-u^k\|\right]\leq \tilde{\zeta}^k\min\left\{\E \left[\|\hat{u}^k- z ^k\|\right],\E \left[\|\hat{u}^k- z ^k\|^2\right]\right\},
\end{split} \end{align*}
where
\begin{align} \label{eq:49-tildee}\begin{split}
    \tilde{\zeta}^k&=\frac{1}{32\eta}\frac{1}{\frac{9}{8\eta}+\frac{2\delta^2}{\mu}+ 64\eta \delta^2 +3\mu+\frac{2}{\eta}+2\sqrt{\frac{2\E[\Phi^k]}{\eta}}}\\
    &=\frac{1}{100+\frac{64\eta\delta^2}{\mu}+ 2048\eta^2 \delta^2 +96\eta\mu+64\sqrt{2\eta\E[\Phi^k]}}\\
    &\geq \frac{1}{100+\frac{64\eta\delta^2}{\mu}+ 2048\eta^2 \delta^2 +96\eta\mu+64\sqrt{2\eta\Phi^0}}=\frac{1}{c}.
\end{split} \end{align}
The inequality (\ref{eq:49-tildee}) is based on the assumption $\E[\Phi^k]\leq \Phi^0 $.

Note that we have $\| z ^{k+1}- z^* \|\leq \|u^k-\hat{u}^k\|+\|\hat{u}^k- z ^k\|+\| z ^k- z^* \|$, then
\begin{align*} 
    &\left(-\frac{7}{8\eta}+\frac{2\delta^2}{\mu}+ 64\eta \delta^2 +3\mu\right)\E \left[\|\hat{u}^k-u^k\|^2  \right]+\frac{2}{\eta}\E \left[\|\hat{u}^k-u^k\|\| z ^{k+1}- z^* \|\right]\\
    &\leq \left(\frac{9}{8\eta}+\frac{2\delta^2}{\mu}+ 64\eta \delta^2 +3\mu\right)\E \left[\|\hat{u}^k-u^k\|^2  \right]+\frac{2}{\eta}\E \left[\|\hat{u}^k-u^k\|\|\hat{u}^k- z ^k\|\right]\\
    &\quad +\frac{2}{\eta}\E \left[\|\hat{u}^k-u^k\|\| z ^k- z^* \|\right]\\
    &\leq \left(\frac{9}{8\eta}+\frac{2\delta^2}{\mu}+ 64\eta \delta^2 +3\mu\right)\E \left[\|\hat{u}^k-u^k\|^2  \right]+\frac{2}{\eta}\E \left[\|\hat{u}^k-u^k\|\|\hat{u}^k- z ^k\|\right]\\
    &\quad +2\sqrt{\frac{2\E[\Phi^k]}{\eta}}\E \left[\|\hat{u}^k-u^k\|\right]\\
    &\leq \frac{1}{32\eta}\E \left[\|\hat{u}^k- z ^k\|^2\right].
 \end{align*}

According to Lemma \ref{lem:lem1prooflem3}, we have
\begin{align*}
    &\left(\frac{1}{\eta}+\mu\right)\E \left[\| z ^{k+1}- z^* \|^2\right]+2\E \left[\langle F( z ^k)-F_1( z ^k)-F( z ^{k+1})+F_1( z ^{k+1}), z ^{k+1}- z^* \rangle\right]\\
    &\quad +\frac{1}{64\eta}\E \left[\| z ^{k+1}- z ^k\|^2\right]+\frac{\gamma}{4\eta}\E \left[\|w^k- z ^{k+1}\|^2\right]+\frac{\gamma+\frac{1}{2}\eta\mu}{p\eta}\E \left[\|w^{k+1}- z^* \|^2\right]\\
    &\leq \left(\frac{1}{\eta}+\frac{\mu}{2}\right)\E \left[\| z ^k- z^* \|^2\right]+2\alpha\E \left[\langle F( z ^{k-1})-F_1( z ^{k-1})-F( z ^k)+F_1( z ^k), z ^{k}- z^* \rangle\right]\\
    &\quad +\alpha\frac{1}{64\eta}\E \left[\| z ^k- z ^{k-1}\|^2\right]+\frac{4\eta \delta^2}{b}\E \left[\|w^{k-1}- z ^k\|^2\right]+\frac{1}{32\eta}\E \left[\| z ^k-\hat{u}^k\|^2\right]  \\
    &\quad -\frac{1}{16\eta}\E \left[\| z ^k-\hat{u}^k\|^2\right]-\frac{\gamma}{2\eta}\E \left[\|w^k-\hat{u}^k\|^2\right]+\left(1-\frac{p\eta\mu}{2\gamma+\eta\mu}\right)\frac{(\gamma+\frac{1}{2}\eta\mu)}{p\eta}\E \left[\|w^k- z^* \|^2\right].
\end{align*}

From the facts ${4\eta \delta^2}/{b}\leq \alpha{\gamma}/{(4\eta)}$ and 
$\eta\mu\leq 1$, we obtain $(1/\eta+\mu/2)\leq (1/\eta+\mu)(1-\eta\mu/6)$.
Thus, it holds
\begin{align*} \begin{split}
    &\left(\frac{1}{\eta}+\mu\right)\E \left[\| z ^{k+1}- z^* \|^2\right]+2\E \left[\langle F( z ^k)-F_1( z ^k)-F( z ^{k+1})+F_1( z ^{k+1}), z ^{k+1}- z^* \rangle\right]\\
    &\quad +\frac{1}{64\eta}\E \left[\| z ^{k+1}- z ^k\|^2\right]+\frac{\gamma}{4\eta}\E \left[\|w^k- z ^{k+1}\|^2\right]+\frac{\gamma+\frac{1}{2}\eta\mu}{p\eta}\E \left[\|w^{k+1}- z^* \|^2\right]\\
    &\leq \left(1-\frac{\eta\mu}{6}\right)\left(\frac{1}{\eta}+\mu\right)\E \left[\| z ^k- z^* \|^2\right]+\left(1-\frac{p\eta\mu}{2\gamma+\eta\mu}\right)\frac{(\gamma+\frac{1}{2}\eta\mu)}{p\eta}\E \left[\|w^k- z^* \|^2\right]\\
    &\quad +2\alpha\E \left[\langle F( z ^{k-1})-F_1( z ^{k-1})-F( z ^k)+F_1( z ^k), z ^{k}- z^* \rangle\right]+\alpha\frac{1}{64\eta}\E \left[\| z ^k- z ^{k-1}\|^2\right]\\
&\quad +\alpha\frac{\gamma}{2\eta}\E \left[\|w^{k-1}- z ^k\|^2\right]-\frac{1}{16\eta}\E \left[\| z ^k-\hat{u}^k\|^2\right]-\frac{\gamma}{2\eta}\E \left[\|w^k-\hat{u}^k\|^2\right].
\end{split} \end{align*}

The definition (\ref{eq:45-lya}) and the setting $\alpha=\max\left\{1-\eta\mu/6,1-p\eta\mu/(2\gamma+\eta\mu)\right\}$ implies
\begin{align}\label{eq:lemma1res}  \begin{split}
    \E [\Phi^{k+1}]\leq \max\left\{1-\frac{\eta\mu}{6},1-\frac{p\eta\mu}{2\gamma+\eta\mu}\right\}\E [\Phi^{k}]-\frac{1}{16\eta}\E \left[\| z ^k-\hat{u}^k\|^2\right]-\frac{\gamma}{2\eta}\E \left[\|w^k-\hat{u}^k\|^2\right].
\end{split} \end{align}

\end{proof}

Then we provide the proof of Lemma \ref{lem:convergenceLya}.
\begin{proof}
    We firstly use the induction to prove 
\begin{align*}
\E[\Phi^k]\leq \Phi^0    
\end{align*}
holds for all $k\in \BN$. 

Note that it holds for $k=0$. Assume we have $\E[\Phi^k]\leq  \Phi^0$ holds, then Lemma \ref{lem:lem1prooflem4} means it holds
\begin{align*}
    \E [\Phi^{k+1}]\leq \max\left\{1-\frac{\eta\mu}{6},1-\frac{p\eta\mu}{2\gamma+\eta\mu}\right\}\E [\Phi^{k}]\leq \E [\Phi^{k}]\leq \Phi^0,
\end{align*}
which finish the induction.

The result of above induction implies the condition of Lemma \ref{lem:lem1prooflem4} always holds. 
Therefore, we can apply Lemma \ref{lem:lem1prooflem4} to achieve equation (\ref{eq:lemma1res}), which finishes the proof of Lemma \ref{lem:convergenceLya}.
\end{proof}

\subsection{Proof of Theorem 
\ref{thm:convergenceSVOGScc}}

We firstly introduce the following quantities for our analysis
\begin{align} \label{eq:errors}\begin{split}
    e_{11}(z,k)&:= \frac{2\eta}{b}\sum_{j\in \fS^k}\langle F(z^k)-F_j(z^k)-F(w^{k-1})+F_j(w^{k-1}) ,\hat{u}^k- z\rangle, \\[0.1cm]
    e_{12}(z,k)&:=\frac{2\eta\alpha}{b}\sum_{j\in \fS^k}\langle F(z^k)-F_j(z^k)-F(z^{k-1})+F_j(z^{k-1}) ,\hat{u}^k- z\rangle, \\[0.1cm]
    e_2(z,k)&:=\|w^{k+1}-z\|^2-p\|z^k-z\|^2-(1-p)\|w^k-z\|^2, \\[0.15cm]
    \Psi^k(z)&:=(1-\gamma)\| z ^{k+1}- z\|^2+\frac{\gamma}{p}\|w^{k}-z\|^2+\frac{1}{16}  \| z ^{k}- z ^{k-1}\|^2.
\end{split} \end{align}
Specifically, we take the constant
\begin{align}\label{value-zeta}
    \zeta:=\min\left\{\frac{\eta^2\varepsilon^2}{16(9\eta L D +3\eta\max_{i\in [n]}\|F_i(z^0)\|  + D )^2},\frac{\eta\varepsilon}{4(12\eta^2\delta^2+1)}\right\}
\end{align}
and 
\begin{align}\label{value-chat}
\hat{c}:=100+2048\eta^2\delta^2+64\sqrt{2\eta\Phi^0}\leq 102+16\sqrt{\Phi^0/\delta}
\end{align}
for the statement Theorem \ref{thm:convergenceSVOGScc}. We then provide several lemmas that will be used in the proof of Theorem \ref{thm:convergenceSVOGScc}.

\begin{lem}\label{lem:thm1prooflem1}
    Under the setting of Theorem \ref{thm:convergenceSVOGScc}, we have
    \begin{align*}
        & -2   \langle \E _k[\delta^k]+F_1(\hat{u}^k),\hat{u}^k- z  \rangle \\
    &\leq 4L D \|u^k-\hat{u}^k\|+2 \langle F(u^k), z -u^k\rangle+(8 L D +6 D_F)\|\hat{u}^k-u^k\|
    +\frac{1}{16\eta}   \| z ^k- z ^{k+1}\|^2   \\
    &\quad +16\eta \delta^2   \|\hat{u}^k-u^k\|^2-2   \langle F( z ^k)-F_1( z ^k)-F( z ^{k+1})+F_1( z ^{k+1}), z ^{k+1}- z  \rangle \\
    &\quad+\frac{1}{2\eta}   \| z ^k-\hat{u}^k\|^2 +2\eta\delta^2\alpha^2   \| z ^k- z ^{k-1}\|^2  \\
    &\quad -2\alpha   \langle F( z ^k)-F_1( z ^k)-F( z ^{k-1})+F_1( z ^{k-1}), z ^{k}- z\rangle,
    \end{align*}
    where $D_F:= \max_{i\in [n]}\sup_{z\in\fZ}\|F_i(z)\|$.
\end{lem}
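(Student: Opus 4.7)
\textbf{Proof Proposal for Lemma \ref{lem:thm1prooflem1}.}
The plan is to mirror the structure of the second identity in Lemma~\ref{lem:lem1prooflem1}, but without strong monotonicity. Instead of absorbing $-2\langle F(\hat u^k)-F(z^*),\hat u^k-z^*\rangle$ into a $-2\mu\|\hat u^k-z^*\|^2$ term, the analogous quantity here must be rewritten so that the duality-gap surrogate $2\langle F(u^k),z-u^k\rangle$ emerges explicitly (this is exactly what the outer sum in (\ref{ieq:gap-inner}) needs). First, I would expand
\begin{align*}
\E_k[\delta^k]+F_1(\hat u^k)
&= F(\hat u^k) + \bigl(F(z^k)-F_1(z^k)-F(\hat u^k)+F_1(\hat u^k)\bigr) \\
&\quad + \alpha\bigl(F(z^k)-F_1(z^k)-F(z^{k-1})+F_1(z^{k-1})\bigr),
\end{align*}
and split $-2\langle\E_k[\delta^k]+F_1(\hat u^k),\hat u^k-z\rangle$ into three pieces A, B, C corresponding to these three summands.

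For piece A, $-2\langle F(\hat u^k),\hat u^k-z\rangle$, I would insert $\pm u^k$ and $\pm F(u^k)$ to get
$-2\langle F(u^k),u^k-z\rangle - 2\langle F(\hat u^k),\hat u^k-u^k\rangle - 2\langle F(\hat u^k)-F(u^k),u^k-z\rangle$.
The first becomes the desired $2\langle F(u^k),z-u^k\rangle$. The second is bounded by $2D_F\|\hat u^k-u^k\|$ using $\hat u^k\in\fZ$ and the definition of $D_F$. The third is bounded by $2LD\|\hat u^k-u^k\|$ using $L$-smoothness and Assumption~\ref{asm:bounded}. For piece B, the key trick is to introduce $\pm(F(z^{k+1})-F_1(z^{k+1}))=\pm(F(u^k)-F_1(u^k))$, producing the telescoping inner product $-2\langle F(z^k)-F_1(z^k)-F(z^{k+1})+F_1(z^{k+1}),z^{k+1}-z\rangle$ (which is kept intact for later cancellation across rounds), plus two residuals: one of the form $-2\langle F(z^k)-F_1(z^k)-F(z^{k+1})+F_1(z^{k+1}),\hat u^k-z^{k+1}\rangle$, which by Young's inequality with weight $1/(16\eta)$ and Lemma~\ref{lem:ss} is bounded by $\tfrac{1}{16\eta}\|z^k-z^{k+1}\|^2+16\eta\delta^2\|\hat u^k-u^k\|^2$, and another one $-2\langle F(z^{k+1})-F_1(z^{k+1})-F(\hat u^k)+F_1(\hat u^k),\hat u^k-z\rangle$, bounded crudely by $L$-Lipschitzness of each $F_i$ (not by $\delta$, because the second slot has diameter $D$) and the diameter bound to yield $4LD\|\hat u^k-u^k\|$.

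For piece C, I would analogously split $\hat u^k-z=(\hat u^k-z^k)+(z^k-z)$ to isolate the telescoping term $-2\alpha\langle F(z^k)-F_1(z^k)-F(z^{k-1})+F_1(z^{k-1}),z^k-z\rangle$, and bound the remainder by Young's inequality with weight $1/(2\eta)$ and Lemma~\ref{lem:ss}, yielding $\tfrac{1}{2\eta}\|\hat u^k-z^k\|^2+2\eta\alpha^2\delta^2\|z^k-z^{k-1}\|^2$. Adding A, B, C, collecting the $\|\hat u^k-u^k\|$ coefficients (plus some slack so the stated constants $4LD+(8LD+6D_F)$ absorb contributions like $2D_F$, $2LD$, and the $4LD$ from piece B residual) gives exactly the claimed inequality.

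The main obstacle is the careful choice of the intermediate ``pivot'' points ($u^k=z^{k+1}$ in piece B, $z^k$ in piece C) so that (i) the telescoping bilinear forms that later cancel in the sum over $k$ appear with the correct signs, and (ii) the leftover Young residuals have coefficients compatible with the Lyapunov-function coefficients $1/(16\eta)$, $1/(2\eta)$, and $2\eta\alpha^2\delta^2$ used in Lemma~\ref{lem:convergenceLya}. Managing the constants on $\|\hat u^k-u^k\|$ is routine but bookkeeping-heavy; I would keep the bound slightly loose (e.g.\ dropping $\delta\le L$ and using $L$-Lipschitzness uniformly wherever the gap to $D$-scale bounds arises) so the stated $4LD+(8LD+6D_F)$ constants go through.
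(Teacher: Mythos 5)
Your proposal is correct and follows essentially the same route as the paper's proof: the identical splitting of $\E_k[\delta^k]+F_1(\hat u^k)$ into the gap surrogate $2\langle F(u^k),z-u^k\rangle$, the two retained telescoping bilinear terms, the Young/Lemma~\ref{lem:ss} bounds with weights $\frac{1}{16\eta}$ and $\frac{1}{2\eta}$, and diameter-based bounds on the residual linear terms (your constants $6LD+2D_F$ are even slightly tighter than the stated $12LD+6D_F$, so the claimed inequality follows a fortiori).
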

\begin{proof}

Note that the sequence $\{\|F_i(z)\|\}_{i=1}^n$ is bounded on $z\in\fZ$, since we have
\begin{align}\label{eq:bound-DF}
    D_F= \max_{i\in [n]}\sup_{z\in\fZ}\|F_i(z)\| \leq \max_{i\in [n]}\sup_{z\in\fZ}(\|F_i(z)-F_i(z^0)\|+\|F_i(z^0)\|)\leq LD+\max_{i\in [n]}\|F_i(z^0)\|.
\end{align}

The Lipschitz continuity of $F(\cdot)$ impies
\begin{align*}\begin{split}
    \|F_1(\hat{u}^k)\|-D_F \leq \|F_1(\hat{u}^k)\|-\|F_1(z)\|\leq \|F_1(\hat{u}^k)-F_1(z)\|\leq L D,
\end{split} \end{align*} 
then we have
\begin{align*} 
    & -2   \langle \E _k[\delta^k]+F_1(\hat{u}^k),\hat{u}^k- z  \rangle \\
    &= -2   \langle F( z ^k)-F_1( z ^k)+\alpha\left(F( z ^k) - F_1( z ^k)- F( z ^{k-1}) + F_1( z ^{k-1})\right)+F_1(\hat{u}^k),\hat{u}^k- z  \rangle \\
    &= -2   \langle F(u^k)-F_1(u^k)-F(\hat{u}^k)+F_1(\hat{u}^k),\hat{u}^k- z  \rangle \\
    &\quad -2   \langle F(\hat{u}^k),\hat{u}^k- z  \rangle \\
    &\quad -2   \langle F( z ^k)-F_1( z ^k)-F( z ^{k+1})+F_1( z ^{k+1}),\hat{u}^k- z ^{k+1}\rangle \\
    &\quad -2   \langle F( z ^k)-F_1( z ^k)-F( z ^{k+1})+F_1( z ^{k+1}), z ^{k+1}- z  \rangle \\
    &\quad -2\alpha   \langle F( z ^k)-F_1( z ^k)-F( z ^{k-1})+F_1( z ^{k-1}),\hat{u}^k- z ^{k}\rangle \\
    &\quad -2\alpha   \langle F( z ^k)-F_1( z ^k)-F( z ^{k-1})+F_1( z ^{k-1}), z ^{k}- z  \rangle \\
    &\leq 4L D \|u^k-\hat{u}^k\|+2 \langle F(u^k), z -u^k\rangle+(8 L D +6 D_F)\|\hat{u}^k-u^k\|
    +\frac{1}{16\eta}   \| z ^k- z ^{k+1}\|^2   \\
    &\quad +16\eta \delta^2   \|\hat{u}^k-u^k\|^2-2   \langle F( z ^k)-F_1( z ^k)-F( z ^{k+1})+F_1( z ^{k+1}), z ^{k+1}- z  \rangle \\
    &\quad+\frac{1}{2\eta}   \| z ^k-\hat{u}^k\|^2 +2\eta\delta^2\alpha^2   \| z ^k- z ^{k-1}\|^2  \\
    &\quad -2\alpha   \langle F( z ^k)-F_1( z ^k)-F( z ^{k-1})+F_1( z ^{k-1}), z ^{k}- z\rangle.
\end{align*}
\end{proof}
\begin{lem}\label{lem:thm1prooflem2}
  Under the setting of Theorem \ref{thm:convergenceSVOGScc}, the quantities defined in equation (\ref{eq:errors}) hold
  \begin{align*}
&\max_{z\in\fZ}\Psi^0(z)+\E\left[\max_{z\in\fZ}\sum_{k=0}^{K-1}e_{11}(z,k)+e_{12}(z,k)+\frac{\gamma}{p}e_2(z,k)\right]\\
       & \leq  \max_{z\in\fZ} 4\|z^0-z\|^2 +\frac{4\eta^2\delta^2}{b} \sum_{k=0}^{K-1}\E\left[\|z^k-\hat{u}^{k}\|^2\right]+\left(2p+\frac{4\eta^2\delta^2}{b}  \right)\sum_{k=0}^{K-1}\E\left[\|w^k-\hat{u}^{k}\|^2\right]\\
    &\quad +\left(2p+\frac{8\eta^2\delta^2}{b}\right)\sum_{k=0}^{K-1}\E\left[\|\hat{u}^k-u^k\|^2\right],
  \end{align*}
\end{lem}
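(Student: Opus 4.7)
The plan is to apply Lemma~\ref{lem:mar} to the martingale-difference parts of $e_{11}$, $e_{12}$ and $e_2$, and to absorb the remaining residuals via Young's inequality together with the similarity bound of Lemma~\ref{lem:ss}. Let $\fF_k$ denote the filtration generated by all randomness strictly before the $k$-th sample $\fS^k$ and the $k$-th Bernoulli draw that produces $w^{k+1}$. Writing
\begin{align*}
\xi_k := \tfrac{1}{b}\sum_{j\in\fS^k}\bigl[(F-F_j)(z^k)-(F-F_j)(w^{k-1})\bigr],\qquad \xi_k' := \tfrac{1}{b}\sum_{j\in\fS^k}\bigl[(F-F_j)(z^k)-(F-F_j)(z^{k-1})\bigr],
\end{align*}
we have $\E[\xi_k\mid\fF_k]=\E[\xi_k'\mid\fF_k]=0$, and I would decompose each error around the $\fF_k$-measurable point $z^k$, e.g.
\begin{align*}
e_{11}(z,k)=\langle-2\eta\xi_k,\,z\rangle+2\eta\langle\xi_k,\,z^k\rangle+2\eta\langle\xi_k,\,\hat{u}^k-z^k\rangle,
\end{align*}
isolating a $z$-linear martingale-difference term, a zero-mean $z$-independent term, and a Young-type remainder. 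For $e_2$, expanding $\|\cdot-z\|^2$ isolates the $z$-linear piece $-2\langle w^{k+1}-pz^{k+1}-(1-p)w^k,\,z\rangle$ that has zero conditional mean under the Bernoulli draw, together with a quadratic residue that is either centered or telescopes in $k$.

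Next I would assemble the $z$-linear pieces into one martingale difference $r^{k+1}:=-2\eta\xi_k-2\eta\alpha\xi_k'-\tfrac{2\gamma}{p}\bigl(w^{k+1}-pz^{k+1}-(1-p)w^k\bigr)$ and apply Lemma~\ref{lem:mar} with starting point $x^0=z^0$, obtaining $\E\bigl[\max_{z\in\fZ}\sum_k\langle r^{k+1},z\rangle\bigr]\le \max_{z\in\fZ}\tfrac{1}{2}\|z^0-z\|^2+\tfrac{1}{2}\sum_k\E\|r^{k+1}\|^2$. Using Lemma~\ref{lem:ss} and the sampling-with-replacement identity already exploited in the proof of Lemma~\ref{lem:convergenceLya} gives $\E_k\|\xi_k\|^2\le(\delta^2/b)\|z^k-w^{k-1}\|^2$, $\E_k\|\xi_k'\|^2\le(\delta^2/b)\|z^k-z^{k-1}\|^2$, and $\E\|w^{k+1}-pz^{k+1}-(1-p)w^k\|^2\le p\|z^{k+1}-w^k\|^2$, while the Young remainder $2\eta\langle\xi_k,\hat{u}^k-z^k\rangle$ (and its $\alpha$-weighted analogue from $e_{12}$) is bounded by $2\eta^2\|\xi_k\|^2+\tfrac{1}{2}\|\hat{u}^k-z^k\|^2$.

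Third, I would rewrite each squared distance $\|z^k-w^{k-1}\|^2,\|z^k-z^{k-1}\|^2,\|z^{k+1}-w^k\|^2$ in the target triple $\{\|z^k-\hat{u}^k\|^2,\|w^k-\hat{u}^k\|^2,\|\hat{u}^k-u^k\|^2\}$ by the triangle inequality together with the identity $z^{k+1}=u^k$ and a one-step index shift; the boundary contributions at $k=0$ cancel by the initialisation $w^{-1}=z^{-1}=w^0=z^0$. The telescoping residue $p(\|z^K-z\|^2-\|z^0-z\|^2)$ from $e_2$ is paired with $\Psi^0(z)=(1-\gamma)\|z^1-z\|^2+\tfrac{\gamma}{p}\|w^0-z\|^2$ (the $\tfrac{1}{16}\|z^0-z^{-1}\|^2$ part vanishes by initialisation), and the elementary inequality $\|z^1-z\|^2\le 2\|z^1-z^0\|^2+2\|z^0-z\|^2$ lets the remaining $z$-dependent pieces collapse under a single $4\|z^0-z\|^2$ once the $\tfrac{1}{2}\|z^0-z\|^2$ contributions produced by Lemma~\ref{lem:mar} for the three martingale processes are added in.

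The hard part will be the careful bookkeeping in this last step: the mismatched indices between $w^{k-1},z^{k-1}$ and the target triple force one-step shifts and matching boundary terms, and the precise constants $4\eta^2\delta^2/b$, $2p+4\eta^2\delta^2/b$, $2p+8\eta^2\delta^2/b$ in the claim only appear if the Young scalars and the triangle splits are tuned so that each of $\|z^k-\hat{u}^k\|^2$, $\|w^k-\hat{u}^k\|^2$, $\|\hat{u}^k-u^k\|^2$ picks up exactly its stated multiplier. An additional delicate point is to make sure that the three copies of $\tfrac{1}{2}\|z^0-z\|^2$ from Lemma~\ref{lem:mar}, together with $\max_z\Psi^0(z)$, fit under a single $\max_z 4\|z^0-z\|^2$ without spawning an extra diameter term, which is exactly why the initialisation identities $w^{-1}=z^{-1}=w^0=z^0$ are indispensable.
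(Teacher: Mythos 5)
Your skeleton is the same as the paper's (Lemma~\ref{lem:mar} for the martingale parts, the variance bounds from Lemma~\ref{lem:ss} and uniform independent sampling, conversion via $z^{k+1}=u^k$ and the initialization $w^{-1}=z^{-1}=w^0=z^0$), but the way you handle the $\hat u^k$-dependence of $e_{11},e_{12}$ breaks the constants, so the plan does not prove the inequality as stated. The paper applies Lemma~\ref{lem:mar} three times, feeding in only the $z$-linear parts $\langle r^{k+1},z\rangle$ with $r^{k+1}=-2\eta\xi_k$, $-2\eta\alpha\xi_k'$ and $pz^{k+1}+(1-p)w^k-w^{k+1}$ respectively; the only quadratic by-products are then $\tfrac{2\eta^2\delta^2}{b}\E\|z^k-w^{k-1}\|^2$, $\tfrac{2\eta^2\alpha^2\delta^2}{b}\E\|z^k-z^{k-1}\|^2$ and $p(1-p)\E\|z^{k+1}-w^k\|^2$, which after $\|u^{k-1}-\cdot\|^2\le 2\|\hat u^{k-1}-\cdot\|^2+2\|\hat u^{k-1}-u^{k-1}\|^2$ and the index shift produce exactly the coefficients $\tfrac{4\eta^2\delta^2}{b}$, $2p+\tfrac{4\eta^2\delta^2}{b}$, $2p+\tfrac{8\eta^2\delta^2}{b}$; no term with weight larger than $O(\eta^2\delta^2/b)$ ever multiplies $\sum_k\E\|\hat u^k-z^k\|^2$. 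Your decomposition instead retains the residual $2\eta\langle\xi_k,\hat u^k-z^k\rangle$ and absorbs it by Young's inequality, and this cannot be tuned to the stated bound: writing $2\eta\langle\xi_k,\hat u^k-z^k\rangle\le\beta\|\hat u^k-z^k\|^2+\tfrac{\eta^2}{\beta}\|\xi_k\|^2$ with $\E_k\|\xi_k\|^2\le\tfrac{\delta^2}{b}\|z^k-w^{k-1}\|^2$, the product of the two weights is pinned at $\tfrac{\eta^2\delta^2}{b}$. Your announced choice ($\beta=\tfrac12$) puts an order-one weight on $\sum_k\E\|\hat u^k-z^k\|^2$, far above the claimed $\tfrac{4\eta^2\delta^2}{b}\le\gamma/4$, while forcing $\beta\approx\tfrac{4\eta^2\delta^2}{b}$ pushes a weight of order one onto $\E\|z^k-w^{k-1}\|^2$, which after your own triangle/index-shift conversion lands on $\|\hat u^k-w^k\|^2$ and $\|\hat u^k-u^k\|^2$ with constants near $1$ rather than $2p+O(\eta^2\delta^2/b)$. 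The ``careful tuning'' you defer to the last step therefore does not exist; this is the genuine gap. (Merging the three martingales into a single $r^{k+1}$ is a further, milder mismatch: the cross terms in $\|r^{k+1}\|^2$ already inflate the variance coefficients beyond those displayed.)

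Two additional remarks. First, your instinct to recenter at $z^k$ responds to a real subtlety --- $\hat u^k$ is correlated with $\fS^k$, and the paper's identity $\E[\max_z\sum_k e_{11}(z,k)]=\E[\max_z\sum_k\langle r^{k+1},z\rangle]$ silently discards $\E\langle r^{k+1},\hat u^k\rangle$ --- but if one wants to track that correlation without ruining the constants, the comparison point should not be $z^k$: compare $\hat u^k$ with the point obtained by solving subproblem (\ref{prob:sub-svogs}) with $\E_k[\delta^k]$ in place of $\delta^k$, which is measurable with respect to the pre-$\fS^k$ information; nonexpansiveness of that proximal map bounds the discrepancy by $\eta\|\delta^k-\E_k[\delta^k]\|$, so the residual is of order $\eta^2\|\xi_k\|^2$ rather than $\|\hat u^k-z^k\|^2$ and the structure of the claimed bound survives. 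Second, your treatment of $e_2$ leaves an extra term $p(\|z^K-z\|^2-\|z^0-z\|^2)$ that is neither in the target inequality nor cancelled by $\Psi^0(z)$; in the reading actually used in the proof of Theorem~\ref{thm:convergenceSVOGScc} (with $z^{k+1}$ in the middle term of $e_2$, consistent with $\E_{k+1/2}[w^{k+1}]=pz^{k+1}+(1-p)w^k$), the $z$-quadratic parts of $e_2$ cancel exactly and only the $z$-linear martingale piece plus a $z$-free centered quantity remain, so no telescoping residue arises.
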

\begin{proof}
    
Applying Lemma \ref{lem:mar} with $x^0=z^0$, $\fF_0=\sigma(\fS^0)$, $\fF_k=\sigma(\fS^0,\ldots,\fS^{k-1},w^k)$ for $k\geq 1$, and \begin{align*}
    r^{k+1}=\frac{2\eta}{b}\sum_{j\in \fS^k}  F_j(z^k)-F(z^k)-F_j(w^{k-1})+F(w^{k-1}),
\end{align*} and then using $\E_k[r^{k+1}]=0$, we have
\begin{align*} \begin{split}
    \E\left[\max_{z\in\fZ}\sum_{k=0}^{K-1}e_{11}(z,k)\right]&=\E\left[\max_{z\in\fZ}\sum_{k=0}^{K-1}\langle r^{k+1},z\rangle \right]\leq \max_{z\in\fZ} \frac{1}{2}\|z^0-z\|^2+\frac{1}{2}\sum_{k=0}^{K-1}\E\left[\|r^{k+1}\|^2\right]\\
    &\leq \max_{z\in\fZ} \frac{1}{2}\|z^0-z\|^2+\frac{2\eta^2\delta^2}{b} \sum_{k=0}^{K-1}\E\left[\|z^k-w^{k-1}\|^2\right].
\end{split} \end{align*}

Similarly we can obtain
\begin{align*}  
\E\left[\max_{z\in\fZ}\sum_{k=0}^{K-1}e_{12}(z,k)\right]&\leq \max_{z\in\fZ} \frac{1}{2}\|z^0-z\|^2+\frac{2\eta^2\alpha^2\delta^2}{b} \sum_{k=0}^{K-1}\E\left[\|z^k-z^{k-1}\|^2\right].
\end{align*}

Applying Lemma \ref{lem:mar} with $x^0=z^0$, $\fF_0=\sigma(\fS^0)$, $\fF_k=\sigma(\fS^0,\ldots,\fS^{k-1},w^k)$ for $k\geq 1$, and \begin{align*}
    r^{k+1}=pz^{k+1}+(1-p)w^k-w^{k+1},
\end{align*} 
and then using the fact that $\E[\|w^{k+1}\|^2-p\| z^{k+1}\|^2-(1-p)\|w^{k}\|^2]=0$ and $\E_k[r^{k+1}]=0$, we have
\begin{align*}
    \E\left[\max_{z\in\fZ}\sum_{k=0}^{K-1}e_2(z,k)\right]&=2\E\left[\max_{z\in\fZ}\sum_{k=0}^{K-1}\langle r^{k+1},z\rangle \right]\leq \max_{z\in\fZ} \|z^0-z\|^2+\sum_{k=0}^{K-1}\E\left[\|r^{k+1}\|^2\right]\\
    &\leq \max_{z\in\fZ}  \|z^0-z\|^2+p(1-p) \sum_{k=0}^{K-1}\E\left[\|z^{k+1}-w^k\|^2\right],
\end{align*}
where we use
\begin{align*} \E\left[\|r^{k+1}\|^2\right]&=\E\left[\E_{k+1/2}\|\E_{k+1/2}[w^{k+1}]-w^{k+1}\|^2\right]\\
    &=\E\left[\E_{k+1/2}[\|w^{k+1}\|^2]-\|\E_{k+1/2}[w^{k+1}]\|^2\right]\\
    &=\E\left[p\|z^{k+1}\|^2+(1-p)\|w^k\|^2-\|pz^{k+1}+(1-p)w^k\|^2\right]\\
    &=p(1-p) \E\left[\|z^{k+1}-w^k\|^2\right].
 \end{align*}

Note that $z^k=u^{k-1}$, then we have
\begin{align*} 
    & \max_{z\in\fZ}\Psi^0(z)+\E\left[\max_{z\in\fZ}\sum_{k=0}^{K-1}e_{11}(z,k)+e_{12}(z,k)+\frac{\gamma}{p}e_2(z,k)\right]\\
    & \leq  4\max_{z\in\fZ} \|z^0-z\|^2+\frac{2\eta^2\delta^2}{b} \sum_{k=0}^{K-1}\E\left[\|z^k-w^{k-1}\|^2\right]+\frac{2\eta^2\alpha^2\delta^2}{b} \sum_{k=0}^{K-1}\E\left[\|z^k-z^{k-1}\|^2\right]\\
    &\quad +p(1-p) \sum_{k=0}^{K-1}\E\left[\|z^{k+1}-w^k\|^2\right]\\
    & \leq 4 \max_{z\in\fZ} \|z^0-z\|^2+\frac{4\eta^2\delta^2}{b} \sum_{k=0}^{K-1}\E\left[\|\hat{u}^{k-1}-w^{k-1}\|^2\right]+\frac{4\eta^2\alpha^2\delta^2}{b} \sum_{k=0}^{K-1}\E\left[\|\hat{u}^{k-1}-z^{k-1}\|^2\right]\\
    &\quad +2p \sum_{k=0}^{K-1}\E\left[\|\hat{u}^k-w^k\|^2\right]+\left(\frac{4\eta^2\delta^2}{b}+\frac{4\eta^2\alpha^2\delta^2}{b}+2p\right)\sum_{k=0}^{K-1}\E\left[\|\hat{u}^k-u^k\|^2\right]\\
    & \leq  4\max_{z\in\fZ} \|z^0-z\|^2 +\frac{4\eta^2\delta^2}{b} \sum_{k=0}^{K-1}\E\left[\|z^k-\hat{u}^{k}\|^2\right]+\left(2p+\frac{4\eta^2\delta^2}{b}  \right)\sum_{k=0}^{K-1}\E\left[\|w^k-\hat{u}^{k}\|^2\right]\\
    &\quad +\left(2p+\frac{8\eta^2\delta^2}{b}\right)\sum_{k=0}^{K-1}\E\left[\|\hat{u}^k-u^k\|^2\right],
\end{align*}
where we use Young's inequality and $\alpha\leq 1$.
\end{proof}

Now we provide the proof of Theorem \ref{thm:convergenceSVOGScc}.

\begin{proof}
The optimality of $\hat{u}^k$ implies for all $z\in \fZ$, we have
\begin{align} \label{eq:211-main}\begin{split}
\langle \eta F_1(\hat{u}^k)+\hat{u}^k-v^k, z -\hat{u}^k\rangle \geq 0.
\end{split} \end{align}

Combine equation (\ref{eq:211-main}) with the update rule in Line \ref{line:updatevk} of Algorithm \ref{alg:SVOGS} , we achieve
\begin{align}\label{eq:244-main} \begin{split}
    -\frac{1}{\eta}\langle \bar{ z }^k-\hat{u}^k-\eta\delta^k,\hat{u}^k- z \rangle \leq -\langle F_1(\hat{u}^k),\hat{u}^k- z \rangle.
\end{split} \end{align}

Then we have
 \begin{align*} 
     \frac{1}{\eta}\|\hat{u}^k- z \|^2&=\frac{1}{\eta}\| z ^k- z \|^2+\frac{2}{\eta}\langle\hat{u}^k- z ^k,\hat{u}^k- z \rangle-\frac{1}{\eta}\|\hat{u}^k- z ^k\|^2\\
    &=\frac{1}{\eta}\| z ^k- z \|^2+\frac{2\gamma}{\eta}\langle w^k- z ^k,\hat{u}^k- z \rangle-2\langle\delta^k,\hat{u}^k- z \rangle\\
    &\quad -\frac{1}{\eta}\|\hat{u}^k- z ^k\|^2-\frac{2}{\eta}\langle \bar{ z }^k-\hat{u}^k-\eta\delta^k,\hat{u}^k- z \rangle \\
    &\leq \frac{1}{\eta}\| z ^k- z \|^2+\frac{2\gamma}{\eta}\langle w^k- z ^k,\hat{u}^k- z \rangle-2\langle\delta^k,\hat{u}^k- z \rangle-\frac{1}{\eta}\|\hat{u}^k- z ^k\|^2\\
     &\quad -2\langle F_1(\hat{u}^k),\hat{u}^k- z \rangle\\
     &= \frac{1}{\eta}\| z ^k- z \|^2+\frac{\gamma}{\eta}\|w^k- z \|^2-\frac{\gamma}{\eta}\|w^k-\hat{u}^k\|^2-\frac{\gamma}{\eta}\| z ^k- z \|^2-\frac{1-\gamma}{\eta}\|\hat{u}^k- z ^k\|^2\\
     &\quad -2\langle\delta^k+ F_1(\hat{u}^k),\hat{u}^k- z \rangle\\
     &= \frac{1}{\eta}\| z ^k- z \|^2+\frac{\gamma}{\eta}\|w^k- z \|^2-\frac{\gamma}{\eta}\|w^k-\hat{u}^k\|^2-\frac{\gamma}{\eta}\| z ^k- z \|^2-\frac{1-\gamma}{\eta}\|\hat{u}^k- z ^k\|^2\\
     &\quad -2\langle\E\delta^k+ F_1(\hat{u}^k),\hat{u}^k- z \rangle +\frac{1}{\eta}e_{11}(z,k)+\frac{1}{\eta}e_{12}(z,k),
\end{align*}

Combining this result with Lemma \ref{lem:thm1prooflem1}, we have
 \begin{align}\label{eq:83-main} \begin{split}
     \frac{1}{\eta}\|\hat{u}^k- z \|^2&\leq  \frac{1}{\eta}\| z ^k- z \|^2+\frac{\gamma}{\eta}\|w^k- z \|^2-\frac{\gamma}{\eta}\|w^k-\hat{u}^k\|^2-\frac{\gamma}{\eta}\| z ^k- z \|^2\\
     &\quad -\frac{1-\gamma}{\eta}\|\hat{u}^k- z ^k\|^2-2\langle\E\delta^k+ F_1(\hat{u}^k),\hat{u}^k- z \rangle +\frac{1}{\eta}e_{11}(z,k)+\frac{1}{\eta}e_{12}(z,k)\\
     &\leq  \frac{1}{\eta}\| z ^k- z \|^2+\frac{\gamma}{\eta}\|w^k- z \|^2-\frac{\gamma}{\eta}\|w^k-\hat{u}^k\|^2-\frac{\gamma}{\eta}\| z ^k- z \|^2-\frac{1}{4\eta}\|\hat{u}^k- z ^k\|^2\\
     &\quad +2 \langle F(u^k), z -u^k\rangle+(12 L D +6 D_F )\|\hat{u}^k-u^k\|
    +\frac{1}{16\eta}   \| z ^k- z ^{k+1}\|^2  \\
    &\quad  + 16\eta \delta^2   \|\hat{u}^k-u^k\|^2-2  \langle F( z ^k)-F_1( z ^k)-F( z ^{k+1})+F_1( z ^{k+1}), z ^{k+1}- z  \rangle  \\
    &\quad +2\eta\delta^2\alpha^2  \| z ^k- z ^{k-1}\|^2-2\alpha  \langle F( z ^k)-F_1( z ^k)-F( z ^{k-1})+F_1( z ^{k-1}), z ^{k}- z\rangle \\
    &\quad  +\frac{1}{\eta}e_{11}(z,k)+\frac{1}{\eta}e_{12}(z,k),
\end{split} \end{align}
where we use the fact $\gamma\leq 1/4$.

From the fact that $\|a+b\|^2\geq\frac{1}{2}\|a\|^2-\|b\|^2$ and $\frac{3}{2}\|a+b\|^2\geq\|a\|^2-3\|b\|^2$  , we have
\begin{align}\label{eq:84-hat} \begin{split}
    -\frac{1}{4\eta}  \|\hat{u}^k- z ^k\|^2 &\leq  -\frac{1}{8\eta}  \| z ^{k+1}- z ^k\|^2 +\frac{1}{4\eta}  \|\hat{u}^k-u^k\|^2 \\
   -\frac{\gamma}{\eta}  \|w^k-\hat{u}^k\|^2 &\leq -\frac{\gamma}{2\eta}  \|w^k- z ^{k+1}\|^2 +\frac{\gamma}{\eta}  \|\hat{u}^k-u^k\|^2 .
\end{split} \end{align}

Plugging equation (\ref{eq:84-hat}) into equation (\ref{eq:83-main}), we achieve
 \begin{align*} \begin{split}
     \frac{1}{\eta}\|\hat{u}^k- z \|^2&\leq  \frac{1}{\eta}\| z ^k- z \|^2+\frac{\gamma}{\eta}\|w^k- z \|^2-\frac{\gamma}{\eta}\| z ^k- z \|^2 -\frac{1}{8\eta}  \| z ^{k+1}- z ^k\|^2 \\
     &\quad  +\frac{1}{4\eta}  \|\hat{u}^k-u^k\|^2+2 \langle F(u^k), z -u^k\rangle+(12 L D +6 D_F )\|\hat{u}^k-u^k\|
      \\
    &\quad +\frac{1}{16\eta}   \| z ^k- z ^{k+1}\|^2 -2  \langle F( z ^k)-F_1( z ^k)-F( z ^{k+1})+F_1( z ^{k+1}), z ^{k+1}- z  \rangle  \\
    &\quad + 16\eta \delta^2   \|\hat{u}^k-u^k\|^2-2\alpha  \langle F( z ^k)-F_1( z ^k)-F( z ^{k-1})+F_1( z ^{k-1}), z ^{k}- z\rangle  \\
     &\quad +2\eta\delta^2\alpha^2  \| z ^k- z ^{k-1}\|^2-\frac{\gamma}{2\eta}  \|w^k- z ^{k+1}\|^2 +\frac{\gamma}{\eta}\|\hat{u}^k-u^k\|^2\\
     &\quad +\frac{1}{\eta}e_{11}(z,k)+\frac{1}{\eta}e_{12}(z,k)\\
     &\leq  \frac{1}{\eta}\| z ^k- z \|^2+\frac{\gamma}{\eta}\|w^k- z \|^2-\frac{\gamma}{\eta}\| z ^k- z \|^2 -\frac{1}{16\eta}  \| z ^{k+1}- z ^k\|^2  \\
     &\quad -\frac{\gamma}{2\eta}  \|w^k- z ^{k+1}\|^2 +2 \langle F(u^k), z -u^k\rangle+(12 L D +6 D_F )\|\hat{u}^k-u^k\|\\
    &\quad -2  \langle F( z ^k)-F_1( z ^k)-F( z ^{k+1})+F_1( z ^{k+1}), z ^{k+1}- z  \rangle +2\eta\delta^2\alpha^2  \| z ^k- z ^{k-1}\|^2 \\
    &\quad -2\alpha  \langle F( z ^k)-F_1( z ^k)-F( z ^{k-1})+F_1( z ^{k-1}), z ^{k}- z\rangle  \\
    &\quad + \left(16\eta \delta^2 +\frac{1}{2\eta}\right)  \|\hat{u}^k-u^k\|^2
     +\frac{1}{\eta}e_{11}(z,k)+\frac{1}{\eta}e_{12}(z,k).
\end{split} \end{align*}

Note that the fact
\begin{align*} \begin{split}
     \frac{1}{\eta}\|\hat{u}^k- z \|^2=\frac{1}{\eta}\| z ^{k+1}- z\|^2+\frac{1}{\eta}\|\hat{u}^k-u^k\|^2-\frac{2}{\eta} \|\hat{u}^k-u^k\|\| z ^{k+1}- z \|,
\end{split} \end{align*}
then we have
 \begin{align*} 
    2 \langle F(u^k),  u^k-z \rangle&\leq  -\frac{1}{\eta}\| z ^{k+1}- z\|^2-\frac{1}{\eta}\|\hat{u}^k-u^k\|^2+\frac{2}{\eta} \|\hat{u}^k-u^k\|\| z ^{k+1}- z \|\\
    &\quad +\frac{1}{\eta}\| z ^k- z \|^2+\frac{\gamma}{\eta}\|w^k- z \|^2-\frac{\gamma}{\eta}\| z ^k- z \|^2 -\frac{1}{16\eta}  \| z ^{k+1}- z ^k\|^2  \\
     &\quad - \frac{\gamma}{2\eta}  \|w^k- z ^{k+1}\|^2 +(12 L D +6 D_F )\|\hat{u}^k-u^k\|+2\eta\delta^2\alpha^2  \| z ^k- z ^{k-1}\|^2 \\
    &\quad -2  \langle F( z ^k)-F_1( z ^k)-F( z ^{k+1})+F_1( z ^{k+1}), z ^{k+1}- z  \rangle  \\
    &\quad -2\alpha  \langle F( z ^k)-F_1( z ^k)-F( z ^{k-1})+F_1( z ^{k-1}), z ^{k}- z\rangle 
    \\
     &\quad  + \left(16\eta \delta^2 +\frac{1}{2\eta}\right)  \|\hat{u}^k-u^k\|^2 +\frac{1}{\eta}e_{11}(z,k)+\frac{1}{\eta}e_{12}(z,k)\\
     &\leq  -\frac{1}{\eta}\| z ^{k+1}- z\|^2-\frac{1}{16\eta}  \| z ^{k+1}- z ^k\|^2 -\frac{\gamma}{2\eta}  \|z ^{k+1}-w^k \|^2 \\
     &\quad -2  \langle F( z ^k)-F_1( z ^k)-F( z ^{k+1})+F_1( z ^{k+1}), z ^{k+1}- z  \rangle  \\
    &\quad +\frac{1}{\eta}\| z ^k- z \|^2+\frac{\gamma}{\eta}\|w^k- z \|^2-\frac{\gamma}{\eta}\| z ^k- z \|^2 +2\eta\delta^2\alpha^2  \| z ^k- z ^{k-1}\|^2 \\
     &\quad +2\alpha  \langle F( z ^{k-1})-F_1( z ^{k-1})-F( z ^k)+F_1( z ^k), z ^{k}- z\rangle \\
     &\quad +\left(12 L D +6 D_F +\frac{2 D }{\eta}\right)\|\hat{u}^k-u^k\|
    +  16\eta \delta^2   \|\hat{u}^k-u^k\|^2 \\
    &\quad +\frac{1}{\eta}e_{11}(z,k)+\frac{1}{\eta}e_{12}(z,k)\\
    &\leq  -\frac{1-\gamma}{\eta}\| z ^{k+1}- z\|^2-\frac{\gamma}{\eta p}\|w^{k+1}-z\|^2-\frac{1}{16\eta }  \| z ^{k+1}- z ^k\|^2  \\
     &\quad -2  \langle F( z ^k)-F_1( z ^k)-F( z ^{k+1})+F_1( z ^{k+1}), z ^{k+1}- z  \rangle  \\
    &\quad +\frac{1-\gamma}{\eta }\| z ^k- z \|^2+\frac{\gamma}{\eta p}\|w^k- z \|^2+2\eta\delta^2\alpha^2  \| z ^k- z ^{k-1}\|^2 \\
     &\quad +2\alpha  \langle F( z ^{k-1})-F_1( z ^{k-1})-F( z ^k)+F_1( z ^k), z ^{k}- z\rangle \\
     &\quad +\left(12  L\Omega+6  D_F+\frac{2D}{\eta }\right)\|\hat{u}^k-u^k\|
    +  16\eta\delta^2   \|\hat{u}^k-u^k\|^2 \\
    &\quad +\frac{1}{\eta}\left(e_{11}(z,k)+e_{12}(z,k) +\frac{\gamma}{p}e_2(z,k)\right)-\frac{\gamma}{2\eta }  \|z ^{k+1}-w^k \|^2.
 \end{align*}

The parameters settings implies $2\eta\delta^2\alpha^2\leq 1/{(16\eta)}$, then we have
\begin{align} \label{eq:93-main}\begin{split}
  &2\eta K \E\left[\max_{z\in\fZ}\frac{1}{K}\sum_{k=0}^{K-1}\langle F(u^k),u^k-z\rangle\right]\\
  &\leq \max_{z\in\fZ}\Psi^0(z)+\E\left[\max_{z\in\fZ}\sum_{k=0}^{K-1}e_{11}(z,k)+e_{12}(z,k)+\frac{\gamma}{p}e_2(z,k)\right]\\
  &\quad +\sum_{k=0}^{K-1}\left(\left(12\eta L D +6\eta D_F  +2 D \right)\|\hat{u}^k-u^k\|+16\eta^2\delta^2\|u^k-\hat{u}^k\|^2\right).
\end{split} \end{align}

Recall that we take $\hat c$ by equation (\ref{value-chat}), then the setting 
\begin{align*}
\varepsilon_k=\min\left\{\zeta,\hat{c}^{-1}\min\left\{\|\hat{u}^k- z ^k\|,\|\hat{u}^k- z ^k\|^2\right\}\right\}   
\end{align*}
satisfy the condition on $\varepsilon_k$ in Lemma \ref{lem:convergenceLya}.
Then we apply Lemma \ref{lem:convergenceLya} with $\mu=0$ and $\alpha=1$ and sum over equation~(\ref{eq:convergenceLya}) with $k=0,\dots,K-1$ to obtain
\begin{align}\label{eq:99-bound2} \begin{split}
    \sum_{k=0}^{K-1}\left(\frac{1}{32}\E \left[\| z ^k-\hat{u}^k\|^2\right]+\frac{\gamma}{2}\E \left[\|w^k-\hat{u}^k\|^2\right]\right)\leq \left(1+\frac{\gamma}{p}\right)\|z^0-z^*\|^2.
\end{split} \end{align}

Note that parameter settings $\gamma=p=1/{(\sqrt{n}+8)}$, $b=\left\lceil\sqrt{n}\right\rceil$, and $\eta =\min\left\{{\sqrt{\gamma b}}/{(4\delta)}, 1/(32\delta)\right\}$ satisfy
\begin{align} \label{eq:101-req}\begin{split}
    \frac{4\eta^2\delta^2}{b}\leq \frac{\gamma}{4}\leq 8\cdot\frac{1}{32},\quad 
    2p+\frac{4\eta^2\delta^2}{b}\leq 2p+\frac{4\delta^2}{b}\frac{\gamma b}{16\delta^2}\leq 5 \cdot\frac{\gamma}{2}\quad\text{and}\quad
    1+\frac{\gamma}{p}=2.
\end{split} \end{align}

Substituting equations (\ref{eq:99-bound2}) and (\ref{eq:101-req}) into equation (\ref{eq:93-main}) and applying Lemma \ref{lem:thm1prooflem2}, we obtain
\begin{align*} \begin{split}
  &\E\left[\max_{z\in\fZ}\frac{1}{K}\sum_{k=0}^{K-1}\langle F(u^k),u^k-z\rangle\right]\\
  &\leq \frac{1}{ 2\eta K}\left(4+8\cdot 2\right)\max_{z\in\fZ}\|z^0-z\|^2 \\
  &\quad+\frac{1}{ 2\eta K}\sum_{k=0}^{K-1}\left(\left(12\eta L D +6\eta D_F  +2 D \right)\|\hat{u}^k-u^k\|+\left(16\eta^2\delta^2+\frac{8\eta^2\delta^2}{b}+2p\right)\|u^k-\hat{u}^k\|^2\right)\\
  &\leq \frac{10 D^2 }{\eta K} +\frac{6\eta L D +3\eta D_F  + D }{\eta}\sqrt{\zeta}+\frac{12\eta^2\delta^2+1}{\eta}\zeta\\
  &\leq \frac{10 D^2 }{\eta K} +\frac{9\eta L D +3\eta\max_{i\in [n]}\|F_i(z^0)\|  + D }{\eta}\sqrt{\zeta}+\frac{12\eta^2\delta^2+1}{\eta}\zeta,
\end{split} \end{align*}
where we use the equation (\ref{eq:bound-DF}) to bound $D_F$.

Recall that we take constant $\zeta$ by equation (\ref{value-zeta}), then we get the bound
\begin{align*} 
    \E\left[\max_{z\in\fZ}\frac{1}{K}\sum_{k=0}^{K-1}\langle F(u^k),u^k-z\rangle\right]\leq \frac{10 D^2 }{\eta K} +\frac{\varepsilon}{2}.
\end{align*}

\end{proof}

\subsection{Proof of Corollary \ref{cor:complexitySVOGScc}}\label{sec:proofcomplexitySVOGScc}
\begin{proof}

Theorem \ref{thm:convergenceSVOGScc} means we can achieve $\E[\operatorname{Gap}(u_{{\rm avg}}^K)]\leq\varepsilon$ by taking the communication rounds of
\begin{align*} \begin{split}
    K =\left\lceil\frac{20D^2}{\varepsilon\eta}\right\rceil=\fO\left(\frac{ D^2  }{\varepsilon\eta}\right)=\fO\left(\frac{\delta D^2 }{\varepsilon}\right).
\end{split} \end{align*}

Consider that the expected communication complexity in each round is $\fO(b(1-p)+np)=\fO(\sqrt{n})$ and the server need to communicate with all client in initialization within the  communication complexity of $\fO(n)$, the overall communication complexity is
\begin{align*} \begin{split}    \fO(n)+K\cdot\fO(\sqrt{n})=\fO\left(n+\frac{\sqrt{n}\delta D^2 }{\varepsilon} \right) .
\end{split} 
\end{align*}

Note that the objective of the sub-problem in Line \ref{line:sub-prob} of Algorithm \ref{alg:SVOGS} is $(L+1/\eta)$-smooth-$(1/\eta)$-strongly-convex-$(1/\eta)$-strongly-concave, hence the local gradient complexity for solving the sub-problem is \mbox{$\fO((1+\eta L)\log(\max\{\zeta^{-1},\hat{c}\})$}.
Therefore, the overall local gradient complexity is
\begin{align*} \begin{split}
        & \fO(n)+K\cdot\left(\fO(\sqrt{n})+\fO\left((1+\eta L)\log(\zeta^{-1}+\hat{c})\right)\right)\\
        &=\fO(n)+\fO\left(\frac{\delta D^2 }{\varepsilon}\right)\cdot\left(\fO(\sqrt{n})+\fO\left(\left(1+\frac{L}{\delta}\right)\log\left(\frac{LD+D_F}{\varepsilon}+\sqrt{\frac{\Phi^0}{\delta}}\,\right)\right)\right)\\
        &=\tilde{\fO}\left(n+\frac{(\sqrt{n}\delta+L) D^2 }{\varepsilon}\log\frac{1}{\varepsilon}\right).
\end{split} \end{align*}
\end{proof}

\subsection{Proof of Theorem \ref{thm:convergenceSVOGSscsc}}\label{appx:proofconscsc}
\begin{proof}
We can verify that the parameter setting of Theorem \ref{thm:convergenceSVOGSscsc} satisfies the condition of  Lemma \ref{lem:convergenceLya}. 
Then we can apply Lemma \ref{lem:convergenceLya} to obtain
\begin{align}
    \E [\Phi^{K}]\leq \max\left\{1-\frac{\eta\mu}{6},1-\frac{p\eta\mu}{2\gamma+\eta\mu}\right\}^K\Phi^{0}.
\end{align}
\end{proof}

\subsection{Proof of Corollary \ref{thm:complexitySVOGSscsc}}\label{appx:proofscsc}
\begin{proof}
Recall that we set the parameters as
    \begin{align*} 
        \gamma=p=\frac{1}{\min\left\{\sqrt{n},\frac{\delta}{\mu}\right\}+8},&\quad b=\left\lceil\min\left\{\sqrt{n},\frac{\delta}{\mu}\right\}\right\rceil\\
        \eta =\min\left\{\frac{\sqrt{\gamma b}}{4\delta},\frac{1}{32\delta}\right\},&\quad \alpha=\max\left\{1-\frac{\eta\mu}{6},1-\frac{p\eta\mu}{2\gamma+\eta\mu}\right\}.
 \end{align*}    
    
We can lower bound $\alpha$ as
\begin{align} \begin{split}
    \alpha\geq 1-\frac{p\eta\mu}{2\gamma+\eta\mu}=1-\frac{p\eta\mu}{2p+\eta\mu}=1-\frac{1}{\frac{2}{\eta\mu}+\frac{1}{p}}\geq \frac{7}{8}.
\end{split} \end{align}

Then the number of communication rounds is
\begin{align*} \begin{split}
    K&=\fO\left(\left(1+\frac{1}{\eta\mu}+\frac{\gamma+\eta\mu}{p\eta\mu}\right)\log\frac{1}{\varepsilon}\right)\\
    &=\fO\left(\left(\frac{1}{p}+\frac{1}{\eta\mu}\right)\log\frac{1}{\varepsilon}\right)\\
    &=\fO\left(\left(\frac{1}{p}+\frac{1}{\mu}\left(32\delta+\frac{32\delta}{\sqrt{\alpha\gamma b}}\right)\right)\log\frac{1}{\varepsilon}\right)\\
    &=\fO\left(\left(\frac{1}{p}+\frac{\delta}{\mu}+\frac{1}{\sqrt{pb}}\frac{\delta}{\mu}\right)\log\frac{1}{\varepsilon}\right).
\end{split} \end{align*}

Note that 
\begin{align*} \begin{split}
    \frac{1}{p}+\frac{1}{\sqrt{pb}}\frac{\delta}{\mu}&\leq \min\left\{\sqrt{n},\frac{\delta}{\mu}\right\}+8+\frac{\delta}{\mu}\sqrt{\frac{\min\left\{\sqrt{n},\frac{\delta}{\mu}\right\}+8}{\min\left\{\sqrt{n},\frac{\delta}{\mu}\right\} }}\\
    &=\min\left\{\sqrt{n},\frac{\delta}{\mu}\right\}+8+\frac{\delta}{\mu}\sqrt{1+8\max\left\{\frac{1}{\sqrt{n}},\frac{\mu}{\delta}\right\}}=\fO\left(\frac{\delta}{\mu}\right),
\end{split} \end{align*}
then we have $K=\fO(\delta/\mu\log(1/\varepsilon))$. 

Consider that the expected communication complexity in each round is 
\begin{align*}
\fO(b(1-p)+np)=\fO\left(\sqrt{n}+\frac{n\mu}{\delta}\right),     
\end{align*}
and the server need to communicate with all client in initialization within the  communication complexity of $\fO(n)$, the overall communication complexity is
\begin{align*} \begin{split}    \fO(n)+K\cdot\fO\left(\sqrt{n}+\frac{n\mu}{\delta}\right)=\fO\left(\left(n+\frac{\sqrt{n}\delta}{\mu}\right)\log\frac{1}{\varepsilon}\right).
\end{split} 
\end{align*}

Note that the objective of the sub-problem in Line \ref{line:sub-prob} of Algorithm \ref{alg:SVOGS} is $(L+1/\eta)$-smooth-$(1/\eta)$-strongly-convex-$(1/\eta)$-strongly-concave, the local gradient complexity for solving the sub-problem is $\fO((1+\eta L)\log(c))$.
Therefore, the overall local gradient complexity is
\begin{align*}\begin{split}
    &\fO(n)+K\cdot\left(\fO\left(\sqrt{n}+\frac{n\mu}{\delta}\right)+\fO\left((1+\eta L)\log(c)\right)\right)\\
    &=\fO(n)+\fO\left(\frac{\delta}{\mu}\log\frac{1}{\varepsilon}\right)\cdot\left(\fO\left(\sqrt{n}+\frac{n\mu}{\delta}\right)+\fO\left(\left(1+\frac{L}{\delta}\right)\log\frac{\delta}{\mu}\right)\right)\\
    &=\fO\left(\left(n+\frac{\sqrt{n}\delta}{\mu}+\frac{L}{\mu}\log\frac{\delta}{\mu}\right)\log\frac{1}{\varepsilon}\right)\\
    &=\tilde{\fO}\left(\left(n+\frac{\sqrt{n}\delta+L}{\mu}\right)\log\frac{1}{\varepsilon}\right).
\end{split} \end{align*}
\end{proof}

\section{The Algorithm Class}\label{appx:algorithm-class}

We formally define the distributed first-order oracle (DFO) algorithm as follows.

\begin{dfn}[DFO Algorithm]\label{dfn:oracles}
    Each node $i$ has its own local memories $\mathcal{M}_i^x$ and $\mathcal{M}_i^y$ for the $x$- and $y$-variables with initialization  $\mathcal{M}_i^x=\mathcal{M}_i^y=\{0\}$ for all $i\in[n]$. 
    Specifically, the server has memories $\mathcal{M}_1^x$ and $\mathcal{M}_1^y$. 
    These memories $\{\mathcal{M}_i^x\}_{i=1}^n$ and $\{\mathcal{M}_i^y\}_{i=1}^n$ can be updated as follows:
\begin{itemize}[leftmargin=0.5cm,topsep=-0.03cm,itemsep=-0.1cm]
    \item \textbf{Communication from clients to server}: During one communication round, we sample  uniformly and independently batch $\fS$ of any size $b$ and ask client with number from $\fS$ to share some vector of their local memories with the server, i.e. can add points $x_1', y_1'$ to the local memories of the server according to the next rule:
\begin{align*}
  x_1' \in \operatorname{span}\left\{x_1, \bigcup_{i \in \fS} x_i\right\} \qquad\text{and}\qquad   y_1' \in \operatorname{span}\left\{y_1, \bigcup_{i \in \fS} y_i\right\}
\end{align*}
where $x_i \in \mathcal{M}_i^x$ and $y_i\in \mathcal{M}_i^y$. If the batch size is equal to $b$ we say that it costs $b$ communication complexity from clients to the server. Batch of the size $n$ is equal to the situation, when all clients send their memories to the server.

\item \textbf{Communication from server to clients}: 
During one communication round, we sample  uniformly and independently batch $\fS$ of any size $b$ and ask the server to share some vector of its local memories with the clients with numbers from $\fS$, i.e. can add points $x_i', y_i'$ to the corresponding local memories of client $i$ as
\begin{align*}
    x_i' \in \operatorname{span}\left\{x_1, x_i\right\} \qquad\text{and}\qquad y_i' \in \operatorname{span}\left\{y_1, y_i\right\},
\end{align*}

where $x_i \in \mathcal{M}_i^x$ and $y_i \in \mathcal{M}_i^y$, and we say that it costs $b$ communication complexity. 

\item \textbf{Local computations}: During local computations each client $i$ can make any computations using $f_i$, i.e. can add points $x_i', y_i'$ to the corresponding local memory of client $i$ as
\begin{align*}
  x_i' \in \operatorname{span}\left\{x^{\prime}, \nabla_x f_i\left(x^{\prime \prime}, y^{\prime \prime}\right)\right\} \qquad\text{and}\qquad y_i' \in \operatorname{span}\left\{y^{\prime}, \nabla_y f_i\left(x^{\prime \prime}, y^{\prime \prime}\right)\right\},  
\end{align*}

for given $x^{\prime}, x^{\prime \prime} \in \mathcal{M}_i^x$ and $y^{\prime}, y^{\prime \prime} \in \mathcal{M}_i^y$. And we use local gradient calls to count the times when $\nabla_x$ and $\nabla_y$ are applied to any one of $\{f_i\}$.
\end{itemize}
The final global output is calculated as $\hat{x} \in \mathcal{M}_1^x, \hat{y} \in \mathcal{M}_1^y$.
\end{dfn}

Our Definition \ref{dfn:oracles} follows the algorithm class of \citet[Definition C.7]{beznosikov2024similarity}, but additionally take the communication from the server to the clients into considerations.

{\small\section{Lower Bounds in Convex-Concave Case}\label{appx:lowercc}}
In this section, we provide the proofs of the lower bounds for solving the problem
\begin{align}\label{prob:cc}
    \min_{x\in\fX}\max_{y\in\fY} f(x,y)=\frac{1}{n}\sum_{i=1}^n f_i(x,y)
\end{align}
by DFO algorithms, where the diameters of closed convex sets $\fX$ and $\fY$ are $R_x$ and $R_y$ respectively.
We define the subspaces $\{\fF_k\}_{k=0}^d$ as 
\begin{align*}
    \fF_k=\begin{cases}
        {\rm span}\{e_1,\ldots,e_k\},&\quad \text{for}~1\leq k\leq d,\\
        \{0_d\},&\quad \text{for}~k=0,
    \end{cases}
\end{align*}
which is used in the following proofs of lower bounds.

\subsection{Proof of Theorem \ref{thm:lowerboundKcc}}
We first define the function set with one server ($i=1$) and $n-1$ clients ($i=2,\dots,n-1$) as follows
\begin{align}\label{def:ficc}
    f_i(x,y)=\begin{dcases}
        \frac{\delta}{4}x^\top A_1y-\frac{\delta R_y}{2\sqrt{d}}e_1^\top x,&\quad i-1\equiv 1\ ({\rm mod}\ 3),\\
        \frac{\delta}{4}x^\top A_2y,&\quad i-1\equiv 2\ ({\rm mod}\ 3),\\
        0,&\quad \text{otherwise}.
    \end{dcases}
\end{align}
Then corresponding global objective is
\begin{align}\label{def:fcc}
    f(x,y)=\frac{\delta}{6}x^\top Ay-\frac{\delta R_y}{6\sqrt{d}}e_1^\top x,
\end{align}
where
\begin{align}\label{eq:matrixA}
\begingroup
\setlength\arraycolsep{4pt}
    A_1 = \begin{pmatrix}
1 & 0 &  &  &  \\
 & 1 & -2 &  &  \\
 &  & 	\ddots & 	\ddots &  \\
 &  &  & 1 & 0 \\
 &  &  &  & 1
\end{pmatrix},~ A_2 = \begin{pmatrix}
1 & -2 &  &  &  \\
 & 1 & 0 &  &  \\
 &  & 	\ddots & 	\ddots &  \\
 &  &  & 1 & -2 \\
 &  &  &  & 1
\end{pmatrix},~ A = \begin{pmatrix}
1 & -1 &  &  &  \\
 & 1 & -1 &  &  \\
 &  & 	\ddots & 	\ddots &  \\
 &  &  & 1 & -1 \\
 &  &  &  & 1
\end{pmatrix}.
\endgroup
\end{align}

\begin{prop}\label{prop:lowerccrounds}
    For any $d\geq 3$, the functions $f_i(x,y)$ and $f(x,y)$ defined by equations (\ref{def:ficc}) and (\ref{def:fcc}) satisfy
    \begin{enumerate}[leftmargin=2em]
        \item The function $f_i$ is $L$-smooth with $L\geq \delta$ and convex-concave for all $i\in[n]$, and the function set~$\{f_i\}_{i=1}^n$ holds $\delta$-second-order similarity. Thus, the function $f$ is also convex-concave.
        \item For $1\leq k\leq d-1$, we have
        \begin{flalign}\label{eq:mingap}
            \min_{(x,y)\in\fZ\cap\fF_k^2}{\rm Gap}(x,y)=
            \min_{x\in\fX\cap\fF_k}\max_{y\in\fY}f(x,y)-\max_{y\in\fY\cap\fF_k}\min_{x\in\fX}f(x,y)\geq \frac{\delta R_xR_y}{6\sqrt{d(k+1)}}.
        \end{flalign}
    \end{enumerate}
\end{prop}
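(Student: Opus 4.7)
The plan has two parts, corresponding to the two bullets in the proposition. For the first part, I would compute each Hessian $\nabla^2 f_i$ directly: since $f_i$ is either zero or a bilinear form $\tfrac{\delta}{4}x^\top A_j y$ (for some $j\in\{1,2\}$) plus a term linear in $x$, its Hessian has the block form $\tfrac{\delta}{4}\bigl(\begin{smallmatrix}0 & A_j\\ A_j^\top & 0\end{smallmatrix}\bigr)$ (or zero). Convex-concavity of $f_i$ is then immediate from the vanishing of the $xx$- and $yy$-diagonal blocks, and smoothness reduces to the elementary matrix-norm bound $\|A_j\|_2\leq\sqrt{\|A_j\|_1\|A_j\|_\infty}\leq 3$ --- each row and column of $A_j$ has at most one off-diagonal nonzero entry, of magnitude at most $2$ --- which yields $\|\nabla^2 f_i\|_2\leq\tfrac{3\delta}{4}\leq\delta\leq L$. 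Averaging the Hessians, I recover $f=\tfrac{\delta}{6}x^\top A y-\tfrac{\delta R_y}{6\sqrt d}e_1^\top x$ by using the identity $A_1+A_2=2A$ (verified by inspecting diagonal and superdiagonal entries) together with the count that one-third of the $n$ indices contribute each of the three types; convex-concavity of $f$ then inherits from averaging. For the $\delta$-second-order similarity, I would check each residual $\nabla^2 f_i-\nabla^2 f$ case by case; the binding case reduces to $\|\tfrac{\delta}{4}A_j-\tfrac{\delta}{6}A\|_2=\tfrac{\delta}{12}\|2A_j-A_{3-j}\|_2\leq\tfrac{5\delta}{12}\leq\delta$, again by row/column sum estimates.

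For the second part, I would first observe the equality $\min_{(x,y)\in\fZ\cap\fF_k^2}\operatorname{Gap}(x,y)=\min_{x\in\fX\cap\fF_k}\max_{y'\in\fY}f(x,y')-\max_{y\in\fY\cap\fF_k}\min_{x'\in\fX}f(x',y)$, which holds because the inner max and min inside $\operatorname{Gap}$ depend on independent variables and the joint minimum over $(x,y)\in\fF_k^2$ therefore separates. Choosing $\fX$ and $\fY$ to be origin-centered Euclidean balls with radii proportional to $R_x$ and $R_y$, the two inner optimizations admit the closed forms $\max_{y'\in\fY}f(x,y')=\tfrac{\delta R_y}{12}\|A^\top x\|-\tfrac{\delta R_y}{6\sqrt d}x_1$ and $\min_{x'\in\fX}f(x',y)=-\tfrac{\delta R_x}{12}\|Ay-\tfrac{R_y}{\sqrt d}e_1\|$. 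The heart of the proof is then a pair of Cauchy--Schwarz estimates that exploit the information-propagation structure of $A$: because $A^\top$ is lower bidiagonal, for any $x\in\fF_k$ the vector $A^\top x$ lies in $\fF_{k+1}$ and is encoded by the differences $d_j=x_j-x_{j-1}$ with the natural boundary $x_0=x_{k+1}=0$, giving $\|A^\top x\|^2=\sum_{j=1}^{k+1}d_j^2$ and the telescoping identity $\sum_{j=1}^{k+1}d_j=0$. Applying Cauchy--Schwarz to this identity forces the bracket $\tfrac{\delta R_y}{12}\|A^\top x\|-\tfrac{\delta R_y}{6\sqrt d}x_1$ to be at least $\tfrac{\delta R_xR_y}{12\sqrt{d(k+1)}}$ on $\fX\cap\fF_k$; a symmetric argument on the $y$-side, using $Ay\in\fF_k$ and the inhomogeneous shift by $(R_y/\sqrt d)e_1$, yields the matching lower bound $\tfrac{\delta R_xR_y}{12\sqrt{d(k+1)}}$ for the other bracket. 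Summing the two halves delivers the claimed lower bound $\tfrac{\delta R_xR_y}{6\sqrt{d(k+1)}}$.

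The main obstacle will be the two Cauchy--Schwarz steps: the correct telescoping partition and the handling of the boundary conditions (namely $x_0=x_{k+1}=0$ on the $x$-side and the inhomogeneous shift $(R_y/\sqrt d)e_1$ on the $y$-side) must be calibrated so as to extract precisely the $\sqrt{k+1}$ factor and to interact correctly with the Euclidean ball constraints defining $\fX\cap\fF_k$ and $\fY\cap\fF_k$. The rest of the argument is routine algebraic bookkeeping on matrix row sums and counts of indices modulo three.
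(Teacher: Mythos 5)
Your first part is fine and essentially matches what the paper does: both verify convex-concavity and $L$-smoothness from the block anti-diagonal Hessians and check the $\delta$-similarity by bounding the $xy$-blocks (the paper bounds $\|\nabla^2_{xy}f_i\|$ and $\|\nabla^2_{xy}f\|$ separately and uses the triangle inequality, you bound the difference $\tfrac{\delta}{12}\|2A_j-A_{3-j}\|$ directly; either computation is acceptable). Note also that the paper does not reprove the second item at all: it observes that $f$ coincides with the hard instance $f_{\rm CC}$ of Han et al.\ (their Proposition 3.31, with $L$ replaced by $\delta$) and cites that result for \eqref{eq:mingap}, so a self-contained derivation like yours would have to reproduce that argument correctly.

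This is where your sketch has a genuine gap. You split the restricted duality gap into two brackets and claim that \emph{each} of them is at least $\tfrac{\delta R_xR_y}{12\sqrt{d(k+1)}}$; that is false for the min--max bracket. Taking $x=0\in\fX\cap\fF_k$ (the sets are origin-centered balls) gives $\max_{y'\in\fY}f(0,y')=0$, so $\min_{x\in\fX\cap\fF_k}\max_{y'\in\fY}f(x,y')\le 0$, and no Cauchy--Schwarz manipulation of $\|A^\top x\|$ versus $x_1$ can produce a strictly positive lower bound for it; the telescoping identity you invoke only yields $\|A^\top x\|\ge\sqrt{\tfrac{k+1}{k}}\,|x_1|$, i.e.\ nonnegativity of that bracket (and even that needs a mild relation between the radii, $k$ and $d$), not a positive gap. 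In the correct argument the entire $\Theta\bigl(\tfrac{\delta R_xR_y}{\sqrt{d(k+1)}}\bigr)$ must come from the max--min side: with the orientation of $A$ chosen so that the saddle point is $y^*=\tfrac{R_y}{\sqrt d}\mathbf{1}$ (full support), for any $y\in\fY\cap\fF_k$ the residual $Ay-\tfrac{R_y}{\sqrt d}e_1$ has at most $k+1$ nonzero entries whose sum telescopes to the \emph{$y$-independent} constant $-\tfrac{R_y}{\sqrt d}$, whence $\|Ay-\tfrac{R_y}{\sqrt d}e_1\|\ge\tfrac{R_y}{\sqrt{d(k+1)}}$ and $\max_{y\in\fY\cap\fF_k}\min_{x\in\fX}f(x,y)\le-\Theta\bigl(\tfrac{\delta R_xR_y}{\sqrt{d(k+1)}}\bigr)$, while the min--max bracket is only shown to be $\ge 0$. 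Your $y$-side telescoping as written does not have this property: with an upper-bidiagonal $A$ acting on $y\in\fF_k$ and the shift along $e_1$, the entry sum is $y_1-\tfrac{R_y}{\sqrt d}$, which vanishes at $y=\tfrac{R_y}{\sqrt d}e_1\in\fF_1$, so you must be careful to use the transposed convention (equivalently, the convention of Han et al.) under which the inhomogeneous term sits at the end of the chain generated by the zero-chain mechanism. As it stands, the symmetric ``two halves of $\tfrac{1}{12}$ each'' plan cannot be repaired without restructuring the proof along these lines.
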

\begin{proof}
    The smoothness and the convexity (concavity) are easy to verify. The similarity holds because
\begin{align*}
    \begin{split}
    &\nabla_{xx}^2f_1(x,y)-\nabla_{xx}^2f(x,y)=\nabla_{xx}^2f_2(x,y)-\nabla_{xx}^2f(x,y)=\nabla_{xx}^2f_3(x,y)-\nabla_{xx}^2f(x,y)=0;\\
   & \nabla_{yy}^2f_1(x,y)-\nabla_{yy}^2f(x,y)=\nabla_{yy}^2f_2(x,y)-\nabla_{yy}^2f(x,y)=\nabla_{yy}^2f_3(x,y)-\nabla_{yy}^2f(x,y)=0\\
&\|\nabla_{xy}^2f_1(x,y)-\nabla_{xy}^2f(x,y)\|\leq\|\nabla_{xy}^2f_1(x,y)\|+\|\nabla_{xy}^2f(x,y)\|\leq\frac{\delta}{3}\leq\delta; \\
&\|\nabla_{xy}^2f_2(x,y)-\nabla_{xy}^2f(x,y)\|\leq\|\nabla_{xy}^2f_2(x,y)\|+\|\nabla_{xy}^2f(x,y)\|\leq\delta\left(\frac{5}{8}+\frac{1}{3}\right)\leq\delta; \\
&\|\nabla_{xy}^2f_3(x,y)-\nabla_{xy}^2f(x,y)\|\leq\|\nabla_{xy}^2f_3(x,y)\|+\|\nabla_{xy}^2f(x,y)\|\leq\delta\left(\frac{5}{8}+\frac{1}{3}\right)\leq\delta.
    \end{split}
\end{align*}

The function $f(x,y)$ defined by our equation (\ref{def:fcc}) is identical to the function $f_{{\rm CC}}(x,y)$ defined by \citet[Proposition 3.31]{han2024lower}\footnote{We follow the notation of \citet{han2024lower}'s arXiv version: \url{https://arxiv.org/pdf/2103.08280v1}} by replacing their notation $L$ with our $\delta$ and taking $n=3$.
Then Proposition 3.31 of \citet{han2024lower} directly prove the result of equation (\ref{eq:mingap}).
\end{proof}

The structure of $A_1$ and $A_2$ results the following lemma.

\begin{lem}\label{lem:zero-chain}
    For the function set (\ref{def:ficc}), all $(x,y)\in \fF_{k}\times\fF_{k}$ and $k=0,\dots,d-1$, we have 
    \begin{align*} 
        \nabla f_i(x,y)\in \begin{cases}
            \fF_{k+1}\times\fF_{k+1},&\quad (i,k)\in \fI_1\cup\fI_2,\\
            \fF_{k}\times\fF_{k},&\quad \text{otherwise,}
        \end{cases}
    \end{align*}
    where $\fI_1:=\{(i,k):i-1\equiv 1\ ({\rm mod}\ 3), k\equiv 0\ ({\rm mod}\ 2)\}$ and $\fI_2:=\{(i,k):i-1\equiv 2\ ({\rm mod}\ 3), k\equiv 1\ ({\rm mod}\ 2)\}$.
\end{lem}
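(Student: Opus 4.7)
The plan is to verify the claim by a direct split into three cases according to $i\bmod 3$ and, in each case, to track how $\nabla_x f_i$ and $\nabla_y f_i$ propagate the support of the input through the bidiagonal structure of $A_1$ and $A_2$. The case $i-1\equiv 0\pmod 3$ is trivial, since then $f_i\equiv 0$ and $\nabla f_i=0\in\fF_0\times\fF_0$. For the other two cases the entire computation is driven by the pattern of superdiagonal entries,
\[
(A_1)_{j,j+1}=\begin{cases}0,&j~\text{odd},\\-2,&j~\text{even},\end{cases}\qquad(A_2)_{j,j+1}=\begin{cases}-2,&j~\text{odd},\\0,&j~\text{even},\end{cases}
\]
which I would simply read off from equation (\ref{eq:matrixA}).

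First I would handle the $A_1$ case ($i-1\equiv 1\pmod 3$), where $\nabla_x f_i=\tfrac{\delta}{4}A_1y-\tfrac{\delta R_y}{2\sqrt{d}}e_1$ and $\nabla_y f_i=\tfrac{\delta}{4}A_1^{\top}x$. Because $A_1$ is upper bidiagonal, $(A_1y)_j=y_j+(A_1)_{j,j+1}y_{j+1}$, so for $y\in\fF_k$ both $y_j$ and $y_{j+1}$ vanish for every $j>k$, giving $A_1y\in\fF_k$. Dually, $(A_1^{\top}x)_j=x_j+(A_1)_{j-1,j}x_{j-1}$, and a new nonzero entry can appear at index $k+1$ only through the term $(A_1)_{k,k+1}x_k$, which is nonzero iff $k$ is even. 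Since the bias $-\tfrac{\delta R_y}{2\sqrt{d}}e_1$ lies in $\fF_1\subseteq\fF_{k+1}$ for every $k\geq 0$, combining the two pieces yields $\nabla f_i\in\fF_{k+1}\times\fF_{k+1}$ when $k$ is even and $\nabla f_i\in\fF_k\times\fF_k$ when $k$ is odd, matching the $\fI_1$ branch.

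Next I would dispatch the $A_2$ case ($i-1\equiv 2\pmod 3$), which is entirely analogous with the roles of odd and even $k$ swapped and with no bias term: $A_2y\in\fF_k$ holds for every $k$, while $A_2^{\top}x$ can pick up a nonzero $(k+1)$-st coordinate exactly when $(A_2)_{k,k+1}\neq 0$, i.e.\ when $k$ is odd. This gives the $\fI_2$ branch and finishes the argument.

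I do not expect any serious obstacle. The only point that needs a little care is the edge case $k=0$ in the $A_1$ subcase: although $y=0$ forces $A_1y=0$, the bias term pushes $\nabla_x f_i$ into $\fF_1$ rather than $\fF_0$, and this is exactly why $(i,0)$ must sit in $\fI_1$ (with $k=0$ even). Apart from this bookkeeping subtlety, the proof is a mechanical parity check on the sparse structure of $A_1$ and $A_2$.
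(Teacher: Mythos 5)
Your proposal is correct and is exactly the argument the paper intends but leaves implicit (it asserts the lemma directly from "the structure of $A_1$ and $A_2$"): a case split on $i \bmod 3$ together with the parity pattern of the superdiagonal entries, noting that $A_1y,A_2y\in\fF_k$ while $A_1^\top x$ (resp.\ $A_2^\top x$) can reach coordinate $k+1$ only when $(A_1)_{k,k+1}\neq 0$, i.e.\ $k$ even (resp.\ $(A_2)_{k,k+1}\neq 0$, i.e.\ $k$ odd). Your handling of the $k=0$ edge case, where the bias term $-\tfrac{\delta R_y}{2\sqrt{d}}e_1$ forces $\nabla_x f_i\in\fF_1$ and explains why $(i,0)\in\fI_1$, is also right, so nothing is missing.
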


Now we provide the proof of Theorem \ref{thm:lowerboundKcc}.
\begin{proof}
    Consider the minimax problem (\ref{prob:cc}) with functions (\ref{def:ficc}) and (\ref{def:fcc}), $R_x=R_y=D$, $n\geq 3$ and $d=\lfloor \delta D^2/(3\sqrt{2}\varepsilon)\rfloor-1$. 
    Then the assumption $\varepsilon\leq \delta D^2/(12\sqrt{2})$ implies $d\geq 3$. 
    Lemma \ref{lem:zero-chain} means that we need at least one communication round to increase the number of non-zero coordinate, i.e. $(x,y)\in\fZ\cap\fF_K^2$. 
    Running any DFO algorithm with communication rounds of $K=\lfloor (d-1)/2\rfloor\geq 1$, we have $d/2\leq (K+1)\leq (d+1)/2$ and Proposition \ref{prop:lowerccrounds} implies
    \begin{align*}
        \E[{\rm Gap}(x,y)]\geq \min_{(x,y)\in\fZ\cap\fF_K^2}{\rm Gap}(x,y)\geq\frac{\delta D^2}{6\sqrt{d(K+1)}}\geq\frac{\delta D^2}{6\sqrt{2}(K+1)}\geq \frac{\delta D^2}{3\sqrt{2}(d+1)}\geq \varepsilon.
    \end{align*}
    Hence, we achieve the lower bound on the communication rounds of
    \begin{align*}
        K=\left\lfloor\frac{\delta D^2}{6\sqrt{2}\varepsilon}\right\rfloor -1 = \Omega\left(\frac{\delta D^2}{\varepsilon}\right).
    \end{align*}
\end{proof}

\subsection{Proof of Theorem \ref{thm:lowerboundVG1cc}}
We first define the function set with one server ($i=1$) and $n-1$ clients ($i=2,\dots,n-1$) as follows
\begin{align}\label{def:ficc2}
    f_i(x,y)=\begin{dcases}
       - \frac{\sqrt{n}\delta R_y}{8\sqrt{d}}e_1^\top x,&\quad i=1, \\
        \frac{\sqrt{n}\delta}{8}x^\top \Bigg[\sum_{j\equiv(i-1){\rm mod}(n-1)}e_ja_j^\top\Bigg]y,&\quad i\geq 2.
    \end{dcases}
\end{align}

Then corresponding global objective is
\begin{align}\label{def:fcc2}
    f(x,y)=\frac{\delta}{8\sqrt{n}}x^\top Ay-\frac{\delta R_y}{8\sqrt{nd}}e_1^\top x,
\end{align}
where $e_j$ is the $j$-th basis column vector, $a_j^\top$ is the $j$-th row of $A$, $A_i=\sum_{j \equiv(i-1) \bmod (n-1)} e_j a_j^\top$, $A_1=0$ and $A$ is defined in equation (\ref{eq:matrixA}).

We provide the following proposition and lemmas for the proof of Theorem \ref{thm:lowerboundVG1cc}.

\begin{prop}\label{prop:lowerccrounds2}
    For any $d\geq 3$, $f_i(x,y)$ and $f(x,y)$ defined as equations  (\ref{def:ficc2}) and (\ref{def:fcc2}) satisfy
    \begin{enumerate}[leftmargin=2em]
        \item $f_i$ is $L$-smooth with $L\geq \sqrt{n}\delta/4$ and convex-concave, function set $\{f_i\}_{i=1}^n$ has $\delta$ second-order similarity. Thus, $f$ is convex-concave.
        \item For $1\leq k\leq d-1$, we have
        \begin{flalign}\label{eq:mingap2}
            \min_{(x,y)\in\fZ\cap\fF_k^2}{\rm Gap}(x,y)=
            \min_{x\in\fX\cap\fF_k}\max_{y\in\fY}f(x,y)-\max_{y\in\fY\cap\fF_k}\min_{x\in\fX}f(x,y)\geq \frac{\delta R_xR_y}{8\sqrt{nd(k+1)}}.
        \end{flalign}
    \end{enumerate}
\end{prop}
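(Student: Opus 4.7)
The plan is to verify the four structural claims in part 1 of the proposition and then reduce part 2 to the gap bound of Proposition \ref{prop:lowerccrounds} via a direct rescaling.

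For part 1, convex-concavity of each $f_i$ is immediate since $f_1$ is linear in $x$ and every $f_i$ with $i\geq 2$ is bilinear in $(x,y)$. For the smoothness constant, the Hessian of $f_i$ ($i \geq 2$) has only a nonzero cross block $\frac{\sqrt n\,\delta}{8}A_i$ with $A_i = D_i A$, where $D_i$ is the diagonal $0/1$ projection onto $S_i = \{j : j \equiv (i-1)\pmod{n-1}\}$. Since consecutive elements of $S_i$ differ by at least $n-1 \geq 2$, the rows $\{a_j\}_{j\in S_i}$ of $A$ have pairwise disjoint supports, hence $A_i^\top A_i = \sum_{j \in S_i} a_j a_j^\top$ has operator norm at most $\max_j \|a_j\|^2 \leq 2$, yielding $\|\nabla^2 f_i\| \leq \sqrt n\,\delta/4$, which gives $L$-smoothness for any $L \geq \sqrt n\,\delta/4$.

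For $\delta$-second-order similarity, I would write the cross block of $\nabla^2 f_i - \nabla^2 f$ as $\frac{\delta}{8\sqrt n}(nA_i - A) = \frac{\delta}{8\sqrt n}(nD_i - I)A$ for $i \geq 2$, and similarly handle $i=1$ via $\|\nabla^2 f_1 - \nabla^2 f\| = \frac{\delta}{8\sqrt n}\|A\| \leq \delta$. The key technical step, which I expect to be the main obstacle, is bounding $\|(nD_i - I)A\|$: the naive triangle inequality is too loose, so I would decompose
\begin{align*}
    \|(nA_i - A)v\|^2 = ((n-1)^2 - 1)\,\|A_i v\|^2 + \|Av\|^2,
\end{align*}
using the disjoint-support identity $\|Av\|^2 = \|A_iv\|^2 + \|(A-A_i)v\|^2$, and then leverage the sparsity of $D_i$ together with the bandwidth-two structure of $A^\top A$ to extract the $\sqrt n$ savings needed to obtain the required bound on $\|\nabla^2 f_i - \nabla^2 f\|$.

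For part 2, the function $f$ in (\ref{def:fcc2}) is exactly $\frac{3}{4\sqrt n}$ times the function in (\ref{def:fcc}) (same $A$, same $e_1$-direction, and matching ratio between the bilinear and linear coefficients). Because the duality gap is a linear functional of $f$, Proposition \ref{prop:lowerccrounds} immediately yields
\begin{align*}
    \min_{(x,y)\in\fZ\cap\fF_k^2}{\rm Gap}(x,y) = \frac{3}{4\sqrt n}\cdot\min_{(x,y)\in\fZ\cap\fF_k^2}{\rm Gap}_{\text{old}}(x,y) \geq \frac{3}{4\sqrt n}\cdot\frac{\delta R_x R_y}{6\sqrt{d(k+1)}} = \frac{\delta R_x R_y}{8\sqrt{nd(k+1)}},
\end{align*}
which matches the claimed bound. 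The min-max/max-min decomposition in (\ref{eq:mingap2}) is justified by the same saddle-point argument on the restricted bilinear problem as in the proof of Proposition \ref{prop:lowerccrounds}, which applies verbatim after rescaling.
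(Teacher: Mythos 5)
Your part 2 is correct: the global objective \mbox{(\ref{def:fcc2})} is exactly $\tfrac{3}{4\sqrt{n}}$ times (\ref{def:fcc}) (both coefficients scale by the same factor), the duality gap is positively homogeneous in $f$, and $\tfrac{3}{4\sqrt n}\cdot\tfrac{\delta R_xR_y}{6\sqrt{d(k+1)}}=\tfrac{\delta R_xR_y}{8\sqrt{nd(k+1)}}$; this rescaling of Proposition \ref{prop:lowerccrounds} is a clean alternative to the paper, which instead re-invokes Han et al.'s Proposition 3.31 with $L\mapsto\sqrt{n}\delta/4$. The smoothness and convexity arguments are also fine (for $n=2$ the disjoint-support claim fails, but then $A_2=A$ with $\|A\|\le 2$ gives the same constant).

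The genuine gap is exactly the step you flagged: the similarity verification, and the route you propose cannot be completed. You need $\bigl\|\tfrac{\delta}{8\sqrt n}(nA_i-A)\bigr\|\le\delta$, i.e.\ $\|nA_i-A\|\le 8\sqrt n$ in operator norm, but your own disjoint-image identity already refutes this: taking $v=(e_j-e_{j+1})/\sqrt 2$ with $j\in S_i$ gives $\|A_iv\|^2\ge (a_j^\top v)^2=2$, hence
\begin{align*}
\|nA_i-A\|^2 \;\ge\; \bigl((n-1)^2-1\bigr)\,\|A_iv\|^2+\|Av\|^2 \;\ge\; 2\bigl((n-1)^2-1\bigr),
\end{align*}
so $\|nA_i-A\|=\Theta(n)$ and $\|\nabla^2 f_i-\nabla^2 f\|=\Theta(\sqrt n\,\delta)$ for every client $i\ge 2$ holding a nonzero block. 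There is no ``$\sqrt n$ savings'' hiding in the sparsity of $D_i$ or the band structure of $A^\top A$: a per-client, uniform operator-norm bound of order $\delta$ is simply incompatible with the $\sqrt n\,\delta$ scaling of the local functions once $n$ is large. The $\sqrt n$ factor in this family of constructions is recovered only through an \emph{averaged} (variance-type) similarity argument over the clients, e.g.\ a bound of the form $\tfrac1n\sum_i\|(\nabla^2 f_i-\nabla^2 f)v\|^2\lesssim \delta^2\|v\|^2$, which exploits $\sum_{i\ge 2}A_iA_i^\top\preceq 2I$; a single summand there is of size $n\delta^2$, and only the average over the $n$ clients brings it down to $\delta^2$. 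This is how the argument the paper points to (Lemma C.8 of Beznosikov et al., cited in the paper's one-line proof of this step) proceeds, so any correct completion of your part 1 must either reproduce that averaged argument and explain how it yields the similarity notion actually asserted in the proposition, or abandon the direct per-client bound you set up — as written, the inequality you plan to prove is false.
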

\begin{proof}
    The smoothness and convexity (concavity) are easy to check. And the similarity can be verified following the methods of \citet[Lemma C.8]{beznosikov2024similarity}. 
    The function $f(x,y)$ defined by our equation (\ref{def:fcc2}) is identical to the function $f_{{\rm CC}}(x,y)$ defined by \citet[Proposition 3.31]{han2024lower} by replacing their notation $L$ with our $\sqrt{n}\delta/4$.
Then Proposition 3.31 of \citet{han2024lower} directly prove the result of equation (\ref{eq:mingap2}).
\end{proof}

\begin{lem}\label{lem:coordinatebound}
    Consider the minimax problem (\ref{prob:cc}) with  functions (\ref{def:ficc2}) and (\ref{def:fcc2}), $R_x=R_y=D$, $d=\lfloor \delta D^2/(4\sqrt{2n}\varepsilon)\rfloor-1$ and $\varepsilon\leq \delta D^2/(16\sqrt{2n})$. 
    We let $M=\lfloor (d-1)/2\rfloor \geq \Omega(\delta D^2/(\sqrt{n}\varepsilon))$, then
    any point $(x,y) \in \fZ\cap\fF_M^2$ satisfies ${\rm Gap}(x,y)\geq \varepsilon$.
\end{lem}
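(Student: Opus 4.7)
The plan is to reduce the claim directly to the gap lower bound already established in Proposition \ref{prop:lowerccrounds2}; the work is essentially arithmetic on the chosen values of $d$ and $M$. First I would verify that the parameters are within the range where that proposition applies. The hypothesis $\varepsilon\leq \delta D^2/(16\sqrt{2n})$ yields $\delta D^2/(4\sqrt{2n}\varepsilon)\geq 4$, so $d=\lfloor \delta D^2/(4\sqrt{2n}\varepsilon)\rfloor-1\geq 3$. Consequently $M=\lfloor(d-1)/2\rfloor$ satisfies $1\leq M\leq d-1$, which is exactly the index range for which Proposition \ref{prop:lowerccrounds2} supplies a nontrivial gap lower bound.

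Next, I would instantiate the proposition with $k=M$ and $R_x=R_y=D$ to obtain
\begin{align*}
\min_{(x,y)\in\fZ\cap\fF_M^2}{\rm Gap}(x,y)\geq \frac{\delta D^2}{8\sqrt{nd(M+1)}}.
\end{align*}
The key numerical step is to replace $\sqrt{d(M+1)}$ by a clean multiple of $d+1$. Since $M+1\leq (d+1)/2$, we have $\sqrt{d(M+1)}\leq \sqrt{d(d+1)/2}\leq (d+1)/\sqrt{2}$, so every $(x,y)\in\fZ\cap\fF_M^2$ obeys
\begin{align*}
{\rm Gap}(x,y)\geq \frac{\delta D^2}{4\sqrt{2n}\,(d+1)}.
\end{align*}
Finally, the choice $d+1\leq \delta D^2/(4\sqrt{2n}\varepsilon)$ implies the right-hand side is at least $\varepsilon$, giving the desired conclusion. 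The auxiliary claim $M\geq \Omega(\delta D^2/(\sqrt{n}\varepsilon))$ follows from $M\geq (d-2)/2$ together with the lower bound $d\geq \delta D^2/(4\sqrt{2n}\varepsilon)-2$ coming from removing the floor.

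There is no real obstacle here: the difficulty has already been absorbed into Proposition \ref{prop:lowerccrounds2} (which itself is imported from the bilinear construction of \citet{han2024lower}). The only mild care required is tracking the floors so that the admissibility condition $1\leq M\leq d-1$ is preserved and the constant $4\sqrt{2n}$ in the denominator matches the threshold in the hypothesis on $\varepsilon$.
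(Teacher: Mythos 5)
Your proposal is correct and follows essentially the same route as the paper: both invoke Proposition \ref{prop:lowerboundVG1cc} (i.e., Proposition \ref{prop:lowerccrounds2}) at $k=M$ and then use $M+1\leq (d+1)/2$ together with $d+1\leq \delta D^2/(4\sqrt{2n}\,\varepsilon)$ to conclude ${\rm Gap}(x,y)\geq \delta D^2/(4\sqrt{2n}\,(d+1))\geq\varepsilon$. The only difference is cosmetic — you bound $\sqrt{d(M+1)}\leq (d+1)/\sqrt{2}$ in one step where the paper passes through $\sqrt{2}(M+1)$ — and your additional remarks on $d\geq 3$ and $M=\Omega(\delta D^2/(\sqrt{n}\varepsilon))$ are consistent with the paper.
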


\begin{proof}
    The assumptions on $\varepsilon$ and $M$ imply $d\geq 3$.  and $d/2\leq (M+1)\leq (d+1)/2$.
    Then Proposition \ref{prop:lowerccrounds2} means
    \begin{align*}
        {\rm Gap}(x,y) \geq \min_{(x,y)\in\fZ\cap\fF_M^2}{\rm Gap}(x,y)\geq\frac{\delta D^2}{8\sqrt{nd(M+1)}}\geq\frac{\delta D^2}{8\sqrt{2n}(M+1)}\geq \frac{\delta D^2}{4\sqrt{2n}(d+1)}\geq \varepsilon.
    \end{align*}
\end{proof}
 \begin{lem}\label{lem:coordinate}
    Consider the minimax problem (\ref{prob:cc}) with functions (\ref{def:ficc2}) and (\ref{def:fcc2}) and run any DFO algorithm with $V$ communication complexity and $C$ local gradient calls. 
    In expectation, only the first $M\leq \min\{2V/n,2C/n\}$ coordinates of the final output can be non-zero while the rest of the $d-M$ coordinates are strictly equal to zero.
\end{lem}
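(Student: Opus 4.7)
The plan is to combine a zero-chain argument tailored to the block-cyclic structure of $\{A_i\}$ with the random batch sampling built into Definition~\ref{dfn:oracles}. For each node $i \in [n]$ and each time $t$ during the run, introduce the progress index
$P_i(t) := \max\{k : \exists v \in \mathcal{M}_i^x(t) \cup \mathcal{M}_i^y(t),\ v_k \neq 0\}$,
with $P_i(0) = 0$. Since the final output lies in $\mathcal{M}_1^x \cup \mathcal{M}_1^y$, the number $M$ of its non-zero coordinates is at most $P_1(T)$ at termination, so it suffices to bound $\E[P_1(T)]$ from above by $2V/n$ and by $2C/n$.

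First I would catalog how each admissible oracle operation in Definition~\ref{dfn:oracles} updates the family $\{P_i\}$. Because $\nabla f_1$ is a scalar multiple of $e_1$, local computation at the server can never raise $P_1$ above $1$. For each $i \geq 2$, substituting the explicit formulas $\nabla_x f_i(x,y) = \tfrac{\sqrt{n}\,\delta}{8} A_i y$ and $\nabla_y f_i(x,y) = \tfrac{\sqrt{n}\,\delta}{8} A_i^\top x$, together with $a_j = e_j - e_{j+1}$ and the cyclic support of $A_i$, shows that a local gradient call at client $i$ raises $P_i$ by at most one, and does so only when (a)~the new coordinate's residue modulo $n-1$ equals $i-1$, and (b)~the corresponding precursor already lies in $\mathcal{M}_i^x$ or $\mathcal{M}_i^y$. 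Communications merely copy coordinates among memories that belong to a sampled batch.

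It follows that any advance of $P_1$ from $k$ to $k+1$ must be routed through the \emph{unique} client $i^\star(k) \in \{2,\ldots,n\}$ determined by $i^\star(k) - 1 \equiv k \pmod{n-1}$: the server must first deliver its $e_k$ to $i^\star(k)$ via a server-to-client batch, then $i^\star(k)$ performs a productive local call, and finally the new $e_{k+1}$ returns to the server via a client-to-server batch. Because a batch of size $b$ contains any prescribed client with probability $b/n$, the expected communication complexity to route one advance through $i^\star(k)$ is of order $n$; summing over the $M$ advances and applying Wald's identity to the independent batch draws yields $\E[V] \geq nM/2$, i.e.\ $M \leq 2V/n$. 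The analogous bookkeeping for local gradient calls---each productive call requires a precursor that itself had to cross a random batch before it could be used---couples $C$ to the same chain of rare random arrivals of length $M$, producing $\E[C] \geq nM/2$.

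The main obstacle will be making the second half of the counting rigorous, since local gradient calls are not themselves randomized in Definition~\ref{dfn:oracles} and so the factor $n/2$ on $C$ must be imported from the randomness of the batches that supply each precursor. I plan to address this by a coupling that pairs every productive gradient call at $i^\star(k)$ with the earliest batch that delivered its precursor; productive calls are then in bijection with rare sampling events whose expected spacing is $\Theta(n)$, and a martingale-style bookkeeping over the induced filtration closes the announced bound $M \leq \min\{2V/n,\,2C/n\}$.
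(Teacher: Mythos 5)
Your communication-complexity half is essentially the paper's own argument: you identify the same zero-chain structure (the advance from coordinate $k$ to $k+1$ can only be produced by the unique client $i^\star(k)$ with $i^\star(k)-1\equiv k\ (\mathrm{mod}\ n-1)$, and must be routed through the server because clients never exchange information directly), and you convert ``the needed client lands in a size-$b$ batch with probability $\approx b/(n-1)$'' into an expected communication cost of order $n$ per advance. Whether one phrases this as the paper does (expected progress of a size-$b$ round is at most $b/(n-1)$, then compare $V=\sum_j j s_j$ with $M=\sum_{j<n}\tfrac{j}{n-1}s_j+s_n$) or via your Wald-type accounting over the batch draws is immaterial, up to the harmless $b/n$ versus $b/(n-1)$ slip.

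The genuine gap is in the $M\le 2C/n$ half, and it is exactly the obstacle you flag. Your proposed repair---pairing each productive gradient call with the earliest batch that delivered its precursor---produces a correspondence between productive calls and rare sampling events, but that only re-derives the communication bound; it says nothing about how many gradient calls must be made. Under your own catalog of admissible operations, an algorithm that exploits the fixed, publicly known residue assignment of the blocks can let only $i^\star(k)$ compute, and only after some batch has actually delivered $e_k$ to it; such an algorithm makes $O(1)$ productive calls per advance and no unproductive ones, so your coupling yields $\E[C]=\Omega(M)$, not $\Omega(nM)$, and the claimed $M\le 2C/n$ does not follow from it. The paper closes this count differently: it charges at least one local gradient call per unit of communication complexity in every sampled batch---each application of the matrix $A_i$ that a sampled client contributes must itself have been produced by a gradient call at that client---so the minimum number of gradient calls obeys the same arithmetic $C\ge\sum_j j s_j$ as $V$, and $M\le 2C/n$ follows by the identical computation as for communication. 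To keep your route you would need either to adopt that per-participant accounting of gradient calls, or to argue that the algorithm cannot identify the needed client without spending gradient calls (for instance by randomizing the block-to-client assignment in the hard instance); the martingale/coupling bookkeeping as you describe it accomplishes neither.
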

\begin{proof}

At initialization, $\mathcal{M}_i^x=\mathcal{M}_i^y=\fF_0$. Let's analyze how $\mathcal{M}_i^x$ and $\mathcal{M}_i^y$ change through local computations. For the $i$-th client, we add the following points to $\mathcal{M}_i^x$ and $\mathcal{M}_i^y$ as 
\begin{align*}
    x \in \operatorname{span}\left\{x^{\prime}, A_i y^{\prime}\right\}, \quad\text{and}\quad  y \in \operatorname{span}\left\{e_1 \cdot \mathbb{I}\{i=1\}, y^{\prime}, A_i^\top  x^{\prime}\right\},
\end{align*}
where $x^{\prime} \in \mathcal{M}_i^x$ and $y^{\prime} \in \mathcal{M}_i^y$.

It is easy to see that the server can make the first coordinate of $y$ non-zero using $e_1$, and broadcast this progress to other clients. Only updates of the type $A_i y^{\prime}$ or $A_i^\top  x^{\prime}$ will help in this regard. Since $A_i$ only contains rows from the matrix $A$ such as the $(i-1)$-th row, $(n+i-1)$-th row, etc., to make the first coordinate of $x$ in the global output non-zero, we need and can only use the $A_2$ matrix. It can be noted that by using $A_2$, we can also make the second coordinate of $y$ non-zero after making the first coordinate of $x$ non-zero. Furthermore, to make more progress, we need to use $A_3$ and so on.
We conclude that we must constantly transfer progress from the node currently needed (to make the next coordinate of $x$ non-zero; then of $y$) to the server, and then to other nodes.

By definition, one communication round involves communication with all clients or only with batches of some uniform and independent clients. When we sample without replacement, the success probability of a communication round on clients with batch size $b$ (i.e., making one coordinate non-zero) is $\frac{b}{n-1}$ (or 1 when $b=n$), which is also the expected number of non-zero coordinates that can be obtained with $b$ communication complexity and at least $b$ local gradient calls (as each use of the matrix $A$ necessarily comes from a gradient call). 
 When we sample with replacement by a batch size $b$, it is equivalent to that we sample without replacement by batch size $1$ for $b$ times. 
Assuming that we have communication rounds with batch sizes (sampling without replacement) of~$1, 2, \ldots,n$ for $s_1, s_2, \ldots, s_n$ times, then the communication complexity we spent is $V=\sum_{j=1}^{n} js_j$ 
and the minimum gradient calls we spent is $C=\sum_{j=1}^{n} js_j$. 
This implies the expected number of non-zero coordinates is $M=\sum_{j=1}^{n-1} \frac{j}{n-1} s_j+s_n$. 
Therefore, the expected total number of non-zero coordinates in the global output is at most $M$ for $y$ and $M-1$ for $x$ (or we can say $M$). By comparing expressions of $V$, $C$ and $M$, we can have $M\leq 2V/n$ and $M\leq 2C/n$, completing the proof.
\end{proof}

Then we can prove Theorem \ref{thm:lowerboundVG1cc} by combining Lemma \ref{lem:coordinatebound} and \ref{lem:coordinate}. 

\subsection{Proof of Lemma \ref{lem:lowerboundG2cc}}
We choose the function set as
\begin{align}\label{def:fcc3}
    f(x,y)=f_i(x,y)=\frac{L}{2}x^\top Ay-\frac{LR_y}{2\sqrt{d}}e_1^\top x,\quad \forall i \in [n],
\end{align}
where $A$ is defined as equation (\ref{eq:matrixA}). The structure of $A$ results the following lemma.
\begin{lem}\label{lem:zero-chain2}
    For the function set (\ref{def:fcc3}), all $(x,y)\in \fF_{k}\times\fF_{k}$ and $k=0,\dots,d-1$, we have 
    \begin{align*} 
        \nabla f_i(x,y)\in \fF_{k+1}\times\fF_{k+1}.
    \end{align*}
\end{lem}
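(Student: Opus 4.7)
The plan is to verify this zero-chain (invariant-subspace) property by direct computation, exploiting the bidiagonal sparsity pattern of $A$. Since construction (\ref{def:fcc3}) takes $f_1 = \cdots = f_n = f$, it suffices to analyze the single function $f$. I would first write the partial gradients explicitly,
$$\nabla_x f(x,y) = \tfrac{L}{2}\, A y - \tfrac{L R_y}{2\sqrt{d}}\, e_1, \qquad \nabla_y f(x,y) = \tfrac{L}{2}\, A^\top x,$$
and then show separately that each component lies in $\fF_{k+1}$ whenever $(x,y)\in\fF_k\times\fF_k$.

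For the $x$-component, I would use that $A$ is upper bidiagonal, so $(Ay)_j = y_j - y_{j+1}$ for $j < d$ and $(Ay)_d = y_d$. If $y\in\fF_k$, then $y_j = 0$ for all $j > k$, which forces $(Ay)_j = 0$ on the same range; hence $Ay\in\fF_k\subseteq\fF_{k+1}$. Combined with $e_1\in\fF_1\subseteq\fF_{k+1}$, this yields $\nabla_x f(x,y)\in\fF_{k+1}$. For the $y$-component, $A^\top$ is lower bidiagonal, so $(A^\top x)_j = x_j - x_{j-1}$ with $(A^\top x)_1 = x_1$. If $x\in\fF_k$, the only indices $j$ for which both $x_j$ and $x_{j-1}$ can be nonzero are those with $j \leq k+1$; consequently $A^\top x \in \fF_{k+1}$ and therefore $\nabla_y f(x,y)\in\fF_{k+1}$.

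The proof is essentially a mechanical sparsity check, so there is no serious obstacle; the only delicate point is the asymmetric behavior of $A$ versus $A^\top$. Multiplication by $A$ preserves the subspace $\fF_k$ exactly, while multiplication by $A^\top$ extends it by exactly one coordinate. This one-coordinate growth per oracle call is precisely the mechanism that, when fed into a counting argument analogous to Lemma \ref{lem:coordinate}, converts the zero-chain into the $\Omega(n + LD^2/\varepsilon)$ lower bound on local gradient calls that Lemma \ref{lem:lowerboundG2cc} targets.
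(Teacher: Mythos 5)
Your proof is correct and follows exactly the reasoning the paper intends (the paper simply asserts the lemma from "the structure of $A$" without writing it out): you compute $\nabla_x f = \tfrac{L}{2}Ay - \tfrac{LR_y}{2\sqrt d}e_1$ and $\nabla_y f = \tfrac{L}{2}A^\top x$ and verify the containments directly from the bidiagonal sparsity of $A$ and $A^\top$, together with $e_1\in\fF_1\subseteq\fF_{k+1}$. The only cosmetic remark is that "$A^\top$ extends $\fF_k$ by exactly one coordinate" should read "by at most one coordinate," but this does not affect the argument.
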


One can follow the similar method as the proof of Theorem \ref{thm:lowerboundVG1cc} to prove the following proposition and lemma for the proof of Lemma \ref{lem:lowerboundG2cc}.
\begin{prop}\label{prop:lowerccrounds3}
    For any $d\geq 3$, $f_i(x,y)$ and $f(x,y)$ defined as equation (\ref{def:fcc3}) satisfy
    \begin{enumerate}[leftmargin=2em]
        \item $f_i$ is $L$-smooth and convex-concave, function set $\{f_i\}_{i=1}^n$ has $\delta$ second-order similarity for any $\delta>0$. Thus, $f$ is convex-concave.
        \item For $1\leq k\leq d-1$, we have
        \begin{align*}
            \min_{(x,y)\in\fZ\cap\fF_k^2}{\rm Gap}(x,y)=
            \min_{x\in\fX\cap\fF_k}\max_{y\in\fY}f(x,y)-\max_{y\in\fY\cap\fF_k}\min_{x\in\fX}f(x,y)\geq \frac{L R_xR_y}{2\sqrt{d(k+1)}}.
        \end{align*}
    \end{enumerate}
\end{prop}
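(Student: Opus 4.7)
\noindent\textbf{Proof proposal for Proposition \ref{prop:lowerccrounds3}.}
The plan is to treat the two parts completely separately. Part 1 should reduce to elementary computations on a bilinear function, while part 2 should follow almost verbatim from the bilinear construction in \citet[Proposition 3.31]{han2024lower} after a trivial identification of constants. The main conceptual shortcut is that, in contrast to the constructions in Propositions~\ref{prop:lowerccrounds} and \ref{prop:lowerccrounds2}, here every $f_i$ is identical to the global objective $f$, which makes the similarity check immediate and the variational structure coincide exactly with the classical bilinear worst-case example.

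For part 1, I would start by writing the Hessian of $f_i$ as the block matrix
\begin{align*}
    \nabla^2 f_i(x,y) = \begin{pmatrix} 0 & \tfrac{L}{2} A \\[0.1cm] \tfrac{L}{2} A^\top & 0 \end{pmatrix},
\end{align*}
so that $\|\nabla^2 f_i\|_2 = \tfrac{L}{2}\|A\|_2$. Then I would bound $\|A\|_2$ by writing $A=I+N$, where $N$ has $-1$ on the superdiagonal, and use $\|N\|_2\le 1$ to conclude $\|A\|_2 \le 2$, giving $L$-smoothness. Convex-concavity follows because the only non-affine part of $f_i$ is bilinear in $(x,y)$. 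Finally, since $f_i \equiv f$ for every $i$, we have $\nabla^2 f_i(x,y) - \nabla^2 f(x,y) = 0$ identically, so the $\delta$-second-order similarity holds vacuously for every $\delta > 0$, as claimed.

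For part 2, I would observe that the global objective here is exactly the convex-concave bilinear function used by \citet[Proposition 3.31]{han2024lower} with their smoothness parameter replaced by our $L$ (the linear term $-\tfrac{L R_y}{2\sqrt{d}}e_1^\top x$ has the same normalization as in their construction). Their result states that restricting both variables to the coordinate subspace $\fF_k$ incurs a duality gap of at least a prescribed constant times $L R_x R_y / \sqrt{d(k+1)}$. Plugging $L$ directly into that bound yields
\begin{align*}
    \min_{(x,y)\in\fZ\cap\fF_k^2}\!{\rm Gap}(x,y)
    \ge \frac{L R_x R_y}{2\sqrt{d(k+1)}},
\end{align*}
which is the statement we need. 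I would verify that the normalization constants and the domain restriction $(x,y)\in\fZ\cap\fF_k^2$ (versus only one of the variables being restricted) match those of \citet{han2024lower}, using Lemma~\ref{lem:zero-chain2} to justify the zero-chain structure that makes the restriction meaningful.

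There is no real obstacle here: the only thing to watch is the bookkeeping of constants between our normalization and Han et al.'s, and the fact that the lower bound is stated under the restriction of both $x$ and $y$ to $\fF_k$, which is actually a relaxation of their one-sided restriction (so if anything the bound becomes easier). The key takeaway is that this proposition is essentially a restatement of a known bilinear lower bound in our notation, together with the trivial similarity observation $f_i \equiv f$, rather than a genuinely new construction.
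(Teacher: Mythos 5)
Your proposal is correct and follows essentially the same route as the paper: part 1 is the elementary verification (bilinear structure gives convex-concavity, the block Hessian norm $\tfrac{L}{2}\|A\|_2\le L$ gives $L$-smoothness, and $f_i\equiv f$ makes the $\delta$-similarity trivial for every $\delta>0$), and part 2 is obtained by identifying $f$ with the bilinear hard instance of \citet[Proposition 3.31]{han2024lower} and citing their gap bound, exactly as the paper does for Propositions \ref{prop:lowerccrounds} and \ref{prop:lowerccrounds2}.
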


\begin{lem}\label{lem:coordinatebound2}
    Consider the minimax problem (\ref{prob:cc}) with the function  (\ref{def:fcc3}), \mbox{$R_x=R_y=D$}, \mbox{$d=\lfloor \delta D^2/(\sqrt{2}\varepsilon)\rfloor-1$} and $\varepsilon\leq \delta D^2/(4\sqrt{2})$. 
    We let $M=\lfloor (d-1)/2\rfloor \geq \Omega(L D^2/\varepsilon)$, then
    any point $(x,y) \in \fZ\cap\fF_M^2$ satisfies ${\rm Gap}(x,y)\geq \varepsilon$.
\end{lem}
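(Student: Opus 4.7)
The plan is to mirror the three-step arithmetic used in the proof of Lemma \ref{lem:coordinatebound}, but with Proposition \ref{prop:lowerccrounds3} (the single-function $L$-smooth bilinear chain) replacing Proposition \ref{prop:lowerccrounds2}. I read the symbol $\delta$ appearing in the statement as $L$, since the construction (\ref{def:fcc3}) depends only on the smoothness parameter $L$ (all $f_i$ are identical, so the similarity degree may be taken as any $\delta>0$), and since the final asymptotic claim $M\ge\Omega(LD^2/\varepsilon)$ can only match this way.

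First I would verify that Proposition \ref{prop:lowerccrounds3} is applicable with $k=M$. The hypothesis $\varepsilon\le LD^2/(4\sqrt 2)$ yields $LD^2/(\sqrt 2\varepsilon)\ge 4$, so $d=\lfloor LD^2/(\sqrt 2\varepsilon)\rfloor-1\ge 3$, and the integer $M=\lfloor(d-1)/2\rfloor$ satisfies $1\le M\le d-1$. Next, from $M=\lfloor(d-1)/2\rfloor$ I extract the sandwich $d/2\le M+1\le (d+1)/2$: the lower half gives $d\le 2(M+1)$, hence $\sqrt{d(M+1)}\le\sqrt 2\,(M+1)$, while the upper half gives $1/(M+1)\ge 2/(d+1)$.

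Plugging these two inequalities into Proposition \ref{prop:lowerccrounds3} with $R_x=R_y=D$ and $k=M$, I obtain for every $(x,y)\in\fZ\cap\fF_M^2$,
\begin{align*}
{\rm Gap}(x,y)\ \ge\ \frac{LD^2}{2\sqrt{d(M+1)}}\ \ge\ \frac{LD^2}{2\sqrt 2\,(M+1)}\ \ge\ \frac{LD^2}{\sqrt 2\,(d+1)}\ \ge\ \varepsilon,
\end{align*}
where the last step uses $d+1\le LD^2/(\sqrt 2\varepsilon)$, which is immediate from $d=\lfloor LD^2/(\sqrt 2\varepsilon)\rfloor-1$. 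The asymptotic bound $M\ge\Omega(LD^2/\varepsilon)$ then follows from $d=\Theta(LD^2/\varepsilon)$ and $M=\lfloor(d-1)/2\rfloor$.

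There is no genuinely hard step here: Proposition \ref{prop:lowerccrounds3} already encapsulates the analytic content of the classical bilinear-chain lower bound of \citet{han2024lower}, so the remaining work is a purely arithmetic dimension-choosing calculation. The only mild subtlety is the notational one flagged above, which is forced by dimensional consistency because $L$ is the only non-trivial parameter entering the construction (\ref{def:fcc3}).
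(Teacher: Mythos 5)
Your proof is correct and is exactly the argument the paper intends (the paper leaves this lemma's proof implicit, directing the reader to repeat the computation of Lemma~\ref{lem:coordinatebound}): apply Proposition~\ref{prop:lowerccrounds3} with $k=M$, use the sandwich $d/2\le M+1\le (d+1)/2$, and unwind the choice of $d$. Your decision to read the $\delta$ in the stated choice of $d$ and in the threshold on $\varepsilon$ as $L$ is the right repair of what is evidently a typo carried over from the statement of Lemma~\ref{lem:coordinatebound}, since the construction in~(\ref{def:fcc3}) and the conclusion $M\ge\Omega(LD^2/\varepsilon)$ are both governed by $L$.
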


Then we can prove Lemma \ref{lem:lowerboundG2cc} by combining Lemma \ref{lem:zero-chain2} and \ref{lem:coordinatebound2}.

\subsection{Proof of Theorem \ref{thm:lowerboundGcc}}\label{appendix:lower-bound-SCSC}
\begin{proof}
In the case of $\sqrt{n}\delta \geq \Omega(L)$, the problem with with functions (\ref{def:ficc2}) and (\ref{def:fcc2}) in Theorem \ref{thm:lowerboundVG1cc} implies the lower bound on the local gradient complexity of \begin{align*}
    \Omega\left(n+\frac{\sqrt{n}\delta D^2}{\varepsilon}\right)=\Omega\left(n+\frac{(\sqrt{n}\delta+L)D^2}{\varepsilon}\right).
\end{align*}
In the case of $L \geq \Omega(\sqrt{n}\delta) $, the problem with function (\ref{def:fcc3}) in Lemma \ref{lem:lowerboundG2cc} implies the lower bound on the local gradient complexity of $\Omega(LD^2/\varepsilon)=\Omega(n+(\sqrt{n}\delta+L)D^2/\varepsilon)$. 
Combining these two cases, we achieve the lower bound on the local gradient complexity of $\Omega(n+(\sqrt{n}\delta+L)D^2/\varepsilon)$.
\end{proof}

{\small\section{Lower Bounds in Strongly-Convex-Strongly-Concave Case}\label{appx:lowerscsc}}

We follow similar steps as in Appendix \ref{appx:lowercc} to prove Theorem \ref{thm:lowerboundGG2scsc}. Firstly we divide it into several detailed theorems and lemmas as below and prove them one by one.

\begin{thm}\label{thm:lowerboundVG1scsc}
    For any $\mu, \delta, L > 0$ with $L\geq \max\{\mu,\delta\}$ and $n\geq 2$, there exist $L$-smooth and convex-concave functions $f_1,\dots,f_n:\BR^{d_x}\times\BR^{d_y}$ with $\delta$-second-order similarity such that the function $f(x,y)=\frac{1}{n}\sum_{i=1}^n f_i(x,y)$ is $\mu$-strongly-convex-$\mu$-strongly-concave. In order to find a solution of problem (\ref{prob:main}) such that $\E[\|z-z^*\|^2]\leq\varepsilon$, any DFO algorithm needs at least $\Omega((n+\sqrt{n}\delta/\mu)\log(1/\varepsilon))$ communication complexity and $\Omega((n+\sqrt{n}\delta/\mu)\log(1/\varepsilon))$ local gradient calls.
\end{thm}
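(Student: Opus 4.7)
The plan is to imitate the proof of Theorem~\ref{thm:lowerboundVG1cc}, but replace the purely bilinear hard instance by a regularized version whose global objective is $\mu$-strongly-convex-$\mu$-strongly-concave, so that the per-coordinate lower bound becomes a geometric decay rather than a polynomial one. Concretely, I would work with an instance of the form
\begin{align*}
f_i(x,y) = \frac{c_1\sqrt{n}\,\delta}{8}\,x^\top A_i y + \frac{\mu}{2}\|x\|^2 - \frac{\mu}{2}\|y\|^2 - c_2\,\mathbb{I}\{i=1\}\,e_1^\top x,
\end{align*}
where $A_i := \sum_{j\equiv (i-1)\bmod (n-1)} e_j a_j^\top$ is the row-partition of the matrix $A$ from \eqref{eq:matrixA} used in Theorem~\ref{thm:lowerboundVG1cc}, and $c_1,c_2$ are absolute constants tuned so that each $f_i$ is $L$-smooth (using $L\geq\max\{\mu,\delta\}$), the family $\{f_i\}$ has $\delta$-second-order similarity, and $f=\tfrac{1}{n}\sum_i f_i$ is exactly $\mu$-strongly-convex-$\mu$-strongly-concave. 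The additive bias $-c_2 e_1^\top x$ breaks the trivial solution at the origin and plants the optimum in the interior.

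Next, I would establish the two structural ingredients that drive the lower bound. The first is a zero-chain property: by inspection of $A_i$, for any $(x,y)\in\fF_k\times\fF_k$, $\nabla f_i(x,y)$ lies in $\fF_{k+1}\times\fF_{k+1}$ only for the unique client $i$ whose $A_i$ contains row $k+1$, and in $\fF_k\times\fF_k$ otherwise. Applying the batch-sampling analysis of Lemma~\ref{lem:coordinate} then gives that, in expectation, after spending $V$ units of communication and $C$ units of local gradient calls, the global output $\hat z$ has at most $M\le \min\{2V/n,2C/n\}$ non-zero coordinates. The second ingredient is an explicit lower bound on the distance to the unique saddle $z^*$ when $\hat z$ is supported on the first $M$ coordinates. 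Solving the linear system $(\mu I\pm \tfrac{c_1\sqrt{n}\,\delta}{8}A)z^* = \text{const}\cdot e_1$ componentwise gives $z^*_j = \Theta((1-c_3\mu/(\sqrt{n}\delta))^j)$ geometrically, so truncating at coordinate $M$ yields
\begin{align*}
\min_{\hat z\in\fF_M^2}\|\hat z - z^*\|^2 \;\geq\; \Omega\!\left(\big(1-c_3\mu/(\sqrt{n}\delta)\big)^{2M}\right).
\end{align*}

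Combining the two ingredients, achieving $\E[\|\hat z-z^*\|^2]\leq\varepsilon$ forces $M \geq \Omega((\sqrt{n}\delta/\mu)\log(1/\varepsilon))$ whenever the ambient dimension $d$ is chosen large enough (so the truncation-induced tail dominates $\varepsilon$), and then the bound $M\le 2V/n = 2C/n$ from Lemma~\ref{lem:coordinate} translates to $V,C \geq \Omega((\sqrt{n}\delta/\mu)\log(1/\varepsilon))\cdot n/2 = \Omega(\sqrt{n}(\delta/\mu)\log(1/\varepsilon))$. The additional $\Omega(n\log(1/\varepsilon))$ term in both bounds comes from the initialization round in which the server must broadcast any non-zero information to all $n-1$ clients at least once per factor-of-$1/e$ reduction in error; this is most cleanly extracted by a separate argument restricted to the $n=2$ regime (or by noting that the bound must dominate $\Omega(n)$ simply to touch every client).

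\paragraph{Main obstacle.} The delicate step is the joint calibration of $c_1,c_2$ and $d$. We need simultaneously: (i) $L$-smoothness of each $f_i$ given that $A_i$ has operator norm $\Theta(1)$; (ii) $\delta$-second-order similarity of the family (which forces the bilinear coefficient to scale like $\sqrt{n}\delta$, not $\delta$); (iii) $\mu$-strong convexity/concavity of $f$; and (iv) a clean geometric expression for $z^*$ so that the truncation estimate $\|\hat z-z^*\|^2 \gtrsim (1-c_3\mu/(\sqrt{n}\delta))^{2M}$ is tight. The analogous calibration is carried out in~\cite{beznosikov2024similarity} for their communication lower bound, and I would lift it almost verbatim, augmenting it with the per-round gradient-call accounting from Lemma~\ref{lem:coordinate} to pick up the matching lower bound on local gradient complexity.
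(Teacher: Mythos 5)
Your route is the same as the paper's: the paper also takes (up to inessential choices of the linear bias term and constants) the hard instance of \citet{beznosikov2024similarity}, restated in equations (\ref{eq121-badfi})--(\ref{eq120-badf}), reuses the batch-sampling progress bound of Lemma \ref{lem:coordinate} to show that $V$ communications and $C$ gradient calls yield in expectation at most $M\le\min\{2V/n,2C/n\}$ non-zero coordinates (Lemma \ref{lem:lower-scsc-000}), and then invokes the geometric truncation bound of \citet[Theorem~C.10]{beznosikov2024similarity} (Lemma \ref{lem:lower-scsc-001}) before multiplying by $n/2$. However, your quantitative calibration has a genuine error. With global coupling $\Theta(\delta/\sqrt{n})\,x^\top A y$ and $\mu$-strong convexity/concavity, the saddle point decays like $z_j^*=\Theta\bigl((1-c\,\sqrt{n}\mu/\delta)^j\bigr)$, not $(1-c\,\mu/(\sqrt{n}\delta))^j$: eliminating $x$ from the optimality conditions gives $(\mu^2 I+\lambda^2 A^\top A)y^*=\mathrm{const}\cdot e_1$ with $\lambda=\Theta(\delta/\sqrt{n})$, whose decay rate $q$ satisfies $1-q\approx\mu/\lambda=\Theta(\sqrt{n}\mu/\delta)$. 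Consequently the truncation argument forces only $M\ge\Omega\bigl((\delta/(\sqrt{n}\mu))\log(1/\varepsilon)\bigr)$, not your claimed $\Omega\bigl((\sqrt{n}\delta/\mu)\log(1/\varepsilon)\bigr)$; your version, combined with $V,C\ge (n/2)M$, would give $\Omega(n^{3/2}(\delta/\mu)\log(1/\varepsilon))$ and contradict the matching upper bound of SVOGS in Corollary \ref{thm:complexitySVOGSscsc}. In your write-up this is masked by the line ``$\Omega((\sqrt{n}\delta/\mu)\log(1/\varepsilon))\cdot n/2=\Omega(\sqrt{n}(\delta/\mu)\log(1/\varepsilon))$'', which is not valid arithmetic; with the corrected bound on $M$ the multiplication by $n/2$ does produce the intended $\Omega(\sqrt{n}(\delta/\mu)\log(1/\varepsilon))$ term, exactly as in the paper.

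The second gap is the $\Omega(n\log(1/\varepsilon))$ term. In the paper it falls out of the same combination with no extra argument: the bound in Lemma \ref{lem:lower-scsc-001} has denominator $1+\sqrt{\delta^2/(16\mu^2 n)+1}\ge 2$, i.e. each additional non-zero coordinate reduces the error by at most a universal constant factor, so $M\ge\Omega(\log(1/\varepsilon))$ holds in every parameter regime, and multiplying by $n/2$ gives $\Omega(n\log(1/\varepsilon))$ for both $V$ and $C$. Your proposed substitutes do not deliver this: ``the bound must dominate $\Omega(n)$ simply to touch every client'' yields only $\Omega(n)$, which is strictly weaker than $\Omega(n\log(1/\varepsilon))$ when $\delta/\mu=O(1)$, and the ``separate argument restricted to the $n=2$ regime'' is not spelled out and in any case cannot produce a factor of $n$. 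To close the proof you should either import the constant-factor-per-coordinate statement from \citet[Theorem~C.10]{beznosikov2024similarity}, as the paper does, or prove directly for your instance that the per-coordinate error reduction is bounded by an absolute constant.
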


\begin{lem}\label{lem:lowerboundG2scsc}
    For any $\mu, \delta, L > 0$ with $L\geq \max\{\mu,\delta\}$ and $n\geq 2$, there exist $L$-smooth and convex-concave functions $f_1,\dots,f_n:\BR^{d_x}\times\BR^{d_y}$ with $\delta$-second-order similarity such that the function $f(x,y)=\frac{1}{n}\sum_{i=1}^n f_i(x,y)$ is $\mu$-strongly-convex-$\mu$-strongly-concave. In order to find a solution of problem (\ref{prob:main}) such that $\E[\|z-z^*\|^2]\leq\varepsilon$, any DFO algorithm needs at least $\Omega(L/\mu\log(1/\varepsilon))$ local gradient calls.
\end{lem}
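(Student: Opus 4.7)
\textbf{Proof proposal for Lemma \ref{lem:lowerboundG2scsc}.}
The plan is to exploit the freedom in choosing the splitting by making every local function identical. Set $f_i \equiv f$ for all $i\in[n]$; then $\nabla^2 f_i-\nabla^2 f=0$, so the $\delta$-second-order similarity holds vacuously for every $\delta>0$, and the distributed structure contributes no additional difficulty. The lemma therefore reduces to a classical single-oracle lower bound: how many gradient evaluations of a single $L$-smooth, $\mu$-strongly-convex-$\mu$-strongly-concave $f$ are needed to reach $\E[\|z-z^*\|^2]\le\varepsilon$.

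For the hard instance I would take a saddle-point variant of the Nesterov zero-chain, namely
\begin{align*}
f(x,y)\;=\;\frac{L-\mu}{4}\bigl(x^\top T\,x \;-\; y^\top T\,y\bigr)\;-\;c\,e_1^\top x\;+\;\frac{\mu}{2}\|x\|^2\;-\;\frac{\mu}{2}\|y\|^2
\end{align*}
on $\BR^d\times\BR^d$, where $T$ is the standard tridiagonal positive semi-definite matrix with $\|T\|\le 4$ and the zero-chain property $T\fF_k\subseteq\fF_{k+1}$, and the scalar $c$ is chosen so the unique saddle $z^*=(x^*,y^*)$ has geometrically decaying coordinates. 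This $f$ is $L$-smooth, $\mu$-strongly-convex-$\mu$-strongly-concave, and (as in Lemma \ref{lem:zero-chain2}) obeys $\nabla f(\fF_k\times\fF_k)\subseteq\fF_{k+1}\times\fF_{k+1}$.

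Since every $f_i$ equals $f$, each local gradient call at any node can enlarge the span of accessible points by at most one new basis direction, and every communication step merely copies vectors whose coordinates already lie in the collective span. Consequently, after $C$ total local gradient calls across all clients, the server memory satisfies $\mathcal{M}_1^x,\mathcal{M}_1^y\subseteq\fF_C$, so the global output lies in $\fF_C\times\fF_C$. The lemma then follows from a direct coordinate estimate on the tridiagonal saddle system: the components of $z^*$ decay like $q^k$ with $q=1-\Theta(\mu/L)$, which gives
\begin{align*}
\min_{z\in\fF_C\times\fF_C}\|z-z^*\|^2\;\geq\;\Omega\!\bigl((1-\Theta(\mu/L))^{2C}\,\|z^*\|^2\bigr),
\end{align*}
and choosing $d=\Omega((L/\mu)\log(1/\varepsilon))$ ensures the truncation error of $z^*$ is negligible. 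Enforcing the right-hand side to be at most $\varepsilon$ forces $C=\Omega((L/\mu)\log(1/\varepsilon))$.

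The main obstacle is justifying the non-accelerated rate $q=1-\Theta(\mu/L)$ rather than the accelerated $1-\Theta(\sqrt{\mu/L})$ that one gets for pure strongly-convex minimization. This is the distinctive feature of the saddle-point setting: the associated operator $F(z)=(\nabla_xf,-\nabla_yf)$ is strongly monotone but non-symmetric, so Chebyshev-type acceleration is unavailable against a first-order oracle. I would carry out this eigenvalue argument by mimicking the strongly-monotone variational inequality analysis of \citet{han2024lower}, whose bilinear-plus-quadratic construction for non-distributed minimax yields exactly the matching $\Omega(L/\mu\log(1/\varepsilon))$ oracle lower bound and transfers verbatim to our homogeneous distributed instance.
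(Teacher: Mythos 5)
Your overall reduction is the same as the paper's: take all local functions identical so the $\delta$-second-order similarity holds vacuously for any $\delta>0$, use a zero-chain structure so each local gradient call reveals at most one new coordinate, and conclude with a single-oracle lower bound for $L$-smooth, $\mu$-strongly-convex-$\mu$-strongly-concave saddle points. However, your specific hard instance breaks the argument. The function
$f(x,y)=\frac{L-\mu}{4}\bigl(x^\top Tx-y^\top Ty\bigr)-c\,e_1^\top x+\frac{\mu}{2}\|x\|^2-\frac{\mu}{2}\|y\|^2$
has \emph{no bilinear coupling} between $x$ and $y$: the problem decomposes into an independent strongly convex minimization in $x$ (Nesterov's chain) and a strongly concave maximization in $y$ with $y^*=0$. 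For this instance the operator $F(z)=(\nabla_x f,-\nabla_y f)$ has a symmetric, block-diagonal Jacobian, so it is a gradient (potential) operator and Chebyshev/accelerated methods apply directly; accelerated gradient descent run blockwise reaches $\E[\|z-z^*\|^2]\le\varepsilon$ in $\fO(\sqrt{L/\mu}\log(1/\varepsilon))$ gradient calls. Hence no $\Omega(L/\mu\log(1/\varepsilon))$ lower bound can be extracted from your instance, and your stated justification for the non-accelerated rate ("$F$ is strongly monotone but non-symmetric") is false precisely for the function you chose. The obstacle you flag at the end is not a technicality to be filled in later — it is where the proof fails.

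The fix is to make the cross term carry the large constant, as the paper does: it takes $f_i\equiv f$ with $f(x,y)=\frac{L}{2}x^\top\tilde A y+\frac{\mu}{2}\|x\|^2-\frac{\mu}{2}\|y\|^2+\frac{L^2}{4\mu}e_1^\top y$, where $\tilde A$ is a zero-chain (anti-diagonal bidiagonal) coupling matrix (equation (\ref{eq123-badf})). In the lower bound of \citet{zhang2022lower} the dominant term is $\sqrt{L_{xy}^2/(\mu_x\mu_y)}$, so it is only through the bilinear block $L_{xy}=L$ that one gets the contraction factor $q=1+2\alpha-2\sqrt{\alpha^2+\alpha}$ with $\alpha=\mu^2/L^2$, i.e.\ $q=1-\Theta(\mu/L)$, and hence $\Omega(L/\mu\log(1/\varepsilon))$ gradient calls (Lemma \ref{lem:lowerboundG2modified} in the paper). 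Your closing remark that the "bilinear-plus-quadratic construction" of \citet{han2024lower} transfers verbatim is essentially correct, but it contradicts the decoupled instance you actually wrote down; if you adopt such a bilinearly coupled instance and redo the span/zero-chain counting (which does carry over unchanged to the homogeneous distributed setting), you recover the paper's proof.
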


\begin{thm}\label{thm:lowerboundGscsc}
    For any $\mu, \delta, L > 0$ with $L\geq \max\{\mu,\delta\}$ and $n\geq 2$, there exist $L$-smooth and convex-concave functions $f_1,\dots,f_n:\BR^{d_x}\times\BR^{d_y}$ with $\delta$-second-order similarity such that the function $f(x,y)=\frac{1}{n}\sum_{i=1}^n f_i(x,y)$ is $\mu$-strongly-convex-$\mu$-strongly-concave. In order to find a solution of problem (\ref{prob:main}) such that $\E[\|z-z^*\|^2]\leq\varepsilon$, any DFO algorithm needs at least \mbox{$\Omega((n+(\sqrt{n}\delta+L)/\mu)\log(1/\varepsilon))$} local gradient calls.
\end{thm}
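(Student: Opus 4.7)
The plan is to mimic the case-analysis argument used for Theorem \ref{thm:lowerboundGcc} in Appendix \ref{appendix:lower-bound-SCSC}, combining the two building blocks that immediately precede the theorem statement: Theorem \ref{thm:lowerboundVG1scsc}, which supplies $\Omega((n+\sqrt{n}\delta/\mu)\log(1/\varepsilon))$ local gradient calls via the $(n-1)$-node splitting of the tridiagonal construction plus a strongly-monotone quadratic regularizer, and Lemma \ref{lem:lowerboundG2scsc}, which supplies $\Omega((L/\mu)\log(1/\varepsilon))$ local gradient calls via the homogeneous construction $f_1=\dots=f_n$ (for which $\delta$-similarity is satisfied trivially for any $\delta>0$).

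First, I would split on whether $\sqrt{n}\delta \geq L$ or $L \geq \sqrt{n}\delta$, and observe that in either regime we have $\sqrt{n}\delta + L = \Theta(\max\{\sqrt{n}\delta, L\})$, so the target lower bound $\Omega((n+(\sqrt{n}\delta+L)/\mu)\log(1/\varepsilon))$ reduces in each regime to one of the two building-block bounds.

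In the first regime ($\sqrt{n}\delta \geq L$), Theorem \ref{thm:lowerboundVG1scsc} directly gives
\begin{align*}
\Omega\Big(\Big(n+\frac{\sqrt{n}\delta}{\mu}\Big)\log\frac{1}{\varepsilon}\Big) = \Omega\Big(\Big(n+\frac{\sqrt{n}\delta+L}{\mu}\Big)\log\frac{1}{\varepsilon}\Big),
\end{align*}
and its hard instance is admissible since $L \geq \max\{\mu,\delta\}$ by hypothesis. In the second regime ($L \geq \sqrt{n}\delta$), Lemma \ref{lem:lowerboundG2scsc} gives $\Omega((L/\mu)\log(1/\varepsilon))$, and, combined with the trivial bound $\Omega(n)$ obtained by observing that any output must at some point see the data on at least a constant fraction of the $n$ nodes in order for $\E[\|z-z^*\|^2]\leq \varepsilon$ to hold for a hard instance whose solution depends on every $f_i$, yields
\begin{align*}
\Omega\Big(\Big(n+\frac{L}{\mu}\Big)\log\frac{1}{\varepsilon}\Big) = \Omega\Big(\Big(n+\frac{\sqrt{n}\delta+L}{\mu}\Big)\log\frac{1}{\varepsilon}\Big).
\end{align*}
Taking the maximum across the two regimes gives the claimed bound in every parameter regime.

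The only non-routine ingredient is verifying that the $\Omega(n)$ term coexists with the $\Omega((L/\mu)\log(1/\varepsilon))$ term in the second regime on a single hard instance; this is easiest to handle by noting that Lemma \ref{lem:lowerboundG2scsc}'s construction already requires at least one gradient call per node before any non-trivial progress can be communicated (since the initial memories are zero and the solution $z^*\neq 0$ depends on data held by each $f_i$), which forces $\Omega(n)$ local gradient calls before the linear-rate lower bound even begins to apply. The rest of the proof is then a direct two-line combination, exactly parallel to the proof of Theorem \ref{thm:lowerboundGcc}.
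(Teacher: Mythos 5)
Your overall strategy (a two-case combination of Theorem~\ref{thm:lowerboundVG1scsc} and Lemma~\ref{lem:lowerboundG2scsc}) is the same as the paper's, but your case split leaves a genuine gap. You split on $\sqrt{n}\delta\geq L$ versus $L\geq\sqrt{n}\delta$, and in the second regime you try to recover the $n$-dependence by adding a ``trivial $\Omega(n)$'' to Lemma~\ref{lem:lowerboundG2scsc}'s bound. Two problems. First, the target bound is $\Omega\big((n+(\sqrt{n}\delta+L)/\mu)\log(1/\varepsilon)\big)$, i.e.\ it contains an $n\log(1/\varepsilon)$ term; your combination $\Omega(n)+\Omega((L/\mu)\log(1/\varepsilon))$ only yields $\Omega\big(n+(L/\mu)\log(1/\varepsilon)\big)$, which is strictly weaker in the admissible regime $\sqrt{n}\delta\leq L\ll n\mu$ (e.g.\ $\mu=1$, $L=10$, $n=10^6$, $\delta=L/\sqrt{n}$), where the claimed bound is $\Omega(n\log(1/\varepsilon))$ but your argument gives only $\Omega(n+10\log(1/\varepsilon))$. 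Second, the proposed source of the $\Omega(n)$ is invalid: the hard instance of Lemma~\ref{lem:lowerboundG2scsc} is homogeneous, $f_1=\dots=f_n=f$, so under Definition~\ref{dfn:oracles} the server can solve it entirely with local gradient calls to $f_1$ and output from $\mathcal{M}_1^x,\mathcal{M}_1^y$; nothing in that construction forces even one gradient call per node, so the claim that ``$z^*$ depends on data held by each $f_i$'' does not hold there.

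The paper avoids this by splitting on $n\mu+\sqrt{n}\delta$ versus $L$ rather than on $\sqrt{n}\delta$ versus $L$: when $L\geq\Omega(n\mu+\sqrt{n}\delta)$ one has $L/\mu\geq\Omega(n+\sqrt{n}\delta/\mu)$, so Lemma~\ref{lem:lowerboundG2scsc}'s bound $\Omega((L/\mu)\log(1/\varepsilon))$ alone already subsumes the full target (including $n\log(1/\varepsilon)$), and in the complementary case Theorem~\ref{thm:lowerboundVG1scsc} alone does, since there $(\sqrt{n}\delta+L)/\mu=\fO(n+\sqrt{n}\delta/\mu)$. An equally clean repair of your write-up, without any case analysis, is to observe that worst-case lower bounds from two different admissible instances can be combined by a maximum, and $\max\{(n+\sqrt{n}\delta/\mu)\log(1/\varepsilon),\,(L/\mu)\log(1/\varepsilon)\}\geq\tfrac{1}{2}(n+(\sqrt{n}\delta+L)/\mu)\log(1/\varepsilon)$; but as written, your Case~B argument does not establish the stated bound.
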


\subsection{Proof of Theorem \ref{thm:lowerboundVG1scsc}}
We introduce the function set as in \cite{beznosikov2024similarity}, which is similar to equation (\ref{def:ficc2}) that
\begin{align}\label{eq121-badfi}\begin{split}
    f_i(x, y)  =\begin{dcases}
        \frac{\mu}{2}\|x\|^2-\frac{\mu}{2}\|y\|^2+\frac{\delta^2}{16 \mu} e_1^\top  y ,&\quad i=1,\\
        \frac{\delta \sqrt{n}}{4} x^\top \Bigg[\sum_{j \equiv(i-1) \bmod (n-1)} e_j a_j^\top\Bigg] y+\frac{\mu}{2}\|x\|^2-\frac{\mu}{2}\|y\|^2 ,&\quad  i>2.
    \end{dcases}
\end{split}\end{align}

Then corresponding global objective is
\begin{align}\label{eq120-badf}
    f(x, y)=\frac{\delta}{4 \sqrt{n}} x^\top  A y+\frac{\mu}{2}\|x\|^2-\frac{\mu}{2}\|y\|^2+\frac{\delta^2}{16 n \mu} e_1^\top  y,
\end{align}
where all the notation keeps the same as the former section. We point out that the global objective function (\ref{eq120-badf}) with local functions (\ref{eq121-badfi}) satisfies Assumptions \ref{asm:set}, \ref{asm:smooth}, \ref{asm:cc}, \ref{asm:scsc}, and \ref{asm:ss}, with constant $\mu$, $\delta$, and $L\geq \delta$. See Lemma C.8 of \citet{beznosikov2024similarity} for details.

We provide the following lemmas for the proof of Theorem \ref{thm:lowerboundVG1scsc}.

\begin{lem}\label{lem:lower-scsc-000}
 Consider the minimax problem (\ref{prob:cc}) with functions (\ref{eq121-badfi}) and (\ref{eq120-badf}) and run any DFO algorithm with $V$ communication complexity and $C$ local gradient calls. In expectation, only the first $M\leq \min\{2V/n,2C/n\}$ coordinates of the final output can be non-zero while the rest of the $d-M$ coordinates are strictly equal to zero.
\end{lem}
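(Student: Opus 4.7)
The plan is to mirror the zero-chain argument used in the proof of Lemma \ref{lem:coordinate}, but to account for the extra quadratic regularization $(\mu/2)\|x\|^2 - (\mu/2)\|y\|^2$ that appears in every local function in \eqref{eq121-badfi}. First I would observe that each local gradient has the form
\begin{align*}
\nabla_x f_i(x,y) &= \mu x + \tfrac{\delta\sqrt{n}}{4}\Big[\textstyle\sum_{j \equiv (i-1)\bmod(n-1)} e_j a_j^\top\Big] y \cdot \mathbb{I}\{i\geq 2\},\\
\nabla_y f_i(x,y) &= -\mu y + \tfrac{\delta\sqrt{n}}{4}\Big[\textstyle\sum_{j \equiv (i-1)\bmod(n-1)} e_j a_j^\top\Big]^\top x \cdot \mathbb{I}\{i\geq 2\} + \tfrac{\delta^2}{16\mu} e_1 \cdot \mathbb{I}\{i=1\}.
\end{align*}
The $\mu x$ and $-\mu y$ terms only rescale the existing iterate, so they cannot increase the dimension of the span of a client's memory. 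Consequently, if $\mathcal{M}_i^x \subseteq \mathcal{F}_k$ and $\mathcal{M}_i^y \subseteq \mathcal{F}_k$, then the only operations that can enlarge the support are (i) the term $(\delta^2/(16\mu))e_1$ available solely on the server, and (ii) the bilinear updates $A_i y'$ or $A_i^\top x'$, each of which requires a gradient call on a specific client whose index class $(i-1) \bmod (n-1)$ selects the next row of $A$ to be activated.

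Next I would carry out an inductive argument identical in spirit to the one in Lemma \ref{lem:coordinate}: to convert the $m$-th zero coordinate of the server's memory into a non-zero coordinate, one must have already produced the $(m-1)$-th non-zero coordinate on the client whose index matches $(m-1) \bmod (n-1)$, then communicate it to the server. Since the regularization terms are transparent to this chain, each unit of progress must be sourced from one specifically indexed client. Under sampling without replacement of a batch of size $b$, the probability that a single communication round hits the required client is $b/(n-1)$ (and exactly $1$ when $b=n$), while sampling with replacement can be folded in by treating it as $b$ independent size-$1$ draws, as in Lemma \ref{lem:coordinate}.

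Then I would aggregate. Suppose the algorithm performs $s_j$ rounds with batch size $j$ for $j=1,\ldots,n$; the communication complexity incurred is $V = \sum_j j s_j$, and each activation of the bilinear block necessarily consumes a gradient call, so $C \geq \sum_j j s_j$. The expected number of new non-zero coordinates produced is bounded by $M = \sum_{j=1}^{n-1} \frac{j}{n-1} s_j + s_n$, which satisfies both $M \leq 2V/n$ and $M \leq 2C/n$ by the same arithmetic as in Lemma \ref{lem:coordinate}. Since the server holds the final output, at most the first $M$ coordinates of $x$ (and analogously for $y$, accounting for the one coordinate activated by the server itself) can be non-zero in expectation.

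The main obstacle I anticipate is rigorously ruling out that the regularization $\mu\|x\|^2 - \mu\|y\|^2$ enables some algorithmic shortcut; concretely, one must show that linear combinations of $\mu x$ with vectors in $\mathcal{F}_k$ cannot produce vectors outside $\mathcal{F}_k$. This is immediate coordinate-wise, but it deserves to be stated carefully so that the invariant $\mathcal{M}_i^x, \mathcal{M}_i^y \subseteq \mathcal{F}_{\text{(progress so far)}}$ is preserved through every permitted operation in Definition \ref{dfn:oracles}. Once that invariant is established, the counting argument goes through verbatim from the convex-concave case.
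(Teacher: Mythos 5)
Your proposal is correct and follows essentially the same route as the paper, whose proof of this lemma simply says to repeat the argument of Lemma~\ref{lem:coordinate}; your added observation that the $\mu x$ and $-\mu y$ regularization terms (and the server-only $e_1$ term) preserve the subspace invariant is exactly the detail needed to transfer that zero-chain counting argument to the strongly-convex-strongly-concave construction.
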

\begin{proof}
Follow the proof of Lemma \ref{lem:coordinate}.    
\end{proof}

\begin{lem}[{\citet[Theorem C.10]{beznosikov2024similarity}}]\label{lem:lower-scsc-001}
 Let $\mu, \delta > 0$, $n\in\BN$, $M\in\BN$. There exists a centralized distributed saddle-point problem with functions (\ref{eq121-badfi}) and (\ref{eq120-badf}), in which $z^*\neq 0$ and $d \geq \max\{2\log_q(\alpha/(4\sqrt{2})),2M\}$ where $q=(2+\alpha-\sqrt{\alpha^2+4\alpha})/2\in(0,1)$ and $\alpha=16n\mu^2/\delta^2$. Then for any output $(\hat{x},\hat{y})$ of any DFO algorithm leaving $d-M$ coordinates zero, one can obtain the following estimate:
    \begin{align*}
        \|\hat{x}-x^*\|^2+\|\hat{y}-y^*\|^2=\Omega\left(\exp\left(-\frac{16}{1+\sqrt{\frac{\delta^2}{16\mu^2n}+1}}\cdot M\right)\|y_0-y^*\|^2\right).
    \end{align*}
\end{lem}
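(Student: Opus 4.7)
The plan is to reduce the claim to a zero-chain-plus-geometric-tail argument. I would first derive an explicit formula for the saddle point $z^*=(x^*,y^*)$ and show that its coordinates decay geometrically with rate $q$; then, using the hypothesis that $\hat z$ has zero last $d-M$ coordinates, I would bound $\|\hat z-z^*\|^2$ from below by the tail $\sum_{j>M}(y^*_j)^2$ and convert this geometric tail into the stated exponential-in-$M$ lower bound. The overall structure mirrors the zero-chain arguments already used in Lemmas \ref{lem:coordinate} and \ref{lem:coordinatebound} for the convex-concave case, but the exponential (rather than polynomial) decay now arises from the strong-convexity regularization $\tfrac{\mu}{2}\|x\|^2-\tfrac{\mu}{2}\|y\|^2$ added in (\ref{eq120-badf}).

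The first step is to solve the optimality system for (\ref{eq120-badf}). From $\nabla_x f(x^*,y^*)=0$ one obtains $x^*=-\tfrac{\delta}{4\sqrt{n}\,\mu}Ay^*$, and substituting into $\nabla_y f(x^*,y^*)=0$ yields the linear system $(A^\top A+\alpha I)y^*=e_1$ with $\alpha=16n\mu^2/\delta^2$. Because $A$ is upper bidiagonal, $A^\top A$ is tridiagonal with diagonal $(1,2,\ldots,2)$ and off-diagonal $-1$, so for interior indices the coordinates of $y^*$ obey the linear recurrence $y^*_{j+1}-(2+\alpha)y^*_j+y^*_{j-1}=0$, with a non-homogeneous boundary at $j=1$ coming from $e_1$ and a reflective boundary at $j=d$. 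The characteristic polynomial $t^2-(2+\alpha)t+1=0$ has roots $q$ and $1/q$ with $q=(2+\alpha-\sqrt{\alpha^2+4\alpha})/2\in(0,1)$, so $y^*_j=c_1 q^{j-1}+c_2 q^{-(j-1)}$. The hypothesis $d\geq 2\log_q(\alpha/(4\sqrt{2}))$ is exactly what is needed to ensure that on the range $j\leq d/2$ the growing mode $c_2 q^{-(j-1)}$ is dominated by the decaying mode $c_1 q^{j-1}$, yielding an estimate of the form $|y^*_j|\gtrsim q^{j-1}\|y_0-y^*\|$ for every $j\leq M+1$.

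Given this decay, the hypothesis that the last $d-M$ coordinates of $\hat y$ vanish immediately gives
\[
\|\hat y - y^*\|^2 \;\geq\; \sum_{j=M+1}^{d}(y^*_j)^2 \;\gtrsim\; \|y_0-y^*\|^2\sum_{j=M+1}^{2M}q^{2(j-1)} \;\gtrsim\; q^{2M}\,\|y_0-y^*\|^2,
\]
where the condition $d\geq 2M$ is used so that the geometric partial sum runs over at least $M$ terms and is therefore comparable to its leading term. Converting $q^{2M}$ into the stated exponential form follows from the elementary bound $\ln(1/q)\leq 8/(1+\sqrt{1+\alpha})$, obtained by expanding $q$ and applying $\ln(1+u)\leq u$; plugging in $\alpha=16n\mu^2/\delta^2$ recovers the exponent $16M/(1+\sqrt{\delta^2/(16\mu^2 n)+1})$. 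The main obstacle is the boundary-matching step: one must pick $c_1,c_2$ so that both the non-homogeneous condition at row $1$ and the reflective condition at row $d$ are satisfied simultaneously, and then argue quantitatively that $c_1=\Theta(\|y_0-y^*\|)$ while $c_2 q^{-(j-1)}$ does not cancel $c_1 q^{j-1}$ on $j\leq M$. This is precisely where the threshold $d\geq 2\log_q(\alpha/(4\sqrt{2}))$ enters; the remaining steps, including transferring the estimate from $\|\hat y-y^*\|^2$ to $\|\hat z-z^*\|^2$ via $x^*=-\tfrac{\delta}{4\sqrt{n}\,\mu}Ay^*$, are routine algebra on top of the standard zero-chain observation that a DFO algorithm activating at most $M$ coordinates cannot resolve a target supported on all $d$ coordinates.
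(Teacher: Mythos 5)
You are attempting to prove something the paper itself does not prove: Lemma \ref{lem:lower-scsc-001} is imported verbatim from \citet[Theorem C.10]{beznosikov2024similarity}, so there is no internal proof to compare against. Your skeleton is the natural one and matches how such estimates are obtained in the literature (and how Lemma \ref{lem:lowerboundG2modified} is justified via \citet{zhang2022lower}): the reduction $x^*=-\tfrac{\delta}{4\sqrt{n}\mu}Ay^*$, the system $(A^\top A+\alpha I)y^*=e_1$ with $A^\top A$ tridiagonal, the characteristic roots $q,1/q$, and the truncation/tail lower bound are all correct. However, the crux of the argument — the quantitative boundary matching at rows $1$ and $d$, i.e.\ showing $c_1=\Theta(\|y_0-y^*\|)$, that the growing mode does not cancel the decaying one on $j\le M$, and that this is exactly what the hypothesis $d\ge 2\log_q(\alpha/(4\sqrt{2}))$ guarantees — is only announced, not executed. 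Also, the pointwise claim $|y^*_j|\gtrsim q^{j-1}\|y_0-y^*\|$ hides a factor of order $\sqrt{1-q^2}$ that degenerates as $\alpha\to 0$ (since $y^*_j\approx q^j/(1-q)^2$ on the semi-infinite chain); comparing the tail sum $\sum_{j>M}(y^*_j)^2$ directly to $\|y^*\|^2$, which gives the ratio $q^{2M}$ exactly, avoids this loss.

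The more serious gap is the final conversion from $q^{2M}$ to the stated exponent. Since $\delta^2/(16\mu^2 n)=1/\alpha$, what you need is $\ln(1/q)\le 8/(1+\sqrt{1+1/\alpha}\,)$, not the inequality $\ln(1/q)\le 8/(1+\sqrt{1+\alpha}\,)$ you assert; the latter is false already for moderate $\alpha$ (e.g.\ $\alpha=16$ gives $\ln(1/q)\approx 2.9>8/(1+\sqrt{17})\approx 1.6$), and substituting $\alpha=16n\mu^2/\delta^2$ into it does not produce the quoted exponent. Worse, even the corrected inequality fails once $\alpha$ is large: $\ln(1/q)\approx\ln\alpha$ is unbounded while $8/(1+\sqrt{1+1/\alpha}\,)\le 8$ (at $\alpha=100$ one has $\ln(1/q)\approx 4.6>4.0$), so in that regime $q^{2M}\ll\exp\bigl(-16M/(1+\sqrt{1+1/\alpha}\,)\bigr)$. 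Since $q^{2M}\|y_0-y^*\|^2$ is, up to absolute constants, exactly the error of the best output supported on the first $M$ coordinates, no refinement of the truncation argument can yield the quoted constant there; your route proves a correct bound of the form $\Omega\bigl(\exp(-2M\ln(1/q))\|y_0-y^*\|^2\bigr)$, which recovers the stated exponent only when $\alpha=16n\mu^2/\delta^2=O(1)$ (the regime that drives the $\sqrt{n}\delta/\mu$ term in Theorem \ref{thm:lowerboundVG1scsc}, using $\ln(1/q)=\Theta(\sqrt{\alpha}\,)$ for small $\alpha$), but not uniformly over all $\mu,\delta,n$ admitted by the lemma. To finish along your lines you must either restrict the parameter regime, keep the bound in terms of $\ln(1/q)$, or defer to the cited construction as the paper does.
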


Then we can prove Theorem \ref{thm:lowerboundVG1scsc} by combining Lemma \ref{lem:lower-scsc-000} and \ref{lem:lower-scsc-001} and noting that to reach a solution of $\varepsilon$-accuracy requires
\begin{align*}
    \min\left\{\frac{2V}{n}, \frac{2C}{n}\right\}\geq M=\Omega\left(\left(1+\frac{\delta}{\sqrt{n}\mu}\right)\log\frac{1}{\varepsilon}\right).
\end{align*}

\subsection{Proof of Lemma \ref{lem:lowerboundG2scsc}}

We introduce the function set as 
\begin{align}\label{eq123-badf}
    f(x,y)=f_i(x,y)=\frac{L}{2}x^\top \tilde{A}y+\frac{\mu}{2}\|x\|^2-\frac{\mu}{2}\|y\|^2+\frac{L^2}{4\mu}e_1^\top y, \quad \forall i\in [n],
\end{align}
where
\begin{align*}
    \tilde{A} = \begin{pmatrix}
 &    &  & 1 \\
 &    & 1 &-1  \\
 &    \rotatebox[origin=c]{90}{$\ddots$}
  & \rotatebox[origin=c]{90}{$\ddots$} &  \\
1 & -1   &  & 
\end{pmatrix}.
\end{align*}

Note that since all the nodes have the same function, this function set (\ref{eq123-badf}) satisfies Assumptions \ref{asm:set}, \ref{asm:smooth}, \ref{asm:cc}, \ref{asm:scsc}, and \ref{asm:ss} for $L,\mu $ and any $\delta>0$. The structure of $\tilde{A}$ results the following lemma.
\begin{lem}\label{lem:zero-chain3}
    For the function set (\ref{eq123-badf}), all $(x,y)\in \fF_{k}\times\fF_{k}$ and $k=0,\dots,d-1$, we have 
    \begin{align*} 
        \nabla f_i(x,y)\in \fF_{k+1}\times\fF_{k+1}.
    \end{align*}
\end{lem}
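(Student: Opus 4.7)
The plan is to prove Lemma~\ref{lem:zero-chain3} by a direct calculation of $\nabla f_i(x,y)$ from the definition (\ref{eq123-badf}). Because every local function coincides with the global objective in this construction, a single gradient computation suffices:
\begin{align*}
\nabla_x f_i(x,y) = \tfrac{L}{2}\tilde{A}y + \mu x, \qquad \nabla_y f_i(x,y) = \tfrac{L}{2}\tilde{A}^{\top}x - \mu y + \tfrac{L^2}{4\mu}e_1.
\end{align*}
I would then verify that each additive piece lies in $\fF_{k+1}$, so that the pair lies in $\fF_{k+1}\times\fF_{k+1}$.

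The regularization terms $\mu x$ and $-\mu y$ already sit in $\fF_k\subseteq\fF_{k+1}$ by the hypothesis $(x,y)\in\fF_k\times\fF_k$, and the constant offset $\frac{L^2}{4\mu}e_1$ belongs to $\fF_1\subseteq\fF_{k+1}$ for any $k\ge 0$. Consequently the only real content is to control the bilinear parts $\tilde{A}y$ and $\tilde{A}^{\top}x$, for which I would exploit the (anti-)bidiagonal structure of $\tilde{A}$.

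Concretely, inspection of (\ref{eq123-badf}) shows that row $i$ of $\tilde{A}$ contains at most two nonzero entries placed in consecutive columns ($\tilde{A}_{i,d-i+1}=+1$ and $\tilde{A}_{i,d-i+2}=-1$ for $i\ge 2$, with row $1$ supported only on column $d$). The symmetry of $\tilde{A}$ means the same analysis handles $\tilde{A}^{\top}$. The combinatorial step I plan to carry out coordinate-by-coordinate is the ``zero-chain'' property inherited from this bidiagonal pattern: an output coordinate of $\tilde{A}y$ (respectively $\tilde{A}^{\top}x$) vanishes unless one of its two contributing input coordinates already lies in the input support $\{1,\ldots,k\}$, so at most one new coordinate is activated beyond $\fF_k$. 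This parallels the mechanism behind Lemma~\ref{lem:zero-chain2}, with the roles of rows and columns permuted by the anti-diagonal arrangement.

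The main obstacle is the indexing bookkeeping: the anti-diagonal placement of $+1$/$-1$ makes the ``newly activated'' coordinate less transparent than in the strictly upper-triangular case, so I would treat the boundary rows ($i=1$ and $i=d$) and the generic interior rows separately before taking the union of the supports across the three additive pieces in each gradient block. Assembling these observations then gives $\nabla f_i(x,y)\in\fF_{k+1}\times\fF_{k+1}$ for all $i\in[n]$, as required.
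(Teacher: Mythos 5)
Your computation of the gradient is correct, but the key combinatorial step fails, and in fact the conclusion you are asserting is false for the matrix $\tilde{A}$ as written under the paper's definition $\fF_k=\operatorname{span}\{e_1,\ldots,e_k\}$. The anti-bidiagonal pattern you correctly identify, $\tilde{A}_{i,d-i+1}=+1$ and $\tilde{A}_{i,d-i+2}=-1$, reflects indices about the anti-diagonal: an input coordinate $j\in\{1,\ldots,k\}$ of $y$ feeds the output coordinates $i=d-j+1$ and $i=d-j+2$ of $\tilde{A}y$, which sit at the \emph{top} of the index range rather than adjacent to $\{1,\ldots,k\}$. Concretely, take $k=1$ and $y=e_1$: column $1$ of $\tilde{A}$ has its only nonzero entry, $+1$, in row $d$, so $\tilde{A}y=e_d$ and $\nabla_x f_i(x,e_1)=\tfrac{L}{2}e_d+\mu x\notin\fF_2$ whenever $d\geq 3$. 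The assertion that ``at most one new coordinate is activated beyond $\fF_k$'' is therefore exactly where the argument breaks, and no amount of bookkeeping over boundary versus interior rows will repair it. (Your observation that $\tilde{A}$ is symmetric is correct, but it only propagates the same difficulty to $\tilde{A}^{\top}x$.)

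What is actually true for this instance is an alternating invariant with a reversed subspace on the $x$-side: if $y\in\fF_k$ and $x\in\operatorname{span}\{e_d,\ldots,e_{d-k+1}\}$, then $\tilde{A}y$ remains supported on $\{d-k+1,\ldots,d\}$ while $\tilde{A}^{\top}x=\tilde{A}x$ lands in $\fF_{k+1}$; this is the mechanism behind Theorem 3.5 of \citet{zhang2022lower}, which Lemma \ref{lem:lowerboundG2modified} invokes. Equivalently, relabelling the $x$-coordinates in reverse order turns $\tilde{A}$ into the upper-bidiagonal matrix $A$ of (\ref{eq:matrixA}), for which your ``one new coordinate per application'' argument is valid --- that is precisely Lemma \ref{lem:zero-chain2}. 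The paper offers no proof of Lemma \ref{lem:zero-chain3} beyond asserting it from the structure of $\tilde{A}$, so the statement has to be read (or restated) with this reversed indexing on the $x$-block; as literally written it cannot be proved, and your proposal does not surface this obstruction.
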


\begin{lem}\label{lem:lowerboundG2modified}
Let $\mu,\delta,L>0$, $n\in\BN$, $C\in\BN$. There exists a centralized distributed saddle-point problem with functions (\ref{eq123-badf}), in which $z^*\neq 0$ and $d\geq \max\{2\log_q(\alpha/\sqrt{2}),4C\}$ where $q=1+2\alpha-2\sqrt{\alpha^2+\alpha}\in(0,1)$ and $\alpha=\mu^2/L^2$. Then for any DFO algorithm, the output $(x^C,y^C)$ after $C$ local gradient calls will satisfies
\begin{align}\label{eq:lowerzhang}
    \|y^C-y^*\|^2\geq q^C\cdot\frac{\|y^0-y^*\|^2}{16}.
\end{align}
\end{lem}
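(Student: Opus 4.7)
The function set (\ref{eq123-badf}) assigns the \emph{same} function $f$ to every node, so the problem collapses to a single-agent Nesterov-style zero-chain for the strongly-convex-strongly-concave setting. My proof follows the standard template: (i) solve for $z^*$ explicitly, (ii) invoke the zero-chain property (Lemma \ref{lem:zero-chain3}) to confine $y^C$ to $\fF_C$, and (iii) lower-bound the portion of $y^*$ that lives outside $\fF_C$.

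\emph{Step 1 (compute $z^*$).} Writing the saddle-point equations $\tfrac{L}{2}\tilde{A}\,y^* + \mu x^* = 0$ and $\tfrac{L}{2}\tilde{A}^\top x^* - \mu y^* + \tfrac{L^2}{4\mu}e_1 = 0$, I eliminate $x^*$ to obtain the tridiagonal linear system $(\tilde{A}^\top \tilde{A} + 4\alpha I)y^* = e_1$ with $\alpha = \mu^2/L^2$. Direct computation shows that $\tilde{A}^\top \tilde{A}$ has main diagonal $(1,2,\ldots,2)$ and off-diagonals $-1$, so the associated second-order recurrence has characteristic equation $q^2 - (2+4\alpha)q + 1 = 0$, whose smaller root is exactly $q = 1 + 2\alpha - 2\sqrt{\alpha^2+\alpha} \in (0,1)$. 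Matching the two boundary equations yields the closed form $y^*_j = c_d\bigl(q^j - q^{2d+2-j}\bigr)$ with an explicit positive normalization $c_d$, so $y^* \neq 0$ and hence $z^* \neq 0$.

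\emph{Steps 2 and 3 (zero-chain and tail bound).} Starting from $(x^0, y^0) = (0,0) \in \fF_0 \times \fF_0$, Lemma \ref{lem:zero-chain3} implies inductively that any DFO algorithm applied to $f$ produces iterates in $\fF_k \times \fF_k$ after $k$ local gradient calls; hence $y^C \in \fF_C$ has $y^C_j = 0$ for all $j > C$, and
\begin{align*}
\|y^C - y^*\|^2 \;\geq\; \sum_{j=C+1}^{d}(y^*_j)^2.
\end{align*}
The hypothesis $d \geq 2\log_q(\alpha/\sqrt{2})$ makes the boundary correction $q^{2d+2-j}$ uniformly small relative to $q^j$ on the relevant range of $j$, so $y^*_j \asymp c_d\,q^j$; the hypothesis $d \geq 4C$ leaves enough headroom above $j = C$ for the geometric tail to retain a constant fraction of the total mass. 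Comparing $\sum_{j>C} q^{2j}$ to $\sum_{j\geq 1}q^{2j}$ and using $y^0 = 0$ so that $\|y^0 - y^*\|^2 = \|y^*\|^2$ then yields inequality (\ref{eq:lowerzhang}).

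\emph{Main obstacle.} The only real difficulty is to keep the numerical constant at $1/16$ while simultaneously (i) discarding the boundary correction $q^{2d+2-j}$ in $y^*_j$ and (ii) replacing the full geometric sum by its tail beyond $C$. The two thresholds $\alpha/\sqrt{2}$ and $4C$ in the hypotheses are tuned exactly to absorb these losses, and stating the conclusion with the looser exponent $q^C$ (rather than the natural $q^{2C}$ that the argument actually produces) provides the slack needed to close the estimate cleanly.
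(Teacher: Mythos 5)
Your overall template (solve for $z^*$ explicitly, localize the iterates by a zero-chain argument, lower bound by the geometric tail of $y^*$) is exactly the strategy of the proof the paper invokes — the paper itself only says ``follow the proof of Theorem 3.5 of \citet{zhang2022lower} with $L_x=L_y=L_{xy}=L$, $\mu_x=\mu_y=\mu$'' — and your Step 1 is right: eliminating $x^*$ gives $(\tilde A^\top\tilde A+4\alpha I)y^*=e_1$, the interior recurrence has characteristic roots $1+2\alpha\pm2\sqrt{\alpha^2+\alpha}$, and $y^*_j=c_d\bigl(q^j-q^{2d+2-j}\bigr)>0$. However, there is a genuine gap in Steps 2--3, and it is precisely where you wave it away. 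Since the per-coordinate decay ratio of $y^*$ is $q$ itself, localizing only $y^C\in\fF_C$ gives a tail of order $q^{2C}\|y^*\|^2$ (your own ``comparing $\sum_{j>C}q^{2j}$ to $\sum_{j\ge1}q^{2j}$''). For a \emph{lower} bound this is the \emph{weaker} statement: $q^{2C}<q^C$ for $0<q<1$, so an estimate of order $q^{2C}$ does not imply the claimed $\|y^C-y^*\|^2\ge \tfrac{q^C}{16}\|y^0-y^*\|^2$. Your closing remark that stating the conclusion with ``the looser exponent $q^C$ rather than the natural $q^{2C}$'' provides slack has the direction of the inequality backwards; no choice of $d$ or of the constant $1/16$ repairs this once $q^C\gg q^{2C}$.

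The missing ingredient is the finer zero-chain bookkeeping that Zhang et al.\ actually perform. Because $\tilde A$ is \emph{anti}-bidiagonal, a new coordinate of $y$ can only be activated through an intermediate $x$-gradient evaluation: with $y\in\fF_k$ and $x$ supported on the last $m$ coordinates, a $\nabla_y$ call reaches at most $\fF_{\max(m+1,k)}$ and a $\nabla_x$ call can only raise $m$ to $\max(k,m)$. Hence the $y$-span grows by at most one per \emph{two} local gradient calls, so $y^C\in\fF_{\lceil C/2\rceil}$, and the tail beyond $\lceil C/2\rceil$ is of order $q^{2\lceil C/2\rceil}\asymp q^C$, which is what the stated bound needs (the hypotheses on $d$ then absorb the boundary terms and rounding into the $1/16$). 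Note also that this same structure shows the paper's Lemma~\ref{lem:zero-chain3}, which you cite to place the iterates in $\fF_k\times\fF_k$, cannot be taken literally: for $y=e_1\in\fF_1$ one has $\nabla_x f\propto \tilde A e_1=e_d\notin\fF_2$; the correct invariant pairs the first-coordinate spaces for $y$ with the \emph{last}-coordinate spaces for $x$. Only the $y$-part matters for the final estimate, but it is exactly this alternating $x$/$y$ propagation — not the crude ``one coordinate per gradient call'' count — that produces the exponent $q^C$ in the lemma.
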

\begin{proof}
    Follow the proof of Theorem 3.5 by \citet{zhang2022lower}, in which we take $L_x=L_y=L_{xy}=L$ and $\mu_x=\mu_y=\mu$.
\end{proof}

Then we can prove Lemma \ref{lem:lowerboundG2scsc} by combining Lemma \ref{lem:zero-chain3} and \ref{lem:lowerboundG2modified} and noting that to reach a solution with $\varepsilon$-accuracy needs at least $\Omega(L/\mu\log(1/\varepsilon))$ local gradient calls from equation (\ref{eq:lowerzhang}).

\subsection{Proof of Theorem \ref{thm:lowerboundGscsc}}
\begin{proof}
In the case of $n\mu+\sqrt{n}\delta\geq \Omega(L)$, the problem with with functions (\ref{eq121-badfi}) and (\ref{eq120-badf}) in Theorem \ref{thm:lowerboundVG1scsc} implies the lower bound on the local gradient complexity of 
\begin{align*}
    \Omega\left(\left(n+\frac{\sqrt{n}\delta}{\mu}\right)\log\frac{1}{\varepsilon}\right)=\Omega\left(\left(n+\frac{\sqrt{n}\delta+L}{\mu}\right)\log\frac{1}{\varepsilon}\right).
\end{align*}
In the case of $L\geq \Omega (n\mu+\sqrt{n}\delta)$, the problem with function (\ref{eq123-badf}) in Lemma \ref{lem:lowerboundG2scsc} implies the lower bound on the local gradient complexity of $\Omega(L/\mu\log(1/\varepsilon))=\Omega((n+(\sqrt{n}\delta+L)/\mu)\log(1/\varepsilon))$. 
Combining these two cases, we can achieve the lower bound on the local gradient complexity of $\Omega((n+(\sqrt{n}\delta+L)/\mu)\log(1/\varepsilon))$.
\end{proof}

\section{Making the Gradient Small}\label{appx:gds}

In constrained case, we can also use gradient mapping to measure the sub-optimality of a solution $z$, that is
\begin{align*}
    \ffF_\tau(z)=\frac{z-\fP_\fZ (z-\tau F(z))}{\tau}.
\end{align*}
For the $L$-smooth convex-concave function $f$, we consider the constrained problem
\begin{align*}
    \min_{x\in\fX} \max_{y\in\fY} \hat f(x,y) := f(x,y) + \frac{\lambda}{2}\norm{x-x^0}^2 -  \frac{\lambda}{2}\norm{y-y^0}^2,
\end{align*}
where $\hat f$ is $\lambda$-strongly-convex-$\lambda$-strongly-concave and $\fZ$ is bounded by diameter $D$.
From the the result of Theorem \ref{thm:convergenceSVOGSscsc}, we have $ \E\big[\norm{z^K - \hat z^*}^2\big] \leq (1-\chi\lambda/\delta)^K\norm{z^0 - \hat z^*}^2$ for some constant $\chi\in(0,1)$, then we have
\begin{align*}
     \E\left[\|\ffF_\tau(z^K)\|\right]&=\E\left\|\frac{z^K-\fP_\fZ\left(z^K-\tau(\hat F(z^K) - \lambda(z^K-z^0))\right)}{\tau}\right\|\\
    &=\E\left\|\frac{\fP_\fZ(z^K)-\fP_\fZ\left(z^K-\tau(\hat F(z^K) - \lambda(z^K-z^0))\right)}{\tau}\right\|\\
    &\leq \E\left[\norm{\hat F(z^K) - \lambda(z^K-z^0)}\right]\\
     &\leq \E\left[\norm{\hat F(z^K)}\right]  + \lambda\E\left[\norm{z^K-z^0}\right] \\
&= \E\left[\norm{\hat F(z^K)-\hat F(\hat z^*)}\right] + \lambda\E\left[\norm{z^K-z^0}\right] \\
 &\leq L\E\left[\norm{z^K-\hat z^*}\right] + \lambda\E\left[\norm{z^K-z^0}\right],
\end{align*}
where we use the property of the projection that $\norm{\fP_\fZ(z_1)-\fP_\fZ(z_2)} \leq \norm{z_1-z_2}$ and the gradient mapping holds that $ \ffF_\tau(\hat z^*)=0$. 

Then we have
\begin{align*}
    \E\left[ \norm{\ffF(z^K)}^2\right]  &\leq  2L^2\E\left[ \norm{z^K-\hat z^*}^2\right] + 2\lambda^2\E\left[ \norm{z^K-z^0}^2\right] \\
 &\leq 2L^2(1-\chi\lambda/\delta)^K\norm{z^0 - \hat z^*}^2 + 2\lambda^2\E\left[ \norm{z^K-z^0}^2\right] \\
 &\leq (2L^2(1-\chi\lambda/\delta)^K + 2\lambda^2)D^2.
\end{align*}

Let $\lambda = \sqrt{\varepsilon/(4D^2)}$ and $K=\fO(\delta/\lambda\log(L/\varepsilon))$, then we have $\E\big[ \norm{\ffF(z^K)}^2\big] \leq \varepsilon$. 

Hence, the complexity of communication rounds is $K= \fO\left({\delta D}/{\sqrt{\varepsilon}}\log({L}/{\varepsilon})\right)$. 
We verify that the corresponding communication complexity is $\tilde\fO((n+\sqrt{n}\delta D/\sqrt{\varepsilon})\log(1/\varepsilon))$ and the local gradient complexity is $\tilde\fO((n+\sqrt{n}L D/\sqrt{\varepsilon})\log(1/\varepsilon))$ by following the discussion in Appendix \ref{appx:proofscsc}.

\section{Experimental Setting}\label{appx:para}

We solve the sub-problem in SVOGS (Algorithm \ref{alg:SVOGS}), SMMDS \cite{beznosikov2021distributed}, EGS \cite{kovalev2022optimalgradient}, and TPAPP \cite{beznosikov2024similarity} by Extra-Gradient method of \citet{korpelevich1976extragradient}. 
We implement all of the methods by Python 3.9 with NumPy and run on a machine with AMD Ryzen(TM) 7 4800H 8 core with Radeon Graphics 2.90 GHz CPU with 16GB RAM.

\end{document}